\newcommand{\@bbify}[1]{
  \ifcsname b#1\endcsname
  \message{WARNING: Overwriting b#1 with blackboard letter!}
  \fi
  \expandafter\edef\csname b#1\endcsname
  {\noexpand\ensuremath{\noexpand\mathbb #1}\noexpand\xspace}}
\newcommand{\@calify}[1]{
  \ifcsname c#1\endcsname
  \message{WARNING: Overwriting c#1 with calligraphic letter!}
  \fi 
  \expandafter\edef\csname c#1\endcsname
  {\noexpand\ensuremath{\noexpand\mathcal #1}\noexpand\xspace}}
\newcommand{\@bfify}[1]{
  \ifcsname bf#1\endcsname
  \message{WARNING: Overwriting c#1 with bold letter!}
  \fi
  \expandafter\edef\csname bf#1\endcsname
  {\noexpand\ensuremath{\noexpand\mathbf #1}\noexpand\xspace}}
\newcounter{@letter}\stepcounter{@letter}
\loop\@bbify{\Alph{@letter}}\@calify{\Alph{@letter}}\@bfify{\Alph{@letter}}
\newcommand{\Cat}{\mathrm{Cat}}
\newcommand{\DblCat}{\mathrm{DblCat}}
\newcommand{\TwoCat}{2\mathrm{Cat}}
\newcommand{\ps}{{\mathrm{ps}}}
\newcommand{\id}{{\mathrm{id}}}
\newcommand{\Adj}{\mathbb{A}\mathrm{dj}}
\newcommand{\Eadj}{E_\mathrm{adj}}
\newcommand{\cof}{c}
\newcommand{\fib}{f}
\newcommand{\whi}{\mathrm{whi}}
\newcommand{\Iwcof}{\cI_w\mathrm{-cof}}
\newcommand{\Jwcof}{\cJ_w\mathrm{-cof}}
\newcommand{\Iwinj}{\cI_w\mathrm{-inj}}
\newcommand{\Jwinj}{\cJ_w\mathrm{-inj}}
\newcommand{\Jwcell}{\cJ_w\mathrm{-cell}}
\newcommand{\Gray}{\mathrm{Gr}}
\newcommand{\Lsim}{L^\simeq}
\newcommand{\bHsim}{\bH^\simeq}
\newcommand{\vtwo}{\mathbb{V}\mathbbm{2}}
\newcommand{\A}{\cA}
\newcommand{\B}{\cB}
\newcommand{\vcong}{\rotatebox{270}{$\cong$}}
\newcommand{\lsimeq}{\rotatebox{270}{$\simeq$}}
\newcommand{\rsimeq}{\rotatebox{90}{$\simeq$}}
\newenvironment{tz}{\begin{center}\begin{tikzpicture}}{\end{tikzpicture}\end{center}}
\tikzstyle{d}=[double distance=.3ex]
\tikzset{%
node distance=1.5cm, la/.style={scale=0.8}, rr/.style={xshift=1.5cm},
space/.style={xshift=.5cm}, over/.style={auto=false,fill=white,inner sep=1.5pt, minimum size=0, outer sep=0},
    symbol/.style={%
        draw=none,
        every to/.append style={%
            edge node={node [sloped, allow upside down, auto=false]{$#1$}}},
            
    }, pro/.style={postaction={decorate,decoration={
        markings,
        mark=at position .5 with {\node at (0,0) {$\bullet$};}
      }},
      inner sep=.9ex,
      },
  n/.style={double equal sign distance, -implies}, t/.style={double distance=2.5pt, -implies, postaction={draw,-}},
}
\newcommand{\arrowdot}{
\ensuremath{\begin{tikzpicture}
\node (A) at (0,-.4) {};
\node (B) at (.4,-.4) {};
\draw[->, line width=.1ex] (0,-.6) -- (.4,-.6);
\node[shape=circle, fill=black, scale=0.35] (A) at  (.17,-.6) {};
\end{tikzpicture}
}}
\NewDocumentCommand{\celli}{ O{} O{n} O{\cellslide} O{\celllength} m m m }{
  \coordinate (mid) at ($({#5})!{#3}!({#6})$);
  \coordinate (start) at ($(mid)!{#4}!({#5})$);
  \coordinate (end) at ($(mid)!{#4}!({#6})$);
  \draw[#2] (start) to node(label)[inner sep=4pt,outer sep=0,minimum size=0,#1]{{#7}} (end);
   \coordinate (far) at ($(end)+(mid)-(label)$);
   \node[] at ($(end)!7pt!(far)$) {$\scriptscriptstyle\cong$} ;
}
\newcommand{\sq}[5]{{#1}\colon({#4} \; ^{{#2}}_{\substack{{#3}}} \; {#5})}
\newcommand{\push}[3]{{#1}\,\square_{{#3}}\, {#2}}
\newcommand{\pushprod}[7]{{#1}\,\square\, {#4}\colon {#2}\otimes_{#7} {#6}\coprod_{{#2}\otimes_{#7} {#5}} {#3}\otimes_{#7} {#5}\to {#3}\otimes_{#7} {#6}}
\def\cellslide{0.5}
\def\celllength{.2cm}
\NewDocumentCommand{\cell}{ O{} O{n} O{\cellslide} O{\celllength} m m m }{
  \coordinate (mid) at ($({#5})!{#3}!({#6})$);
  \coordinate (start) at ($(mid)!{#4}!({#5})$);
  \coordinate (end) at ($(mid)!{#4}!({#6})$);
  \draw[#2] (start) to node
  [inner sep=6pt,outer sep=0,minimum size=0,#1]{{#7}} (end);
}
\newlist{rome}{enumerate}{7}
\setlist[rome]{label=(\roman*)}
\newtheorem{theorem}{Theorem}[section]
\newtheorem{cor}[theorem]{Corollary}
\newtheorem{prop}[theorem]{Proposition}
\newtheorem{lemme}[theorem]{Lemma}
\declaretheorem[name=Theorem,numbered=yes]{theoremA}
\theoremstyle{definition}
\newtheorem{defn}[theorem]{Definition}
\newtheorem{ex}[theorem]{Example}
\newtheorem{notation}[theorem]{Notation}
\newtheorem{descr}[theorem]{Description}
\newtheorem{constr}[theorem]{Construction}
\theoremstyle{remark}
\newtheorem{rem}[theorem]{Remark}
\crefname{theorem}{Theorem}{Theorems}
\crefname{cor}{Corollary}{Corollaries}
\crefname{prop}{Proposition}{Propositions}
\crefname{lemme}{Lemma}{Lemmas}
\crefname{defn}{Definition}{Definitions}
\crefname{ex}{Example}{Examples}
\crefname{notation}{Notation}{Notations}
\crefname{descr}{Description}{Descriptions}
\crefname{constr}{Construction}{Constructions}
\crefname{rem}{Remark}{Remarks}
\title{A model structure for weakly horizontally invariant double categories}
\author[L.\ Moser]{Lyne Moser}
\address{Max Planck Institute for Mathematics, Vivatsgasse 7, 53111 Bonn, Germany}
\email{moser@mpim-bonn.mpg.de}
\author[M.\ Sarazola]{Maru Sarazola}
\address{Department of Mathematics, Cornell University, Ithaca NY, 14853, USA}
\email{mes462@cornell.edu}
\author[P.\ Verdugo]{Paula Verdugo}
\address{Department of Mathematics and Statistics, Macquarie University, NSW 2109, Australia}
\email{paula.verdugo@hdr.mq.edu.au}
\begin{document}

\maketitle

\begin{abstract}
    We construct a model structure on the category $\mathrm{DblCat}$ of double categories and double functors, whose  trivial fibrations are the double functors that are surjective on objects, full on horizontal and vertical morphisms, and fully faithful on squares; and whose fibrant objects are the weakly horizontally invariant double categories.
    
    We show that the functor $\mathbb H^{\simeq}\colon \mathrm{2Cat}\to \mathrm{DblCat}$, a more homotopical version of the usual horizontal embedding $\mathbb H$, is right Quillen and homotopically fully faithful when considering Lack's model structure on $\mathrm{2Cat}$. In particular, $\mathbb H^{\simeq}$ exhibits a levelwise fibrant replacement of $\mathbb H$. Moreover, Lack's model structure on $\mathrm{2Cat}$ is right-induced along $\mathbb H^{\simeq}$ from the model structure for weakly horizontally invariant double categories. 
    
    We also show that this model structure is monoidal with respect to B\"ohm's Gray tensor product. Finally, we prove a Whitehead Theorem characterizing the weak equivalences with fibrant source as the double functors which admit a pseudo inverse up to horizontal pseudo natural equivalence.
\end{abstract}

\section{Introduction}

This paper aims to study and compare the homotopy theories of two related types of $2$-dimensional categories: \emph{$2$-categories} and \emph{double categories}. While $2$-categories consist of objects, morphisms, and $2$-morphisms, double categories admit two types of morphisms between objects -- \emph{horizontal} and \emph{vertical} morphisms -- and their $2$-morphisms are given by \emph{squares}. In particular, a $2$-category $\A$ can always be seen as a horizontal double category~$\bH\A$ with only trivial vertical morphisms. This assignment $\bH$ gives a full embedding of $2$-categories into double categories.

The category $\TwoCat$ of $2$-categories and $2$-functors admits a model structure, constructed by Lack in \cite{Lack2Cat,LackBicat}. In this model structure, the weak equivalences are the biequivalences; the trivial fibrations are the $2$-functors which are surjective on objects, full on morphisms, and fully faithful on $2$-morphisms; and all $2$-categories are fibrant. Moreover, Lack gives a characterization of the cofibrant objects as the $2$-categories whose underlying category is free. With this well-established model structure at hand, we raise the question of whether there is a homotopy theory for double categories which contains that of $2$-categories.

Several model structures for double categories were first constructed by Fiore and Paoli in \cite{FP}, and by Fiore, Paoli, and Pronk in \cite{FPP}, but the homotopy theory of $2$-categories does not embed in any of these homotopy theories for double categories. The first positive answer to this question is given by the authors in \cite{MSV}, and further related results appear in work in progress by Campbell \cite{Camp}. In \cite{MSV}, we construct a model structure on the category $\DblCat$ of double categories and double functors that is right-induced from two copies of Lack's model structure on $\TwoCat$; its weak equivalences are called the \emph{double biequivalences}. 

This model structure is very well-behaved with respect to the horizontal embedding~$\bH$: the functor $\bH\colon \TwoCat\to \DblCat$ is both left and right Quillen, and Lack's model structure is both left- and right-induced along it. In particular, this says that Lack's model structure on $\TwoCat$ is created by $\bH$ from the model structure on $\DblCat$ of~\cite{MSV}. Moreover, the functor~$\bH$ is homotopically fully faithful, and it embeds the homotopy theory of $2$-categories into that of double categories in a reflective and coreflective way.

As it was constructed with a pronounced horizontal bias, this model structure is unsurprisingly not well-behaved with respect to the vertical direction. For example, trivial fibrations, which are full on horizontal morphisms, are only surjective on vertical morphisms, and the free double category on two composable vertical morphisms is not cofibrant, as opposed to its horizontal analogue. In particular, this prevents the model structure from being monoidal with respect to the Gray tensor product for double categories defined by B\"ohm in \cite{Bohm}.

Additionally, the model structure of \cite{MSV} is not compatible with the first-named author's nerve construction from double categories to double $(\infty,1)$-categories in \cite{Lyne}. Since all objects of this model structure on $\DblCat$ are fibrant, while the nerve of a double category is in general not fibrant, we see that the nerve functor fails to be right Quillen. In fact, the double categories whose nerve is fibrant are precisely the \emph{weakly horizontally invariant} ones. This condition requires that every vertical morphism in the double category can be lifted along horizontal equivalences at its source and target; see \cref{def:whidbl}. 

The aim of this paper is to provide a new model structure on $\DblCat$, whose trivial fibrations behave symmetrically with respect to the horizontal and vertical directions, and whose fibrant objects are the weakly horizontally invariant double categories. We achieve this by adding the inclusion $\mathbbm 1\sqcup\mathbbm 1\to \vtwo$ of the two end-points into the vertical morphism to the class of cofibrations of the model structure in \cite{MSV}. In particular, by making this inclusion into a cofibration, the trivial fibrations will now be given by the double functors that are surjective on objects, full on horizontal \emph{and} vertical morphisms, and fully faithful on squares. The existence of this model structure was independently noticed at roughly the same time by Campbell~\cite{Camp}.

As an anonymous referee pointed out, this change in the generating cofibrations requires us to enlarge the class of weak equivalences, since now the class of double functors that are both cofibrations and double biequivalences is not closed under pushouts, and therefore cannot be the class of trivial cofibrations in a model structure. Instead, we find that the weak equivalences of the desired model structure can be described as the double functors which induce a double biequivalence between fibrant replacements.

\begin{theoremA} \label{theoremA}
There is a model structure on $\DblCat$, in which the trivial fibrations are the double functors which are surjective on objects, full on horizontal and vertical morphisms, and fully faithful on squares, and the fibrant objects are the weakly horizontally invariant double categories.
\end{theoremA}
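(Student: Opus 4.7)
My plan is to invoke Kan's recognition theorem for cofibrantly generated model categories, enlarging the model structure of \cite{MSV}. I would take the generating cofibrations $\cI$ to be those of \cite{MSV} together with the inclusion $\mathbbm 1 \sqcup \mathbbm 1 \to \vtwo$ of the endpoints into the free double category on a vertical morphism. A direct lifting-property calculation then shows that the $\cI$-injectives are precisely the double functors described in the theorem: surjective on objects, full on both horizontal and vertical morphisms, and fully faithful on squares.

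Since the class of double biequivalences intersected with $\cI$-cofibrations fails to be closed under pushouts, as noted in the introduction, I would define the weak equivalences $W$ by first constructing a functorial fibrant replacement $Q$ via the small object argument applied to the generating trivial cofibrations described below, and then declaring a double functor $F$ to be a weak equivalence if and only if $QF$ is a double biequivalence in the sense of \cite{MSV}. The two-out-of-three property and closure under retracts for $W$ transfer directly from the corresponding properties of double biequivalences.

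For the generating trivial cofibrations $\cJ$, I would adjoin to those of \cite{MSV} a family of maps encoding weak horizontal invariance: for each boundary consisting of a vertical morphism together with horizontal equivalences at its source and target, an inclusion into the double category in which the vertical morphism has been transported along the equivalences, equipped with an invertible filling square. The right lifting property against these cells is exactly the weak horizontal invariance condition, identifying the fibrant objects as claimed in the theorem.

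The remaining axioms of Kan's theorem are smallness of the domains of $\cI$ and $\cJ$, which is immediate since all such sources are finite; the equality $\Iinj = \Jinj \cap W$, which is routine from the constructions; and the inclusion $\Jcell \subseteq \Icof \cap W$. The last of these is the main obstacle, and in particular for the new generators: showing that the pushout of a "weak horizontal invariance" cell is a weak equivalence in the enlarged sense requires controlling $Q$ applied to such cell attachments. I expect the argument to reduce to the fact that on weakly horizontally invariant double categories the new weak equivalences agree with the double biequivalences of \cite{MSV}, so that the bulk of the verification takes place on fibrant replacements, where the results of \cite{MSV} apply directly.
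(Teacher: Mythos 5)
Your proposal takes a genuinely different route than the paper, and as it stands the central difficulty is identified but not resolved, so I would call this a gap rather than a complete argument.

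The first difference is structural: the paper never invokes Kan's recognition theorem and never exhibits a set of generating trivial cofibrations. It defines $\cF \coloneqq (\cC\cap\cW)^\boxslash$ and proves the factorization axiom for $(\cC\cap\cW,\cF)$ directly (\cref{thm:factinCcapWandF}). The auxiliary set $\cJ_w$ (\cref{not:Jw}) plays only a partial role: the paper shows $\cF\subseteq\Jwinj$ (\cref{lem:FinJwinj}) and that $\Jwinj=\cF$ for morphisms with weakly horizontally invariant \emph{target} (\cref{prop:F=Jinjbtwwhi}), but equality for arbitrary morphisms is neither claimed nor established. Your plan tacitly assumes that the $\cJ$-injectives are exactly the fibrations (equivalently, that $\Jcof$ is exactly $\cC\cap\cW$); without this, Kan's theorem cannot recover the correct model structure, and proving it amounts to much of the same work the paper does by hand.

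The second difference is the choice of replacement functor. You propose using the small-object-argument replacement $Q$ built from $\cJ$ to define $\cW$; the paper uses an explicit one-step pushout $(-)^\whi$ along coproducts of $J_4\colon\bH\Eadj\to\bHsim\Eadj$ (\cref{def:whireplacement}). This is not a cosmetic choice: the tractability of $(-)^\whi$ — it is the identity on underlying horizontal categories, fully faithful on squares, and compatible with pushouts (\cref{rem:idonunderhorcat}) — is used repeatedly. A transfinite $Q$ built from all of $\cJ$ is much harder to analyze, and the only reason to switch is to appeal to Kan's theorem, which the paper does not need.

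Third, and most important, the step you flag as "the main obstacle" — showing that pushouts of your weak-horizontal-invariance cells lie in $\cW$ — is where the real work is, and your expectation that it "reduces to the fact that on weakly horizontally invariant double categories the new weak equivalences agree with the double biequivalences" substantially understates what is needed. The paper's corresponding argument (\cref{thm:factinCcapWandF}) does \emph{not} directly verify $\Jwcell\subseteq\cW$; instead it factors the whi-replacement $F^\whi$ in the auxiliary $\cJ_w$-WFS, uses that the source $\bA^\whi$ is fibrant to apply \cref{prop:Jwcellwithwhisourcearedblbieq}, pulls back along $j_\bB$, and then runs a delicate argument showing the induced map out of the pullback is a weak equivalence. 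The key Lemma you would need along your route — that a $\cJ_w$-relative cell complex with arbitrary source has $(-)^\whi$-image a double biequivalence — is not proved in the paper before the model structure is in place (only afterward, as \cref{rem:JwcofinCcapW}, which is downstream of everything). If you want to pursue your approach you would need to prove something like this directly (for instance, by noting that $(-)^\whi$ preserves pushouts along $J_3$ since $\text{HorEq}$ is unchanged, and then invoking \cref{prop:Jwcellwithwhisourcearedblbieq}), and then separately address the fibration side; at that point your proof would be essentially a rearrangement of the paper's, but with a number of non-obvious claims filled in.
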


This new model structure on $\DblCat$ takes care of the issues posed above. Namely, it is compatible with the double $(\infty,1)$-categorical nerve construction of \cite{Lyne}, and it is moreover monoidal, as we prove in \cref{thm:DblCat2monoidal}.

\begin{theoremA}
The model structure on $\DblCat$ of \cref{theoremA} is monoidal with respect to B\"ohm's Gray tensor product.
\end{theoremA}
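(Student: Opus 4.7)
The plan is to verify the two defining axioms of a monoidal model category: the pushout-product axiom and the unit axiom. Since the model structure from \cref{theoremA} is cofibrantly generated, by standard arguments it is enough to check the pushout-product axiom on generating (trivial) cofibrations. B\"ohm's Gray tensor product $\Gray$ is a closed monoidal structure on $\DblCat$ with unit the terminal double category $\mathbbm{1}$, so it preserves colimits in each variable. Moreover, the unit $\mathbbm{1}$ is cofibrant (it is the codomain of the generating cofibration $\emptyset\to\mathbbm{1}$), so the unit axiom follows immediately.

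For the pushout-product axiom, I would proceed in two stages. Recall that the generating cofibrations are those inherited from \cite{MSV}, together with the new map $\mathbbm{1}\sqcup\mathbbm{1}\to \vtwo$. In the first stage, for two generating cofibrations $i$ and $i'$, one must show that the pushout-product $i\square i'$ with respect to $\Gray$ is a cofibration. For pushout-products among the MSV generating cofibrations, the computation is controlled by the known compatibility of $\Gray$ with the horizontal embedding $\bH$ and its action on free cells, so the result can be read off from the combinatorial description of $\Gray$ on small double categories. The genuinely new pushout-products involve $\mathbbm{1}\sqcup\mathbbm{1}\to \vtwo$; these are finite calculations in which one computes $C\Gray\vtwo$ for each domain or codomain $C$ of an MSV generating cofibration, and then exhibits the pushout-product as an iterated cellular attachment of generating cofibrations. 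In the second stage, one shows that if additionally $i$ or $i'$ is a generating trivial cofibration, then the pushout-product is a weak equivalence as well.

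The trivial half of the pushout-product axiom is the main obstacle. Because the weak equivalences of the new model structure are defined indirectly, as double functors inducing a double biequivalence between weakly horizontally invariant replacements, the property of being a weak equivalence cannot be read off the pushout-product directly; one must pass through fibrant replacements. A more uniform strategy is to dualize and instead verify the equivalent pullback-hom formulation against the internal hom of $\Gray$: for every cofibration $i$ and every fibration $p$, the induced map $\{i,p\}$ should be a fibration, and a trivial fibration whenever $i$ or $p$ is. The trivial fibrations admit the explicit description of \cref{theoremA} (surjective on objects, full on horizontal and vertical morphisms, fully faithful on squares), and are detected by lifting against generating cofibrations, which makes the reduction to a finite list of cellular checks manageable. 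The delicate point is controlling the behaviour of the internal hom on fibrations between weakly horizontally invariant double categories, and in particular checking that the new cofibration $\mathbbm{1}\sqcup\mathbbm{1}\to\vtwo$ interacts correctly with the lifting problem encoding horizontal invariance.
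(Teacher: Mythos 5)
Your treatment of the cofibration half is essentially sound, if somewhat vaguer than the paper's: the paper also reduces to generating cofibrations and checks each case, but rather than appealing to an explicit cellular decomposition of the Gray tensor of small double categories, it uses \cref{thm:charcofDblsecond} to detect cofibrations on underlying categories together with the observation that $U\bfH$ and $U\bfV$ of $\bA\otimes_\Gray\bX$ only see $U\bfH\bA,U\bfH\bX$ (resp.\ their vertical analogues), which makes the cases involving $I_4$ and $I_5$ immediate. The unit axiom is likewise straightforward, as you say.

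The genuine gap is in the trivial half. You correctly identify the obstacle---weak equivalences in this model structure are defined via weakly horizontally invariant replacements and cannot be read off the pushout-product---but the route you propose does not resolve it. Passing to the pullback-hom $\{i,p\}$ trades the problem of recognizing weak equivalences for the problem of recognizing fibrations, and fibrations in this model structure do not have an explicit description either: \cref{prop:F=Jinjbtwwhi} only identifies them with the $\cJ_w$-injective maps when the target is weakly horizontally invariant, and there is no a priori reason the internal hom $[\bX,\bA]_\Gray$ inherits weak horizontal invariance from $\bA$. Showing that it does is essentially equivalent to the statement you are trying to prove. The paper instead stays on the pushout-product side and introduces a key ingredient you do not anticipate: the projection $\Pi_{\bA,\bX}\colon\bA\otimes_\Gray\bX\to\bA\times\bX$ is a trivial fibration (\cref{lem:Piistrivfib}), so that $F\otimes_\Gray\id_\bX$ is a weak equivalence precisely when $F\times\id_\bX$ is. Combined with the fact that $j_\bA\times j_\bX\colon\bA\times\bX\to\bA^\whi\times\bX^\whi$ is a fibrant replacement (\cref{lem:fibreplproduct}) and that double biequivalences are preserved by $-\times\id$, this reduces the problem to a check between fibrant objects, where weak equivalences coincide with double biequivalences (\cref{prop:Wwithwhisourceisdblbieq}). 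The pushout argument in the paper's final step, exploiting that the domains of the generating cofibrations are cofibrant so that $\id_\bC\otimes_\Gray J$ is already a trivial cofibration, then finishes by $2$-out-of-$3$. Without some analogue of the projection lemma your approach stalls precisely at the point you flag as ``delicate.''
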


While the horizontal embedding $\bH\colon \TwoCat\to \DblCat$ remains a left Quillen and homotopically fully faithful functor between Lack's model structure and our new model structure, it is not right Quillen anymore. Indeed, the horizontal double category $\bH\A$ associated to a $2$-category $\A$ is typically not weakly horizontally invariant; see \cref{rem:hordblcatarenotwhi}. 

Instead, we consider a more homotopical version of~the horizontal embedding given by the functor ${\bHsim\colon \TwoCat\to \DblCat}$. It sends a $2$-category $\A$ to the double category $\bHsim\A$, whose underlying horizontal $2$-category is still $\A$, but whose vertical morphisms are given by the adjoint equivalences of $\A$. In particular, the inclusion $\bH\A\to \bHsim\A$ is a weak equivalence, as shown in \cref{rem:Hsimiswhi}, and therefore exhibits $\bHsim\A$ as a fibrant replacement of $\bH\A$ in the model structure for weakly horizontally invariant double categories. 

In \cref{thm:LsimHsimrefsecond}, we prove that $\bHsim$ is a right Quillen functor, and that the derived counit is level-wise a biequivalence in $\TwoCat$; therefore, $\bHsim$ embeds the homotopy theory of $2$-categories into that of weakly horizontally invariant double categories in a reflective way. Furthermore, we show in \cref{thm:2CatriHsim} that $\bHsim$ not only preserves, but also reflects weak equivalences and fibrations.

\begin{theoremA}
The adjunction 
\begin{tz}
\node[](A) {$\TwoCat$};
\node[right of=A,xshift=1cm](B) {$\DblCat$};
\draw[->] ($(B.west)+(0,.25cm)$) to [bend right=25] node[above,la]{$\Lsim$} ($(A.east)+(0,.25cm)$);
\draw[->] ($(A.east)-(0,.25cm)$) to [bend right=25] node[below,la]{$\bHsim$} ($(B.west)+(0,-.25cm)$);
\node[la] at ($(A.east)!0.5!(B.west)$) {$\bot$};
\end{tz}
is a Quillen pair between Lack's model structure on $\TwoCat$ and the model structure on $\DblCat$ of \cref{theoremA}. Moreover, the derived counit of this adjunction is level-wise a biequivalence, and Lack's model structure on $\TwoCat$ is right-induced along $\bHsim$ from the model structure on~$\DblCat$. 
\end{theoremA}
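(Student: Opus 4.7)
The plan is to verify the three assertions in sequence: that $\Lsim \dashv \bHsim$ is a Quillen pair, that its derived counit is a levelwise biequivalence, and that Lack's model structure is right-induced along $\bHsim$.

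First, I would show that $\bHsim$ is right Quillen by verifying that it preserves fibrations and trivial fibrations. For trivial fibrations, let $F\colon\A\to\B$ be a $2$-functor that is surjective on objects, full on morphisms, and fully faithful on $2$-morphisms. Then $\bHsim F$ automatically has the same properties on objects, horizontal morphisms, and squares, so the content is fullness on vertical morphisms: given an adjoint equivalence $v\colon FX\to FY$ in $\B$, one lifts $v$ and a chosen inverse to $1$-morphisms of $\A$ using fullness, and lifts the unit, counit, and triangle identities using fully faithfulness on $2$-morphisms, obtaining an adjoint equivalence in $\A$ sent to $v$ by $\bHsim F$. Preservation of fibrations I would handle by transposing lifting problems across the adjunction against a generating set of trivial cofibrations in $\DblCat$, identifying $\Lsim$ explicitly on those generators, and reducing to the known lifting properties defining fibrations in Lack's model structure.

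For the derived counit, observe that $\bHsim\A$ is fibrant for every $2$-category $\A$ by \cref{rem:Hsimiswhi}, so the derived counit at $\A$ is represented by the ordinary counit $\varepsilon_\A\colon \Lsim\bHsim\A\to\A$. Using the universal property of $\Lsim$ as the left adjoint to $\bHsim$, I would identify $\Lsim\bHsim\A$ as the $2$-category built from $\A$ by freely adjoining adjoint-equivalence data for every adjoint equivalence of $\A$, and argue that $\varepsilon_\A$ collapses the new formal data back onto its already-existing realization in $\A$. This makes $\varepsilon_\A$ the identity on objects, essentially surjective on morphisms, and fully faithful on $2$-morphisms, hence a biequivalence. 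The main obstacle is pinning down a sufficiently explicit description of $\Lsim\bHsim\A$ to carry out this biequivalence check; working purely through the universal property and the adjunction should suffice, but care is needed to match the coherence data of adjoint equivalences.

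Finally, for the right-induction statement, I would show that $\bHsim$ also reflects weak equivalences and fibrations. For weak equivalences, since $\bHsim\A$ and $\bHsim\B$ are fibrant, $\bHsim F$ is a weak equivalence in $\DblCat$ exactly when it is a double biequivalence. Restricting a double biequivalence to underlying horizontal $2$-categories yields a biequivalence, and the underlying horizontal $2$-functor of $\bHsim F$ is $F$ itself, so $F$ is a biequivalence. For fibrations, I would transpose the lifting problems defining fibrations in $\DblCat$ across the adjunction, identifying $\Lsim$ of the generating trivial cofibrations and reducing each to a lifting problem for $F$ against a generating trivial cofibration of Lack's model structure. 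Combined with the Quillen preservation property, this shows that $\bHsim$ creates the weak equivalences and fibrations of Lack's model structure, which is precisely the right-induction statement.
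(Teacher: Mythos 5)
Your treatments of trivial-fibration preservation and of reflecting weak equivalences are essentially the paper's arguments and are fine. However, there are two genuine gaps.

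First, for preserving (and reflecting) fibrations you propose to transpose lifting problems across the adjunction ``against a generating set of trivial cofibrations in $\DblCat$.'' No such set is available: the model structure of \cref{thm:secondMS} is not established to be cofibrantly generated, and the whole point of \cref{subsec:JwcofJwinj} is to work around this by introducing the auxiliary weak factorization system $(\Jwcof,\Jwinj)$. The paper's route is to observe that $\cF\subseteq\Jwinj$ always, and that the inclusion is an equality precisely when the target is weakly horizontally invariant (\cref{prop:F=Jinjbtwwhi}); since $\bHsim\cB$ is weakly horizontally invariant, it then suffices to check that $\bHsim F$ satisfies (df1)--(df3) of \cref{prop:fibinsecond}. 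You could run your transposition argument against the set $\cJ_w$ and invoke this equivalence, but as written your proposal needs that replacement.

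Second, and more seriously, your identification of the derived counit is wrong. You write that since $\bHsim\cA$ is fibrant, the derived counit at $\cA$ is the ordinary counit $\Lsim\bHsim\cA\to\cA$. Fibrancy of $\bHsim\cA$ is irrelevant here (indeed, every $2$-category is already fibrant, so $\mathbb{R}\bHsim=\bHsim$ for free). The derived counit $\mathbb L\Lsim(\mathbb R\bHsim\cA)\to\cA$ requires a cofibrant replacement $q_{\bHsim\cA}\colon(\bHsim\cA)^\cof\to\bHsim\cA$ and is the composite $\Lsim(\bHsim\cA)^\cof\to\Lsim\bHsim\cA\to\cA$. The double category $\bHsim\cA$ is typically not cofibrant --- by \cref{cor:charcofibrantsecond} that would require its underlying horizontal and vertical categories to be free, and the underlying horizontal category of $\bHsim\cA$ is simply the underlying category of $\cA$. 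So your reduction to the ordinary counit does not go through. The paper instead constructs an explicit cofibrant replacement of $\bHsim\cA$ (freely generating on the generating horizontal and vertical morphisms) and then checks directly that the composite is a trivial fibration in $\TwoCat$. You would need an analogous computation on $\Lsim$ applied to a cofibrant replacement, not merely on $\Lsim\bHsim\cA$.
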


We also show in \cref{thm:idQuillenrefl} that the identity functor from our new model structure on $\DblCat$ to the one of \cite{MSV} is right Quillen and homotopically fully faithful. This implies that, unsurprisingly, the homotopy theory of weakly horizontally invariant double categories is embedded into that of double categories. 

To summarize, we have a triangle of right Quillen and homotopically fully faithful functors
\begin{tz}
\node[](1) {$\TwoCat$};
\node[below right of =1,xshift=.5cm,yshift=-.2cm](2) {$\DblCat$};
\node[below left of=1,xshift=-.5cm,yshift=-.2cm](3) {$\DblCat_{\mathsf{whi}}$}; 
\draw[->] (1) to node[right,la,yshift=7pt]{$\bH$} (2);
\draw[->] (1) to node[left,la,yshift=7pt]{$\bHsim$} (3);
\draw[->] (3) to node[below,la]{$\id$} (2);
\coordinate(a) at ($(1)!0.25!(2)$);
\coordinate(b) at ($(3.east)+(.1cm,0)$);
\cell[la,left,xshift=-3pt][n][0.45]{a}{b}{$\simeq$};
\end{tz}
filled by a natural transformation which is level-wise a weak equivalence.

Finally, in a similar vein to Grandis' result \cite[Theorem 4.4.5]{Grandis}, we obtain a Whitehead Theorem characterizing the weak equivalences with fibrant source in our model structure as the double functors which admit a pseudo inverse up to horizontal pseudo natural equivalences; see \cref{thm:whitehead}. Indeed, the weakly horizontally invariant condition on a double category is a $2$-categorical analogue of the horizontally invariant condition of \cite[Theorem and Definition 4.1.7]{Grandis}.

\begin{theoremA}[Whitehead Theorem for double categories] \label{Whiteheadintro}
Let $\bA$ and $\bB$ be double categories such that $\bA$ is weakly horizontally invariant. Then a double functor $F\colon \bA\to \bB$ is a double biequivalence if and only if there is a pseudo double functor $G\colon \bB\to \bA$ together with horizontal pseudo natural equivalences $\id_\bA\simeq GF$ and $FG\simeq \id_\bB$.
\end{theoremA}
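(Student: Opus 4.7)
The backward direction is the easier half. Given a pseudo double functor $G\colon \bB\to\bA$ and horizontal pseudo natural equivalences $\eta\colon \id_\bA\simeq GF$ and $\epsilon\colon FG\simeq \id_\bB$, I would verify each condition for $F$ to be a double biequivalence by transporting data along $\eta$ and $\epsilon$. The components of $\epsilon$ directly witness essential surjectivity of $F$ on objects up to horizontal equivalence; the component squares of $\eta$ and $\epsilon$, combined with the pseudo functoriality of $G$, give essential fullness on horizontal and vertical morphisms; and fully faithfulness of $F$ on squares follows by using $G$ and the invertible components of $\eta,\epsilon$ to build an inverse (up to invertible squares) to the map on hom-sets of squares induced by $F$.

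The forward direction is where the weak horizontal invariance of $\bA$ plays its role, and where the real work lies. Starting from a double biequivalence $F$, I would construct $G$ as follows. On objects, essential surjectivity gives, for each $b\in \bB$, an object $Gb\in\bA$ together with a chosen horizontal equivalence $\epsilon_b\colon FGb\simeq b$. On horizontal morphisms $f\colon b\to b'$, essential fullness of $F$ on horizontal morphisms applied to the composite $\epsilon_{b'}^{-1}\circ f\circ \epsilon_b$ produces a horizontal morphism $Gf\colon Gb\to Gb'$ in $\bA$ with an invertible comparison square witnessing $F(Gf)\cong \epsilon_{b'}^{-1}\circ f\circ \epsilon_b$; these squares supply the pseudo functoriality data in the horizontal direction. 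On vertical morphisms $u\colon b\arrowdot b'$, essential fullness of $F$ on vertical morphisms first produces a vertical morphism $\tilde u\colon a\arrowdot a'$ in $\bA$ together with horizontal equivalences $Fa\simeq b$ and $Fa'\simeq b'$ fitting into an invertible square with $u$; composing with $\epsilon_b,\epsilon_{b'}$ and lifting through essential fullness on horizontal morphisms together with fully faithfulness on squares yields horizontal equivalences $a\simeq Gb$ and $a'\simeq Gb'$ inside $\bA$. \emph{At this point the weak horizontal invariance of $\bA$ is essential}: it lets us transport $\tilde u$ along these horizontal equivalences to produce a vertical morphism $Gu\colon Gb\arrowdot Gb'$ in $\bA$, equipped with an invertible square showing that $F(Gu)$ agrees with $u$ up to $\epsilon_b,\epsilon_{b'}$. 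On squares, fully faithfulness of $F$ on squares then delivers unique lifts.

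It remains to assemble this data into a pseudo double functor $G$ and two horizontal pseudo natural equivalences. The counit $\epsilon\colon FG\simeq \id_\bB$ is given on objects by the chosen $\epsilon_b$, and its square component on a vertical $u\colon b\arrowdot b'$ is precisely the comparison square obtained in the previous step. The unit $\eta\colon \id_\bA\simeq GF$ is built on objects by lifting $\epsilon_{Fa}^{-1}\colon Fa\simeq FGFa$ through essential fullness on horizontal morphisms and uniqueness via fully faithfulness on squares, and its square component on a vertical $u$ in $\bA$ is extracted from the unique lift produced by the same fully faithfulness. The coherence axioms for $G$ and for $\eta,\epsilon$ reduce to equalities of squares in $\bA$, which by fully faithfulness of $F$ on squares can be checked after applying $F$, where they follow from the construction. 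The main obstacle is the third step: without the weak horizontal invariance of $\bA$, the vertical morphism $\tilde u$ produced by essential fullness lives over the wrong endpoints and cannot be moved to a morphism with source $Gb$ and target $Gb'$, so the definition of $G$ on vertical morphisms would fail. Once this step is carried out, all remaining verifications are careful but routine diagram chases, run through $F$ via fully faithfulness on squares.
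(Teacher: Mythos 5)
Your proposal follows essentially the same route as the paper: the backward direction verifies (db1)--(db4) by transporting data along $\eta,\epsilon$ (with the paper additionally invoking a biadjoint-biequivalence promotion and an auxiliary modification $F\eta'\cong\epsilon_F$ to handle fully faithfulness on squares precisely), and the forward direction constructs $G$ and the counit simultaneously, using (db3) to lift vertical morphisms and then weak horizontal invariance of $\bA$ to transport the lifted vertical morphism to the correct endpoints $GB$ and $GB'$, with all coherences checked after applying $F$ via (db4). You have correctly identified the key step where weak horizontal invariance enters; the remaining verifications you defer as routine are exactly the diagram chases the paper carries out.
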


This result implies that the weak equivalences between fibrant objects in the model structure for weakly horizontally invariant double categories resemble the biequivalences between $2$-categories. 

\subsection*{Outline}

In \cref{sec:prelim}, we recall some notations and definitions of double category theory introduced in \cite{MSV}. We also introduce weakly horizontally invariant double categories and the homotopical horizontal embedding functor $\bHsim\colon \TwoCat\to \DblCat$. Then, in \cref{sec:MS}, we give the main features of the model structure on $\DblCat$. In particular, we describe the cofibrations, trivial fibrations, and weak equivalences. The proof of the existence of this model structure uses several technical results presented in \cref{subsec:JwcofJwinj} and is completed in \cref{subsec:prooftheorem}. After establishing the model structure, we study in \cref{sec:Quillenpairs} its relation with the model structure on $\DblCat$ of \cite{MSV} and with Lack's model structure on $\TwoCat$. In \cref{sec:monoidality}, we prove that it is monoidal with respect to the Gray tensor product for double categories. The last section, \cref{section:whitehead}, is devoted to the proof of the Whitehead theorem for double categories.

\subsection*{Acknowledgements} 

The authors would like to thank tslil clingman for sharing LaTeX commands which greatly simplify the drawing of diagrams. We would also like to thank Rune Haugseng, Viktoriya Ozornova and Jérôme Scherer for interesting discussions related to the subject of this paper, and Martina Rovelli, Hadrian Heine, and Yuki Maehara for useful answers to our questions. We also acknowledge several discussions with Alexander Campbell regarding ours and his work on the topic. 

In a first version of this paper, the authors had proposed a model structure with the same cofibrations and fibrant objects, but with double biequivalences as the class of weak equivalences. We are thankful for the careful reading of an anonymous referee, who alerted us to this mistake and provided an example showing that the proposed trivial cofibrations were not stable under pushout. Since then, the paper has gone through a thorough rewriting, and we are grateful to Jérôme Scherer and Denis-Charles Cisinski for helpful suggestions regarding the construction of our new class of weak equivalences.

During the realization of this work, the first-named author was supported by the Swiss National Science Foundation under the project P1ELP2\_188039 and the Max Planck Institute of Mathematics.

\section{Double categorical preliminaries} \label{sec:prelim}

We introduce in this section the concepts and notations that will be used throughout this paper. We denote by $\TwoCat$ the category of $2$-categories and $2$-functors, and by $\DblCat$ the category of double categories and double functors. Let us recall the following notation.

\begin{notation}
We write $\bH\colon \TwoCat\to \DblCat$ for the horizontal embedding, which sends a $2$-category $\cA$ to the double category $\bH\cA$ with the same objects as $\cA$, the morphisms of~$\cA$ as its horizontal morphisms, only trivial vertical morphisms, and the $2$-morphisms of~$\cA$ as its squares. This functor has a right adjoint $\bfH\colon \DblCat\to \TwoCat$ that sends a double category $\bA$ to its underlying horizontal $2$-category $\bfH\bA$ obtained by forgetting the vertical morphisms of $\bA$. Note that $\bfH\bH=\id_{\TwoCat}$. 

Similarly, there is a vertical embedding $\bV\colon \TwoCat\to \DblCat$ which also admits a right adjoint $\bfV\colon \DblCat\to\TwoCat$ extracting from a double category its underlying vertical $2$-category. 
\end{notation}

\subsection{Weak horizontal invertibility in a double category}

We recall the notions of weak horizontal invertibility for horizontal morphisms and squares introduced in \cite[\S 2]{MSV}. These notions were independently developed by Grandis and Par\'e in \cite[\S 2]{GraPar19}, where the weakly horizontally invertible squares are called \emph{equivalence cells}.

\begin{defn}\label{def:horeq}
A horizontal morphism $a\colon A\to B$ in a double category $\bA$ is a \textbf{horizontal (adjoint) equivalence} if it is an (adjoint) equivalence in the underlying horizontal $2$-category $\bfH \bA$. We write $a\colon A\xrightarrow{\simeq} B$.
\end{defn} 

For the next definition, we remind the reader that the category $\DblCat$ is cartesian closed, and we denote its internal hom double category by $[-,-]$. In particular, we consider the functor $\bfH[\vtwo,-]\colon \DblCat\to \TwoCat$, where $\vtwo$ is the free double category on a vertical morphism. See  \cite[Definition 2.13]{MSV} for an explicit description.

\begin{defn} \label{def:whisquare}
A square $\sq{\alpha}{a}{a'}{u}{w}$ in a double category $\bA$ is \textbf{weakly horizontally invertible} if it is an equivalence in the $2$-category $\bfH[\vtwo,\bA]$. In other words, if there is a square $\sq{\gamma}{c}{c'}{w}{u}$ in $\bA$ together with four vertically invertible squares $\eta$, $\eta'$, $\epsilon$, and $\epsilon'$ as in the following pasting equalities. 
\begin{tz}
\node[](1) {$A$}; 
\node[below of=1](2) {$A$}; 
\node[right of=2](3) {$C$}; 
\node[right of=3](4) {$A$}; 
\node[above of=4](5) {$A$}; 
\draw[d] (1) to (5);
\draw[d,pro] (1) to (2); 
\draw[d,pro] (5) to (4); 
\draw[->] (2) to node[above,la]{$a$} (3); 
\draw[->] (3) to node[above,la]{$c$} (4); 

\node[la] at ($(1)!0.5!(4)-(5pt,0)$) {$\eta$}; 
\node[la] at ($(1)!0.5!(4)+(5pt,0)$) {$\vcong$};

\node[below of=2](2') {$A'$}; 
\node[below of=3](3') {$C'$}; 
\node[below of=4](4') {$A'$}; 
\draw[->,pro] (2) to node[left,la]{$u$} (2');
\draw[->,pro] (3) to node[left,la]{$w$} (3');
\draw[->,pro] (4) to node[right,la]{$u$} (4');
\draw[->] (2') to node[below,la]{$a'$} (3'); 
\draw[->] (3') to node[below,la]{$c'$} (4'); 

\node[la] at ($(2)!0.5!(3')$) {$\alpha$}; 
\node[la] at ($(3)!0.5!(4')$) {$\gamma$}; 

\node[right of=5,xshift=.3cm](1') {$A$}; 
\node[la] at ($(4')!0.5!(1')$) {$=$}; 
\node[below of=1'](1) {$A'$}; 
\node[below of=1](2) {$A'$}; 
\node[right of=2](3) {$C'$}; 
\node[right of=3](4) {$A'$}; 
\node[above of=4](5) {$A'$}; 
\node[above of=5](5') {$A$};
\draw[d] (1) to (5);
\draw[d] (1') to (5');
\draw[d,pro] (1) to (2); 
\draw[d,pro] (5) to (4); 
\draw[->] (2) to node[below,la]{$a'$} (3); 
\draw[->] (3) to node[below,la]{$c'$} (4); 
\draw[->,pro] (1') to node[left,la]{$u$} (1);
\draw[->,pro] (5') to node[right,la]{$u$} (5);

\node[la] at ($(1')!0.5!(5)$) {$\id_u$}; 

\node[la] at ($(1)!0.5!(4)-(5pt,0)$) {$\eta'$}; 
\node[la] at ($(1)!0.5!(4)+(5pt,0)$) {$\vcong$};
\end{tz}
\begin{tz}
\node[](1) {$C$}; 
\node[above of=1](2) {$C$}; 
\node[right of=2](3) {$A$}; 
\node[right of=3](4) {$C$}; 
\node[below of=4](5) {$C$}; 
\node[below of=1](1') {$C'$}; 
\node[below of=5](5') {$C'$};
\draw[d] (1) to (5);
\draw[d] (1') to (5');
\draw[d,pro] (2) to (1); 
\draw[d,pro] (4) to (5); 
\draw[->,pro] (1) to node[left,la]{$w$} (1');
\draw[->,pro] (5) to node[right,la]{$w$} (5');
\draw[->] (2) to node[above,la]{$c$} (3); 
\draw[->] (3) to node[above,la]{$a$} (4); 

\node[la] at ($(2)!0.5!(4')$) {$\id_w$}; 

\node[la] at ($(1)!0.5!(4)-(5pt,0)$) {$\epsilon$}; 
\node[la] at ($(1)!0.5!(4)+(5pt,0)$) {$\vcong$};

\node[right of=4,xshift=.3cm](2') {$C$}; 
\node[la] at ($(5')!0.5!(2')$) {$=$}; 
\node[right of=2'](3') {$A$}; 
\node[right of=3'](4') {$C$};
\node[below of=2'](2) {$C'$};  
\node[below of=2](1) {$C'$}; 
\node[right of=2](3) {$A'$}; 
\node[right of=3](4) {$C'$}; 
\node[below of=4](5) {$C'$}; 
\draw[d] (1) to (5);
\draw[d,pro] (2) to (1); 
\draw[d,pro] (4) to (5); 
\draw[->,pro] (2') to node[left,la]{$w$} (2);
\draw[->,pro] (3') to node[left,la]{$u$} (3);
\draw[->,pro] (4') to node[right,la]{$w$} (4);
\draw[->] (2) to node[below,la]{$c'$} (3); 
\draw[->] (3) to node[below,la]{$a'$} (4);
\draw[->] (2') to node[above,la]{$c$} (3'); 
\draw[->] (3') to node[above,la]{$a$} (4'); 

\node[la] at ($(2')!0.5!(3)$) {$\gamma$}; 
\node[la] at ($(3')!0.5!(4)$) {$\alpha$}; 

\node[la] at ($(1)!0.5!(4)-(5pt,0)$) {$\epsilon'$}; 
\node[la] at ($(1)!0.5!(4)+(5pt,0)$) {$\vcong$};
\end{tz}
We call $\gamma$ a \textbf{weak inverse} of $\alpha$.
\end{defn}

\begin{rem}
In particular, the horizontal boundaries $a$ and $a'$ of a weakly horizontally invertible square $\alpha$ as above are horizontal equivalences witnessed by the data $(a,c,\eta,\epsilon)$ and $(a',c',\eta',\epsilon')$. We call them the \emph{horizontal equivalence data} of $\alpha$. Moreover, if $(a,c,\eta,\epsilon)$ and $(a',c',\eta',\epsilon')$ are both horizontal adjoint equivalences, we call them the \emph{horizontal adjoint equivalence data} of $\alpha$.
\end{rem}

\begin{rem}
Note that a horizontal equivalence can always be promoted to a horizontal \emph{adjoint} equivalence, since the corresponding result holds for 2-categories (see, for example, \cite[Lemma 2.1.11]{RiehlVerity}). Similarly, a weakly horizontally invertible square can always be promoted to one with horizontal \emph{adjoint} equivalence data.
\end{rem}

The next result ensures that the weak inverse of a weakly horizontally invertible square is unique with respect to fixed horizontal adjoint equivalences. 

\begin{lemme}[{\cite[Lemma A.1.1]{Lyne}}]\label{uniqueweakinverse}
 Given a weakly horizontally invertible square $\sq{\alpha}{a}{a'}{u}{w}$ and two horizontal adjoint equivalences $(a,c,\eta,\epsilon)$ and $(a',c',\eta',\epsilon')$ in a double category~$\bA$, there is a unique weak inverse $\sq{\gamma}{c}{c'}{w}{u}$ of $\alpha$ with respect to these horizontal adjoint equivalences. 
\end{lemme}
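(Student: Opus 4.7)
The plan is to adapt the classical uniqueness-of-mates calculation from $2$-category theory to the double-categorical setting. The essential input is that $(a,c,\eta,\epsilon)$ and $(a',c',\eta',\epsilon')$ are horizontal \emph{adjoint} equivalences, so that the triangle identities are available in $\bfH\bA$; without these the argument breaks down, which is why the hypothesis is formulated with adjoint rather than merely weak equivalences.

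Suppose $\gamma$ and $\tilde\gamma$ are two weak inverses of $\alpha$ with respect to the same fixed horizontal adjoint equivalence data. Since the accompanying squares $\eta,\eta',\epsilon,\epsilon'$ have trivial vertical boundaries, they are identified with horizontal $2$-morphisms in $\bfH\bA$, and the two pasting equalities of \cref{def:whisquare} express $\gamma$ as a mate of $\alpha$ with respect to the two horizontal adjunctions. The goal is to exhibit a canonical mate formula, namely a specific pasting built only from $\alpha$ and the squares $\eta,\eta',\epsilon,\epsilon'$ together with identities, and to show that any weak inverse must equal it. In our context, this mate formula is the vertical composite of three rows of squares: an $\eta$-row at the top horizontally whiskered by an identity square on $c'$, a middle row containing $\alpha$ whiskered horizontally by identity squares on $c$ and $c'$, and an $\epsilon'$-row at the bottom whiskered by an identity square on $c$, the whole pasting having the same boundary as $\gamma$.

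The proof proceeds by a direct pasting calculation. Starting from $\gamma$, I would attach along its horizontal boundaries the triangle-identity pastings $c\epsilon\cdot\eta c=\id_c$ and $\epsilon' a'\cdot a'\eta'=\id_{a'}$, enlarging the diagram without changing its value. I would then apply the two pasting equalities defining $\gamma$ to rewrite the subdiagram containing $\gamma$ in terms of $\alpha$ and the adjoint equivalence data, eliminating every occurrence of $\gamma$ itself. After a final interchange-law simplification, the resulting pasting is precisely the mate formula described above, which depends only on $\alpha$ and the fixed horizontal adjoint equivalence data. Applying the same sequence of rewrites to $\tilde\gamma$ produces the same formula, so $\gamma=\tilde\gamma$ and uniqueness follows.

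The principal obstacle I expect is the two-dimensional bookkeeping required to orchestrate the interchange moves, triangle-identity expansions, and applications of the defining pasting equalities in a coherent order; writing these out as labelled pasting diagrams would be essential for clarity. Conceptually, the argument is the double-categorical shadow of the $2$-categorical mate uniqueness in the horizontal direction, reflecting the fact that a horizontal ``right adjoint'' to $\alpha$ is forced once its unit and counit data are fixed.
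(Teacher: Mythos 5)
The paper does not give its own proof of this lemma; it cites it directly from \cite[Lemma A.1.1]{Lyne}. So I can only evaluate your sketch on its own merits, and I find two genuine gaps.

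First, the lemma asserts ``there is a unique weak inverse'' --- both existence and uniqueness. The hypothesis that $\alpha$ is weakly horizontally invertible only guarantees that a weak inverse exists with respect to \emph{some} witnessing equivalence data; you are then handed \emph{new, arbitrary} adjoint equivalence data $(a,c,\eta,\epsilon)$, $(a',c',\eta',\epsilon')$ and must produce a weak inverse compatible with \emph{that} data before uniqueness is even meaningful. Your argument opens with ``Suppose $\gamma$ and $\tilde\gamma$ are two weak inverses'' and never constructs one; equivalently, you propose a ``canonical mate formula'' and show any weak inverse equals it, but you never verify that the formula itself satisfies the two defining pasting equalities of \cref{def:whisquare}. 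That verification is precisely the existence half and is not automatic.

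Second, the mate formula you describe does not type-check in the double category. Whiskering the ``$\eta$-row'' by an identity square on $c'$ fails on horizontal boundaries: $\eta$ is a $2$-cell $\id_A\Rightarrow ca$ in $\bfH\bA$ supported at $A$, while $c'\colon C'\to A'$, so the two do not compose horizontally. More seriously, the proposed middle row ``$\alpha$ whiskered horizontally by identity squares on $c$ and $c'$'' cannot be formed: the horizontal identity squares $e_c$ and $e_{c'}$ have trivial vertical boundaries, whereas $\alpha$ has nontrivial vertical boundaries $u$ and $w$; the matching condition for horizontal composition of squares is violated. The correct mate expression must instead thread $\alpha$ together with identity squares of the form $\id_w$, $\id_u$ (which carry the needed vertical boundaries) and the vertically invertible squares $\eta,\eta',\epsilon,\epsilon'$, so that the outer vertical boundaries of the pasting come out to $w$ on the left and $u$ on the right --- the boundary $\gamma$ must have. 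The high-level plan of a mate calculation is the right one, but the pasting as stated would not assemble.
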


\subsection{Double biequivalences and weakly horizontally invariant double categories}

The weak equivalences of the desired model structure for double categories rely on \emph{double biequivalences}, which are the weak equivalences of the model structure on $\DblCat$ constructed in \cite{MSV}.

\begin{defn} \label{def:doublebieq}
Let $\bA$ and $\bB$ be double categories. A double functor $F\colon \bA\to \bB$ is a \textbf{double biequivalence} if 
\begin{enumerate}
    \item[(db1)] for every object $B\in \bB$, there is an object $A\in \bA$ together with a horizontal equivalence $B\xrightarrow{\simeq} FA$ in $\bB$,
    \item[(db2)] for every pair of objects $A,C\in \bA$ and every horizontal morphism $b\colon FA\to FC$ in~$\bB$, there is a horizontal morphism $a\colon A\to C$ together with a vertically invertible square in $\bB$ of the form
    \begin{tz}
    \node[](1) {$FA$}; 
    \node[right of=1](2) {$FC$}; 
    \node[below of=1](3) {$FA$}; 
    \node[below of=2](4) {$FC$};
    \draw[d,pro] (1) to (3);
    \draw[d,pro] (2) to (4);
    \draw[->] (1) to node[above,la]{$b$} (2);
    \draw[->] (3) to node[below,la]{$Fa$} (4);
    \node[la] at ($(1)!0.5!(4)$) {$\vcong$};
    \end{tz}
    \item[(db3)] for every vertical morphism $v\colon B\arrowdot B'$ in $\bB$, there is a vertical morphism $u\colon A\arrowdot A'$ in $\bA$ together with a weakly horizontally invertible square in $\bB$ of the form
    \begin{tz}
    \node[](1) {$B$}; 
    \node[right of=1](2) {$FA$}; 
    \node[below of=1](3) {$B'$}; 
    \node[below of=2](4) {$FA'$};
    \draw[->,pro] (1) to node[left,la]{$v$} (3);
    \draw[->,pro] (2) to node[right,la]{$Fu$} (4);
    \draw[->] (1) to node[above,la]{$\simeq$} (2);
    \draw[->] (3) to node[below,la]{$\simeq$} (4);
    \node[la] at ($(1)!0.5!(4)$) {$\simeq$};
    \end{tz}
    \item[(db4)] for every pair of horizontal morphisms $a\colon A\to C$ and $a'\colon A'\to C'$ in $\bA$, every pair of vertical morphisms $u\colon A\arrowdot A'$ and $w\colon C\arrowdot C'$ in $\bA$, and every square $\beta$ in $\bB$ as depicted below left, there is a unique square $\alpha$ in $\bA$ as depicted below right such that $\beta=F\alpha$. 
    \begin{tz}
    \node[](1) {$FA$}; 
    \node[right of=1](2) {$FC$}; 
    \node[below of=1](3) {$FA'$}; 
    \node[below of=2](4) {$FC'$};
    \draw[->,pro] (1) to node[left,la]{$Fu$} (3);
    \draw[->,pro] (2) to node[right,la]{$Fw$} (4);
    \draw[->] (1) to node[above,la]{$Fa$} (2);
    \draw[->] (3) to node[below,la]{$Fa'$} (4);
    \node[la] at ($(1)!0.5!(4)$) {$\beta$};
    
    \node[right of=2,xshift=1cm](1) {$A$}; 
    \node[right of=1](2) {$C$}; 
    \node[below of=1](3) {$A'$}; 
    \node[below of=2](4) {$C'$};
    \draw[->,pro] (1) to node[left,la]{$u$} (3);
    \draw[->,pro] (2) to node[right,la]{$w$} (4);
    \draw[->] (1) to node[above,la]{$a$} (2);
    \draw[->] (3) to node[below,la]{$a'$} (4);
    \node[la] at ($(1)!0.5!(4)$) {$\alpha$};
    \end{tz}
\end{enumerate}
\end{defn}

We also introduce the notion of \emph{weakly horizontally invariant} double categories, which will form the class of fibrant objects in our model structure. This is a $2$-categorical analogue of the notion of \emph{horizontally invariant} double categories, introduced by Grandis and Par\'e in \cite[\S 2.4]{GrandisPare} as the double categories whose vertical morphisms are transferable along horizontal isomorphisms. 

\begin{defn} \label{def:whidbl}
A double category $\bA$ is \textbf{weakly horizontally invariant} if, for every diagram in $\bA$ as depicted below left, where $a$ and $a'$ are horizontal equivalences, there is a vertical morphism $u\colon A\arrowdot A'$ together with a weakly horizontally invertible square in~$\bA$ as depicted below right.
\begin{tz}
\node[](A) {$A$};
    \node[right of=A](B) {$C$};
    \node[below of=A](A') {$A'$};
    \node[right of=A'](B') {$C'$};
    \draw[->] (A) to node[above,la] {$a$} node[below,la]{$\simeq$} (B);
    \draw[->] (A') to node[below,la] {$a'$} node[above,la] {$\simeq$} (B');
    \draw[->,pro] (B) to node[right,la]{$w$} (B');

\node[right of=B,xshift=1cm](A) {$A$};
    \node[right of=A](B) {$C$};
    \node[below of=A](A') {$A'$};
    \node[right of=A'](B') {$C'$};
    \draw[->] (A) to node[above,la] {$a$} node[below,la]{$\simeq$} (B);
    \draw[->] (A') to node[below,la] {$a'$} node[above,la] {$\simeq$} (B');
    \draw[->,pro] (A) to node[left,la]{$u$} (A');
    \draw[->,pro] (B) to node[right,la]{$w$} (B');

    \node[la] at ($(A)!0.5!(B')$) {$\simeq$};
\end{tz}
\end{defn}

The class of weakly horizontally invariant double categories contains many examples of interest.

\begin{ex}
One can easily check that the (flat) double category $\bR\mathrm{elSet}$ of relations of sets is weakly horizontally invariant. More relevantly, this class of double categories also contains the double categories of quintets $\bQ\cA$ and of adjunctions $\Adj\cA$ built from any $2$-category $\cA$. A precise description of these double categories can be found in \cite[\S 3.1]{Grandis}; in fact, the reader may check that all examples presented in that section are weakly horizontally invariant. 
\end{ex}

\subsection{The homotopical horizontal embedding}

As we will see in \cref{rem:hordblcatarenotwhi}, the horizontal double category $\bH\cA$ associated to a $2$-category $\cA$ is not always weakly horizontally invariant. Hence, since all $2$-categories are fibrant, the functor $\bH\colon \TwoCat\to \DblCat$ is not right Quillen with respect to the desired model structure for weakly horizontally invariant double categories. Instead, we need to consider a more homotopical version of the horizontal embedding $\bH$, which provides a levelwise fibrant replacement for $\bH$. 

\begin{defn} \label{def:htilde}
The \textbf{homotopical horizontal embedding} is defined as the functor $\bHsim\colon \TwoCat\to \DblCat$ that sends a $2$-category $\cA$ to the double category $\bHsim\cA$ having the same objects as $\cA$, the morphisms of $\cA$ as horizontal morphisms, one vertical morphism for each adjoint equivalence $(u,u^\sharp,\eta,\epsilon)$ in $\cA$, and squares
\begin{tz}
    \node[](A) {$A$};
    \node[right of=A](B) {$C$};
    \node[below of=A](A') {$A'$};
    \node[right of=A'](B') {$C'$};
    \draw[->] (A) to node[above,la] {$a$} (B);
    \draw[->] (A') to node[below,la] {$a'$} (B');
    \draw[->,pro] (A) to node[left,la]{$(u,u^\sharp,\eta,\epsilon)$} (A');
    \draw[->,pro] (B) to node[right,la]{$(w,{w}^\sharp,\eta',\epsilon')$} (B');
    
    \node[la] at ($(A)!0.5!(B')$) {$\alpha$};
\end{tz}
given by the $2$-morphisms $\alpha\colon wa\Rightarrow a'u$ in $\cA$. Although a vertical morphism always contains the whole data of an adjoint equivalence, we often denote it by its left adjoint $u$.
\end{defn}

\begin{rem}
Note that every vertical morphism in the double category $\bHsim\cA$ is a \emph{vertical equivalence}, i.e., an equivalence in the underlying vertical $2$-category. 
\end{rem}

The functor $\bHsim$ admits a left adjoint given by the following.

\begin{defn} \label{def:Lsim}
We define the functor $\Lsim\colon \DblCat\to \TwoCat$ which sends a double category $\bA$ to the $2$-category $\Lsim\bA$ whose 
\begin{rome}
\item objects are the objects of $\bA$, 
\item morphisms are generated by a morphism $a\colon A\to C$, for each horizontal morphisms $a\colon A\to C$ in $\bA$, and two morphisms $\overline{u}\colon A\to A'$ and $\overline{u}^\sharp\colon A'\to A$, for each vertical morphism $u\colon A\arrowdot A'$ in $\bA$,
\item $2$-morphisms are generated by $2$-isomorphisms $\eta_{\overline{u}}\colon \id_{A}\cong \overline{u}^\sharp\overline{u}$ and $\epsilon_{\overline{u}}\colon \overline{u}\,\overline{u}^\sharp\cong \id_{A'}$ satisfying the triangle identities, for each vertical morphism $u\colon A\arrowdot A'$ in $\bA$, and a $2$-morphism $\alpha\colon \overline{w}a\Rightarrow a'\overline{u}$, for each square $\sq{\alpha}{a}{a'}{u}{w}$ in $\bA$, 
\end{rome}
submitted to minimal relations making the inclusion $\bA\to \bHsim\Lsim\bA$ into a double functor. The functor $\Lsim$ sends a double functor $F\colon \bA\to\bB$ to the $2$-functor $\Lsim F\colon \Lsim\bA\to \Lsim\bB$ which acts as $F$ does on the corresponding data. In particular, it sends an adjoint equivalence $(\overline{u},\overline{u}^\sharp,\eta_{\overline{u}},\epsilon_{\overline{u}})$ in $\Lsim\bA$ associated to a vertical morphism $u$ in $\bA$ to the adjoint equivalence $(\overline{Fu},\overline{Fu}^\sharp,\eta_{\overline{Fu}},\epsilon_{\overline{Fu}})$ in $\Lsim\bB$ associated to the vertical morphism $Fu$ in $\bB$. 
\end{defn}

\begin{rem} 
The relations on the morphisms in $\Lsim\bA$ expressed by the fact that the inclusion $\bA\to \bHsim\Lsim\bA$ is a double functor can be interpreted as follows. The composite of two morphisms coming from horizontal morphisms in $\bA$ is given by their composite in~$\bA$, and the composite of two adjoint equivalences coming from vertical morphisms in~$\bA$ is given by the adjoint equivalence induced by their composite in $\bA$, while two morphisms with one coming from a horizontal morphism and one coming from a vertical morphism compose freely.
\end{rem}

\begin{prop} \label{prop:bHsimadj}
The functors $\Lsim$ and $\bHsim$ form an adjunction
\begin{tz}
\node[](A) {$\TwoCat$};
\node[right of=A,xshift=1cm](B) {$\DblCat$};
\node at ($(B.east)-(0,4pt)$) {.};
\draw[->] ($(B.west)+(0,.25cm)$) to [bend right=25] node[above,la]{$\Lsim$} ($(A.east)+(0,.25cm)$);
\draw[->] ($(A.east)-(0,.25cm)$) to [bend right=25] node[below,la]{$\bHsim$} ($(B.west)+(0,-.25cm)$);
\node[la] at ($(A.east)!0.5!(B.west)$) {$\bot$};
\end{tz}
\end{prop}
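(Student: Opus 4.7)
My plan is to produce an explicit unit $\eta_{\bA}\colon \bA \to \bHsim\Lsim\bA$ and establish the required universal property directly from the defining presentation of $\Lsim\bA$.

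Concretely, I would take $\eta_{\bA}$ to be the inclusion built into \cref{def:Lsim}: it is the identity on objects, sends each horizontal morphism $a$ of $\bA$ to its namesake generator in $\Lsim\bA$, each vertical morphism $u$ of $\bA$ to the adjoint equivalence $(\overline{u},\overline{u}^\sharp,\eta_{\overline{u}},\epsilon_{\overline{u}})$, and each square $\alpha$ to the corresponding generating $2$-morphism. That this defines a genuine double functor—and that it is natural in $\bA$—is forced by the very construction of $\Lsim$: the relations are imposed minimally so that precisely these assignments respect horizontal and vertical composition of morphisms, units, and horizontal/vertical composition of squares.

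For the universal property, given a $2$-category $\cX$ and a double functor $F\colon \bA \to \bHsim\cX$, I would define a $2$-functor $\widetilde{F}\colon \Lsim\bA \to \cX$ by sending each generator to its image under $F$ interpreted inside $\cX$: an object $A$ to $F(A)$, a horizontal generator $a$ to $F(a)$, the pair $(\overline{u},\overline{u}^\sharp)$ with units $(\eta_{\overline{u}},\epsilon_{\overline{u}})$ to the adjoint equivalence $(F(u),F(u)^\sharp,\eta_{F(u)},\epsilon_{F(u)})$ that $F(u)$ records in $\cX$, and a generating $2$-morphism $\alpha$ to $F(\alpha)$. The equation $\bHsim\widetilde{F}\circ\eta_{\bA} = F$ then holds tautologically, and uniqueness of $\widetilde{F}$ is immediate because any $2$-functor with this property is forced to act as above on the generating data of $\Lsim\bA$.

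The only real content, and the main obstacle, is checking that $\widetilde{F}$ is well-defined, i.e.\ that it respects the defining relations of $\Lsim\bA$. Since those relations are exactly the ones needed for $\eta_{\bA}$ to be a double functor, and since $F$ is itself a double functor into $\bHsim\cX$, each relation in $\Lsim\bA$ becomes, after applying $\widetilde{F}$, a $2$-categorical instance of a double-categorical equation that $F$ is known to satisfy inside $\bHsim\cX$. The bookkeeping consists of enumerating the axioms of the tentative double functor $\eta_{\bA}$ (preservation of horizontal and vertical composites of morphisms and squares, horizontal and vertical unit squares, interchange, and compatibility of vertical composites of morphisms with the canonical adjoint-equivalence structure in $\bHsim$), translating each into an explicit equation among generators of $\Lsim\bA$, and observing that its $\widetilde{F}$-image holds in $\cX$ because of the corresponding axiom for $F$. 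Naturality of the resulting bijection $\Hom_{\TwoCat}(\Lsim\bA,\cX)\cong \Hom_{\DblCat}(\bA,\bHsim\cX)$ in both variables is then an immediate consequence of the explicit formulas above.
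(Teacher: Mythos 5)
Your proposal is correct and takes essentially the same approach as the paper: both exploit the fact that the relations of $\Lsim\bA$ are imposed minimally so that $\bA\to \bHsim\Lsim\bA$ is a double functor, and hence any double functor $F\colon\bA\to\bHsim\cX$ induces a $2$-functor $\Lsim\bA\to\cX$ on generators. The only difference is presentational: you phrase the adjunction via the unit and its universal property, whereas the paper writes down the hom-set bijection $\DblCat(\bA,\bHsim\cC)\cong\TwoCat(\Lsim\bA,\cC)$ directly; these are equivalent packagings of the same argument, and your extra emphasis on where well-definedness comes from is a reasonable elaboration of a step the paper passes over quickly.
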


\begin{proof}
For every $2$-category $\cC$ and every double category $\bA$, there is an isomorphism \[ \DblCat(\bA,\bHsim\cC)\cong\TwoCat(\Lsim\bA,\cC)  \]
natural in $\cC$ and $\bA$. Indeed, every double functor $F\colon \bA\to \bHsim\cC$ induces a $2$-functor ${F^\flat\colon \Lsim\bA\to \cC}$ which acts as $F$ does on the corresponding data. In particular, it sends an adjoint equivalence $(\overline{u},\overline{u}^\sharp,\eta_{\overline{u}},\epsilon_{\overline{u}})$ in $\Lsim\bA$ associated to a vertical morphism $u$ in $\bA$ to the adjoint equivalence in $\cC$ corresponding to the vertical morphism $Fu$ in $\bHsim\cC$. Conversely, if $G\colon \Lsim\bA\to \cC$ is a $2$-functor, it induces a double functor $G^\sharp\colon \bA\to \bHsim\cC$ which acts as~$G$ does on the corresponding data. In particular, it sends a vertical morphism $u$ in $\bA$ to the vertical morphism in $\bHsim\cC$ corresponding to the adjoint equivalence $(G\overline{u}, G\overline{u}^\sharp,G\eta_{\overline{u}},G\epsilon_{\overline{u}})$ in $\cC$.
\end{proof}

\begin{rem}
The functor $\bHsim$ is not a left adjoint since it does not preserve colimits. To see this, consider the following span of $2$-categories $\cB\leftarrow\cA\to\cC$. We set $\cA$ to be the $2$-category with two objects $0$ and $1$, and freely generated by two morphisms $f\colon 0\to 1$ and $g\colon 1\to 0$ and two $2$-morphisms $\eta\colon \id_0\Rightarrow gf$ and $\epsilon\colon fg\Rightarrow\id_1$. Then let $\cB$ be the category obtained from $\cA$ by inverting the $2$-morphism $\eta$, and $\cC$ be the category obtained from $\cA$ by inverting the $2$-morphism $\epsilon$. The pushout $\cB\sqcup_\cA\cC$ contains an equivalence $(f,g,\eta,\epsilon)$ and hence the double category $\bHsim(\cB\sqcup_\cA\cC)$ contains a vertical morphism induced by the corresponding adjoint equivalence. However, the double categories $\bHsim\cA$, $\bHsim\cB$, and $\bHsim\cC$ do not have non-trivial vertical morphisms, since there are no equivalences in~$\cA$,~$\cB$, or~$\cC$. This shows that $\bHsim$ does not preserve pushouts. 
\end{rem}

\section{The model structure} \label{sec:MS}

Just as there are nerve constructions embedding categories into $(\infty,1)$-categories, and 2-categories into $(\infty,2)$-categories, in \cite{Lyne} the first-named author constructs a double categorical nerve, embedding double categories into double $(\infty, 1)$-categories. As the latter admit a natural model structure when considered as double Segal spaces, we expect to have a model structure on $\DblCat$ making this nerve into a right Quillen functor. 

Since the double categories whose nerve is fibrant are precisely the weakly horizontally invariant ones, this suggests that such a model structure should have the weakly horizontally invariant double categories as its class of fibrant objects. Moreover, since the cofibrations of the model structure for double $(\infty,1)$-categories are the monomorphisms, the inclusion $\mathbbm 1\sqcup \mathbbm 1\to \vtwo$ should be added to the class of cofibrations of the model structure on $\DblCat$ of \cite{MSV}; this allows us to characterize the trivial fibrations as the double functors which are surjective on objects, full on horizontal and vertical morphisms, and fully faithful on squares. 
A first attempt to keep the double biequivalences (which were shown by the authors to be the class of weak equivalences in a model structure on $\DblCat$ in \cite{MSV}) as the weak equivalences of this new model structure proves unsuccessful. Indeed, the resulting class of trivial cofibrations would not be closed under pushouts. Instead, we identify the weak equivalences as the double functors which induce a double biequivalence between weakly horizontally invariant replacements.
 
Since many technical results, presented in \cref{subsec:JwcofJwinj}, are needed to prove the existence of such a model structure, its proof is delayed to \cref{subsec:prooftheorem}.

\subsection{Trivial fibrations, cofibrations, and cofibrant objects}

Using the description of the trivial fibrations mentioned above, we first identify a set of generating cofibrations that yields this class of trivial fibrations. For this, let us recall the following terminology. 

\begin{notation}
Let $\mathcal{I}$ be a class of morphisms in a cocomplete category $\cC$. Then a morphism in $\cC$ is
\begin{rome}
   \item \textbf{$\cI$-injective} if it has the right lifting property with respect to every morphism in~$\cI$. The class of all such morphisms is denoted $\cI\mathrm{-inj}$.
    \item an \textbf{$\cI$-cofibration} if it has the left lifting property with respect to every $\cI$-injective morphism. The class of all such morphisms is denoted $\cI\mathrm{-cof}$.
    \item a \textbf{relative $\cI$-cell complex} if it is a transfinite composition of pushouts of morphisms in $\cI$. The class of all such morphisms is denoted $\cI\mathrm{-cell}$.
\end{rome}
\end{notation}

\begin{rem}
Recall that every $\cI$-cofibration can be obtained as a retract of a relative $\cI$-cell complex with the same source, and that in a locally presentable category $\cC$, the pair $(\cI\mathrm{-cof},\cI\mathrm{-inj})$ forms a weak factorization system, for any set of morphisms $\cI$ in $\cC$.
\end{rem} 

We now identify a set $\cI_w$ of double functors such that the $\cI_w$-injective morphisms are precisely the trivial fibrations we seek. 

\begin{notation} \label{not:gencofDblCat2}
We denote by $\mathbbm 1$ the terminal double category, by $\mathbbm 2$ the free ($2$-)category on a morphism, by $\bS=\bH \mathbbm{2}\times \bV \mathbbm{2}$ the free double category on a square, by $\delta \bS$ its boundary, and by $\bS_2$ the free double category on two squares with the same boundary. 

Let $\cI_w$ denote the set containing the following double functors: 
\begin{rome}
    \item the unique map $I_1\colon \emptyset\to \mathbbm{1}$,
    \item the inclusion $I_2\colon \mathbbm{1}\sqcup \mathbbm{1}\to \bH \mathbbm{2}$,
    \item the inclusion $I_3\colon \mathbbm{1}\sqcup \mathbbm{1}\to \bV \mathbbm{2}$,
    \item the inclusion $I_4\colon\delta \bS\to \bS$,
    \item the double functor $I_5\colon \bS_2\to \bS$ sending the two non trivial squares in $\bS_2$ to the non trivial square of $\bS$.
\end{rome}
\end{notation}

\begin{prop} \label{prop:trivfibinsecond}
A double functor $F\colon \bA\to \bB$ is in $\Iwinj$ if and only if it is surjective on objects, full on horizontal and vertical morphisms, and fully faithful on squares. 
\end{prop}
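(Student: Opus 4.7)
The plan is to verify the characterization generator by generator: a double functor $F\colon \bA \to \bB$ is in $\Iwinj$ if and only if it has the right lifting property against each of $I_1, \dots, I_5$ individually, so I would translate each of these five lifting properties into the corresponding conditions on $F$.

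First I would handle $I_1\colon \emptyset \to \mathbbm{1}$. A commutative square with $I_1$ on the left and $F$ on the right is just a choice of object in $\bB$, and a lift is a preimage in $\bA$; hence RLP against $I_1$ is exactly surjectivity on objects. Next, for $I_2\colon \mathbbm{1} \sqcup \mathbbm{1} \to \bH\mathbbm{2}$, the top and bottom maps of a lifting square correspond respectively to a pair of objects $A, C \in \bA$ and to a horizontal morphism $b\colon FA \to FC$ in $\bB$; a lift is a horizontal morphism $a\colon A \to C$ with $Fa = b$. Thus RLP against $I_2$ is fullness on horizontal morphisms. The case of $I_3$ is entirely analogous and gives fullness on vertical morphisms.

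For $I_4\colon \delta\bS \to \bS$, the data of a lifting square is a square boundary in $\bA$ (four objects, two horizontal and two vertical morphisms) and a square $\beta$ in $\bB$ whose boundary is the image under $F$ of this data; a lift is then a square $\alpha$ in $\bA$ with the prescribed boundary and $F\alpha = \beta$. This is precisely fullness on squares. Finally, for $I_5\colon \bS_2 \to \bS$, the top map picks two squares $\alpha, \alpha'$ in $\bA$ with the same boundary, and the commutativity of the square demands $F\alpha = F\alpha'$; a lift exists iff already $\alpha = \alpha'$, which is precisely faithfulness on squares.

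Combining the last two gives fully faithfulness on squares, and the conjunction of all five conditions matches the stated description. There is no real obstacle here: everything reduces to inspecting the universal properties of $\mathbbm{1}$, $\bH\mathbbm{2}$, $\bV\mathbbm{2}$, $\delta\bS$, $\bS$, and $\bS_2$ as corepresenting objects/morphisms/squares in $\DblCat$, the only minor care point being to check that $\delta\bS$ does corepresent exactly a square boundary (so that a map out of it is genuinely four objects plus two horizontal and two vertical morphisms with matching sources and targets) and that $\bS_2$ corepresents a pair of parallel squares. Both are immediate from the explicit descriptions in \cref{not:gencofDblCat2}.
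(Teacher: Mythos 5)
Your proposal is correct and matches the paper's approach exactly: the paper's proof of \cref{prop:trivfibinsecond} is simply the one-line statement that the result follows from inspecting the lifting properties against the five generators, and you have carried out precisely that inspection, generator by generator, with the correct translations.
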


\begin{proof}
This is obtained directly from a close inspection of the right lifting properties with respect to the double functors in $\cI_w$.
\end{proof}

\begin{rem} \label{rem:trivfibaredblbieq}
It is straightforward to check that any double functor in $\Iwinj$ is a double biequivalence; see \cref{def:doublebieq}.
\end{rem}

The class of $\cI_w$-cofibrations admits a nice characterization in terms of their underlying horizontal and vertical functors. We denote by $U\colon \TwoCat\to \Cat$ the functor sending a $2$-category to its underlying category, where $\Cat$ is the category of categories and functors.

\begin{theorem}\label{thm:charcofDblsecond}
A double functor $F\colon \bA\to \bB$ is in $\Iwcof$ if and only if its underlying horizontal and vertical functors $U\bfH F$ and $U\bfV F$ have the left lifting property with respect to surjective on objects and full functors. 
\end{theorem}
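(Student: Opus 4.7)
The plan is to prove the two implications separately, with the forward direction reducing to a formal adjunction argument and the backward direction requiring a careful stepwise construction of a diagonal lift. For the forward direction, the key observation is that the functor $U\bfH\colon \DblCat\to \Cat$ admits a right adjoint $R^h\colon \Cat\to \DblCat$, which sends a category $\cC$ to the co-free double category having the same objects and horizontal morphisms as $\cC$, a unique vertical morphism for each pair of objects, and a unique square for each admissible boundary. From this unique-vertical-and-square description, one checks directly via \cref{prop:trivfibinsecond} that $R^h P \in \Iwinj$ whenever $P\colon \cC\to \cD$ is surjective on objects and full. Thus, if $F\in \Iwcof$, any lifting problem for $U\bfH F$ against such a $P$ transposes along the adjunction $U\bfH\dashv R^h$ to a lifting problem for $F$ against $R^h P$, which admits a solution, and transposing back yields the required diagonal. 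The argument for $U\bfV F$ is entirely symmetric, via the right adjoint $R^v$ of $U\bfV$.

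For the backward direction, let $P\colon \bX\to \bY$ be in $\Iwinj$, and consider a lifting problem against $F$ with top arrow $G\colon \bA\to \bX$ and bottom arrow $H\colon \bB\to \bY$. The plan is to construct a lift $L\colon \bB\to \bX$ in three successive stages. First, I would apply the hypothesis that $U\bfH F$ has the left lifting property against the surjective on objects and full functor $U\bfH P$ to obtain a functor $L^h\colon U\bfH\bB\to U\bfH\bX$ solving the horizontal lifting problem; this defines $L$ on objects and horizontal morphisms, and in particular fixes an object map $L^h_0\colon \Ob\bB\to \Ob\bX$. The hard part will be the second stage: constructing a lift on vertical morphisms that is compatible with the already fixed $L^h_0$, since a naive application of the LLP of $U\bfV F$ against $U\bfV P$ might well yield a vertical lift with a different object component on those objects of $\bB$ not in the image of $F_0$.

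To resolve this obstacle, I would replace $U\bfV P$ by an auxiliary functor $\widetilde{P}\colon \widetilde{\bX}\to \widetilde{\bY}$, where $\widetilde{\bX}$ and $\widetilde{\bY}$ are the categories with common object set $\Ob\bB$ and hom-sets $\widetilde{\bX}(B, B'):=U\bfV\bX(L^h_0 B, L^h_0 B')$ and $\widetilde{\bY}(B, B'):=U\bfV\bY(H_0 B, H_0 B')$, with $H_0 = P_0 L^h_0$. The functor $\widetilde{P}$ is the identity on objects (hence surjective) and inherits fullness from $U\bfV P$, and the vertical part of the original lifting problem factors through it. Any lift $\widetilde{L}^v\colon U\bfV\bB\to \widetilde{\bX}$ obtained from the assumed LLP of $U\bfV F$ against $\widetilde{P}$ is then forced by commutativity to be the identity on objects, and composing with the natural projection $\widetilde{\bX}\to U\bfV\bX$ yields the desired $L^v\colon U\bfV\bB\to U\bfV\bX$ whose object part equals $L^h_0$. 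For the third and final stage, since $P$ is fully faithful on squares, each square $\beta$ in $\bB$ admits a unique square $L\beta$ in $\bX$ with boundary obtained by applying $L$ to the boundary of $\beta$ and mapping to $H\beta$ under $P$; the functoriality of $L$ on squares, together with the required identities $LF=G$ and $PL=H$ on squares, then follow automatically from this uniqueness.
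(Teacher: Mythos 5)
Your proof is correct, and handles the one genuine subtlety of the statement carefully. Since the paper itself only cites \cite[Proposition 4.7]{MSV} ``with the evident modifications for the vertical direction'' without spelling out the argument, it is worth confirming that your steps hold up.

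For the forward direction, the transposition argument is clean: $U\bfH$ does have the described right adjoint $R^h$ (the double category on a category $\cC$ with the objects and horizontal morphisms of $\cC$, a unique vertical morphism between each pair of objects, and a unique square for each boundary), and it is immediate from \cref{prop:trivfibinsecond} that $R^hP\in\Iwinj$ when $P$ is surjective on objects and full; the symmetric argument for $U\bfV$ works with the analogously defined $R^v$.

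For the backward direction, you correctly identify that a naive application of the LLP of $U\bfV F$ against $U\bfV P$ does not control the object component off the image of $F$, and your replacement functor $\widetilde P$ fixes this. To spell out the verification: $\widetilde P$ is identity on objects and full; the commutative square for $U\bfV F$ against $U\bfV P$ factors through $\widetilde P$ using $L^h_0 F_0 = G_0$ (which holds because $L^h$ is a lift); the equality $\widetilde P\widetilde{L}^v=\widetilde H$ with both $\widetilde P$ and $\widetilde H$ identity on objects forces $\widetilde{L}^v$ to be identity on objects; and then $L^v:=\pi_\bX\widetilde{L}^v$ has object component $L^h_0$ and solves the original vertical lifting problem since $U\bfV P\circ\pi_\bX=\pi_\bY\circ\widetilde P$. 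The square stage is also fine: the boundary data match up because $U\bfH P L^h = U\bfH H$ and $U\bfV P L^v = U\bfV H$, so full faithfulness of $P$ on squares produces $L\beta$ uniquely, and the same uniqueness yields compatibility with horizontal and vertical composition, identity squares, and the equations $LF=G$, $PL=H$.

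One small cosmetic point for the write-up: the phrase ``for each admissible boundary'' in the definition of $R^h\cC$ should say explicitly that a boundary is a compatible quadruple (two horizontal morphisms and two vertical morphisms with matching corners), and that every such quadruple admits exactly one square; this makes full faithfulness of $R^hP$ on squares transparent.
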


\begin{proof}
The proof works as in \cite[Proposition 4.7]{MSV}, with the evident modifications for the vertical direction.
\end{proof}

\begin{rem} \label{rem:injectivityofIw}
An equivalent characterization of functors which have the left lifting property with respect to surjective on objects and full functors can be found in \cite[Corollary 4.12]{Lack2Cat}. In particular, we can see that a double functor in $\Iwcof$ is injective on objects, and faithful on horizontal and vertical morphisms.
\end{rem}

Using the characterization mentioned in \cref{rem:injectivityofIw}, we can see that the cofibrant objects in the desired model structure are precisely the double categories whose underlying horizontal and vertical categories are free.

\begin{cor} \label{cor:charcofibrantsecond}
A double category $\bA$ is such that the unique map $\emptyset\to\bA$ is in $\Iwcof$ if and only if its underlying horizontal and vertical categories $U\bfH\bA$ and $U\bfV\bA$ are free.
\end{cor}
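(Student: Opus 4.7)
The plan is to apply \cref{thm:charcofDblsecond} directly to the unique map $\emptyset \to \bA$, so that the statement reduces to a question purely in $\Cat$: namely, for a category $\cC$, when does $\emptyset \to \cC$ have the left lifting property with respect to the class of surjective-on-objects and full functors? I would then invoke the characterization alluded to in \cref{rem:injectivityofIw} (via Lack's \cite[Corollary 4.12]{Lack2Cat}), which identifies this class of cofibrant-from-empty categories with the free categories. Combining this with \cref{thm:charcofDblsecond} applied to $U\bfH F$ and $U\bfV F$ where $F\colon \emptyset \to \bA$ immediately yields the equivalence.

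More concretely, for the forward direction, I would argue that if $\emptyset \to \bA$ is in $\Iwcof$, then by \cref{thm:charcofDblsecond} each of $\emptyset \to U\bfH\bA$ and $\emptyset \to U\bfV\bA$ has the left lifting property with respect to surjective-on-objects and full functors in $\Cat$. Applying this lifting property to the canonical counit $\mathrm{Free}(\mathrm{Und}\,\cC) \to \cC$ (from the free category on the underlying graph of $\cC$), which is bijective on objects and surjective on morphisms, produces a section and exhibits $\cC$ as a retract of a free category; Lack's characterization then identifies such retracts as themselves free.

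For the converse, if $U\bfH\bA$ and $U\bfV\bA$ are free categories on some graphs $G_h$ and $G_v$, I would construct $\emptyset \to U\bfH\bA$ and $\emptyset \to U\bfV\bA$ as relative cell complexes built from pushouts of the generating cofibrations of categories (namely $\emptyset \to \mathbbm{1}$ to add objects and $\mathbbm{1}\sqcup \mathbbm{1}\to \mathbbm{2}$ to add generating arrows). These are precisely the kind of maps with the required left lifting property, so \cref{thm:charcofDblsecond} applies and $\emptyset \to \bA$ lies in $\Iwcof$.

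The entire argument is essentially a transcription of \cref{thm:charcofDblsecond} into the special case of the initial map, with all the work having already been done in that theorem and in Lack's characterization of cofibrant $2$-categories. I do not anticipate any serious obstacle; the only subtlety worth flagging is that the characterization of categories $\cC$ for which $\emptyset \to \cC$ is a cofibration (in the weak factorization system generated by $\{I_1, I_2\}$ restricted to $\Cat$) as free categories is classical and is exactly the content of the citation to \cite[Corollary 4.12]{Lack2Cat}.
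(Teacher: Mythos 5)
Your proposal is correct and follows essentially the same route as the paper: the paper's one-line proof delegates to \cite[Proposition 4.11]{MSV}, whose argument is precisely to specialize the characterization of cofibrations (here \cref{thm:charcofDblsecond}) to the initial map and invoke the classical fact (via Lack's \cite[Corollary 4.12]{Lack2Cat} and the retract argument) that a category $\cC$ has $\emptyset\to\cC$ in the left class of the weak factorization system generated by $\{\emptyset\to\mathbbm{1},\,\mathbbm{1}\sqcup\mathbbm{1}\to\mathbbm{2}\}$ in $\Cat$ if and only if $\cC$ is free. Your reduction, the retract against the counit $\mathrm{Free}(\mathrm{Und}\,\cC)\to\cC$, and the cell-complex construction for the converse are exactly the standard ingredients used there.
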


\begin{proof}
The proof works as in \cite[Proposition 4.11]{MSV}, with the evident modifications for the vertical direction.
\end{proof}

\subsection{Weakly horizontally invariant replacements and weak equivalences} 

Our next goal is to introduce the class of weak equivalences; these will be the double functors that induce a double biequivalence between weakly horizontally invariant replacements. To construct a weakly horizontally invariant double category from a double category $\bA$, we attach $\bHsim\Eadj$-data freely to every horizontal adjoint equivalence in $\bA$, where the $2$-category $\Eadj$ is the free-living adjoint equivalence $\{0\xrightarrow{\simeq} 1\}$. Since this will be a key notion throughout the paper, let us first describe the double category $\bHsim\Eadj$.

\begin{descr} \label{descr:J4} 
The double category $\bHsim\Eadj$ is generated by the data of a horizontal adjoint equivalence $(f,g,\eta,\epsilon)$, and by vertical morphisms $u$, $v$ and weakly horizontally invertible squares $\alpha$, $\gamma$ as depicted below,
\begin{tz}
    \node[](1) {$0$}; 
    \node[right of=1](2) {$1$}; 
    \node[below of=1](3) {$1$}; 
    \node[below of=2](4) {$1$};
    \draw[->,pro] (1) to node[left,la]{$u$} (3);
    \draw[d,pro] (2) to (4);
    \draw[->] (1) to node[above,la]{$f$} node[below,la]{$\simeq$} (2);
    \draw[d] (3) to (4);
    \node[la] at ($(1)!0.5!(4)-(5pt,0)$) {$\alpha$};
    \node[la] at ($(1)!0.5!(4)+(5pt,0)$) {$\simeq$};
    
    \node[right of=2,xshift=1cm](1) {$1$}; 
    \node[right of=1](2) {$0$}; 
    \node[below of=1](3) {$0$}; 
    \node[below of=2](4) {$0$};
    \draw[->,pro] (1) to node[left,la]{$v$} (3);
    \draw[d,pro] (2) to (4);
    \draw[->] (1) to node[above,la]{$g$} node[below,la]{$\simeq$} (2);
    \draw[d] (3) to (4);
    \node[la] at ($(1)!0.5!(4)-(5pt,0)$) {$\gamma$};
    \node[la] at ($(1)!0.5!(4)+(5pt,0)$) {$\simeq$};
\end{tz}
where $u$ and $v$ are induced by the adjoint equivalences $(f,g,\eta,\epsilon)$ and $(g,f,\epsilon^{-1},\eta^{-1})$, respectively, and the squares $\alpha$ and $\gamma$ are induced by the identity $2$-morphisms at $f$ and $g$, respectively.

In particular, one can show that the pairs $(f,u)$ and $(g,v)$ are \emph{orthogonal companion pairs} as defined for example in \cite[\S 4.1.1]{Grandis}, and the pairs $(g,u)$ and $(f,v)$ are \emph{orthogonal adjoint pairs} as defined in \cite[\S 4.1.2]{Grandis}. Furthermore, the vertical morphisms $(u,v)$ form a vertical adjoint equivalence, i.e., an adjoint equivalence in the underlying vertical $2$-category $\bfV \bHsim\Eadj$. 
\end{descr}

\begin{notation} \label{not:J4}
There is an inclusion $J_4\colon \bH\Eadj\to \bHsim\Eadj$ which sends the horizontal adjoint equivalence in $\bH\Eadj$ to the horizontal adjoint equivalence $(f,g,\eta,\epsilon)$ in $\bHsim\Eadj$.
\end{notation}

\begin{rem} \label{rem:uniqueHsimEadjdata}
By uniqueness of weak inverses with respect to fixed horizontal adjoint equivalence data of \cref{uniqueweakinverse}, we can see that a double functor $G\colon \bHsim\Eadj\to \bA$ is completely determined by its value on the horizontal adjoint equivalence $(f,g,\eta,\epsilon)$ and the squares $\alpha$, $\gamma$ in $\bHsim\Eadj$.
\end{rem}

We are now ready to construct a functorial weakly horizontally invariant replacement $(-)^\whi\colon\DblCat\to\DblCat^{\mathbbm{2}}$.

\begin{constr} \label{def:whireplacement}
Let $\bA$ be a double category and let $\mathrm{HorEq}(\bA)$ denote the set of all horizontal adjoint equivalence data in $\bA$. Each horizontal adjoint equivalence $(a,c,\eta,\epsilon)$ in $\bA$ defines a double functor $\bH\Eadj\to \bA$, and we define $\bA^\whi$ as the pushout below left.
\begin{tz}
\node[](1) {$\bigsqcup_{\mathrm{HorEq}(\bA)} \bH\Eadj$}; 
\node[right of=1,xshift=2cm](2) {$\bA$};
\node[below of=1](3) {$\bigsqcup_{\mathrm{HorEq}(\bA)} \bHsim\Eadj$};
\node[below of=2](4) {$\bA^\whi$}; 
\draw[->] (1) to (2);
\draw[->] (1) to node[left,la]{$\bigsqcup_{\mathrm{HorEq}(\bA)} J_4$} (3); 
\draw[->] (2) to node[right,la]{$j_\bA$} (4); 
\draw[->] (3) to (4);

\node at ($(4)-(8pt,-8pt)$) {$\ulcorner$};

\node[right of=2,xshift=2cm](1) {$\bA$}; 
\node[right of=1,xshift=.5cm](2) {$\bB$};
\node[below of=1](3) {$\bA^\whi$};
\node[below of=2](4) {$\bB^\whi$}; 
\draw[->] (1) to node[above,la]{$F$} (2);
\draw[->] (1) to node[left,la]{$j_\bA$} (3); 
\draw[->] (2) to node[right,la]{$j_\bB$} (4); 
\draw[->] (3) to node[below,la]{$F^\whi$} (4);
\end{tz}
This extends naturally to a functor $(-)^\whi\colon \DblCat\to \DblCat^{\mathbbm{2}}$. In particular, it sends a double category $\bA$ to the double functor $j_\bA\colon \bA\to \bA^\whi$ and a double functor $F\colon \bA\to \bB$ to a commutative square in $\DblCat$ as depicted above right.
\end{constr}

\begin{rem} \label{rem:idonunderhorcat}
The double functor $j_\bA\colon \bA\to \bA^\whi$ is the identity on underlying horizontal categories and it is fully faithful on squares for every double category $\bA$, since it is a pushout of coproducts of the double functor $J_4\colon \bH\Eadj\to \bHsim\Eadj$. Hence a double functor $F\colon \bA\to \bB$ coincides with $F^\whi\colon \bA^\whi\to \bB^\whi$ on underlying horizontal categories.
\end{rem}

\begin{rem}
The construction $j_\bA\colon \bA\to \bA^\whi$ adds $\bHsim\Eadj$-data in $\bA^\whi$ to each horizontal adjoint equivalence $(a,c,\eta,\epsilon)$ in $\bA$, as detailed in \cref{descr:J4}. In particular,  we can see that two vertical morphisms $u$ and $v$ were freely added in $\bA^\whi$ for each equivalence $(a,c,\eta,\epsilon)$, as well as weakly horizontally invertible squares as in \cref{descr:J4}. We henceforth say that the morphisms $u$ and $v$ were \emph{added using the horizontal adjoint equivalence data $(a,c,\eta,\epsilon)$ in $\bA$}.
\end{rem}

As claimed, the double category $\bA^\whi$ is indeed weakly horizontally invariant.

\begin{prop}
For every double category $\bA$, the double category $\bA^\whi$ is weakly horizontally invariant.
\end{prop}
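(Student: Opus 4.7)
The plan is to use the $\bHsim\Eadj$-data freely added in $\bA^\whi$ to construct the required vertical morphism $u$ as a vertical composite, and to build the filling square as a vertical pasting of three weakly horizontally invertible squares.

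First, I would observe that by \cref{rem:idonunderhorcat}, the functor $j_\bA\colon \bA\to\bA^\whi$ is the identity on underlying horizontal categories and fully faithful on squares, so it induces an isomorphism of $2$-categories $\bfH\bA\cong\bfH\bA^\whi$. The horizontal equivalences $a\colon A\xrightarrow{\simeq} C$ and $a'\colon A'\xrightarrow{\simeq} C'$ in $\bA^\whi$ therefore come from horizontal equivalences in $\bA$ and may be promoted to horizontal adjoint equivalences $(a,c,\eta,\epsilon)$ and $(a',c',\eta',\epsilon')$ in $\mathrm{HorEq}(\bA)$.

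By \cref{def:whireplacement}, this supplies $\bA^\whi$ with copies of the $\bHsim\Eadj$-data of \cref{descr:J4} glued along each of these adjoint equivalences. From the copy for $(a,c,\eta,\epsilon)$ I extract the vertical morphism $u_a\colon A\arrowdot C$ and the weakly horizontally invertible companion square $\alpha_a$ with horizontal top $a$, horizontal bottom $\id_C$, vertical left $u_a$, and trivial vertical right. From the copy for $(a',c',\eta',\epsilon')$ I extract the vertical morphism $v_{a'}\colon C'\arrowdot A'$ and a further weakly horizontally invertible square $\beta_{a'}$ with horizontal top $\id_{C'}$, horizontal bottom $a'$, vertical left $v_{a'}$, and trivial vertical right. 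The square $\beta_{a'}$ is not one of the two distinguished generators of \cref{descr:J4}; rather, it exists in $\bHsim\Eadj$ as the square classified by the $2$-isomorphism $\epsilon^{-1}\colon\id_1\cong fg$ of $\Eadj$ via the identification of squares in $\bHsim\cC$ with $2$-morphisms of $\cC$ built into \cref{def:htilde} (with weak inverse classified by the identity $2$-morphism on $g$), and its image under the gluing $\bHsim\Eadj\to \bA^\whi$ yields the required square in $\bA^\whi$.

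I then set $u\coloneqq v_{a'}\cdot w\cdot u_a\colon A\arrowdot A'$ and define the filling square $\alpha$ as the vertical pasting of $\alpha_a$ on top, the horizontal identity square on $w$ in the middle, and $\beta_{a'}$ on the bottom. A direct inspection of the boundaries shows that $\alpha$ has horizontal top $a$, horizontal bottom $a'$, vertical left $u$, and vertical right $w$, as required. Weak horizontal invertibility of $\alpha$ follows from the fact that each of the three pasted squares is weakly horizontally invertible, together with the closure of this class under vertical composition: by \cref{def:whisquare}, weakly horizontally invertible squares are precisely the equivalences in the $2$-category $\bfH[\vtwo,\bA^\whi]$, and equivalences are stable under composition in any $2$-category. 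The main subtlety is isolating the auxiliary square $\beta_{a'}$, which is not among the distinguished generators of $\bHsim\Eadj$ emphasized in \cref{descr:J4}; once this square is identified, the decomposition of $u$ and the pasting argument are routine.
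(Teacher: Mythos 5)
Your construction matches the paper's proof exactly: take the vertical morphism $u_a\colon A\arrowdot C$ freely attached along $(a,c,\eta,\epsilon)$ with its companion square, take a vertical morphism $v_{a'}\colon C'\arrowdot A'$ freely attached along $(a',c',\eta',\epsilon')$ with an appropriate weakly horizontally invertible square, form $u = v_{a'}\cdot w\cdot u_a$, and paste the three squares vertically. Your remark that the bottom square $\beta_{a'}$ is not one of the two distinguished generating squares of \cref{descr:J4} but a different square of $\bHsim\Eadj$ (classified by the inverse counit of the adjoint equivalence) is correct, and it makes explicit something the paper leaves implicit.

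There is, however, a flaw in your justification for the last step. You argue that the vertical pasting of weakly horizontally invertible squares is weakly horizontally invertible because ``weakly horizontally invertible squares are the equivalences in $\bfH[\vtwo,\bA^\whi]$ and equivalences compose.'' But in the $2$-category $\bfH[\vtwo,\bA^\whi]$, objects are vertical morphisms of $\bA^\whi$ and morphisms are squares composed \emph{horizontally}: $\alpha_a$ is a morphism $u_a\to e_C$, $\id_w$ is a morphism $w\to w$, and $\beta_{a'}$ is a morphism $v_{a'}\to e_{C'}$, so these are not composable as morphisms of that $2$-category at all. Vertical composition of squares is not morphism composition in $\bfH[\vtwo,-]$, and the ``equivalences compose'' argument does not apply. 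The underlying fact---that the class of weakly horizontally invertible squares is closed under vertical pasting---is nonetheless true; it is \cite[Lemma A.2.1]{Lyne}, which this paper invokes elsewhere for exactly this purpose, and a direct proof proceeds by taking the vertical composite of weak inverses and using interchange. Citing that lemma, or giving the interchange argument, closes the gap; the rest of your proof is sound.

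(You also tacitly use that $\id_w$ is weakly horizontally invertible, which is immediate since its weak inverse is $\id_w$ itself; worth a word, but not a gap.)
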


\begin{proof}
Let $a\colon A\xrightarrow{\simeq} C$ and $a'\colon A'\xrightarrow{\simeq} C'$ be horizontal equivalences in $\bA$ and $w\colon C\arrowdot C'$ be a vertical morphism in $\bA^\whi$. By construction of $\bA^\whi$, we have vertical morphisms $u\colon A\arrowdot C$ and $v\colon C'\arrowdot A'$ in $\bA^\whi$ together with weakly horizontally invertible squares~$\alpha$ and $\delta$ in $\bA^\whi$ as depicted below.
\begin{tz}
    \node[](1) {$A$}; 
    \node[right of=1](2) {$C$}; 
    \node[below of=1](3) {$C$}; 
    \node[below of=2](4) {$C$};
    \draw[->,pro] (1) to node[left,la]{$u$} (3);
    \draw[d,pro] (2) to (4);
    \draw[->] (1) to node[above,la]{$a$} node[below,la]{$\simeq$} (2);
    \draw[d] (3) to (4);
    \node[la] at ($(1)!0.5!(4)-(5pt,0)$) {$\alpha$};
    \node[la] at ($(1)!0.5!(4)+(5pt,0)$) {$\simeq$};
    
    \node[right of=2,xshift=1cm](1) {$C'$}; 
    \node[right of=1](2) {$C'$}; 
    \node[below of=1](3) {$A'$}; 
    \node[below of=2](4) {$C'$};
    \draw[->,pro] (1) to node[left,la]{$v$} (3);
    \draw[d,pro] (2) to (4);
    \draw[d] (1) to (2);
    \draw[->] (3) to node[below,la]{$a'$} node[above,la]{$\simeq$} (4);
    \node[la] at ($(1)!0.5!(4)-(5pt,0)$) {$\delta$};
    \node[la] at ($(1)!0.5!(4)+(5pt,0)$) {$\simeq$};
    \end{tz}
Then the composite of vertical morphisms $vwu\colon A\arrowdot A'$ together with the weakly horizontally invertible square given by the vertical composite of the squares $\alpha$, $\id_w$, and $\delta$ gives the desired lift.
\end{proof}

The foresight that $(-)^\whi$ will give a fibrant replacement in our desired model structure (as we show in \cref{prop:j_AinCcapW}) and that the double biequivalences will precisely be the weak equivalences between fibrant objects (proved in \cref{prop:Wwithwhisourceisdblbieq}) motivates us to define our weak equivalences as the double functors inducing double biequivalences between weakly horizontally invariant replacements. 

\begin{defn} \label{def:W}
We define $\cW$ to be the class of double functors $F\colon \bA\to \bB$ such that the induced double functor $F^\whi\colon \bA^\whi\to \bB^\whi$ is a double biequivalence. 
\end{defn}

\begin{rem} \label{prop:2outof3andretractW}
Since double biequivalences are the weak equivalences in the model structure on $\DblCat$ of \cite[Theorem 3.19]{MSV}, they satisfy 2-out-of-3 and are closed under retracts. As a consequence, the class $\cW$ also has these properties, as the replacement $(-)^\whi$ is functorial.
\end{rem}

Although double biequivalences are more tractable than our proposed weak equivalences, the passage to this bigger class is truly needed. Indeed, the class of double functors that are both in $\Iwcof$ and double biequivalences is not closed under pushouts, and thus cannot be the class of trivial cofibrations in a model structure. However, as we now show, double biequivalences are contained in $\cW$. The reverse inclusion does not hold, but, as we will see in \cref{prop:Wwithwhisourceisdblbieq}, a weak equivalence whose source is a weakly horizontally invariant double category is a double biequivalence.

We use the following technical lemma to prove that double biequivalences are contained in our class of weak equivalences.

\begin{lemme} \label{lem:freelyadded}
Let $F\colon \bA\to \bB$ be a double biequivalence. Then, for every vertical morphism $v\colon B\arrowdot B'$ in $\bB^\whi$ which is a composite of freely added vertical morphisms along $j_\bB\colon \bB\to \bB^\whi$, and every pair of horizontal equivalences $b\colon FA\xrightarrow{\simeq} B$ and $b'\colon FA'\xrightarrow{\simeq} B'$, there is a vertical morphism $u\colon A\arrowdot A'$ in $\bA^\whi$ together with a weakly horizontally invertible square in $\bB^\whi$ of the form
\begin{tz}
    \node[](1) {$FA$}; 
    \node[right of=1](2) {$B$}; 
    \node[below of=1](3) {$FA'$}; 
    \node[below of=2](4) {$B'$};
    \draw[->,pro] (1) to node[left,la]{$F^\whi u$} (3);
    \draw[->,pro] (2) to node[right,la]{$v$} (4);
    \draw[->] (1) to node[above,la]{$b$} node[below,la]{$\simeq$} (2);
    \draw[->] (3) to node[below,la]{$b'$} node[above,la]{$\simeq$} (4);
    \node[la] at ($(1)!0.5!(4)-(5pt,0)$) {$\beta$};
    \node[la] at ($(1)!0.5!(4)+(5pt,0)$) {$\simeq$};
\end{tz}
\end{lemme}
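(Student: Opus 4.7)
The plan is to proceed by induction on the length $n$ of the decomposition of $v$ into freely added vertical morphisms along $j_\bB\colon \bB\to \bB^\whi$.

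For the inductive step, suppose $v=v_1\cdot v_2$, with $v_2\colon B\arrowdot B''$ and $v_1\colon B''\arrowdot B'$ each itself a shorter composite of freely added vertical morphisms. Apply (db1) of the double biequivalence $F$ to obtain an object $A''\in \bA$ together with a horizontal equivalence $b''\colon FA''\xrightarrow{\simeq} B''$ in $\bB$ (hence in $\bB^\whi$ via \cref{rem:idonunderhorcat}). Apply the inductive hypothesis to $v_2$ with horizontal equivalences $b, b''$, and to $v_1$ with horizontal equivalences $b'', b'$, obtaining vertical morphisms $u_2\colon A\arrowdot A''$ and $u_1\colon A''\arrowdot A'$ in $\bA^\whi$ together with the corresponding weakly horizontally invertible bridging squares $\beta_2$ and $\beta_1$ in $\bB^\whi$. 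Then set $u:=u_1\cdot u_2$ and take $\beta$ to be the vertical composite $\beta_1\cdot \beta_2$, which remains weakly horizontally invertible.

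For the base case $n=1$, by symmetry between the $u$-type and $v$-type freely added morphisms of \cref{descr:J4}, we may assume that $v\colon B\arrowdot B'$ is the $u$-type morphism in the $\bHsim\Eadj$-data attached to some horizontal adjoint equivalence $(a,c,\eta,\epsilon)$ in $\bB$ with $a\colon B\to B'$. Using (db1), (db2), and (db4) of the double biequivalence $F$ (the last applied to squares with trivial vertical sides), one checks that the induced $2$-functor $\bfH F\colon \bfH\bA\to \bfH\bB$ is a biequivalence of $2$-categories, and therefore reflects adjoint equivalences. Since $b'^\sharp \cdot a\cdot b\colon FA\to FA'$ is a horizontal adjoint equivalence in $\bB$, condition (db2) lifts it to a horizontal morphism $a_A\colon A\to A'$ in $\bA$, together with a vertically invertible square $\phi$ in $\bB$ comparing $Fa_A$ and $b'^\sharp\cdot a\cdot b$; moreover $a_A$ admits horizontal adjoint equivalence data $(a_A, c_A, \eta_A, \epsilon_A)$. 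Set $u:=u_A\colon A\arrowdot A'$ to be the $u$-type freely added vertical morphism in $\bA^\whi$ attached to this data.

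It remains to construct the bridging weakly horizontally invertible square $\beta$ in $\bB^\whi$. In $\bB^\whi$ we have at our disposal the weakly horizontally invertible square $\alpha_a$ (attached to $(a,c,\eta,\epsilon)$, with top $a$, left $v$ and trivial right and bottom sides), the square $F^\whi\alpha_{a_A}$ (attached to $(Fa_A,Fc_A,F\eta_A,F\epsilon_A)$, with top $Fa_A$, left $F^\whi u_A$ and trivial right and bottom sides), the $\bHsim\Eadj$-data freely added in $\bB^\whi$ for the horizontal adjoint equivalences $b$ and $b'$, and the vertically invertible square $j_\bB\phi$ comparing $Fa_A$ and $b'^\sharp\cdot a\cdot b$. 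Pasting these squares appropriately, and using that composites of weakly horizontally invertible squares (possibly with vertically invertible ones) remain weakly horizontally invertible, yields a weakly horizontally invertible square $\beta$ with top $b$, bottom $b'$, left $F^\whi u_A$ and right $v$, as required.

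The main obstacle lies in the combinatorics of this pasting in the base case: one must carefully use the orthogonal companion and adjoint pair structure of the freely added $\bHsim\Eadj$-data described in \cref{descr:J4} to interpolate between the ``companion view'' of $v$ (encoded by $\alpha_a$) and that of $F^\whi u_A$ (encoded by $F^\whi\alpha_{a_A}$), with the horizontal equivalences $b, b'$ and the vertically invertible square $\phi$ serving as the bridge. By \cref{uniqueweakinverse}, once the horizontal adjoint equivalence data are fixed, the weak inverses are uniquely determined, which streamlines the verification that the assembled square is indeed weakly horizontally invertible.
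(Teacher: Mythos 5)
Your proof is correct and follows essentially the same route as the paper's: the core step in both is to lift the composite horizontal adjoint equivalence $b'^\sharp\cdot f\cdot b$ (or, in your notation, $b'^\sharp\cdot a\cdot b$) via (db2) and (db4), freely add the corresponding vertical morphism to $\bA^\whi$, and build $\beta$ by a pasting involving the freely added $\bHsim\Eadj$-data for $b$, $b'$, the lifted equivalence, and $v$.

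The one organizational difference worth flagging: you handle the composite by induction on the length of the decomposition of $v$, invoking (db1) to supply intermediate objects $FA''\xrightarrow{\simeq}B''$. The paper instead observes directly that a composite of freely added vertical morphisms comes, by composing the $\bHsim\Eadj$-squares for each piece, with a \emph{single} horizontal adjoint equivalence $(f,g,\eta,\epsilon)$ in $\bB$ and a \emph{single} weakly horizontally invertible square $\sq{\alpha}{f}{\id}{v}{e}$ in $\bB^\whi$; it then reduces to the same one-step lift-and-paste. The two are equally valid; the paper's version avoids (db1) and collapses the argument, while yours isolates what happens for a single freely added generator. In either formulation the real work is the pasting that you sketch but do not carry out; you correctly identify the ingredients (the companion/conjoint structure of Description~\ref{descr:J4}, the vertically invertible comparison square, and the adjoint-equivalence units/counits for $b$, $b'$, and $f$) and the paper's explicit diagram confirms that exactly those pieces assemble into $\beta$.
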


\begin{proof}
First note that there is a horizontal adjoint equivalence $(f,g,\eta,\epsilon)$ in $\bB$ and a weakly horizontally invertible square $\alpha$ in $\bB^\whi$ of the form
\begin{tz}
    \node[](1) {$B$}; 
    \node[right of=1](2) {$B'$}; 
    \node[below of=1](3) {$B'$}; 
    \node[below of=2](4) {$B'$};
    \draw[->,pro] (1) to node[left,la]{$v$} (3);
    \draw[d,pro] (2) to (4);
    \draw[->] (1) to node[above,la]{$f$} node[below,la]{$\simeq$} (2);
    \draw[d] (3) to (4);
    \node[la] at ($(1)!0.5!(4)-(5pt,0)$) {$\alpha$};
    \node[la] at ($(1)!0.5!(4)+(5pt,0)$) {$\simeq$};
\end{tz}
obtained by composing the corresponding weakly horizontally invertible squares for each freely added vertical morphism appearing in the decomposition of $v$. Let $(b',d',\eta',\epsilon')$ be a choice of horizontal adjoint equivalence data for $b'$. Since $F$ satisfies (db2) and (db4) of \cref{def:doublebieq}, there is a horizontal equivalence $a\colon A\xrightarrow{\simeq} A'$ in $\bA$ together with a vertically invertible square $\psi$ in $\bB$ of the form
\begin{tz}
    \node[](1) {$FA$}; 
    \node[right of=1](2) {$B$}; 
    \node[right of=2](3) {$B'$}; 
    \node[right of=3](4) {$FA'$}; 
    \draw[->] (1) to node[above,la]{$b$} node[below,la]{$\simeq$} (2);
    \draw[->] (2) to node[above,la]{$f$} node[below,la]{$\simeq$} (3);
    \draw[->] (3) to node[above,la]{$d'$} node[below,la]{$\simeq$} (4);
    \node[below of=1](1') {$FA$}; 
    \node[below of=4](4') {$FA'$}; 
    \draw[->] (1') to node[below,la]{$Fa$} node[above,la]{$\simeq$} (4');
    \draw[d,pro](1) to (1');
    \draw[d,pro](4) to (4');
    \node[la] at ($(1)!0.5!(4')+(5pt,0)$) {$\vcong$};
    \node[la] at ($(1)!0.5!(4')-(5pt,0)$) {$\psi$};
\end{tz}
Let $u\colon A\arrowdot A'$ be a vertical morphism in $\bA^\whi$ freely added using horizontal adjoint equivalence data for $a$. We get a weakly horizontally square $\beta$, as desired,
\begin{tz}
\node[](1) {$FA$}; 
    \node[right of=1](2) {$B$}; 
    \node[below of=1](3) {$FA'$}; 
    \node[below of=2](4) {$B'$};
    \draw[->,pro] (1) to node[left,la]{$F^\whi u$} (3);
    \draw[->,pro] (2) to node(a)[right,la]{$v$} (4);
    \draw[->] (1) to node[above,la]{$b$} (2);
    \draw[->] (3) to node[below,la]{$b'$} (4);
    \node[la] at ($(1)!0.5!(4)-(5pt,0)$) {$\beta$};
    \node[la] at ($(1)!0.5!(4)+(5pt,0)$) {$\simeq$};

\node[right of=2,yshift=2.25cm,xshift=.3cm](1) {$FA$}; 
\node[right of=1](2) {$B$}; 
\node[below of=1](1') {$FA$}; 
\node[right of=1'](2') {$B$}; 
\node[right of=2'](3') {$B'$}; 
\node[below of=1'](1'') {$FA$}; 
\node[la] at ($(a)!0.5!(1'')$) {$=$};
\node[right of=1''](2'') {$B$}; 
\node[right of=2''](3'') {$B'$}; 
\node[right of=3''](4'') {$FA'$}; 
\node[right of=4''](5'') {$B'$}; 
\node[right of=5''](6'') {$B$}; 
\node[above of=5''](5') {$B'$}; 
\node[right of=5'](6') {$B$}; 
\node[above of=6'](6) {$B$};
\draw[d] (2) to (6);
\draw[d] (3') to (5');
\draw[d,pro] (1) to (1');
\draw[d,pro] (2) to (2');
\draw[d,pro] (6) to (6');
\draw[d,pro] (1') to (1'');
\draw[d,pro] (2') to (2'');
\draw[d,pro] (3') to (3'');
\draw[d,pro] (5') to (5'');
\draw[d,pro] (6') to (6'');
\draw[->] (1) to node[above,la]{$b$} (2);
\draw[->] (1') to node[above,la]{$b$} (2');
\draw[->] (2') to node[above,la]{$f$} (3');
\draw[->] (5') to node[above,la]{$g$} (6');

\draw[->] (1'') to node[below,la]{$b$} (2'');
\draw[->] (2'') to node[below,la]{$f$} (3'');
\draw[->] (3'') to node[above,la]{$d'$} (4'');
\draw[->] (4'') to node[above,la]{$b'$} (5'');
\draw[->] (5'') to node[above,la]{$g$} (6'');

\node[la] at ($(1)!0.5!(2')$) {$e_b$};
\node[la] at ($(1')!0.5!(2'')$) {$e_b$};
\node[la] at ($(2')!0.5!(3'')$) {$e_f$};
\node[la] at ($(5')!0.5!(6'')$) {$e_g$};
    \node[la] at ($(2)!0.5!(6')+(5pt,0)$) {$\vcong$};
    \node[la] at ($(2)!0.5!(6')-(5pt,0)$) {$\eta$};
    \node[la] at ($(3')!0.5!(5'')+(7pt,0)$) {$\vcong$};
    \node[la] at ($(3')!0.5!(5'')-(7pt,0)$) {$(\epsilon')^{-1}$};

\node[below of=1''](1) {$FA$}; 
\node[below of=4''](4) {$FA'$}; 
\node[right of=4](5) {$B'$}; 
\node[right of=5](6) {$B$}; 
\draw[d,pro] (1'') to (1);
\draw[d,pro] (4'') to (4);
\draw[d,pro] (5'') to (5);
\draw[d,pro] (6'') to (6);
\draw[->] (1) to node[below,la]{$Fa$} (4);
\draw[->] (4) to node[above,la]{$b'$} (5);
\draw[->] (5) to node[above,la]{$g$} (6);

\node[la] at ($(4'')!0.5!(5)$) {$e_{b'}$};
\node[la] at ($(5'')!0.5!(6)$) {$e_g$};
    \node[la] at ($(1'')!0.5!(4)+(5pt,0)$) {$\vcong$};
    \node[la] at ($(1'')!0.5!(4)-(5pt,0)$) {$\psi$};
    
\node[below of=1](1') {$FA'$}; 
\node[below of=4](4') {$FA'$}; 
\node[right of=4'](5') {$B'$}; 
\node[right of=5'](6') {$B'$}; 
\draw[d] (1') to (4');
\draw[d] (5') to (6');
\draw[->] (4') to node[below,la]{$b'$} (5');
\draw[->,pro] (1) to node[left,la]{$F^\whi u$} (1');
\draw[->,pro] (6) to node[right,la]{$v$} (6');
\draw[d,pro] (4) to (4');
\draw[d,pro] (5) to (5');
\node[la] at ($(4)!0.5!(5')$) {$e_{b'}$};
    \node[la] at ($(1)!0.5!(4')+(10pt,0)$) {$\simeq$};
    \node[la] at ($(1)!0.5!(4')-(10pt,0)$) {$F^\whi\overline{\alpha}$};
    \node[la] at ($(5)!0.5!(6')+(5pt,0)$) {$\simeq$};
    \node[la] at ($(5)!0.5!(6')-(5pt,0)$) {$\alpha'$};
\end{tz}
where $\overline{\alpha}$ is the weakly horizontally invertible square in $\bA^\whi$ that was freely added with~$u$ (see \cref{descr:J4}), and $\alpha'$ is the weak inverse of the square $\alpha$.
\end{proof}

\begin{prop} \label{prop:dblbieqareinW}
Every double biequivalence is in $\cW$.
\end{prop}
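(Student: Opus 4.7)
The plan is to verify directly that $F^\whi\colon \bA^\whi\to\bB^\whi$ satisfies the four conditions (db1)--(db4) for being a double biequivalence. A key observation, recorded in \cref{rem:idonunderhorcat}, is that $j_\bA\colon \bA\to \bA^\whi$ and $j_\bB\colon \bB\to \bB^\whi$ are the identity on objects and on the underlying horizontal category, and are fully faithful on squares; in particular $F^\whi$ agrees with $F$ on all horizontal data.

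Given this, conditions (db1) and (db2) for $F^\whi$ reduce immediately to the same conditions for $F$: any object in $\bB^\whi$ is an object of $\bB$, any horizontal morphism in $\bB^\whi$ is a horizontal morphism in $\bB$, horizontal equivalences in $\bB$ remain horizontal equivalences in $\bB^\whi$ since horizontal composition is preserved by $j_\bB$, and vertically invertible squares in $\bB$ yield vertically invertible squares in $\bB^\whi$ via the double functor $j_\bB$. For (db4), I would decompose a square $\beta$ in $\bB^\whi$ with prescribed $F^\whi$-boundary into horizontal and vertical composites of squares from $\bB$ and freely added $\bHsim\Eadj$-squares. The $\bB$-part lifts uniquely by combining (db4) of $F$ with full faithfulness of $j_\bB$ on squares, while the freely added $\bHsim\Eadj$-squares in $\bB^\whi$ that lie in the image of $F^\whi$ must come from the corresponding freely added squares in $\bA^\whi$ by \cref{rem:uniqueHsimEadjdata} (uniqueness of $\bHsim\Eadj$-data given horizontal adjoint equivalence data).

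The main obstacle is (db3). Given a vertical morphism $v\colon B\arrowdot B'$ in $\bB^\whi$, I would decompose $v$ as a vertical composite $v = v_n\cdots v_1$ where each $v_i$ either lies in $\bB$ or is one of the freely added vertical morphisms along $j_\bB$. Since $(-)^\whi$ creates no new objects, each intermediate vertex $B_i$ lies in $\bB$, and (db1) of $F$ provides an object $A_i\in \bA$ together with a horizontal equivalence $b_i\colon FA_i\xrightarrow{\simeq} B_i$; at the endpoints this pins down the desired $A = A_0$ and $A' = A_n$. For each maximal chunk of $v$ consisting of freely added morphisms, \cref{lem:freelyadded} directly produces a vertical morphism in $\bA^\whi$ together with a weakly horizontally invertible square whose horizontal boundaries are the chosen $b_{i-1}$ and $b_i$. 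For a chunk $v_i$ lying in $\bB$, I would first apply (db3) of $F$ to obtain a vertical morphism $u'_i$ in $\bA$ and a weakly horizontally invertible square whose horizontal boundaries are some equivalences $\tilde b_{i-1},\tilde b_i$ in $\bB$; the composites $b_{i-1}\circ \tilde b_{i-1}^{-1}$ and $b_i\circ \tilde b_i^{-1}$ are then horizontal equivalences in $\bB$, which by (db2) of $F$ lift to horizontal equivalences in $\bA$, and the freely added $\bHsim\Eadj$-data in $\bA^\whi$ associated to these lifts can be whiskered with $u'_i$ to yield a vertical morphism $u_i\colon A_{i-1}\arrowdot A_i$ in $\bA^\whi$ with a weakly horizontally invertible square whose boundaries are exactly $b_{i-1}$ and $b_i$. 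Composing all $u_i$'s vertically assembles the desired $u\colon A\arrowdot A'$ in $\bA^\whi$ and weakly horizontally invertible square; the delicate point is precisely the careful alignment of horizontal equivalence data between consecutive chunks, which is what \cref{lem:freelyadded} encapsulates on the freely added side and what the interplay of (db2) and (db3) of $F$ provides on the $\bB$ side.
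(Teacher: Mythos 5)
Your treatment of (db1), (db2), and (db3) matches the paper's proof: for (db1)--(db2) both use that $F^\whi$ agrees with $F$ on underlying horizontal data (\cref{rem:idonunderhorcat}), and for (db3) both decompose a vertical morphism of $\bB^\whi$ into an alternating composite of pieces in $\bB$ and of freely added vertical morphisms, handling the former via (db3) for $F$ and the latter via \cref{lem:freelyadded}. Your extra commentary on aligning the horizontal equivalence data between consecutive chunks is welcome detail that the paper elides; that alignment is legitimate precisely because vertical morphisms of $\bA^\whi$ are built freely over $\bA$, so the decomposition you use is canonical.

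Where you depart from the paper is in (db4), and this is also where your argument has a gap. You propose to decompose the square $\beta$ in $\bB^\whi$ into horizontal and vertical pastings of squares from $\bB$ and of freely added $\bHsim\Eadj$-squares, then lift piece by piece. Unlike the decomposition of a vertical morphism, the decomposition of a square in a pushout of double categories is \emph{not} unique: the interchange law and the relations imposed by the $\bHsim\Eadj$-data identify many different pastings. Consequently your argument does not directly yield the \emph{uniqueness} part of (db4), nor does it show existence is well defined independently of the chosen decomposition. The paper avoids this entirely: since $j_\bA$, $j_\bB$, and $F$ are each fully faithful on squares and $F^\whi j_\bA = j_\bB F$, one concludes that $F^\whi$ is fully faithful on squares without ever decomposing a square. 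You should replace your decomposition argument for (db4) with this naturality-plus-full-faithfulness argument, which is both shorter and sidesteps the well-definedness issue.
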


\begin{proof}
Let $F\colon \bA\to \bB$ be a double biequivalence; we show that $F^\whi$ satisfies (db1-4) of \cref{def:doublebieq}. Since $F$ and $F^\whi$ agree on underlying horizontal categories by \cref{rem:idonunderhorcat}, and $F$ satisfies (db1-2), then so does  $F^\whi$. Moreover, since $j_\bA$, $j_\bB$, and $F$ are fully faithful on squares and $F^\whi j_\bA=j_\bB F$, then $F^\whi$ is also fully faithful on squares, i.e., it satisfies (db4). Finally, since every vertical morphism in $\bB^\whi$ can be decomposed as an alternate composite of vertical morphisms in $\bB$ and of composites of freely added vertical morphisms, then the fact that $F^\whi$ satisfies (db3) follows from (db3) for $F$ and \cref{lem:freelyadded}.
\end{proof}

\subsection{The model structure}

By taking cofibrations as the $\cI_w$-cofibrations and weak equivalences as the double functors in $\cW$, we obtain the desired model structure on~$\DblCat$. The relevant classes of morphisms, as well as an outline of the proof with shortcuts to the corresponding results, is provided below; the technical details are deferred to \cref{subsec:prooftheorem}.

\begin{theorem} \label{thm:secondMS}
There is a model structure $(\cC,\cF,\cW)$ on $\DblCat$ such that
\begin{rome}
    \item the class $\cC$ of cofibrations is given by $\cC\coloneqq \Iwcof$, where $\cI_w$ is the set described in \cref{not:gencofDblCat2}, 
    \item the class $\cW$ of weak equivalences is as described in \cref{def:W},
    \item the class $\cF$ of fibrations is given by $\cF\coloneqq (\cC\cap\cW)^\boxslash$, and 
    \item the fibrant objects are the weakly horizontally invariant double categories.
\end{rome}
\end{theorem}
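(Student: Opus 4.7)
The plan is to apply Kan's recognition theorem for cofibrantly generated model structures, taking $\cI_w$ as generating cofibrations and introducing in \cref{subsec:JwcofJwinj} a set $\cJ_w$ of generating trivial cofibrations. Several ingredients are already in place: $\DblCat$ is locally presentable so the small object argument applies; $\cW$ satisfies 2-out-of-3 and is closed under retracts by \cref{prop:2outof3andretractW}; and $\Iwinj\subseteq \cW$ by \cref{rem:trivfibaredblbieq} combined with \cref{prop:dblbieqareinW}. Applying the small object argument to $\cI_w$ therefore produces the $(\cC,\cF\cap\cW)$-factorization system directly, once the identification $\Iwinj = \cF\cap\cW$ is verified by a standard retract argument.

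The set $\cJ_w$ should consist of two packets. The first is built from the inclusion $J_4\colon \bH\Eadj\to\bHsim\Eadj$ of \cref{not:J4}, together with suitable cylindrical boundary variants, and encodes the lifting condition of \cref{def:whidbl}, so that an object has the right lifting property against these generators precisely when it is weakly horizontally invariant. The second packet consists of maps encoding the double biequivalence conditions (db1)--(db4) of \cref{def:doublebieq}, analogous to those in \cite{MSV}. For each generator one verifies that it lies in $\Iwcof$ via \cref{thm:charcofDblsecond}, and that it is a weak equivalence by checking that applying $(-)^\whi$ produces a double biequivalence.

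With $\cJ_w$ in hand, two inclusions are needed: $\Jwcell\subseteq \Iwcof\cap\cW$, which reduces to the generator-level checks above together with the fact that $\Iwcof$ and $\cW$ are closed under transfinite composition of pushouts, and the crucial $\Jwinj\cap\cW\subseteq\Iwinj$. Given these, one runs the small object argument on $\cJ_w$ to obtain the $(\Jwcof,\Jwinj)$-factorization, and a retract argument yields $\Iwcof\cap\cW = \Jwcof$. The model structure axioms then follow, and the characterization of the fibrant objects as the weakly horizontally invariant double categories is immediate: an object is fibrant iff it has the right lifting property against $\cJ_w$, which by design of the first packet unpacks to exactly \cref{def:whidbl}.

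The main obstacle is the closure of $\cW$ under pushout along cofibrations, which is essentially equivalent to the inclusion $\Jwinj\cap\cW\subseteq\Iwinj$. The referee's example recalled in the introduction shows that this fails for the naive class of trivial cofibrations, namely cofibrations which are double biequivalences; routing everything through the functorial replacement $(-)^\whi$ is what makes the closure tractable. Correspondingly, \cref{subsec:JwcofJwinj} is essentially devoted to a careful analysis of how pushouts of the generators of $\cJ_w$ interact with $(-)^\whi$, showing that the resulting map on weakly horizontally invariant replacements inherits the properties (db1)--(db4) needed to be a double biequivalence.
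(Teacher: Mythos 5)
Your plan hinges on exhibiting a set $\cJ_w$ of generating trivial cofibrations and verifying the Kan recognition condition $\cW\cap\Jwinj\subseteq\Iwinj$, from which $\cC\cap\cW=\Jwcof$ and the second factorization system would follow. The paper deliberately avoids this route, and the gap in your argument is precisely there. The set $\cJ_w$ of \cref{not:Jw} contains only $J_1,J_2,J_3$ (note that $J_4$ is \emph{not} among the generators), and it is never claimed to generate the trivial cofibrations: only the one-sided inclusions $\cF\subseteq\Jwinj$ (\cref{lem:FinJwinj}) and $\Jwcof\subseteq\cC\cap\cW$ (\cref{rem:JwcofinCcapW}) are proved, with the equality $\cF=\Jwinj$ established only for double functors whose target is weakly horizontally invariant (\cref{prop:F=Jinjbtwwhi}). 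The inclusion you call crucial does not follow from what is available: \cref{prop:JwinjcapdblbieqisIwinj} identifies $\Iwinj$ with $\Jwinj\cap(\text{double biequivalences})$, but a map in $\cW$ need not be a double biequivalence unless its \emph{source} is weakly horizontally invariant (\cref{prop:Wwithwhisourceisdblbieq}). Concretely, given $P\colon\bA\to\bB$ in $\Jwinj\cap\cW$, condition (db3) is known only for $P^\whi$, and the vertical morphism it produces lives in $\bA^\whi$ rather than $\bA$; there is no visible way to extract fullness of $P$ on vertical morphisms of $\bA$, which is needed for $P\in\Iwinj$. Routing through $(-)^\whi$ does not make this containment ``tractable'' as you suggest; it is exactly the point where the naive recognition-theorem strategy breaks down.

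The paper instead constructs the $(\cC\cap\cW,\cF)$-factorization by hand in \cref{thm:factinCcapWandF}. Given $F\colon\bA\to\bB$, one factors $F^\whi$ as $PJ$ with $J\in\Jwcof$, $P\in\Jwinj$; since the target $\bB^\whi$ is weakly horizontally invariant, \cref{prop:F=Jinjbtwwhi} upgrades $P$ to a genuine fibration. One then pulls $P$ back along $j_\bB$ to obtain $\bD\to\bB$, shows the induced map $K\colon\bA\to\bD$ over $j_\bA$ lies in $\cW$ (by producing an auxiliary map $\hat\pi\colon\bD^\whi\to\bC$ and checking it is a double biequivalence), and finally factors $K$ through the already-established $(\Iwcof,\Iwinj)$ system, finishing with $2$-out-of-$3$. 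Thus the auxiliary $(\Jwcof,\Jwinj)$ weak factorization system is applied only to the weakly horizontally invariant replacements, never used as the generating data of the model structure, and the Kan recognition theorem is not invoked at all.
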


\begin{proof}
By \cref{prop:2outof3andretractW}, we know that the class $\cW$ of weak equivalences satisfies the $2$-out-of-$3$ property. Furthermore, by \cref{prop:FcapW=Iinj}, we have that $\cF\cap\cW=\Iwinj$, and hence the pair $(\cC,\cF\cap\cW)=(\Iwcof,\Iwinj)$ is the weak factorization system generated by the set $\cI_w$ of \cref{not:gencofDblCat2}. The fact that the pair $(\cC\cap\cW,\cF)$ forms a weak factorization system is the content of \cref{thm:factinCcapWandF,cor:CcapWLLPF}. We present in  \cref{cor:whiarefibrant} the desired characterization of fibrant objects.
\end{proof}

\section{\texorpdfstring{$\cJ_w$}{Jw}-cofibrations and \texorpdfstring{$\cJ_w$}{Jw}-injective double functors}\label{subsec:JwcofJwinj}
  
As we saw in the previous section, our proposed classes of cofibrations and of trivial fibrations can be constructed from a generating set $\cI_w$, and admit concise descriptions. Unfortunately, a nice description of the proposed fibrations and trivial cofibrations is not available in general. To prove that these classes of double functors form a weak factorization system, we introduce an auxiliary weak factorization system $(\Jwcof, \Jwinj)$ generated by a set $\cJ_w$ of double functors. Aside from admitting a simple description, the $\cJ_w$-injective double functors contain our proposed fibrations, and agree with these when we restrict to double functors with weakly horizontally invariant target; in particular, they can be used to identify our fibrant objects.

This section is largely technical, and the reader willing to trust our claims is encouraged to jump ahead to \cref{subsec:prooftheorem}.

Let us first introduce the set $\cJ_w$.

\begin{notation} \label{not:Jw}
Let $\cJ_w$ denote the set containing the following double functors: 
\begin{rome}
    \item the inclusion $J_1\colon \mathbbm{1}\to \bH\Eadj$, where the $2$-category $\Eadj$ is the free-living adjoint equivalence, 
    \item the inclusion $J_2\colon \bH \mathbbm{2}\to \bH C_{\mathrm{inv}}$, where the $2$-category $C_{\mathrm{inv}}$ is the free-living $2$-isomorphism, 
    \item the inclusion $J_3\colon \bW^{-}\to \bW$, where the double category $\bW$ is the free-living weakly horizontally invertible square with horizontal adjoint equivalence data, and $\bW^{-}$ is its double subcategory where we remove one of the vertical morphisms.
\begin{tz}
    \node[](A) {$0$};
    \node[right of=A](B) {$1$};
    \node[below of=A](A') {$0'$};
    \node[right of=A'](B') {$1'$};
    \node at ($(B)!0.5!(B')+(.5cm,0)$) {;};
    \node at ($(A)!0.5!(A')-(.75cm,0)$) {$\bW=$};
    \draw[->] (A) to node[above,la]{$\simeq$} (B);
    \draw[->] (A') to node[below,la]{$\simeq$} (B');
    \draw[->,pro] (A) to (A');
    \draw[->,pro] (B) to (B');

    \node[la] at ($(A)!0.5!(B')$) {$\simeq$};
    
    \node[right of=B,space](A) {$0$};
    \node[right of=A](B) {$1$};
    \node[below of=A](A') {$0'$};
    \node[right of=A'](B') {$1'$};
    \node at ($(B)!0.5!(B')+(.5cm,0)$) {.};
    \node at ($(A)!0.5!(A')-(.75cm,0)$) {$\bW^{-}=$};
    \draw[->] (A) to node[above,la]{$\simeq$} (B);
    \draw[->] (A') to node[below,la]{$\simeq$} (B');
    \draw[->,pro] (B) to (B');
\end{tz}
\end{rome}
\end{notation}

\begin{rem} \label{rem:JinIcof}
It is straightforward from the characterization of $\cI_w$-cofibrations given in \cref{thm:charcofDblsecond} that the double functors $J_1$, $J_2$, and $J_3$ are in $\Iwcof$, and from \cref{def:doublebieq} that they are double biequivalences. In particular, by \cref{prop:dblbieqareinW} this implies that they are trivial cofibrations in our proposed model structure on $\DblCat$. 
\end{rem}

\subsection{\texorpdfstring{$\cJ_w$}{Jw}-injective double functors}

By studying what it means to have the right lifting property with respect to the double functors in $\cJ_w$, we can characterize the $\cJ_w$-injective double functors as follows. 

\begin{prop} \label{prop:fibinsecond}
A double functor $F\colon \bA\to \bB$ is in $\Jwinj$ if and only if it satisfies the following conditions: 
\begin{enumerate}
    \item[\textnormal{(df1)}] for every object $C\in \bA$ and every horizontal equivalence $b\colon B\xrightarrow{\simeq} FC$ in $\bB$, there is a horizontal equivalence $a\colon A\xrightarrow{\simeq} C$ in $\bA$ such that $b=Fa$,
    \item[\textnormal{(df2)}] for every horizontal morphism $c\colon A\to C$ in $\bA$ and every vertically invertible square $\beta$ in $\bB$ as depicted below left, there is a vertically invertible square $\alpha$ in $\bA$ as depicted below right such that $\beta=F\alpha$,
    \begin{tz}
    \node[](1) {$FA$}; 
    \node[right of=1](2) {$FC$}; 
    \node[below of=1](3) {$FA$}; 
    \node[below of=2](4) {$FC$};
    \draw[d,pro] (1) to (3);
    \draw[d,pro] (2) to (4);
    \draw[->] (1) to node[above,la]{$b$} (2);
    \draw[->] (3) to node[below,la]{$Fc$} (4);
    \node[la] at ($(1)!0.5!(4)-(5pt,0)$) {$\beta$};
    \node[la] at ($(1)!0.5!(4)+(5pt,0)$) {$\vcong$};

    \node[right of=2,xshift=1cm](1) {$A$}; 
    \node[right of=1](2) {$C$}; 
    \node[below of=1](3) {$A$}; 
    \node[below of=2](4) {$C$};
    \draw[d,pro] (1) to (3);
    \draw[d,pro] (2) to (4);
    \draw[->] (1) to node[above,la]{$a$} (2);
    \draw[->] (3) to node[below,la]{$c$} (4);
    \node[la] at ($(1)!0.5!(4)-(5pt,0)$) {$\alpha$};
    \node[la] at ($(1)!0.5!(4)+(5pt,0)$) {$\vcong$};
    \end{tz}
    \item[\textnormal{(df3)}] for every diagram in $\bA$ as depicted below left, where $a$ and $a'$ are horizontal equivalences, and every weakly horizontally invertible square~$\beta$ in $\bB$ as depicted below middle, there is a weakly horizontally invertible square $\alpha$ in~$\bA$ as depicted below right such that $\beta=F\alpha$.
    \begin{tz}
    \node[](A) {$A$};
    \node[right of=A](B) {$C$};
    \node[below of=A](A') {$A'$};
    \node[right of=A'](B') {$C'$};
    \draw[->] (A) to node[above,la] {$a$} node[below,la]{$\simeq$} (B);
    \draw[->] (A') to node[below,la] {$a'$} node[above,la] {$\simeq$} (B');
    \draw[->,pro] (B) to node[right,la]{$w$} (B');

    \node[right of=B,xshift=1cm](1) {$FA$}; 
    \node[right of=1](2) {$FC$}; 
    \node[below of=1](3) {$FA'$}; 
    \node[below of=2](4) {$FC'$};
    \draw[->,pro] (1) to node[left,la]{$v$} (3);
    \draw[->,pro] (2) to node[right,la]{$Fw$} (4);
    \draw[->] (1) to node[above,la]{$Fa$} node[below,la]{$\simeq$} (2);
    \draw[->] (3) to node[below,la]{$Fa'$} node[above,la]{$\simeq$} (4);
    \node[la] at ($(1)!0.5!(4)-(5pt,0)$) {$\beta$};
    \node[la] at ($(1)!0.5!(4)+(5pt,0)$) {$\simeq$};
    
    \node[right of=2,xshift=1cm](1) {$A$}; 
    \node[right of=1](2) {$C$}; 
    \node[below of=1](3) {$A'$}; 
    \node[below of=2](4) {$C'$};
    \draw[->,pro] (1) to node[left,la]{$u$} (3);
    \draw[->,pro] (2) to node[right,la]{$w$} (4);
    \draw[->] (1) to node[above,la]{$a$} node[below,la]{$\simeq$} (2);
    \draw[->] (3) to node[below,la]{$a'$} node[above,la]{$\simeq$} (4);
    \node[la] at ($(1)!0.5!(4)-(5pt,0)$) {$\alpha$};
    \node[la] at ($(1)!0.5!(4)+(5pt,0)$) {$\simeq$};
    \end{tz}
\end{enumerate}
\end{prop}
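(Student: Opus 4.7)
My plan is to analyze the right lifting property against each of the three generating double functors $J_1$, $J_2$, $J_3$ in $\cJ_w$ separately, matching each with its corresponding condition among (df1)-(df3). The correspondences become transparent once each generator is unpacked. Since a double functor $\bH\cC\to\bX$ amounts to a $2$-functor $\cC\to\bfH\bX$, a lift against $J_1$ extends a chosen object to a horizontal adjoint equivalence, while a lift against $J_2$ extends a horizontal morphism $c$ paired with a $2$-isomorphism $\beta\colon b\cong Fc$ in $\bB$ to a $2$-isomorphism in $\bA$. Analogously, a double functor $\bW\to\bB$ encodes a weakly horizontally invertible square together with horizontal adjoint equivalence data, and $\bW^{-}\to\bA$ encodes its boundary data minus one vertical morphism.

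For the implication ``(df1)-(df3) imply the RLP'', the cases $J_2$ and $J_3$ will follow essentially directly from the statements: (df2) immediately provides the required $\alpha$ for $J_2$, while for $J_3$ I would promote the horizontal equivalences $a, a'$ in the $\bW^{-}$-diagram in $\bA$ to adjoint equivalences and invoke \cref{uniqueweakinverse} to ensure that the weak inverse appearing in the $\bW$-datum in $\bB$ is uniquely determined by $\beta$, so that applying (df3) produces a lift compatible with all of $\bW$. The $J_1$ case is subtler: after using (df1) to produce a horizontal equivalence $a$ in $\bA$ with $Fa=b$ and promoting it to an adjoint equivalence $(a,\hat{a},\hat\eta,\hat\epsilon)$, I would need to match the prescribed data $(b,b',\eta',\epsilon')$ in $\bB$ on the nose. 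Since adjoint equivalence data with a fixed left adjoint is essentially unique, there is a canonical $2$-isomorphism $\sigma\colon F\hat a\cong b'$ compatible with the triangle identities; I would then apply (df2) to lift $\sigma$ to a $2$-isomorphism in $\bA$ and transport the adjoint equivalence structure along this lift to produce the desired double functor $\bH\Eadj\to\bA$.

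The converse direction proceeds by setting up appropriate lifting problems and reading off the relevant data from the resulting lifts: for (df1) I would promote $b$ to an adjoint equivalence in $\bB$ giving $\bH\Eadj\to\bB$, pair it with the object $C$ giving $\mathbbm 1\to\bA$, and extract $a$ from the lift; for (df2) the pair $(c,\beta)$ directly assembles into a lifting square against $J_2$; for (df3) I would promote $a, a'$ in $\bA$ to adjoint equivalences to define $\bW^{-}\to\bA$, complete $\beta$ to $\bW\to\bB$ via \cref{uniqueweakinverse}, and read off $u$ and $\alpha$ from the resulting lift. In each case, compatibility of the lifting square is either automatic or imposed by the construction.

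The hard part will be the $J_1$ analysis, specifically the need to match the full adjoint equivalence data rather than only the underlying horizontal equivalence; this is what forces the use of (df2) inside the proof of the $J_1$ lifting property. Everything else reduces to a routine unpacking of the generators $J_1$, $J_2$, $J_3$.
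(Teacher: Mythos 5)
Your proposal is correct and is precisely the ``close inspection'' the paper's one-line proof alludes to: the RLP against $J_2$ is a literal restatement of (df2); the RLP against $J_3$ reduces to (df3) once one notes (via \cref{uniqueweakinverse}) that the weak inverse recorded in $\bW$ is determined by the square and the adjoint equivalence data, and is preserved by $F$; and the RLP against $J_1$ is the only case where work is needed, exactly as you identify. You are right that (df1) only lifts the underlying equivalence, and that to match the whole adjoint equivalence datum of the bottom leg one first promotes the lifted $a$ to an adjoint equivalence in $\bA$, uses uniqueness of adjoints (fixed left adjoint) to produce the comparison $2$-isomorphism $\sigma\colon F\hat a\cong b'$ in $\bB$, lifts $\sigma$ along $F$ via (df2), and transports the unit and counit along the resulting isomorphism in $\bA$. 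Your converse extractions are also as expected.
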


\begin{proof}
This is obtained directly from a close inspection of the right lifting properties with respect to the double functors in $\cJ_w$.
\end{proof}

As a consequence, we can use the class $\Jwinj$ to identify the weakly horizontally invariant double categories (see \cref{def:whidbl}).

\begin{cor} \label{prop:whiareinJinj}
A double category $\bA$ is weakly horizontally invariant if and only if the unique double functor $\bA\to \mathbbm{1}$ is in $\Jwinj$.
\end{cor}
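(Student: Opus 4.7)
The strategy is to apply the characterization of $\Jwinj$ given in \cref{prop:fibinsecond} to the unique double functor $F\colon \bA\to \mathbbm{1}$, and observe which of the three conditions (df1), (df2), (df3) remain non-trivial once the target has been forced to be the terminal double category.

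First, I would observe that $\mathbbm{1}$ has a single object, only identity horizontal and vertical morphisms, and only the identity square; in particular, every horizontal equivalence, every vertically invertible square, and every weakly horizontally invertible square in $\mathbbm{1}$ is an identity. This observation is what allows us to trivialize most of the lifting data.

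Next, I would examine each of the three conditions in turn. Condition (df1) asks for lifts of horizontal equivalences whose target is $FC$ for some $C\in\bA$; since the only horizontal equivalence in $\mathbbm{1}$ is the identity, the identity $\id_C\colon C\to C$ supplies the required lift, so (df1) is automatic. Condition (df2) asks for lifts of vertically invertible squares in $\mathbbm{1}$ whose bottom is $Fc$ for a given horizontal morphism $c$ in $\bA$; the identity square $e_c$ provides such a lift, so (df2) is also automatic.

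The only remaining condition is (df3): given horizontal equivalences $a\colon A\xrightarrow{\simeq} C$ and $a'\colon A'\xrightarrow{\simeq} C'$ and a vertical morphism $w\colon C\arrowdot C'$ in $\bA$, together with the trivial weakly horizontally invertible square in $\mathbbm{1}$, we must produce a vertical morphism $u\colon A\arrowdot A'$ and a weakly horizontally invertible square in $\bA$ with boundaries $a,a',u,w$. This is precisely the defining condition of a weakly horizontally invariant double category in \cref{def:whidbl}. Hence (df1) and (df2) hold for free, (df3) becomes equivalent to weak horizontal invariance of $\bA$, and the claim follows at once; there is no substantial obstacle, since everything reduces to unpacking \cref{prop:fibinsecond} in the case of a terminal target.
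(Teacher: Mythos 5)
Your proof is correct and follows exactly the intended route: the paper states this as an immediate corollary of \cref{prop:fibinsecond}, and your unpacking of (df1)--(df3) for the terminal target, with (df1) and (df2) trivialized and (df3) collapsing to \cref{def:whidbl}, is precisely the implicit argument.
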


The following result tells us that every $\cJ_w$-injective double functor has the right lifting property with respect to $J_4\colon \bH\Eadj\to \bHsim\Eadj$, which is useful when proving that $\cJ_w$-injective double functors with weakly horizontally invariant targets are fibrations.

\begin{prop} \label{prop:JinjRLPJ4}
Let $F\colon \bA\to \bB$ be a double functor in $\Jwinj$. Then $F$ is in $\{J_4\}\mathrm{-inj}$, where $J_4\colon \bH\Eadj\to \bHsim\Eadj$ is the inclusion of \cref{not:J4}.
\end{prop}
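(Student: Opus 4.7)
The plan is to interpret a lifting square concretely. By \cref{rem:uniqueHsimEadjdata} and \cref{descr:J4}, a commutative square from $J_4$ to $F$ consists of a horizontal adjoint equivalence $(a,c,\eta,\epsilon)$ in $\bA$ (coming from the top map) together with vertical morphisms $\tilde u, \tilde v$ and weakly horizontally invertible squares $\tilde\alpha,\tilde\gamma$ in $\bB$ (coming from the bottom map), compatible with the image $(Fa,Fc,F\eta,F\epsilon)$. The goal is to produce a double functor $L\colon \bHsim\Eadj \to \bA$ making both triangles commute.

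First, I would apply condition (df3) of \cref{prop:fibinsecond} to $\tilde\alpha$. Its top boundary $Fa$ and bottom boundary $\id$ are images of the horizontal equivalences $a$ and $\id$ in $\bA$, and its right boundary $\id$ is the image of $\id$ in $\bA$, so (df3) applies. This produces a weakly horizontally invertible square $\alpha_\bA$ in $\bA$ with top $a$, bottom $\id$, right $\id$, and some left vertical morphism $u_\bA$, satisfying $F\alpha_\bA = \tilde\alpha$ (and in particular $Fu_\bA = \tilde u$).

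Next, by \cref{uniqueweakinverse}, I would let $\gamma_\bA$ be the unique weak inverse of $\alpha_\bA$ in $\bA$ with respect to the horizontal adjoint equivalence data $(a,c,\eta,\epsilon)$ and the trivial one, and denote its left vertical morphism by $v_\bA$. According to \cref{rem:uniqueHsimEadjdata}, this data assembles into a double functor $L\colon \bHsim\Eadj \to \bA$. Commutativity of the top triangle is immediate from the effect of $L$ on the horizontal adjoint equivalence. For the bottom triangle, $F \circ L$ agrees with the bottom map on the horizontal adjoint equivalence and on $\alpha$ by construction; for $\gamma$, both $F\gamma_\bA$ and $\tilde\gamma$ are weak inverses of $\tilde\alpha$ in $\bB$ with respect to the same horizontal adjoint equivalence data (the latter by functoriality of the bottom map, since $\gamma$ is the weak inverse of $\alpha$ in $\bHsim\Eadj$), so the uniqueness in \cref{uniqueweakinverse} forces $F\gamma_\bA = \tilde\gamma$.

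The main obstacle is constructing $\gamma_\bA$ without a direct appeal to (df3) on $\tilde\gamma$ (since lifting $\tilde\gamma$ independently would not automatically be compatible with $\alpha_\bA$), and ensuring its image under $F$ matches the prescribed $\tilde\gamma$. Both issues are handled cleanly by \cref{uniqueweakinverse}, which not only singles out $\gamma_\bA$ from $\alpha_\bA$ and the adjoint equivalence data, but also justifies, via \cref{rem:uniqueHsimEadjdata}, that the chosen data genuinely extends to a double functor on $\bHsim\Eadj$.
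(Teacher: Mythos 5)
Your setup and the first step — lifting $\tilde\alpha$ via (df3) — match the paper. But your construction of $\gamma_\bA$ is incorrect, and that is a real gap, not a stylistic choice.

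You propose to take $\gamma_\bA$ to be the weak inverse of $\alpha_\bA$ (with respect to $(a,c,\eta,\epsilon)$ and the trivial adjoint equivalence) and to "denote its left vertical morphism by $v_\bA$." This confuses the shape of a weak inverse. Unwinding \cref{def:whisquare} and \cref{uniqueweakinverse}: $\alpha_\bA$ has boundary $\sq{\alpha_\bA}{a}{\id_C}{u_\bA}{e_C}$, so its weak inverse has boundary $\sq{\gamma'}{c}{\id_C}{e_C}{u_\bA}$ — its left vertical boundary is the \emph{identity} $e_C$ and its right vertical boundary is $u_\bA$. There is no new vertical morphism $v_\bA$ to extract from it. On the other hand, the square $\gamma$ in $\bHsim\Eadj$ (see \cref{descr:J4}) has boundary $\sq{\gamma}{g}{\id_0}{v}{e_0}$, with the genuinely new vertical morphism $v$ on the \emph{left} and the identity on the right. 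So $\gamma$ is not the weak inverse of $\alpha$: the vertical morphism $v$ is a separate generator, induced by the other adjoint equivalence $(g,f,\epsilon^{-1},\eta^{-1})$ in $\Eadj$, not by $(f,g,\eta,\epsilon)$. Consequently $\gamma_\bA$ cannot be manufactured from $\alpha_\bA$ via \cref{uniqueweakinverse}, and your functor $L$ is missing the data needed to cover $v$ and $\gamma$.

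The worry that motivated your workaround is also unfounded. The paper simply applies (df3) twice, to $G\alpha$ and independently to $G\gamma$, obtaining $\overline{\alpha}$ and $\overline{\gamma}$ with $F\overline{\alpha}=G\alpha$ and $F\overline{\gamma}=G\gamma$; \cref{rem:uniqueHsimEadjdata} (which does invoke the uniqueness of weak inverses, but only to pin down the value of $L$ on the \emph{derived} squares such as the actual weak inverses of $\alpha$ and $\gamma$) then guarantees this data determines a double functor $L\colon\bHsim\Eadj\to\bA$, and $FL=G$ holds because the two agree on the generating data. No extra compatibility check between $\overline{\alpha}$ and $\overline{\gamma}$ is needed beyond picking them with the correct boundaries, which (df3) delivers.
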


\begin{proof}
Consider a commutative square in $\DblCat$ of the form
\begin{tz}
\node[](1) {$\bH\Eadj$}; 
\node[below of=1](2) {$\bHsim\Eadj$}; 
\node[right of=1,xshift=.5cm](3) {$\bA$}; 
\node[below of=3](4) {$\bB$}; 
\draw[->] (1) to node[above,la]{$(a,c,\eta,\epsilon)$} (3);
\draw[->] (2) to node[below,la]{$G$} (4);
\draw[->] (1) to node[left,la]{$J_4$} (2);
\draw[->] (3) to node[right,la]{$F$} (4);
\draw[->,dashed] (2) to node[pos=0.4,above,la]{$L$} (3);
\end{tz}
where $a\colon A\xrightarrow{\simeq} C$ is a horizontal adjoint equivalence with data $(a,c,\eta,\epsilon)$; we want to find a lift $L$ as depicted. The images under $G$ of the weakly horizontally invertible squares $\alpha,\gamma\in\bHsim\Eadj$ from \cref{descr:J4} are as in the two leftmost diagrams below. 
\begin{tz}
    \node[](1) {$FA$}; 
    \node[right of=1](2) {$FC$}; 
    \node[below of=1](3) {$FC$}; 
    \node[below of=2](4) {$FC$};
    \draw[->,pro] (1) to node[left,la]{$Gu$} (3);
    \draw[d,pro] (2) to (4);
    \draw[->] (1) to node[above,la]{$Fa$} node[below,la]{$\simeq$} (2);
    \draw[d] (3) to (4);
    \node[la] at ($(1)!0.5!(4)-(7pt,0)$) {$G\alpha$};
    \node[la] at ($(1)!0.5!(4)+(7pt,0)$) {$\simeq$};
    
    \node[right of=2,xshift=.5cm](1) {$FC$}; 
    \node[right of=1](2) {$FA$}; 
    \node[below of=1](3) {$FA$}; 
    \node[below of=2](4) {$FA$};
    \draw[->,pro] (1) to node[left,la]{$Gv$} (3);
    \draw[d,pro] (2) to (4);
    \draw[->] (1) to node[above,la]{$Fc$} node[below,la]{$\simeq$} (2);
    \draw[d] (3) to (4);
    \node[la] at ($(1)!0.5!(4)-(7pt,0)$) {$G\gamma$};
    \node[la] at ($(1)!0.5!(4)+(7pt,0)$) {$\simeq$};
    
     \node[right of=2,xshift=1cm](1) {$A$}; 
    \node[right of=1](2) {$C$}; 
    \node[below of=1](3) {$C$}; 
    \node[below of=2](4) {$C$};
    \draw[->,pro] (1) to node[left,la]{$\overline{u}$} (3);
    \draw[d,pro] (2) to (4);
    \draw[->] (1) to node[above,la]{$a$} node[below,la]{$\simeq$} (2);
    \draw[d] (3) to (4);
    \node[la] at ($(1)!0.5!(4)-(5pt,0)$) {$\overline\alpha$};
    \node[la] at ($(1)!0.5!(4)+(5pt,0)$) {$\simeq$};
    
    \node[right of=2,xshift=.5cm](1) {$C$}; 
    \node[right of=1](2) {$A$}; 
    \node[below of=1](3) {$A$}; 
    \node[below of=2](4) {$A$};
    \draw[->,pro] (1) to node[left,la]{$\overline{v}$} (3);
    \draw[d,pro] (2) to (4);
    \draw[->] (1) to node[above,la]{$c$} node[below,la]{$\simeq$} (2);
    \draw[d] (3) to (4);
    \node[la] at ($(1)!0.5!(4)-(5pt,0)$) {$\overline\gamma$};
    \node[la] at ($(1)!0.5!(4)+(5pt,0)$) {$\simeq$};
\end{tz}
By (df3) of \cref{prop:fibinsecond}, there are weakly horizontally invertible squares $\overline{\alpha}$ and $\overline{\gamma}$ in~$\bA$, as in the two rightmost diagrams above, such that $F\overline{\alpha}=G\alpha$ and $F\overline{\gamma}=G\gamma$. Finally, by \cref{rem:uniqueHsimEadjdata}, the data $(a,c,\eta,\epsilon)$, $\overline{\alpha}$, and $\overline{\gamma}$ determine a unique double functor $L\colon \bHsim\Eadj\to \bA$ which gives the desired lift. Note that we indeed have $G=FL$ since their images on the generating data of $\bHsim\Eadj$ coincide. 
\end{proof}

\begin{rem} \label{rem:whiRLPwrtJ4}
This result, together with \cref{prop:whiareinJinj}, guarantees that for every weakly horizontally invariant double category $\bA$, the double functor $\bA\to \mathbbm{1}$ is in $\{J_4\}\mathrm{-inj}$.
\end{rem}

Next, we show that the double functors which are $\cJ_w$-injective and double biequivalences are precisely the ones that are $\cI_w$-injective.

\begin{prop} \label{prop:JwinjcapdblbieqisIwinj}
A double functor $F\colon \bA\to \bB$ is $\cJ_w$-injective and a double biequivalence if and only if it is $\cI_w$-injective.
\end{prop}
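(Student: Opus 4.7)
The plan is to reduce both implications to the concrete characterizations at hand: $\cI_w$-injectivity means surjectivity on objects, fullness on horizontal and vertical morphisms, and fully faithfulness on squares (\cref{prop:trivfibinsecond}); $\cJ_w$-injectivity means (df1)--(df3) of \cref{prop:fibinsecond}; and being a double biequivalence means (db1)--(db4) of \cref{def:doublebieq}.

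For the forward direction, the first step is to invoke \cref{rem:trivfibaredblbieq}, which already gives that every $\cI_w$-injective double functor is a double biequivalence, reducing the task to verifying (df1)--(df3). In each of these, the strategy is uniform: surjectivity on objects supplies sources of lifts, fullness on horizontal or vertical morphisms lifts boundary data, and fully faithfulness on squares lifts squares themselves, with vertical invertibility, triangle identities, and any other equational constraint automatically reflected by faithfulness. The subtle point is (df3): after lifting the left vertical $v$ via fullness and the square $\beta$ to a candidate $\alpha$ via fully faithfulness on squares, one must show that $\alpha$ is itself weakly horizontally invertible. The idea is to fix horizontal adjoint equivalence data for $a$ and $a'$ in $\bA$; by \cref{uniqueweakinverse}, the square $\beta$ then admits a unique weak inverse $\bar{\gamma}$ in $\bB$ compatible with the $F$-images of this data, and lifting $\bar{\gamma}$ together with its four witnessing vertically invertible squares through fully faithfulness on squares yields the required weak inverse of $\alpha$ in $\bA$, with the pasting identities transferring from $\bB$ by faithfulness.

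For the reverse direction, the plan is to match each condition for being $\cI_w$-injective with an appropriate combination of a (df$i$) and a (db$j$). Fully faithfulness on squares is verbatim (db4). Surjectivity on objects follows by composing (db1) with (df1): the equivalence $B \xrightarrow{\simeq} FA$ produced by (db1) is lifted by (df1) to a horizontal equivalence in $\bA$ whose source $A'$ satisfies $FA' = B$. Fullness on horizontal morphisms follows by composing (db2) with (df2): given $b\colon FA \to FC$, (db2) produces a vertically invertible square from $b$ to some $Fa'$, which (df2) lifts to an $\bA$-square whose top is a lift $a$ of $b$.

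The main obstacle is fullness on vertical morphisms, which requires combining (db3), (df3), and the horizontal fullness just established (to control sources and targets precisely). Given $v\colon FA \arrowdot FC$ in $\bB$, (db3) produces some $u'\colon A_1 \arrowdot A_1'$ in $\bA$ and a weakly horizontally invertible square $\beta$ in $\bB$ with vertical sides $v$ and $Fu'$ and horizontal sides equivalences $b_1\colon FA \xrightarrow{\simeq} FA_1$ and $b_1'\colon FC \xrightarrow{\simeq} FA_1'$. The idea is to lift $b_1$ and $b_1'$ to morphisms $a_1\colon A \to A_1$ and $a_1'\colon C \to A_1'$ in $\bA$ with the correct sources using horizontal fullness, and then to show that $a_1$ and $a_1'$ are themselves horizontal equivalences by lifting an adjoint inverse via horizontal fullness and the unit and counit $2$-cells via fully faithfulness on squares, which also reflects their vertical invertibility and the triangle identities. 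This places us in the hypothesis of (df3), whose conclusion supplies a weakly horizontally invertible square $\alpha$ in $\bA$ with new left vertical $u\colon A \arrowdot C$ and $F\alpha = \beta$; in particular $Fu = v$, as required.
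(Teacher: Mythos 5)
Your proof is correct, and for the non-trivial reverse implication it follows essentially the same path as the paper: derive surjectivity on objects, horizontal fullness, and fully faithfulness on squares from (db1), (db2), (db4), (df1), (df2), then combine (db3), horizontal fullness (plus the standard lifting of inverses and unit/counit $2$-cells through fully faithfulness on squares to see the lifted boundary morphisms are horizontal equivalences), and (df3) to get vertical fullness.

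Where you diverge is the forward implication. You verify $\cI_w$-injective $\Rightarrow$ $\cJ_w$-injective by hand, checking (df1)--(df3) directly, with a careful argument for the weak inverse in (df3) via \cref{uniqueweakinverse} and reflection of pasting equalities along a square-fully-faithful functor. The paper instead observes that the maps $J_1,J_2,J_3$ lie in $\Iwcof$ (via the characterization in \cref{thm:charcofDblsecond}, cf.\ \cref{rem:JinIcof}), so that $\Iwinj = \Iwcof^{\boxslash}\subseteq \cJ_w^{\boxslash}=\Jwinj$ formally, and then appeals to \cref{rem:trivfibaredblbieq} exactly as you do. Your route avoids invoking the cofibration characterization but is noticeably longer; the paper's is a one-liner given that preliminary. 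Both are valid. A minor remark: the reflection-of-equivalences argument you use in the reverse direction is also needed in your (df1) step (to see the lift of a horizontal equivalence is again a horizontal equivalence), which you gesture at but don't spell out there; this is the same check and not a genuine gap.
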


\begin{proof}
Since $\cJ_w\subseteq \Iwcof$ by \cref{rem:JinIcof}, we have $\Iwinj=\Iwcof^\boxslash\subseteq \cJ_w^\boxslash=\Jwinj$. Furthermore, by \cref{rem:trivfibaredblbieq}, a double functor in $\Iwinj$ is in particular a double biequivalence, which shows the converse statement.

Now suppose that $F$ is $\cJ_w$-injective and a double biequivalence. We prove that $F$ is $\cI_w$-injective using \cref{prop:trivfibinsecond}. It is straightforward to see that $F$ is surjective on objects, full on morphisms, and fully faithful on squares using (db1-2) and (db4) of \cref{def:doublebieq} and (df1-2) of \cref{prop:fibinsecond}. To prove that $F$ is full on vertical morphisms, let $A$,~$A'$ be objects in $\bA$, and $v\colon FA\arrowdot FA'$ be a vertical morphism in $\bB$. Since $F$ satisfies~(db3), there is a vertical morphism $w\colon C\arrowdot C'$ in $\bA$ together with a weakly horizontally invertible square $\beta$ in $\bB$ as depicted below left. 
 \begin{tz}
    \node[](1) {$FA$}; 
    \node[right of=1](2) {$FC$}; 
    \node[below of=1](3) {$FA'$}; 
    \node[below of=2](4) {$FC'$};
    \draw[->,pro] (1) to node[left,la]{$v$} (3);
    \draw[->,pro] (2) to node[right,la]{$Fw$} (4);
    \draw[->] (1) to node[above,la]{$b$} node[below,la]{$\simeq$} (2);
    \draw[->] (3) to node[below,la]{$b'$} node[above,la]{$\simeq$} (4);
    \node[la] at ($(1)!0.5!(4)-(5pt,0)$) {$\beta$};
    \node[la] at ($(1)!0.5!(4)+(5pt,0)$) {$\simeq$};
    
    \node[right of=2,xshift=1cm](1) {$A$}; 
    \node[right of=1](2) {$C$}; 
    \node[below of=1](3) {$A'$}; 
    \node[below of=2](4) {$C'$};
    \draw[->,pro] (1) to node[left,la]{$u$} (3);
    \draw[->,pro] (2) to node[right,la]{$w$} (4);
    \draw[->] (1) to node[above,la]{$a$} node[below,la]{$\simeq$} (2);
    \draw[->] (3) to node[below,la]{$a'$} node[above,la]{$\simeq$} (4);
    \node[la] at ($(1)!0.5!(4)-(5pt,0)$) {$\alpha$};
    \node[la] at ($(1)!0.5!(4)+(5pt,0)$) {$\simeq$};
    \end{tz}
Since $F$ is full on horizontal morphisms and fully faithful on squares, there are horizontal equivalences $a\colon A\xrightarrow{\simeq} C$ and $a'\colon A'\xrightarrow{\simeq} C'$ in $\bA$ such that $b=Fa$ and $b'=Fa'$. Then, by~(df3), there is a weakly horizontally invertible square $\alpha$ in $\bA$ as depicted above right such that $\beta=F\alpha$; in particular, we have $v=Fu$. This completes the proof.
\end{proof}

\subsection{\texorpdfstring{$\cJ_w$}{Jw}-cofibrations and double biequivalences}

We now focus on the $\cJ_w$-co\-fib\-ra\-tions. First, we show that they are cofibrations in our proposed model structure, which additionally satisfy the requirements of a double biequivalence except for condition (db3) on vertical morphisms.

\begin{prop} \label{prop:trivcofibssecond}
Let $J\colon \bA\to \bB$ be a double functor in $\Jwcof$. Then $J$ satisfies the following conditions:
\begin{rome}
    \item it is injective on objects, and faithful on horizontal and vertical morphisms,
    \item it satisfies \textnormal{(db1-2)} and \textnormal{(db4)} of \cref{def:doublebieq}.
\end{rome}
\end{prop}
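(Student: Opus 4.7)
The plan is to handle (i) as an immediate consequence of the $\cI_w$-cofibration characterization, and to prove (ii) by tracking the three biequivalence conditions through the construction of $\cJ_w$-cofibrations from the generating set.

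For (i), since $\cJ_w \subseteq \Iwcof$ by \cref{rem:JinIcof}, we have $\Jwcof \subseteq \Iwcof$, and the stated injectivity on objects and faithfulness on horizontal and vertical morphisms follow immediately from \cref{rem:injectivityofIw}.

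For (ii), every $\cJ_w$-cofibration is, by definition, a retract of a relative $\cJ_w$-cell complex, i.e., a transfinite composition of pushouts of maps in $\cJ_w$. So it suffices to check that each generator $J_1, J_2, J_3$ satisfies (db1), (db2), and (db4), and then to show that these three conditions are preserved under pushouts along arbitrary double functors, transfinite compositions, and retracts. The base case is immediate, since each $J_i$ is a double biequivalence by \cref{rem:JinIcof} and hence satisfies all of (db1)--(db4). Closure under retracts is a standard diagram chase. Closure under transfinite composition follows because objects, horizontal morphisms, and squares of a double category are preserved by filtered colimits, so the required witnesses from earlier stages persist. For closure under pushouts along a double functor $F\colon \bA \to \bA'$, producing $J' \colon \bA' \to \bB'$ from $J \colon \bA \to \bB$, one analyses the generating data of $\bB'$ as coming from $\bA'$ and $\bB$, glued along $\bA$. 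For (db1), an object of $\bB'$ either lies in the image of $\bA' \to \bB'$ (hence in the image of $J'$) or descends from $\bB$; in the latter case, the horizontal equivalence to an object of $J(\bA)$ provided by (db1) for $J$ survives the pushout. An analogous case analysis, combined with (db2) and (db4) for $J$, yields (db2) for $J'$.

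The main obstacle is closure under pushouts for condition (db4), because squares in the pushout $\bB'$ can arise as horizontal or vertical composites of squares drawn from both $\bB$ and $\bA'$. One must argue that any square in $\bB'$ whose entire boundary lies in the image of $J'$ must descend from a square in $\bB$ whose boundary lies in $J(\bA)$, at which point (db4) for $J$ produces the unique pre-image in $\bA$, and hence in $\bA'$ via $F$. Carrying this out requires an explicit description of the generating squares adjoined by each of $J_1, J_2, J_3$ in a pushout, together with a verification that any composite of such new squares whose full boundary lies in $J'(\bA')$ is already recoverable from a square in $\bB$ along $J$.
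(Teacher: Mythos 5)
Your part (i) is exactly the paper's argument. For part (ii), your overall plan — reduce to a transfinite induction over the cell structure and then handle retracts — matches the paper in spirit, but you leave the crux of the (db4) step as a declared ``obstacle'' rather than resolving it, and the way you set up the induction makes that obstacle harder than it needs to be.

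The framing ``show (db1), (db2), (db4) are preserved under pushouts along arbitrary double functors'' is too strong. In a relative $\cJ_w$-cell complex $\bA = \bX_0 \to \bX_1 \to \cdots$, you are not taking pushouts of the cumulative map $\bA \to \bX_\mu$ along anything; each step $\bX_\mu \to \bX_{\mu+1}$ is a pushout of one of the generators $J_1, J_2, J_3$. So what you actually need is much weaker: that \emph{each such step} satisfies (db1--2, db4), plus closure of those conditions under composition, transfinite colimit, and retract. This is what lets you avoid proving closure under pushouts for arbitrary $J$ (which is in doubt, since pushouts in $\DblCat$ can introduce new composites in ways you cannot control without knowing the shape of the attached cell).

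Once you adopt the weaker form, the (db4) step becomes a finite check: inspecting $J_1 \colon \mathbbm 1 \to \bH\Eadj$, $J_2 \colon \bH\mathbbm 2 \to \bH C_{\mathrm{inv}}$, and $J_3 \colon \bW^- \to \bW$, the data freely attached in a pushout (a new object with a fresh horizontal adjoint equivalence; a new parallel horizontal morphism with a fresh vertically invertible square; a new vertical morphism with a fresh weakly horizontally invertible square) always involves at least one freshly created boundary cell, and no existing squares are identified. Consequently, a pushout along any $J_i$ adds no squares within an already-existing boundary and collapses none; this is exactly what makes each step $\bX_\mu \to \bX_{\mu+1}$ fully faithful on squares, and (db4) for the whole cell complex follows by composition and the limit step. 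The same inspection gives (db1) (new objects only come with an equivalence to an existing one, via $J_1$) and (db2) (new horizontal morphisms only come with a vertically invertible comparison square, via $J_2$). Without this generator-by-generator observation your proof has a genuine hole at the point you yourself flag, namely ruling out that arbitrary pushouts could create new squares within a boundary lying in $J'(\bA')$.
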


\begin{proof}
Since $\cJ_w\subseteq \Iwcof$ by \cref{rem:JinIcof}, we have that $\Jwcof\subseteq \Iwcof$; hence $J$ is injective on objects, and faithful on horizontal and vertical morphisms, by \cref{rem:injectivityofIw}.

Now, since objects can only be added along $J_1\colon \mathbbm 1\to \bH\Eadj$, i.e., with a horizontal equivalence to an object which was already there, then $J$ satisfies (db1). Similarly, as horizontal morphisms can only be added along $J_2\colon \bH\mathbbm 2\to C_\mathrm{inv}$, we can check that $J$ satisfies (db2). Finally note that $J$ satisfies (db4), since taking pushouts along $J_1$, $J_2$, and $J_3$ does not create new squares within an existing boundary, nor does it identify squares. 
\end{proof}

When the source of a $\cJ_w$-cofibration is a weakly horizontally invariant double category, we can further show that (db3) of \cref{def:doublebieq} is satisfied, and hence that every such $\cJ_w$-cofibration is a double biequivalence. 

\begin{prop} \label{prop:Jwcellwithwhisourcearedblbieq}
Let $J\colon \bA\to \bB$ be a double functor in $\Jwcof$ such that $\bA$ is weakly horizontally invariant. Then $J$ is a double biequivalence. 
\end{prop}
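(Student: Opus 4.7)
Since \cref{prop:trivcofibssecond} already provides (db1), (db2), and (db4) for $J$, the task is to verify (db3) of \cref{def:doublebieq}. My plan is to combine the retract argument with transfinite induction along a cellular decomposition. By the small object argument applied to $\cJ_w$, the double functor $J$ is a retract (in the arrow category) of some relative $\cJ_w$-cell complex $J'\colon \bA \to \bB'$ with the same source $\bA$. Since double biequivalences are the weak equivalences of the model structure on $\DblCat$ from \cite{MSV}, they are closed under retracts; hence it suffices to prove the claim for $J'$.

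For this I would run a transfinite induction along a presentation $\bA = \bB_0 \to \bB_1 \to \dots \to \bB_\lambda = \bB'$, showing at each stage $\beta$ that $\bA \to \bB_\beta$ satisfies (db3). The limit step is straightforward because every vertical morphism of $\bB_\beta$ is already represented in some $\bB_\alpha$ with $\alpha < \beta$. In the successor step, pushouts along $J_2$ do not introduce new vertical structure, while pushouts along $J_1$ only add an identity vertical morphism at a fresh object, which is handled by combining (db1) with the companion identity square on a horizontal equivalence. The substantive case is a $J_3$-pushout, which inserts a new vertical morphism $u'$ together with a weakly horizontally invertible square $\alpha'$ that connects $u'$ to an existing vertical morphism $w \in \bB_\alpha$ along horizontal equivalences already in $\bB_\alpha$. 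Applying the inductive hypothesis to $w$ produces a witness $(u_0, e, f, \gamma_0)$ with $u_0 \in \bA$, and the horizontal pasting of $\alpha'$ with $\gamma_0$ furnishes the required witness for $u'$, with top and bottom boundaries that are composites of horizontal equivalences.

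The main obstacle is handling vertical composites of the new morphisms with old ones, and I would isolate this as a composition lemma: whenever (db3) holds for $v_1\colon B\arrowdot B'$ and $v_2\colon B'\arrowdot B''$ with witnesses $(u_1, e_1, f_1, \gamma_1)$ and $(u_2, e_2, f_2, \gamma_2)$, it also holds for their vertical composite $v_2 \cdot v_1$. The naive guess $u_2 \cdot u_1$ fails because the targets $JA_1'$ and $JA_2$ of $f_1$ and $e_2$ need not agree. To reconcile them, I would compose $f_1$ with a pseudo-inverse of $e_2$ to obtain a horizontal equivalence $JA_1' \xrightarrow{\simeq} JA_2$ in $\bB$ and lift it to a horizontal adjoint equivalence $h\colon A_1' \to A_2$ in $\bA$: the underlying morphism and its pseudo-inverse come from (db2), while the unit and counit squares come from (db4). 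Weak horizontal invariance of $\bA$, applied to $h$, $\id_{A_2'}$, and $u_2$, then yields a vertical morphism $u_2'\colon A_1' \arrowdot A_2'$ in $\bA$ together with a weakly horizontally invertible square $\delta$ identifying $u_2'$ and $u_2$ up to $h$. Setting $u = u_2' \cdot u_1$, the desired square is assembled by horizontally pasting $\gamma_2$ with $J$ of a weak inverse of $\delta$ (to adjust the right boundary to $Ju_2'$), correcting the top boundary back to $f_1$ via the vertically invertible square witnessing $Jh \simeq e_2 \cdot f_1^\sharp$, and finally vertically composing with $\gamma_1$. This composition lemma together with the cell-by-cell analysis completes the induction.
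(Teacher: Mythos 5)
Your proof is correct and follows the same overall strategy as the paper's: reduce via retracts to a relative $\cJ_w$-cell complex, then run a transfinite induction with (db3) as the inductive invariant, with the successor step for $J_3$-pushouts using (db2), (db4), and weak horizontal invariance of $\bA$ to lift the gluing horizontal equivalences into $\bA$ and transfer vertical morphisms across them. The only notable difference is organizational: the paper directly assembles the witness for a composite $v_1 \overline{w} v_0$ inside the inductive step (treating the fresh morphism $\overline{w}$ and the old pieces together, and leaving the general-length composite as ``proceeds similarly''), whereas you first produce a witness for $\overline{w}$ alone and then factor out a standalone composition lemma for (db3)-witnesses --- a slightly cleaner modularization of the same argument.
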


\begin{proof}
We first prove the case when $J\in\Jwcell$. By \cref{prop:trivcofibssecond}, we have that $J$ satisfies (db1-2) and (db4) of \cref{def:doublebieq}; it remains to show (db3). Let $\lambda$ be an ordinal and let $\bX\colon \lambda\to \DblCat$ be a transfinite composition of pushouts of double functors in $\cJ_w$ such that $J$ is the composite
\[ J\colon \bA\cong J(\bA)=\bX_0\xrightarrow{\iota_0} \mathrm{colim}_{\mu<\lambda} \bX_\mu=\bB. \]

Let $v\colon B\arrowdot B'$ be a vertical morphism in $\bB$. We use transfinite induction to show that there is a vertical morphism $u\colon A\arrowdot A'$ in $\bA$ and a weakly horizontally invertible square~$\beta$ in $\bB$ of the form
\begin{tz}
\node[](A) {$JA$};
\node[right of=A](B) {$B$};
\node[below of=A](A') {$JA'$};
\node[right of=A'](B') {$B'$};
\draw[->,pro] (A) to node[left,la] {$Ju$} (A');
\draw[->,pro] (B) to node[right,la] {$v$} (B');
\draw[->] (A) to node[above, la] {$\simeq$} (B);
\draw[->] (A') to node[below, la] {$\simeq$} (B');

\node[la] at ($(A)!0.5!(B')+(5pt,0)$) {$\simeq$};
\node[la] at ($(A)!0.5!(B')-(5pt,0)$) {$\beta$};
\end{tz}
If $v\in \bX_0=J(\bA)$, then there is a vertical morphism $u\colon A\arrowdot A'$ in $\bA$ such that $Ju=v$ and we can take $\beta=\id_{Ju}$.

Now suppose that $v\in \bX_{\mu+1}$ for a successor ordinal $\mu+1<\lambda$. If $v\in \bX_\mu$, then we are done by induction. Otherwise, the double category $\bX_{\mu+1}$ was obtained as a pushout along $J_3$ as depicted below left 
\begin{tz}
\node[](1) {$\bW^{-}$}; 
\node[below of=1](2) {$\bW$}; 
\node[right of=1,xshift=.5cm](3) {$\bX_\mu$}; 
\node[below of=3](4) {$\bX_{\mu+1}$}; 
\draw[->] (1) to node[above,la]{$(w,d,d')$} (3);
\draw[->] (1) to node[left,la]{$J_3$} (2);
\draw[->] (3) to node[right,la]{$i_\mu$} (4);
\draw[->] (2) to node[below,la]{$\delta$} (4);
\node at ($(4)+(-8pt,10pt)$) {$\ulcorner$};

\node[right of=3,xshift=1cm](A) {$D$};
\node[right of=A](B) {$Y$};
\node[below of=A](A') {$D'$};
\node[right of=A'](B') {$Y'$};
\draw[->,pro] (A) to node[left,la] {$w$} (A');
\draw[->,pro] (B) to node[right,la] {$\overline{w}$} (B');
\draw[->] (A) to node[below, la] {$\simeq$} node[above,la]{$d$} (B);
\draw[->] (A') to node[below, la]{$d'$} node[above,la] {$\simeq$} (B');

\node[la] at ($(A)!0.5!(B')+(5pt,0)$) {$\simeq$};
\node[la] at ($(A)!0.5!(B')-(5pt,0)$) {$\delta$};
\end{tz} 
where $w$ is a vertical morphism in $\bX_\mu$, $d$, $d'$ are horizontal equivalences in $\bX_\mu$, and $\delta$ is a weakly horizontally invertible square in $\bB$, as depicted above right. Then the vertical morphism $v\in \bX_{\mu+1}$ is a composite of vertical morphisms in $\bX_\mu$ and the freely added vertical morphism $\overline{w}$. We prove that the result holds for a composite of the form $v=v_1 \overline{w}v_0$ with $v_0\colon B\arrowdot Y$ and $v_1\colon Y'\arrowdot B'$ two vertical morphisms in $\bX_\mu$; the general case proceeds similarly.

By induction, since $v_0$, $v_1$, and $w$ are in $\bX_\mu$, there are vertical morphisms $u_0$, $u_1$, and $t$ in $\bA$, and weakly horizontally invertible squares $\beta_0$, $\beta_1$ and $\varphi$ in~$\bB$, as depicted below.
\begin{tz}
\node[](A) {$JA$};
\node[right of=A](B) {$B$};
\node[below of=A](A') {$JC$};
\node[right of=A'](B') {$Y$};
\draw[->,pro] (A) to node[left,la] {$Ju_0$} (A');
\draw[->,pro] (B) to node[right,la] {$v_0$} (B');
\draw[->] (A) to node[above, la] {$b_0$} node[below,la] {$\simeq$} (B);
\draw[->] (A') to node[below, la] {$b'_0$} node[above, la] {$\simeq$} (B');

\node[la] at ($(A)!0.5!(B')+(5pt,0)$) {$\simeq$};
\node[la] at ($(A)!0.5!(B')-(5pt,0)$) {$\beta_0$};

\node[right of=B,xshift=.5cm](A) {$JC'$};
\node[right of=A](B) {$Y'$};
\node[below of=A](A') {$JA'$};
\node[right of=A'](B') {$B'$};
\draw[->,pro] (A) to node[left,la] {$Ju_1$} (A');
\draw[->,pro] (B) to node[right,la] {$v_1$} (B');
\draw[->] (A) to node[above, la] {$b_1$} node[below,la] {$\simeq$} (B);
\draw[->] (A') to node[below, la] {$b'_1$} node[above, la] {$\simeq$} (B');

\node[la] at ($(A)!0.5!(B')+(5pt,0)$) {$\simeq$};
\node[la] at ($(A)!0.5!(B')-(5pt,0)$) {$\beta_1$};

\node[right of=B,xshift=.5cm](A) {$JX$};
\node[right of=A](B) {$D$};
\node[below of=A](A') {$JX'$};
\node[right of=A'](B') {$D'$};
\draw[->,pro] (A) to node[left,la] {$Jt$} (A');
\draw[->,pro] (B) to node[right,la] {$w$} (B');
\draw[->] (A) to node[above, la] {$f$} node[below,la] {$\simeq$} (B);
\draw[->] (A') to node[below, la] {$f'$} node[above, la]{$\simeq$} (B');

\node[la] at ($(A)!0.5!(B')+(5pt,0)$) {$\simeq$};
\node[la] at ($(A)!0.5!(B')-(5pt,0)$) {$\varphi$};
\end{tz}

Let $(df,g,\eta,\epsilon)$ and $(d'f',g',\eta',\epsilon')$ be horizontal adjoint equivalence data in $\bB$ for the composites $df$ and $d'f'$. Since $J$ satisfies (db2) and (db4), there are horizontal equivalences $a\colon C\xrightarrow{\simeq} X$ and $a'\colon C'\xrightarrow{\simeq} X'$ in $\bA$ together with vertically invertible squares $\psi$ and $\psi'$ in~$\bB$ as in the two leftmost squares below.
\begin{tz}
    \node[](1) {$JC$}; 
    \node[right of=1](2) {$Y$}; 
    \node[right of=2](4) {$JX$}; 
    \draw[->] (1) to node[above,la]{$b'_0$} node[below,la]{$\simeq$} (2);
    \draw[->] (2) to node[above,la]{$g$} node[below,la]{$\simeq$} (4);
    \node[below of=1](1') {$JC$}; 
    \node[below of=4](4') {$JX$}; 
    \draw[->] (1') to node[below,la]{$Ja$} node[above,la]{$\simeq$} (4');
    \draw[d,pro](1) to (1');
    \draw[d,pro](4) to (4');
    \node[la] at ($(1)!0.5!(4')+(5pt,0)$) {$\vcong$};
    \node[la] at ($(1)!0.5!(4')-(5pt,0)$) {$\psi$};
    
    \node[right of=4,xshift=.5cm](1) {$JC'$}; 
    \node[right of=1](2) {$Y'$}; 
    \node[right of=2](4) {$JX'$}; 
    \draw[->] (1) to node[above,la]{$b_1$} node[below,la]{$\simeq$} (2);
    \draw[->] (2) to node[above,la]{$g'$} node[below,la]{$\simeq$} (4);
    \node[below of=1](1') {$JC'$}; 
    \node[below of=4](4') {$JX'$}; 
    \draw[->] (1') to node[below,la]{$Ja'$} node[above,la]{$\simeq$} (4');
    \draw[d,pro](1) to (1');
    \draw[d,pro](4) to (4');
    \node[la] at ($(1)!0.5!(4')+(5pt,0)$) {$\vcong$};
    \node[la] at ($(1)!0.5!(4')-(5pt,0)$) {$\psi'$};
    
    \node[right of=4,xshift=1cm](A) {$C$};
\node[right of=A](B) {$X$};
\node[below of=A](A') {$C'$};
\node[right of=A'](B') {$X$};
\draw[->,pro] (A) to node[left,la] {$\overline{u}$} (A');
\draw[->,pro] (B) to node[right,la] {$t$} (B');
\draw[->] (A) to node[above, la] {$a$} node[below,la] {$\simeq$} (B);
\draw[->] (A') to node[below, la] {$a'$} node[above, la] {$\simeq$} (B');

\node[la] at ($(A)!0.5!(B')+(5pt,0)$) {$\simeq$};
\node[la] at ($(A)!0.5!(B')-(5pt,0)$) {$\alpha$};
\end{tz}
Then, as $\bA$ is weakly horizontally invariant, there is a vertical morphism $\overline{u}\colon C\arrowdot C'$ and a weakly horizontally invertible square $\alpha$ in $\bA$ as depicted above right. Setting $u\coloneqq u_1 \overline{u} u_0\colon A\arrowdot A'$ and  considering the following pasting of squares in~$\bB$
\begin{tz}
\node[](1) {$JA$}; 
\node[right of=1](2) {$B$}; 
\node[right of=2,xshift=3cm](5) {$B$}; 
\draw[->] (1) to node[above,la]{$b_0$} (2); 
\draw[d] (2) to (5); 

\node[below of=1](1') {$JC$}; 
\node[right of=1'](2') {$Y$}; 
\node[below of=5](5') {$Y$}; 
\draw[->] (1') to node[below,la]{$b'_0$} (2'); 
\draw[d] (2') to (5'); 
\draw[->,pro] (1) to node[left,la]{$Ju_0$} (1');
\draw[->,pro] (2) to node[right,la]{$v_0$} (2');
\draw[->,pro] (5) to node[right,la]{$v_0$} (5');

\node[la] at ($(1)!0.5!(2')-(5pt,0)$) {$\beta_0$}; 
\node[la] at ($(1)!0.5!(2')+(5pt,0)$) {$\simeq$}; 
\node[la] at ($(2)!0.5!(5')$) {$\id_{v_0}$};  

\node[below of=1'](1) {$JC$}; 
\node[right of=1](2) {$Y$}; 
\node[right of=2](3) {$JX$}; 
\node[right of=3](4) {$D$}; 
\node[right of=4](5) {$Y$};
\draw[->] (1) to node[below,la]{$b'_0$} (2);
\draw[->] (2) to node[below,la]{$g$} (3); 
\draw[->] (3) to node[above,la]{$f$} (4); 
\draw[->] (4) to node[above,la]{$d$} (5);
\draw[d,pro] (1') to (1);
\draw[d,pro] (2') to (2);
\draw[d,pro] (5') to (5);

\node[la] at ($(1')!0.5!(2)$) {$e_{b'_0}$}; 
\node[la] at ($(2')!0.5!(5)-(5pt,0)$) {$\epsilon^{-1}$}; 
\node[la] at ($(2')!0.5!(5)+(5pt,0)$) {$\vcong$}; 

\node[below of=1](1') {$JC$}; 
\node[below of=3](3') {$JX$}; 
\node[right of=3'](4') {$D$}; 
\node[right of=4'](5') {$Y$};
\draw[->] (1') to node[above,la]{$Ja$} (3'); 
\draw[->] (3') to node[above,la]{$f$} (4'); 
\draw[->] (4') to node[above,la]{$d$} (5');
\draw[d,pro] (1) to (1');
\draw[d,pro] (3) to (3');
\draw[d,pro] (4) to (4');
\draw[d,pro] (5) to (5');

\node[la] at ($(1)!0.5!(3')-(5pt,0)$) {$\psi$}; 
\node[la] at ($(1)!0.5!(3')+(5pt,0)$) {$\vcong$}; 
\node[la] at ($(3)!0.5!(4')$) {$e_{f}$}; 
\node[la] at ($(4)!0.5!(5')$) {$e_{d}$}; 

\node[below of=1'](1) {$JC'$}; 
\node[below of=3'](3) {$JX'$}; 
\node[right of=3](4) {$D'$}; 
\node[right of=4](5) {$Y'$};
\draw[->] (1) to node[below,la]{$Ja'$} (3); 
\draw[->] (3) to node[below,la]{$f'$} (4); 
\draw[->] (4) to node[below,la]{$d'$} (5);
\draw[->,pro] (1') to node[left,la]{$J\overline{u}$} (1);
\draw[->,pro] (3') to node[left,la]{$Jt$} (3);
\draw[->,pro] (4') to node[left,la]{$w$} (4);
\draw[->,pro] (5') to node[right,la]{$\overline{w}$} (5);

\node[la] at ($(1')!0.5!(3)-(7pt,0)$) {$J\alpha$}; 
\node[la] at ($(1')!0.5!(3)+(7pt,0)$) {$\simeq$}; 
\node[la] at ($(3')!0.5!(4)-(5pt,0)$) {$\varphi$}; 
\node[la] at ($(3')!0.5!(4)+(5pt,0)$) {$\simeq$}; 
\node[la] at ($(4')!0.5!(5)-(5pt,0)$) {$\delta$}; 
\node[la] at ($(4')!0.5!(5)+(5pt,0)$) {$\simeq$};

\node[below of=1](1') {$JC'$}; 
\node[right of=1'](2') {$Y'$}; 
\node[right of=2'](3') {$JX'$}; 
\node[right of=3'](4') {$D'$}; 
\node[right of=4'](5') {$Y'$};
\draw[->] (1') to node[above,la]{$b_1$} (2');
\draw[->] (2') to node[above,la]{$g'$} (3'); 
\draw[->] (3') to node[below,la]{$f'$} (4'); 
\draw[->] (4') to node[below,la]{$d'$} (5');
\draw[d,pro] (1) to (1');
\draw[d,pro] (3) to (3');
\draw[d,pro] (4) to (4');
\draw[d,pro] (5) to (5');

\node[la] at ($(1)!0.5!(3')-(7pt,0)$) {$(\psi')^{-1}$}; 
\node[la] at ($(1)!0.5!(3')+(7pt,0)$) {$\vcong$}; 
\node[la] at ($(3)!0.5!(4')$) {$e_{f'}$}; 
\node[la] at ($(4)!0.5!(5')$) {$e_{d'}$}; 

\node[below of=1'](1) {$JC'$}; 
\node[right of=1](2) {$Y'$}; 
\node[below of=5'](5) {$Y'$}; 
\draw[->] (1) to node[above,la]{$b_1$} (2); 
\draw[d] (2) to (5); 
\draw[d,pro] (1') to (1);
\draw[d,pro] (2') to (2);
\draw[d,pro] (5') to (5);

\node[la] at ($(1')!0.5!(2)$) {$e_{b_1}$}; 
\node[la] at ($(2')!0.5!(5)-(5pt,0)$) {$\epsilon'$}; 
\node[la] at ($(2')!0.5!(5)+(5pt,0)$) {$\vcong$};

\node[below of=1](1') {$JA'$}; 
\node[right of=1'](2') {$B'$}; 
\node[below of=5](5') {$B'$}; 
\draw[->] (1') to node[below,la]{$b'_1$} (2'); 
\draw[d] (2') to (5'); 
\draw[->,pro] (1) to node[left,la]{$Ju_1$} (1');
\draw[->,pro] (2) to node[right,la]{$v_1$} (2');
\draw[->,pro] (5) to node[right,la]{$v_1$} (5');

\node[la] at ($(1)!0.5!(2')-(5pt,0)$) {$\beta_1$}; 
\node[la] at ($(1)!0.5!(2')+(5pt,0)$) {$\simeq$}; 
\node[la] at ($(2)!0.5!(5')$) {$\id_{v_1}$};  
\end{tz}
we obtain a weakly horizontally invertible square of the desired form between the vertical morphisms $Ju=(Ju_1)(J\overline{u})(Ju_0)$ and $v=v_1\overline{w}v_0$. 

Finally, if $v\in \bX_\kappa=\mathrm{colim}_{\mu<\kappa}\bX_\mu$ for a limit ordinal $\kappa<\lambda$, there is an ordinal $\mu<\kappa$ such that $v\in \bX_\mu$, and we are done by induction. This shows (db3) for $J$, and proves that $J$ is a double biequivalence.

Now if $J\colon \bA\to \bB$ is in $\Jwcof$, then it is a retract of a double functor $K\colon \bA\to \bC$ in $\Jwcell$, whose source is also the weakly horizontally invariant double category $\bA$. By the first part of the proof, the double functor $K$ is a double biequivalence, and therefore so is $J$.
\end{proof}

\subsection{Fibrations and \texorpdfstring{$\cJ_w$}{Jw}-injective double functors}

To conclude this section, we prove our claim that a double functor whose target is weakly horizontally invariant is a fibration precisely when it is $\cJ_w$-injective. We start by showing that the class of fibrations is included in $\Jwinj$. 

\begin{lemme} \label{lem:FinJwinj}
We have that $\cF\subseteq \Jwinj$. 
\end{lemme}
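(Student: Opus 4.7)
The plan is to deduce the inclusion $\cF \subseteq \Jwinj$ from the inclusion $\cJ_w \subseteq \cC \cap \cW$ by taking right-lifting classes. Concretely, if I can show that every double functor in $\cJ_w$ is both a cofibration and a weak equivalence, then by definition of $\cF = (\cC \cap \cW)^\boxslash$ together with monotonicity of the right-orthogonal operation, I get
\[
\cF = (\cC \cap \cW)^\boxslash \subseteq \cJ_w^\boxslash = \Jwinj,
\]
which is exactly what I want.

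The first step is to verify $\cJ_w \subseteq \cC = \Iwcof$. This is already recorded in \cref{rem:JinIcof}, where it is observed that $J_1, J_2, J_3$ are $\cI_w$-cofibrations (this can be checked directly from the characterization of $\cI_w$-cofibrations in \cref{thm:charcofDblsecond}, since the underlying horizontal and vertical functors of these three maps all have the required lifting property against surjective-on-objects and full functors).

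The second step is to verify $\cJ_w \subseteq \cW$. Again by \cref{rem:JinIcof}, each of $J_1, J_2, J_3$ is a double biequivalence; indeed, each inclusion is bijective on objects up to a horizontal equivalence, full on horizontal and vertical morphisms up to vertical isomorphism and up to a weakly horizontally invertible square respectively, and fully faithful on squares, so conditions (db1)--(db4) of \cref{def:doublebieq} are easy to check. Having established this, \cref{prop:dblbieqareinW} immediately gives the inclusion into $\cW$.

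Combining the two steps yields $\cJ_w \subseteq \cC \cap \cW$, and the inclusion $\cF \subseteq \Jwinj$ follows formally. There is no real obstacle here; this lemma is essentially a bookkeeping consequence of the work done in \cref{rem:JinIcof} and \cref{prop:dblbieqareinW}, and it is the reverse inclusion (that $\cJ_w$-injective maps with weakly horizontally invariant target are fibrations), to be established subsequently, that will require genuine content.
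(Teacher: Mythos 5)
Your proof is correct and follows exactly the same route as the paper: both deduce $\cJ_w \subseteq \cC \cap \cW$ from \cref{rem:JinIcof} and \cref{prop:dblbieqareinW}, and then pass to right-lifting classes.
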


\begin{proof}
Since every double functor in $\cJ_w$ is a double biequivalence by \cref{rem:JinIcof}, it is in $\cW$ by \cref{prop:dblbieqareinW}. This, together with \cref{rem:JinIcof}, implies that $\cJ_w\subseteq \cC\cap\cW$. Therefore $\cF=(\cC\cap\cW)^\boxslash\subseteq \cJ_w^\boxslash=\Jwinj$, which concludes the proof.
\end{proof}

For the converse inclusion, we will use the next incremental lemmas, which ultimately ensure that the weakly horizontally invariant replacement of a trivial cofibration is a $\cJ_w$-cofibration. 

\begin{lemme} \label{lem:whiofCcapWisIcof}
Let $I\colon \bA\to \bB$ be a double functor in $\cC=\Iwcof$ which is fully faithful on squares. Then the induced double functor $I^\whi\colon \bA^\whi\to \bB^\whi$ is in $\cC$. 
\end{lemme}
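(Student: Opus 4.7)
The plan is to factor $I^\whi$ as a composite of two double functors, each in $\cC$ by closure properties.

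Since $I\in\cC=\Iwcof$, by \cref{rem:injectivityofIw} it is injective on objects and faithful on horizontal morphisms. Combining this with the hypothesis that $I$ is fully faithful on squares, the induced map on horizontal adjoint equivalence data $\mathrm{HorEq}(\bA)\to\mathrm{HorEq}(\bB)$ is injective, as each of the four components $(a,c,\eta,\epsilon)$ is reflected by $I$. This injection will allow a clean two-step decomposition of $\bB^\whi$.

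Form the pushout $\bB'\coloneqq \bA^\whi\cup_\bA \bB$ of $I$ along $j_\bA$, giving a map $\bar I\colon \bA^\whi\to\bB'$. The universal property of $\bB^\whi$ then yields a unique $K\colon \bB'\to\bB^\whi$ with $I^\whi=K\circ\bar I$. The map $\bar I$ is a pushout of $I\in\cC$, and hence itself lies in $\cC$ by closure of $\cC$ under pushouts. For $K$, pushout pasting applied to the square defining $\bA^\whi$ together with the one defining $\bB'$ shows that $\bB'$ may equivalently be described as $\bB$ with $\bHsim\Eadj$ attached along each horizontal adjoint equivalence data in $I(\mathrm{HorEq}(\bA))$. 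Using the splitting $\mathrm{HorEq}(\bB)=I(\mathrm{HorEq}(\bA))\sqcup(\mathrm{HorEq}(\bB)\setminus I(\mathrm{HorEq}(\bA)))$, one then identifies $K$ as a pushout of $\bigsqcup J_4$ indexed by $\mathrm{HorEq}(\bB)\setminus I(\mathrm{HorEq}(\bA))$. Granted that $J_4\in\cC$, this shows $K\in\cC$ by closure of $\cC$ under coproducts and pushouts, whence $I^\whi=K\circ\bar I\in\cC$ by closure under composition.

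The principal obstacle is verifying $J_4\colon \bH\Eadj\to\bHsim\Eadj$ lies in $\cC$. Via \cref{thm:charcofDblsecond}, the underlying horizontal functor $U\bfH J_4$ is the identity on $U\Eadj$, since $\bH\Eadj$ and $\bHsim\Eadj$ share the same underlying horizontal $2$-category by \cref{def:htilde}. The nontrivial check is therefore for the underlying vertical functor $U\bfV J_4\colon \{0,1\}\to U\bfV\bHsim\Eadj$; one must show that the target category, whose morphisms are generated by the adjoint equivalences in $\Eadj$, is sufficiently free over the discrete $\{0,1\}$ to guarantee the left lifting property against surjective-on-objects and full functors.
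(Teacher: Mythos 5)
Your factorization approach is a genuine alternative to the paper's argument, and it is substantially correct. The paper instead argues directly via \cref{thm:charcofDblsecond}: since $I^\whi$ agrees with $I$ on underlying horizontal categories (\cref{rem:idonunderhorcat}), the condition on $U\bfH I^\whi$ is immediate, while for $U\bfV I^\whi$ a lift against any surjective-on-objects, full functor $P$ is built by first lifting on $U\bfV\bB$ using $I\in\Iwcof$, and then extending by hand over the freely added vertical morphisms of $\bB^\whi$, using injectivity on objects, faithfulness on horizontal morphisms, and full faithfulness on squares of $I$ to decide which of those come from $\bA^\whi$. Your pushout decomposition $I^\whi=K\circ\bar I$, with $\bar I$ a pushout of $I$ and $K$ a pushout of $\bigsqcup_T J_4$ over $T=\mathrm{HorEq}(\bB)\setminus I(\mathrm{HorEq}(\bA))$, is a cleaner, more structural account; and you rightly identify that the injectivity of $\mathrm{HorEq}(\bA)\to\mathrm{HorEq}(\bB)$ is exactly where the hypotheses on $I$ enter, paralleling the role they play in the paper's lift construction.

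The proof as written is not finished, however, because you leave $J_4\in\cC$ unverified. Your proposed direct check via \cref{thm:charcofDblsecond} --- that $U\bfV\bHsim\Eadj$ is suitably free over $\{0,1\}$ --- is genuinely delicate: by \cref{def:htilde} the vertical morphisms of $\bHsim\Eadj$ are \emph{all} adjoint equivalences in $\Eadj$, not merely the composites of the two generators $u,v$ of \cref{descr:J4}, so the freeness you would need is not at all obvious and you do not establish it. The indirect argument already available at this point in the paper is the one to invoke: \cref{prop:JinjRLPJ4} shows $\Jwinj\subseteq\{J_4\}\mathrm{-inj}$, whence $\{J_4\}\mathrm{-cof}\subseteq\Jwcof$; and $\Jwcof\subseteq\Iwcof$ since $\cJ_w\subseteq\Iwcof$ by \cref{rem:JinIcof}. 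Hence $J_4\in\{J_4\}\mathrm{-cof}\subseteq\Iwcof=\cC$. Replacing your attempted direct verification with this citation closes the gap and makes your factorization proof complete.
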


\begin{proof}
We show that $I^\whi$ is in $\Iwcof$ by using \cref{thm:charcofDblsecond}. Since the double functors~$I$ and $I^\whi$ coincide on underlying horizontal categories by \cref{rem:idonunderhorcat}, and $I\in\Iwcof$, then the functor $U\bfH I=U\bfH I^\whi$ has the left lifting property with respect to surjective on objects and full functors. It remains to prove that $U\bfV I^\whi$ satisfies this lifting property. Let $P\colon \cX\to \cY$ be a surjective on objects and full functor, and consider a commutative square as below left. 
\begin{tz}
\node[](1) {$U\bfV\bA^\whi$}; 
\node[below of=1](2) {$U\bfV\bB^\whi$}; 
\node[right of=1,xshift=.5cm](3) {$\cX$}; 
\node[below of=3](4) {$\cY$}; 
\draw[->] (1) to node[above,la]{$F$} (3);
\draw[->] (2) to node[below,la]{$G$} (4);
\draw[->] (1) to node[left,la]{$U\bfV I^\whi$} (2);
\draw[->] (3) to node[right,la]{$P$} (4);
\draw[->,dashed] (2) to node[pos=0.4,above,la]{$L$} (3);

\node[right of=3,xshift=1cm](1) {$U\bfV\bA$}; 
\node[below of=1](2) {$U\bfV\bB$}; 
\node[right of=1,xshift=.5cm](3) {$\cX$}; 
\node[below of=3](4) {$\cY$}; 
\draw[->] (1) to node[above,la]{$F \circ U\bfV j_\bA$} (3);
\draw[->] (2) to node[below,la]{$G \circ U\bfV j_\bB$} (4);
\draw[->] (1) to node[left,la]{$U\bfV I$} (2);
\draw[->] (3) to node[right,la]{$P$} (4);
\draw[->,dashed] (2) to node[pos=0.4,above,la]{$K$} (3);
\end{tz}
Since $I$ is in $\Iwcof$, then $U\bfV I$ has the left lifting property with respect to $P$ by \cref{thm:charcofDblsecond}, and hence there is a lift $K\colon U\bfV\bB\to \cX$ in the  diagram above right. We define the functor $L\colon U\bfV \bB^\whi\to \cX$ to be $K$ on the subcategory $U\bfV \bB$ and as follows on the freely added morphisms. Let $v\colon A\arrowdot A'$ be a vertical morphism in $\bB^\whi$ freely added to $\bB$ using the horizontal adjoint equivalence $\underline{b}=(b,d,\eta,\epsilon)$. If there is a horizontal adjoint equivalence $\underline{a}=(a,c,\eta',\epsilon')$ in $\bA$ such that $I\underline{a}=\underline{b}$, then $\underline{a}$ is the unique such data since $I$ is injective on objects and faithful on horizontal morphisms by \cref{rem:injectivityofIw}, and fully faithful on squares by assumption. Hence there is a unique vertical morphism $u$ in~$\bA^\whi$ (freely added using $\underline{a}$) such that $I^\whi(u)=v$ and we set $Lv=Fu$. If there are no such $\underline{a}$ in $\bA$ with $I\underline{a}=\underline{b}$, we can choose a morphism $w\colon LA\to LA'$ in $\cX$ such that $Pw=Gv$, by fullness of $P$, and set $Lv=w$. This gives the desired lift. 
\end{proof}

\begin{lemme} \label{prop:IinCcapWthenIwhiinJcof}
If $I\colon \bA\to \bB$ is a double functor in $\cC\cap \cW$, then $I^\whi\colon \bA^\whi\to \bB^\whi$ is in $\Jwcof$.
\end{lemme}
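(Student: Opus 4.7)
The plan is to factor $I^\whi$ using the small object argument for $\cJ_w$ as a relative $\cJ_w$-cell complex followed by a $\cJ_w$-injective map, and then exhibit $I^\whi$ as a retract of the first factor. The key point is that \emph{both} factors will turn out to be double biequivalences, so that \cref{prop:JwinjcapdblbieqisIwinj} promotes the second factor into an element of $\Iwinj$, allowing the retract argument to go through.

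The first preparatory step is to verify that $I$ itself is fully faithful on squares. Although a general $\cI_w$-cofibration is only faithful on squares by \cref{rem:injectivityofIw}, the hypothesis $I\in \cW$ forces $I^\whi$ to be a double biequivalence, so in particular to satisfy \textnormal{(db4)} of \cref{def:doublebieq}. Combining this with the commuting square $I^\whi\circ j_\bA=j_\bB\circ I$ and the fact that $j_\bA$ and $j_\bB$ are fully faithful on squares (\cref{rem:idonunderhorcat}), one recovers that $I$ is fully faithful on squares. \cref{lem:whiofCcapWisIcof} then applies and yields $I^\whi\in \cC=\Iwcof$.

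Next, the small object argument applied to $\cJ_w$ factors $I^\whi$ as $\bA^\whi \xrightarrow{J} \bC \xrightarrow{Q} \bB^\whi$ with $J\in \Jwcell\subseteq \Jwcof$ and $Q\in \Jwinj$. Since $\bA^\whi$ is weakly horizontally invariant, \cref{prop:Jwcellwithwhisourcearedblbieq} ensures that $J$ is a double biequivalence. The hypothesis $I\in \cW$ makes $I^\whi$ a double biequivalence, and double biequivalences satisfy two-out-of-three (being the weak equivalences of the model structure of \cite{MSV}), so it follows that $Q$ is a double biequivalence as well. Then \cref{prop:JwinjcapdblbieqisIwinj} yields $Q\in \Iwinj$.

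Finally, combining $I^\whi\in \Iwcof$ with $Q\in \Iwinj$, the commuting square with top edge $J$, left edge $I^\whi$, right edge $Q$, and bottom edge $\id_{\bB^\whi}$ admits a lift $L\colon \bB^\whi\to \bC$ satisfying $LI^\whi=J$ and $QL=\id_{\bB^\whi}$. This exhibits $I^\whi$ as a retract of $J\in \Jwcof$, so by closure of $\Jwcof$ under retracts, $I^\whi\in \Jwcof$, as required. The main subtlety in this plan is the first step: extracting full faithfulness on squares for $I$, which is not automatic from $I\in \Iwcof$ but does follow from $I\in \cW$ via the naturality of the replacement $j$; the rest of the argument is a standard retract-of-factorization maneuver, lubricated by \cref{prop:Jwcellwithwhisourcearedblbieq,prop:JwinjcapdblbieqisIwinj}.
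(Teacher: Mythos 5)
Your proof is correct and follows essentially the same route as the paper's: factor $I^\whi$ via the small object argument for $\cJ_w$, use \cref{prop:Jwcellwithwhisourcearedblbieq} (with $\bA^\whi$ weakly horizontally invariant) and 2-out-of-3 to see the right factor is $\cI_w$-injective by \cref{prop:JwinjcapdblbieqisIwinj}, invoke \cref{lem:whiofCcapWisIcof} to place $I^\whi$ in $\Iwcof$, and close with the retract argument. You even fill in the one step the paper states without elaboration --- that $I\in\cW$ forces $I$ to be fully faithful on squares --- by pushing (db4) for $I^\whi$ back through the square-fully-faithful $j_\bA$, $j_\bB$, which is exactly the right justification.
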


\begin{proof}
First recall that, since $I\in \cW$, the double functor $I^\whi\colon \bA^\whi\to \bB^\whi$ is a double biequivalence by definition. Next, consider a factorization $I^\whi=PJ$ with $J\in \Jwcof$ and $P\in \Jwinj$. As $\bA^\whi$ is weakly horizontally invariant, \cref{prop:Jwcellwithwhisourcearedblbieq} ensures $J$ is also a double biequivalence; then, by 2-out-of-3, so is $P$. Hence $P$ is both $\cJ_w$-injective and a double biequivalence, and therefore it is $\cI_w$-injective by \cref{prop:JwinjcapdblbieqisIwinj}.

Now, since $I$ is in $\cW$, it is fully faithful on squares, and so it follows from \cref{lem:whiofCcapWisIcof} that $I^\whi$ is in $\Iwcof$. Then $I^\whi$ has the left lifting property with respect to $P\in \Iwinj$, so, by the retract argument, it is a retract of $J\in \Jwcell$ and hence is itself in $\Jwcof$.
\end{proof}

Finally, we prove that every $\cJ_w$-injective double functor with weakly horizontally invariant target has the right lifting property with respect to every trivial cofibration $I$, by using its lifting property against the weakly horizontally invariant replacement $I^\whi$. 

\begin{prop} \label{prop:F=Jinjbtwwhi}
Let $P\colon \bA\to \bB$ be a double functor with $\bB$ weakly horizontally invariant. Then $P$ is in $\cF= (\cC\cap\cW)^\boxslash$ if and only if $P$ is in $\Jwinj$.
\end{prop}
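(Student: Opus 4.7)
The inclusion $\cF \subseteq \Jwinj$ is already covered by \cref{lem:FinJwinj}, so my plan addresses only the reverse implication. Given a $\cJ_w$-injective double functor $P\colon \bA\to \bB$ with $\bB$ weakly horizontally invariant and a trivial cofibration $I\colon \bX\to \bY$ in $\cC\cap\cW$, I want to show any commutative square $(F,G)\colon I\to P$ admits a lift. The guiding idea is to transport the problem to the weakly horizontally invariant replacements, producing a lifting problem for $(I^\whi,P)$ which can be solved using the $(\Jwcof,\Jwinj)$ weak factorization system; the lift in this transported problem will then restrict back along $j_\bY$ to yield the original lift.

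To carry out this transport, I would first build compatible extensions $F'\colon \bX^\whi\to \bA$ and $G'\colon \bY^\whi\to \bB$ along $j_\bX$ and $j_\bY$. Since $\bB$ is weakly horizontally invariant, \cref{rem:whiRLPwrtJ4} ensures that $\bB\to \mathbbm{1}$ has the right lifting property against $J_4$; combined with the fact that $j_\bY$ is a pushout of a coproduct of copies of $J_4$ and hence a relative $\{J_4\}$-cell complex, this allows $G$ to be extended to $G'$ with $G'j_\bY=G$. By \cref{prop:JinjRLPJ4}, $P$ also has the right lifting property against $J_4$ and hence against $j_\bX$, so the lifting problem with top $F$ and bottom $G'I^\whi$ (which commutes because $PF=G'j_\bY I=G'I^\whi j_\bX$) produces $F'$ with $F'j_\bX=F$ and $PF'=G'I^\whi$.

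Next, \cref{prop:IinCcapWthenIwhiinJcof} tells me that $I^\whi\in\Jwcof$; together with $P\in\Jwinj$, this gives a lift $L\colon \bY^\whi\to \bA$ of the square $(F',G')\colon I^\whi\to P$. The composite $Lj_\bY\colon \bY\to \bA$ is then the desired lift of $(F,G)$, since the identities $j_\bY I=I^\whi j_\bX$, $F'j_\bX=F$, and $G'j_\bY=G$ give $(Lj_\bY)I=F$ and $P(Lj_\bY)=G$. The main subtlety I expect to clarify is the claim that $j_\bX$ and $j_\bY$ really behave as relative $\{J_4\}$-cell complexes with respect to lifting, which amounts to rewriting the defining single pushout of a coproduct $\bigsqcup_{\mathrm{HorEq}(\bA)} J_4$ as a transfinite composition of individual $J_4$-pushouts; once this is in place, the argument assembles directly.
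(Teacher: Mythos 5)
Your proof is correct and follows essentially the same strategy as the paper: extend $G$ to $\bY^\whi$ using weak horizontal invariance of $\bB$, build a compatible extension $F'$ on $\bX^\whi$ using that $P$ is $\{J_4\}$-injective (\cref{prop:JinjRLPJ4}), and then invoke $I^\whi\in\Jwcof$ (\cref{prop:IinCcapWthenIwhiinJcof}) to solve the transported lifting problem against $P\in\Jwinj$. The only superficial difference is that you obtain $F'$ by lifting $j_\bX$ directly against $P$ with prescribed bottom composite $G'I^\whi$, whereas the paper lifts the coproduct of $J_4$'s against $P$ and then applies the universal property of the defining pushout; these are equivalent, and in fact for the closure argument you need not decompose $j_\bX$ into a transfinite composition of single $J_4$-pushouts---$\{J_4\}\text{-inj}$ is closed under coproducts and pushouts, which already handles the single pushout of $\bigsqcup_{\mathrm{HorEq}(\bX)} J_4$.
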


\begin{proof}
If $P$ is in $\cF$, then $P$ is in $\Jwinj$ by \cref{lem:FinJwinj}. 

Now suppose that $P$ is in $\Jwinj$. We show that $P$ has the right lifting property with respect to every double functor in $\cC\cap\cW$, i.e., it is in $\cF$. Let $I\colon \bC\to \bD$ be a double functor in $\cC\cap\cW$ and consider a commutative square in $\DblCat$ as below; we want to find a lift  $L\colon \bD\to \bA$  as pictured. 
\begin{tz}
\node[](1) {$\bC$}; 
\node[below of=1](2) {$\bD$}; 
\node[right of=1](3) {$\bA$}; 
\node[below of=3](4) {$\bB$}; 
\draw[->] (1) to node[above,la]{$F$} (3);
\draw[->] (2) to node[below,la]{$G$} (4);
\draw[->] (1) to node[left,la]{$I$} (2);
\draw[->] (3) to node[right,la]{$P$} (4);
\draw[->,dashed] (2) to node[pos=0.4,above,la]{$L$} (3);
\end{tz}
Since $\bB$ is weakly horizontally invariant, there is a lift in the diagram below left by \cref{rem:whiRLPwrtJ4}, which yields a double functor $\hat{G}\colon \bD^\whi\to \bB$ as in the diagram below right, given by the universal property of the pushout.
\begin{tz}
\node[](1) {$\bigsqcup_{\mathrm{HorEq}(\bD)} \bH\Eadj$};
\node[right of=1,xshift=1cm](2) {$\bD$}; 
\node[below of=1](3) {$\bigsqcup_{\mathrm{HorEq}(\bD)} \bHsim\Eadj$}; 
\node[right of=2](4) {$\bB$}; 
\draw[->] (1) to (2);
\draw[->] (2) to node[above,la]{$G$} (4);
\draw[->] (1) to node[left,la]{$\bigsqcup_{\mathrm{HorEq}(\bD)} J_4$} (3);
\draw[->,dashed] (3) to node[pos=0.4,below,la]{$K$} (4);

\node[right of=4,xshift=2cm,yshift=.5cm](1) {$\bigsqcup_{\mathrm{HorEq}(\bD)} \bH\Eadj$}; 
\node[right of=1,xshift=1.5cm](2) {$\bD$};
\node[below of=1](3) {$\bigsqcup_{\mathrm{HorEq}(\bD)} \bHsim\Eadj$};
\node[below of=2](4) {$\bD^\whi$}; 
\draw[->] (1) to (2);
\draw[->] (1) to node[left,la]{$\bigsqcup_{\mathrm{HorEq}(\bD)} J_4$} (3); 
\draw[->] (2) to node[right,la]{$j_\bD$} (4); 
\draw[->] (3) to node[below,la]{$\tau_\bD$} (4);
\node[below right of=4,xshift=.5cm](5) {$\bB$}; 
\draw[->,bend left] (2) to node[right,la]{$G$} (5);
\draw[->,bend right=20] (3) to node[below,la]{$K$} (5);
\draw[->,dashed] (4) to node[pos=0.4,left,la,xshift=-3pt,yshift=-3pt]{$\hat{G}$} (5);

\node at ($(4)-(8pt,-8pt)$) {$\ulcorner$};
\end{tz}
Now, since $P\in \Jwinj$, by \cref{prop:JinjRLPJ4} there is a lift in the commutative diagram below left, which in turns yields a double functor $\hat{F}\colon \bC^\whi\to \bA$ as in the diagram below right, given by the universal property of the pushout.
\begin{tz}
\node[](1) {$\bigsqcup_{\mathrm{HorEq}(\bC)} \bH\Eadj$};
\node[right of=1,xshift=1.5cm](2) {$\bC$}; 
\node[below of=1](3) {$\bigsqcup_{\mathrm{HorEq}(\bC)} \bHsim\Eadj$}; 
\node[right of=2](4) {$\bA$}; 
\node[below of=4](5) {$\bB$};
\draw[->] (1) to (2);
\draw[->] (2) to node[above,la]{$F$} (4);
\draw[->] (4) to node[right,la]{$P$} (5); 
\draw[->] (1) to node[left,la]{$\bigsqcup_{\mathrm{HorEq}(\bC)} J_4$} (3);
\draw[->,dashed] (3) to node[pos=0.4,below,la]{$K'$} (4);
\draw[->] (3) to node[below,la]{$K\circ (\bigsqcup_I \id)$} (5);

\node[right of=4,xshift=2cm,yshift=.5cm](1) {$\bigsqcup_{\mathrm{HorEq}(\bC)} \bH\Eadj$}; 
\node[right of=1,xshift=1.5cm](2) {$\bC$};
\node[below of=1](3) {$\bigsqcup_{\mathrm{HorEq}(\bC)} \bHsim\Eadj$};
\node[below of=2](4) {$\bC^\whi$}; 
\draw[->] (1) to (2);
\draw[->] (1) to node[left,la]{$\bigsqcup_{\mathrm{HorEq}(\bC)} J_4$} (3); 
\draw[->] (2) to node[right,la]{$j_\bC$} (4); 
\draw[->] (3) to (4);
\node[below right of=4,xshift=.5cm](5) {$\bA$}; 
\draw[->,bend left] (2) to node[right,la]{$F$} (5);
\draw[->,bend right=20] (3) to node[below,la]{$K'$} (5);
\draw[->,dashed] (4) to node[pos=0.4,left,la,xshift=-3pt,yshift=-3pt]{$\hat{F}$} (5);

\node at ($(4)-(8pt,-8pt)$) {$\ulcorner$};
\end{tz}
Here $\bigsqcup_I \id\colon \bigsqcup_{\mathrm{HorEq}(\bC)} \bHsim\Eadj\to \bigsqcup_{\mathrm{HorEq}(\bD)} \bHsim\Eadj$ is the double functor induced by the action of $I$ on $\mathrm{HorEq}(\bC)$. By construction of $\hat{F}$ and $\hat{G}$, we have that the following diagram commutes.
\begin{tz}
\node[](A) {$\bC$};
\node[below of=A](B) {$\bD$};
\node[right of=A](1) {$\bC^\whi$}; 
\node[below of=1](2) {$\bD^\whi$}; 
\node[right of=1](3) {$\bA$}; 
\node[below of=3](4) {$\bB$}; 
\draw[->] (A) to node[above,la]{$j_\bC$} (1);
\draw[->] (1) to node[above,la]{$\hat{F}$} (3);
\draw[->] (B) to node[below,la]{$j_\bD$} (2);
\draw[->] (2) to node[below,la]{$\hat{G}$} (4);
\draw[->] (A) to node[left,la]{$I$} (B);
\draw[->] (1) to node[left,la]{$I^\whi$} (2);
\draw[->] (3) to node[right,la]{$P$} (4);
\draw[->,dashed] (2) to node[pos=0.4,above,la]{$\hat{L}$} (3);
\draw[->,bend left=45] (A) to node[above,la]{$F$} (3);
\draw[->,bend right=45] (B) to node[below,la]{$G$} (4);
\end{tz}
Since $I^\whi\in\Jwcof$ by \cref{prop:IinCcapWthenIwhiinJcof} as $I\in \cC\cap\cW$, there is a lift $\hat{L}$ in the right-hand square of the diagram above, and the composite $L\coloneqq \hat{L}j_\bD$ gives the desired lift. 
\end{proof}

\section{Proof of \texorpdfstring{\cref{thm:secondMS}}{Theorem 3.20}} \label{subsec:prooftheorem}

We now use the technical results of \cref{subsec:JwcofJwinj} to prove the remaining claims in \cref{thm:secondMS}. Namely, we show that the pairs $(\cC, \cF\cap\cW)$ and $(\cC\cap\cW,\cF)$ form weak factorization systems, and identify the fibrant objects as the weakly horizontally invariant double categories.

Since by definition we have that $\cC=\Iwcof$, in order to prove that $(\cC, \cF\cap\cW)$ is a weak factorization system it suffices to show that $\cF\cap \cW=\Iwinj$; this is the content of the following result.

\begin{prop}\label{prop:FcapW=Iinj}
We have that $\cF\cap \cW=\Iwinj$.
\end{prop}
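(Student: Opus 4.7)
The plan is to prove the two inclusions separately. The forward inclusion $\Iwinj \subseteq \cF \cap \cW$ is essentially formal; the reverse inclusion $\cF \cap \cW \subseteq \Iwinj$ will be deduced by the standard retract argument, once the forward inclusion is in hand.

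For $\Iwinj \subseteq \cF$, the observation $\cC \cap \cW \subseteq \cC = \Iwcof$ gives $\cF = (\cC \cap \cW)^\boxslash \supseteq \Iwcof^\boxslash = \Iwinj$. For $\Iwinj \subseteq \cW$, I would combine \cref{rem:trivfibaredblbieq}, which states that every $\cI_w$-injective double functor is a double biequivalence, with \cref{prop:dblbieqareinW}, which places the class of double biequivalences inside $\cW$.

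For the reverse inclusion, given $P \in \cF \cap \cW$, I would factor it using the weak factorization system $(\Iwcof, \Iwinj)$ generated by $\cI_w$ as $P = P' \circ J$ with $J \in \Iwcof = \cC$ and $P' \in \Iwinj$. By the forward inclusion just established, $P' \in \cW$; then $P \in \cW$ together with the $2$-out-of-$3$ property from \cref{prop:2outof3andretractW} applied to $P = P'J$ forces $J \in \cW$. Hence $J \in \cC \cap \cW$, so since $P \in \cF = (\cC \cap \cW)^\boxslash$, there exists a diagonal lift $L$ in the commutative square with $J$ on the left, $P$ on the right, $\id$ on top, and $P'$ on the bottom. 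This lift exhibits $P$ as a retract of $P'$, and the closure of $\Iwinj$ under retracts (as the right class of a weak factorization system) concludes that $P \in \Iwinj$.

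There is no genuine obstacle here; the entire argument is formal once $\Iwinj \subseteq \cW$ and $2$-out-of-$3$ for $\cW$ are available, both of which are already established earlier in the paper.
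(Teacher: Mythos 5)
Your argument is correct and follows essentially the same approach as the paper: the inclusion $\Iwinj \subseteq \cF$ by duality from $\cC \cap \cW \subseteq \Iwcof$, the inclusion $\Iwinj \subseteq \cW$ via \cref{rem:trivfibaredblbieq} and \cref{prop:dblbieqareinW}, and the reverse inclusion by factoring $P \in \cF \cap \cW$ through $(\Iwcof, \Iwinj)$ and applying $2$-out-of-$3$ plus the retract argument. No gaps, nothing substantively different from the paper's own proof.
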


\begin{proof}
Since $\cC\cap\cW\subseteq \cC=\Iwcof$, it follows that $\Iwinj=\Iwcof^\boxslash\subseteq (\cC\cap\cW)^\boxslash=\cF$. Moreover, every double functor in $\Iwinj$ is a double biequivalence by \cref{rem:trivfibaredblbieq}, and these are in $\cW$ by \cref{prop:dblbieqareinW}; hence $\Iwinj\subseteq \cW$.

For the inclusion $\cF\cap\cW\subseteq \Iwinj$, note that every double functor $P$ in $\cF\cap \cW$ factors as $P=QI$ with $I\in \cC=\Iwcof$ and $Q\in \Iwinj$. Since $Q\in \cW$ by the above inclusion, and $P\in \cW$ by assumption, we get that $I\in \cW$ by $2$-out-of-$3$; hence $I\in \cC\cap \cW$. Therefore, since $P\in \cF=(\cC\cap\cW)^\boxslash$ has the right lifting property with respect to $I$, by the retract argument we have that $P$ is a retract of $Q$ and hence is also in $\Iwinj$. 
\end{proof}

Before moving on to the next factorization system, we focus on the fibrant objects. Aside from obtaining the desired characterization as the double categories which are weakly horizontally invariant, we see that the weakly horizontally invariant replacements $j_\bA\colon \bA\to \bA^\whi$ of \cref{def:whireplacement} are trivial cofibrations, and hence fibrant replacements in our model structure.

\begin{theorem} \label{cor:whiarefibrant}
A double category $\bA$ is fibrant if and only if it is weakly horizontally invariant. 
\end{theorem}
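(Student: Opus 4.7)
The plan is to deduce this characterization directly from the results already established in \cref{subsec:JwcofJwinj}, which were set up precisely for this purpose. The key observation is that the terminal double category $\mathbbm{1}$ is (vacuously) weakly horizontally invariant, so the unique double functor $\bA\to\mathbbm{1}$ will always have a weakly horizontally invariant target. This allows us to apply \cref{prop:F=Jinjbtwwhi} directly to the fibrancy question.

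For the forward implication, suppose $\bA$ is fibrant, i.e., the unique double functor $\bA\to\mathbbm{1}$ belongs to $\cF$. By \cref{lem:FinJwinj}, we have $\cF\subseteq\Jwinj$, so this double functor is in $\Jwinj$. Then \cref{prop:whiareinJinj} (which characterizes weakly horizontally invariant double categories $\bA$ by the condition that $\bA\to\mathbbm 1$ lies in $\Jwinj$) immediately yields that $\bA$ is weakly horizontally invariant.

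For the reverse implication, suppose $\bA$ is weakly horizontally invariant. Then by \cref{prop:whiareinJinj} the unique double functor $\bA\to\mathbbm{1}$ is in $\Jwinj$. Since $\mathbbm{1}$ is trivially weakly horizontally invariant, \cref{prop:F=Jinjbtwwhi} applies to show that membership in $\cF$ and in $\Jwinj$ are equivalent for this double functor; hence $\bA\to\mathbbm{1}$ lies in $\cF$, meaning $\bA$ is fibrant.

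In short, the whole statement is a two-line corollary of \cref{lem:FinJwinj}, \cref{prop:whiareinJinj}, and \cref{prop:F=Jinjbtwwhi}, with no obstacle to speak of; all the genuine work was carried out in \cref{subsec:JwcofJwinj} when building the auxiliary weak factorization system generated by $\cJ_w$ and showing its interaction with the fibrations of the proposed model structure on double functors with weakly horizontally invariant target.
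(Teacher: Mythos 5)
Your proof is correct and takes essentially the same route as the paper's: both hinge on the observation that $\mathbbm{1}$ is weakly horizontally invariant and then combine \cref{prop:whiareinJinj} with \cref{prop:F=Jinjbtwwhi} to translate between membership in $\Jwinj$ and membership in $\cF$ for the map $\bA\to\mathbbm{1}$. Your explicit appeal to \cref{lem:FinJwinj} in the forward direction merely unpacks a step that is already contained in \cref{prop:F=Jinjbtwwhi}.
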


\begin{proof}
We recall from \cref{prop:whiareinJinj} that a double category $\bA$ is weakly horizontally invariant if and only if the double functor $\bA\to \mathbbm{1}$ is in $\Jwinj$. Since $\mathbbm{1}$ is weakly horizontally invariant, by \cref{prop:F=Jinjbtwwhi} this holds if and only if $\bA\to \mathbbm 1$ is in $\cF$, i.e., $\bA$ is fibrant.
\end{proof}

\begin{prop} \label{lem:J_Aforwhiisdblbieq}
Let $\bA$ be a weakly horizontally invariant double category. Then the double functor $j_\bA\colon \bA\to \bA^\whi$ is a double biequivalence.
\end{prop}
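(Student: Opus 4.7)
The plan is to reduce this to an application of \cref{prop:Jwcellwithwhisourcearedblbieq}: since the source $\bA$ is weakly horizontally invariant, it suffices to exhibit $j_\bA$ as a $\cJ_w$-cofibration, and then the proposition gives the double biequivalence conclusion for free. So the entire task is reduced to showing $j_\bA \in \Jwcof$.

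The key observation is that $J_4 \in \Jwcof$. This is a direct consequence of \cref{prop:JinjRLPJ4}: that result tells us $\Jwinj \subseteq \{J_4\}\mathrm{-inj}$, which by definition of the lifting relation is equivalent to saying that $J_4$ has the left lifting property with respect to every $\cJ_w$-injective double functor, i.e. $J_4 \in \cJ_w^\boxslash{}^\boxslash = \Jwcof$. I would state this explicitly at the start of the proof since it is the only nontrivial input.

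Once $J_4 \in \Jwcof$ is in hand, the rest is formal. By \cref{def:whireplacement}, the double functor $j_\bA$ is defined as the pushout of $\bigsqcup_{\mathrm{HorEq}(\bA)} J_4$ along the canonical map $\bigsqcup_{\mathrm{HorEq}(\bA)} \bH\Eadj \to \bA$. The class $\Jwcof$ is closed under arbitrary coproducts and pushouts (as is any class of the form $\mathcal{S}\mathrm{-cof}$, since these are left lifting classes, hence saturated), so $j_\bA$ lies in $\Jwcof$. Then, since $\bA$ is weakly horizontally invariant by hypothesis, \cref{prop:Jwcellwithwhisourcearedblbieq} applies to $j_\bA$ and yields that $j_\bA$ is a double biequivalence, as desired.

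I do not expect any genuine obstacle here; the result is essentially an assembly of the technical lemmas proved in \cref{subsec:JwcofJwinj}. The one point that required thought is the passage from ``$\Jwinj$ lifts against $J_4$'' (the content of \cref{prop:JinjRLPJ4}) to ``$J_4 \in \Jwcof$'', but this is purely formal via the double-orthogonality characterization of cofibrantly generated classes.
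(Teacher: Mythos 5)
Your argument is correct and is essentially the paper's own proof, merely parsed slightly differently: the paper goes $j_\bA \in \{J_4\}\text{-cof} \subseteq \Jwcof$ via $\Jwinj \subseteq \{J_4\}\text{-inj}$, whereas you first deduce $J_4 \in \Jwcof$ from that same inclusion and then invoke saturation of $\Jwcof$ to conclude $j_\bA \in \Jwcof$; both routes then finish with \cref{prop:Jwcellwithwhisourcearedblbieq}. (Minor notational nit: you want ${}^\boxslash(\cJ_w^\boxslash)$, not $(\cJ_w^\boxslash)^\boxslash$, but the intent is clear.)
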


\begin{proof}
By construction, $j_\bA\colon \bA\to \bA^\whi$ is a double functor in $\{J_4\}\mathrm{-cof}$ (see \cref{def:whireplacement}). Since $\Jwinj\subseteq \{J_4\}\mathrm{-inj}$ by \cref{prop:JinjRLPJ4}, we have that  \[ \{J_4\}\mathrm{-cof}={}^\boxslash \{J_4\}\mathrm{-inj}\subseteq {}^\boxslash\Jwinj=\Jwcof. \]
Hence $j_\bA$ is a $\cJ_w$-cofibration with weakly horizontally invariant source, and thus a double biequivalence by \cref{prop:Jwcellwithwhisourcearedblbieq}.
\end{proof}

\begin{cor} \label{prop:j_AinCcapW}
The double functor $j_\bA\colon \bA\to \bA^\whi$ is in $\cC\cap\cW$. In particular, this exhibits $\bA^\whi$ as a fibrant replacement of $\bA$.
\end{cor}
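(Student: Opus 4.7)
My plan is to establish separately that $j_\bA \in \cC$ and $j_\bA \in \cW$, and then deduce the fibrant replacement claim from \cref{cor:whiarefibrant}.

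For $j_\bA \in \cC$ I would appeal directly to the construction: since $j_\bA$ is a pushout of a coproduct of $J_4\colon \bH\Eadj\to \bHsim\Eadj$ by \cref{def:whireplacement}, it lies in $\{J_4\}\mathrm{-cof}$. The inclusions $\{J_4\}\mathrm{-cof} \subseteq \Jwcof \subseteq \Iwcof = \cC$ are exactly those invoked in the proof of \cref{lem:J_Aforwhiisdblbieq} (via \cref{prop:JinjRLPJ4} and \cref{rem:JinIcof}), so this step is immediate.

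For $j_\bA \in \cW$ I need $(j_\bA)^\whi\colon \bA^\whi \to (\bA^\whi)^\whi$ to be a double biequivalence. The key step of the plan, which I expect to be the main obstacle, is to argue that the naturality square
\begin{tz}
\node[](1) {$\bA$};
\node[right of=1,xshift=1.5cm](2) {$\bA^\whi$};
\node[below of=1](3) {$\bA^\whi$};
\node[below of=2](4) {$(\bA^\whi)^\whi$};
\draw[->] (1) to node[above,la]{$j_\bA$} (2);
\draw[->] (3) to node[below,la]{$(j_\bA)^\whi$} (4);
\draw[->] (1) to node[left,la]{$j_\bA$} (3);
\draw[->] (2) to node[right,la]{$j_{\bA^\whi}$} (4);
\node at ($(4)-(8pt,-8pt)$) {$\ulcorner$};
\end{tz}
is itself a pushout. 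The idea is to exploit \cref{rem:idonunderhorcat}: since $j_\bA$ is the identity on underlying horizontal categories and fully faithful on squares, $\bfH j_\bA$ is an isomorphism of $2$-categories, and consequently $j_\bA$ induces a bijection $\mathrm{HorEq}(\bA) \cong \mathrm{HorEq}(\bA^\whi)$. Horizontally pasting the pushout square defining $\bA^\whi$ with the naturality square should then produce precisely the pushout square defining $(\bA^\whi)^\whi$ (reindexed via the bijection), and the pasting lemma for pushouts would let me conclude that the naturality square is a pushout.

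Once this pushout structure is in hand, closure of $\Jwcof$ under pushouts, together with $j_\bA \in \Jwcof$ (shown above), yields $(j_\bA)^\whi \in \Jwcof$. Since its source $\bA^\whi$ is weakly horizontally invariant, \cref{prop:Jwcellwithwhisourcearedblbieq} then gives that $(j_\bA)^\whi$ is a double biequivalence, whence $j_\bA \in \cW$ by definition. The final assertion is immediate: $\bA^\whi$ is fibrant by \cref{cor:whiarefibrant}, so the trivial cofibration $j_\bA$ realizes $\bA^\whi$ as a fibrant replacement of $\bA$.
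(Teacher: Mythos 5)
Your argument for $j_\bA \in \cC$ coincides with the paper's (a pushout of coproducts of $J_4 \in \cC$). For $j_\bA \in \cW$, however, your route differs meaningfully. The paper invokes the identity $(j_\bA)^\whi = j_{\bA^\whi}$ and then applies \cref{lem:J_Aforwhiisdblbieq}. That identity is at best non-obvious; indeed, the two maps appear to disagree on the freely added vertical morphisms. Given $(a,c,\eta,\epsilon) \in \mathrm{HorEq}(\bA)$, the pushout-functoriality defining $(j_\bA)^\whi$ sends the vertical morphism $u$ added to $\bA^\whi$ for that datum to the \emph{fresh} vertical morphism added to $(\bA^\whi)^\whi$ for the same datum, whereas $j_{\bA^\whi}$ simply includes $u$ as its own copy, and nothing in the pushout defining $(\bA^\whi)^\whi$ (which glues only along $\bH\Eadj$) identifies the two. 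Your pasting argument side-steps this completely, and it does go through: writing $S = \mathrm{HorEq}(\bA) = \mathrm{HorEq}(\bA^\whi)$ (using \cref{rem:idonunderhorcat}), the classifying map $\bigsqcup_S \bH\Eadj \to \bA^\whi$ is the classifying map into $\bA$ followed by $j_\bA$, and $(j_\bA)^\whi$ precomposed with the structure map $\bigsqcup_S \bHsim\Eadj\to \bA^\whi$ equals the structure map $\bigsqcup_S \bHsim\Eadj\to (\bA^\whi)^\whi$ by the defining property of the induced map on pushouts. Hence the outer rectangle of your pasting is literally the pushout square defining $(\bA^\whi)^\whi$, the naturality square is a pushout of $j_\bA$ by cancellation, and then $(j_\bA)^\whi\in\Jwcof$ with weakly horizontally invariant source, so \cref{prop:Jwcellwithwhisourcearedblbieq} finishes the job. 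In short, your proof is correct and, as far as I can tell, more careful than the one in the paper itself.
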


\begin{proof}
Since $\Jwinj\subseteq \{J_4\}\mathrm{-inj}$ by \cref{prop:JinjRLPJ4} and $\Jwcof\subseteq \Iwcof$ by \cref{rem:JinIcof}, we have that $J_4$ is in $\Iwcof=\cC$. Hence so is $j_\bA$, as it is constructed as a pushout of coproducts of $J_4$. The fact that $j_\bA$ is in $\cW$ follows from the relation $(j_\bA)^\whi=j_{\bA^\whi}$ and the fact that the latter is a double biequivalence by \cref{lem:J_Aforwhiisdblbieq}. The second statement then follows from \cref{cor:whiarefibrant}.
\end{proof}

We can also prove, using \cref{lem:J_Aforwhiisdblbieq}, that every weak equivalence with fibrant source is a double biequivalence. In particular, this implies that while our weak equivalences are more general, when restricted to the fibrant double categories they agree with the weak equivalences of the model structure on $\DblCat$ of \cite{MSV}: the double biequivalences. As we will see in \cref{section:whitehead}, the weak equivalences with fibrant source also admit a familiar description in terms of pseudo-inverses.

\begin{prop} \label{prop:Wwithwhisourceisdblbieq}
Let $F\colon \bA\to \bB$ be a double functor with $\bA$ weakly horizontally invariant. Then $F$ is in $\cW$ if and only if $F$ is a double biequivalence.
\end{prop}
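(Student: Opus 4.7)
The forward implication is a direct application of \cref{prop:dblbieqareinW}, which states that every double biequivalence lies in $\cW$. So I will focus on the reverse direction: assuming $F \in \cW$ with $\bA$ weakly horizontally invariant, I want to conclude that $F$ is a double biequivalence.

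My plan is to work from the factorization $j_\bB F = F^\whi j_\bA$ to transfer information between $F$ and its weakly horizontally invariant replacement. By hypothesis $F^\whi$ is a double biequivalence, and since $\bA$ is weakly horizontally invariant, \cref{lem:J_Aforwhiisdblbieq} gives that $j_\bA\colon \bA \to \bA^\whi$ is a double biequivalence. Because double biequivalences are the weak equivalences of the model structure of \cite{MSV} and therefore closed under composition, the composite $j_\bB F = F^\whi j_\bA$ is also a double biequivalence. The task then reduces to showing that if $j_\bB F$ satisfies (db1)-(db4), then so does $F$.

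The crucial ingredient for this is the structural behavior of $j_\bB\colon \bB \to \bB^\whi$ recorded in \cref{rem:idonunderhorcat}: it is the identity on underlying horizontal categories and fully faithful on squares. Together these give an isomorphism of underlying horizontal $2$-categories $\bfH\bB \cong \bfH\bB^\whi$, so horizontal (adjoint) equivalences in $\bB$ correspond bijectively with those in $\bB^\whi$. Under this correspondence, conditions (db1), (db2), and (db4) for $F$ follow essentially formally from the same conditions for $j_\bB F$: objects and horizontal morphisms of $\bB$ coincide with those of $\bB^\whi$, and any vertically invertible square in $\bB^\whi$ with boundary coming from $\bB$ descends to a unique such square in $\bB$ by full faithfulness of $j_\bB$ on squares.

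The main obstacle is condition (db3), since vertical morphisms in $\bB^\whi$ truly extend those of $\bB$, and a priori the weakly horizontally invertible square produced from (db3) for $j_\bB F$ lives in $\bB^\whi$. My approach is: given a vertical morphism $v\colon B \arrowdot B'$ in $\bB$, apply (db3) for $j_\bB F$ to $j_\bB v$ to obtain a vertical morphism $u\colon A \arrowdot A'$ in $\bA$ together with a weakly horizontally invertible square in $\bB^\whi$ relating $j_\bB v$ to $j_\bB Fu$; by the horizontal $2$-category isomorphism, the horizontal adjoint equivalence data witnessing weak horizontal invertibility can be chosen with boundary in $\bB$. The entire weak horizontal invertibility structure---the principal square, its weak inverse, and the four vertically invertible squares of \cref{def:whisquare}---then has boundary lying in $\bB$, and therefore descends to $\bB$ via the full faithfulness of $j_\bB$ on squares. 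This yields the required weakly horizontally invertible square in $\bB$ and completes (db3) for $F$.
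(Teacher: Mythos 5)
Your proposal is correct and takes essentially the same route as the paper: both proofs reduce to showing $j_\bB F = F^\whi j_\bA$ is a double biequivalence (via \cref{lem:J_Aforwhiisdblbieq} and the assumption on $F^\whi$), establish (db1), (db2), (db4) from the fact that $j_\bB$ is an isomorphism on underlying horizontal 2-categories and fully faithful on squares, and then descend the weakly horizontally invertible square witnessing (db3) along $j_\bB$ using its full faithfulness on squares. The only cosmetic difference is that the paper reads (db1-2) directly off $F^\whi$ rather than off the composite $j_\bB F$, which changes nothing of substance.
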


\begin{proof}
If $F$ is a double biequivalence, then $F$ is in $\cW$ by \cref{prop:dblbieqareinW}. 

Now suppose that $F$ is in $\cW$, i.e., that $F^\whi\colon \bA^\whi\to \bB^\whi$ is a double biequivalence. We prove that $F$ satisfies (db1-4) of \cref{def:doublebieq}. Since $F$ and $F^\whi$ coincide on underlying horizontal categories by \cref{rem:idonunderhorcat} and $j_\bB$ is fully faithful on squares, then $F$ satisfies (db1-2) as $F^\whi$ does so. Moreover, since $j_\bA$, $j_\bB$, and $F^\whi$ are fully faithful on squares and $F^\whi j_\bA=j_\bB F$, then $F$ satisfies (db4). 

It remains to prove (db3). Since $\bA$ is weakly horizontally invariant, \cref{lem:J_Aforwhiisdblbieq} guarantees that $j_\bA\colon \bA\to \bA^\whi$ is a double biequivalence; hence so is the composite $F^\whi j_\bA\colon \bA\to \bB^\whi$. Then, given a vertical morphism $v\colon B\arrowdot B'$ in~$\bB$, there is a vertical morphism $u\colon A\arrowdot A'$ in $\bA$ and a weakly horizontally invertible square $\beta$ in $\bB^\whi$ as depicted below left.
\begin{tz}
\node[](A) {$FA$};
\node[right of=A](B) {$B$};
\node[below of=A](A') {$FA'$};
\node[right of=A'](B') {$B'$};
\draw[->,pro] (A) to node[left,la] {$F^\whi j_\bA u=j_\bB Fu$} (A');
\draw[->,pro] (B) to node[right,la] {$j_\bB v$} (B');
\draw[->] (A) to node[above, la] {$b$} node[below,la] {$\simeq$} (B);
\draw[->] (A') to node[below, la] {$b'$} node[above, la]{$\simeq$} (B');

\node[la] at ($(A)!0.5!(B')+(5pt,0)$) {$\simeq$};
\node[la] at ($(A)!0.5!(B')-(5pt,0)$) {$\beta$};

\node[right of=B,xshift=1cm](A) {$FA$};
\node[right of=A](B) {$B$};
\node[below of=A](A') {$FA'$};
\node[right of=A'](B') {$B'$};
\draw[->,pro] (A) to node[left,la] {$Fu$} (A');
\draw[->,pro] (B) to node[right,la] {$v$} (B');
\draw[->] (A) to node[above, la] {$b$} node[below,la] {$\simeq$} (B);
\draw[->] (A') to node[below, la] {$b'$} node[above, la]{$\simeq$} (B');

\node[la] at ($(A)!0.5!(B')+(5pt,0)$) {$\simeq$};
\node[la] at ($(A)!0.5!(B')-(5pt,0)$) {$\beta'$};
\end{tz}
By fully faithfulness on squares of $j_\bB$, we get a weakly horizontally invertible square $\beta'$ in~$\bB$ as depicted above right, which shows (db3).
\end{proof}

We are now ready to finish the proof of \cref{thm:secondMS} by showing that the classes of trivial cofibrations and fibrations form a weak factorization system. We first show that every double functor can be factored as a trivial cofibration followed by a fibration. 

\begin{theorem} \label{thm:factinCcapWandF}
Every double functor $F\colon \bA\to \bB$ can be factored as $F=RI$ with $I\in \cC\cap\cW$ and $R\in \cF$. 
\end{theorem}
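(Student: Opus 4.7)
The plan is to apply Quillen's small object argument for the set $\cJ_w$ directly to the double functor $F\colon \bA\to \bB$. This produces a factorization $F = RI$ in which $I$ is a transfinite composition of pushouts of morphisms in $\cJ_w$, hence an element of $\Jwcell \subseteq \Jwcof$, while $R\in\Jwinj$. The remaining work is to verify that $I$ lies in $\cC\cap\cW$ and that $R$ lies in $\cF$.

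That $I\in\cC$ is immediate: by \cref{rem:JinIcof} we have $\cJ_w\subseteq\cC=\Iwcof$, and the class of $\cI_w$-cofibrations is closed under pushouts and transfinite compositions. To prove $I\in\cW$, it suffices to show that the induced double functor $I^\whi\colon \bA^\whi\to \bE^\whi$ is a $\cJ_w$-cofibration, for then \cref{prop:Jwcellwithwhisourcearedblbieq} applies (since $\bA^\whi$ is weakly horizontally invariant) to yield that $I^\whi$ is a double biequivalence, which is by definition what it means for $I$ to be in $\cW$. My approach to $I^\whi\in\Jwcof$ is a transfinite induction along the cell construction of $I$, analyzing at each stage how the attachment of a $J_1$-, $J_2$-, or $J_3$-cell interacts with the pushout defining the whi-replacement, and using that $j_{(-)}\in\{J_4\}\text{-cof}\subseteq\Jwcof$ (as noted in the proof of \cref{prop:j_AinCcapW}) to absorb the extra $\bHsim\Eadj$-data that the whi-replacement freely adjoins.

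For $R\in\cF$, I plan to verify the right lifting property against an arbitrary $I'\in\cC\cap\cW$ by passing to whi-replacements. By \cref{prop:IinCcapWthenIwhiinJcof}, $(I')^\whi\in\Jwcof$. Using the explicit characterization of $\Jwinj$ in \cref{prop:fibinsecond}, together with the cell-complex structure of $I$, one checks that $R^\whi\in\Jwinj$, so the induced lifting problem admits a solution at the whi-replaced level. This solution descends to a lift in the original square because $j_\bA$ and $j_\bB$ are the identity on underlying horizontal categories and fully faithful on squares (\cref{rem:idonunderhorcat}), and because the squares and morphisms appearing in the lifting problem all live in the subcategories $\bA\subseteq \bA^\whi$ and $\bB\subseteq \bB^\whi$.

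The main obstacle is the inductive step in the second paragraph: the whi-replacement functor $(-)^\whi$ is \emph{not} cocontinuous, since the indexing set $\mathrm{HorEq}(-)$ depends on the argument and attaching $J_1$- or $J_2$-cells creates new horizontal equivalences. Consequently one cannot simply commute $(-)^\whi$ past the transfinite colimit defining $I$; instead, a careful bookkeeping is needed to express $I^\whi$ as an alternating composition of $\cJ_w$-cell attachments (matching those of $I$) and $J_4$-pushouts corresponding to the newly created horizontal equivalences, both of which lie in $\Jwcof$ by \cref{prop:JinjRLPJ4} and the retract argument.
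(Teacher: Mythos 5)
Your approach is genuinely different from the paper's. You apply the $\cJ_w$-small object argument directly to $F\colon\bA\to\bB$, obtaining $F=RI$ with $I\in\Jwcell$ and $R\in\Jwinj$, and then try to upgrade $I$ and $R$ to the desired classes. The paper instead applies the small object argument to $F^\whi\colon\bA^\whi\to\bB^\whi$, so that the resulting $\Jwinj$-morphism $P\colon\bC\to\bB^\whi$ has a weakly horizontally invariant target and therefore lies in $\cF$ by \cref{prop:F=Jinjbtwwhi}; a pullback of $P$ along $j_\bB$, followed by another $(\Iwcof,\Iwinj)$-factorization, then extracts the desired factorization of $F$. This architectural choice is precisely what makes the proof close.

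Your first half ($I\in\cC\cap\cW$) can in fact be completed, and more cleanly than the transfinite bookkeeping you anticipate. Writing $I\colon\bA\to\bE$, one can factor $I^\whi$ as $\bA^\whi\to\bA^\whi\sqcup_\bA\bE\to\bE^\whi$: the first leg is a pushout of $I$, hence in $\Jwcof$, and the second is a pushout along a coproduct of copies of $J_4$ indexed by $\mathrm{HorEq}(\bE)\setminus\mathrm{HorEq}(\bA)$ (using that $\mathrm{HorEq}(\bA)\to\mathrm{HorEq}(\bE)$ is injective because $I\in\Jwcof$ is injective on objects, faithful on horizontal morphisms, and fully faithful on squares by \cref{rem:injectivityofIw,prop:trivcofibssecond}), hence in $\Jwcof$ by \cref{prop:JinjRLPJ4}. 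Then $I^\whi\in\Jwcof$ with weakly horizontally invariant source, and \cref{prop:Jwcellwithwhisourcearedblbieq} gives $I\in\cW$.

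The second half ($R\in\cF$) has a genuine gap. The assertion that $R^\whi\in\Jwinj$ follows ``from the cell-complex structure of $I$'' is not justified and does not follow from $R\in\Jwinj$. The problem is condition (df3) of \cref{prop:fibinsecond}: the double category $\bB^\whi$ contains many new vertical morphisms $v\colon RA\arrowdot RA'$ between objects in the image of $R$ --- for instance the freely added morphism corresponding to a horizontal adjoint equivalence $\underline b$ of $\bB$ between $RA$ and $RA'$ --- and lifting such a $v$ to some $u\colon A\arrowdot A'$ in $\bE^\whi$ with $R^\whi u=v$ would require a horizontal adjoint equivalence in $\bE$ with source $A$, target $A'$, and $R$-image equal to $\underline b$. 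Condition (df1) for $R$ only produces a lift with the \emph{target} $A'$ prescribed, not the source, and certainly does not lift the whole adjoint equivalence datum. Moreover, even granting $R^\whi\in\Jwinj$, your descent step is also unjustified: a lift $\hat L\colon\bD^\whi\to\bE^\whi$ of the whi-replaced lifting problem need not restrict along $j_\bD$ to a functor $\bD\to\bE$, since $\hat L$ may send a vertical morphism of $\bD\subseteq\bD^\whi$ to a vertical morphism of $\bE^\whi$ lying outside the image of $j_\bE$. Your observation that $j_{(-)}$ is the identity on horizontal data and fully faithful on squares does not constrain where the freely added vertical morphisms of $\bE^\whi$ go. The paper's pullback along $j_\bB$ is designed precisely to avoid both obstacles: it keeps the fibration leg landing in the honest $\bB$ while still allowing the $\Jwcof$/$\Jwinj$ lifting to happen at the weakly horizontally invariant level.
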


\begin{proof}
Given a double functor $F\colon \bA\to \bB$, we factor $F^\whi$ as follows
\begin{tz}
    \node[](1) {$\bA^\whi$}; 
    \node[below right of=1,xshift=.5cm](2) {$\bC$}; 
    \node[above right of=2,xshift=.5cm](3) {$\bB^\whi$}; 
    \draw[->] (1) to node[above,la]{$F^\whi$} (3);
    \draw[->] (1) to node[pos=0.4,below,la,xshift=-2pt]{$J$}  (2);
    \draw[->] (2) to node[pos=0.6,below,la]{$P$} (3);
\end{tz}
where $J\in \Jwcof$ and $P\in \Jwinj$. As $\bB^\whi$ is weakly horizontally invariant, by \cref{prop:F=Jinjbtwwhi} we have that $P\in\cF$, and hence $\bC$ is also weakly horizontally invariant. Define~$\bD$ to be the pullback of $P$ along $j_\bB$ as in the following diagram.
\begin{tz}
\node[](1) {$\bD$}; 
\node[right of=1,xshift=.5cm](2) {$\bB$};
\node[below of=1](3) {$\bC$};
\node[below of=2](4) {$\bB^\whi$}; 
\draw[->] (1) to node[above,la]{$P'$} (2);
\draw[->] (1) to node[left,la]{$\pi$} (3); 
\draw[->] (2) to node[right,la]{$j_\bB$} (4); 
\draw[->] (3) to node[below,la]{$P$} (4);
\node[above left of=1,xshift=-.5cm,yshift=-.5cm](5) {$\bA$}; 
\node[below of=5](6) {$\bA^\whi$};
\draw[->,bend left] (5) to node[above,la]{$F$} (2);
\draw[->] (5) to node[left,la]{$j_\bA$} (6);
\draw[->,dashed] (5) to node[pos=0.6,above,la,xshift=2pt]{$K$} (1);
\draw[->] (6) to node[pos=0.4,below,la,xshift=-2pt]{$J$} (3);

\node at ($(1)+(8pt,-8pt)$) {$\lrcorner$};
\end{tz}
Then there is a unique double functor $K\colon \bA\to \bD$ making the above diagram commute. To prove the result, it suffices to show that $K$ is in $\cW$. Indeed, assume that this is the case and factor $K$ as $K=QI$ with $I\in \Iwcof=\cC$ and $Q\in \Iwinj=\cF\cap\cW$, where the latter equality holds by \cref{prop:FcapW=Iinj}. As $K,Q\in \cW$, then $I\in \cC\cap \cW$ by $2$-out-of-$3$. Hence, as $F=P'K$, this gives a factorization of $F$ as $F=RI$ with $I\in \cC\cap\cW$ and $R\coloneqq P'Q\in \cF$, as desired.

As $J$ is in $\cW$ by \cref{prop:dblbieqareinW}, $j_\bA$ is in $\cW$ by \cref{prop:j_AinCcapW}, and $\pi K =Jj_\bA$, in order to prove that $K$ is in $\cW$, by $2$-out-of-$3$ it is enough to show that $\pi$ is in $\cW$. For this, we construct a double functor $\hat{\pi}\colon \bD^\whi\to \bC$ such that $\pi=\hat{\pi} j_\bD$ and then show that $\hat{\pi}$ is a double biequivalence; this implies that $\pi\in \cW$ by $2$-out-of-$3$. 

Let $T\coloneqq \mathrm{HorEq}(\bD)\setminus K(\mathrm{HorEq}(\bA))$. As $\bC$ is weakly horizontally invariant, by \cref{rem:whiRLPwrtJ4} there is a lift $L$ in the following diagram.
\begin{tz}
\node[](1) {$(\bigsqcup_{\mathrm{HorEq}(\bA)} \bHsim\Eadj)\bigsqcup (\bigsqcup_T\bH\Eadj)$};
\node[right of=1,xshift=2.75cm](2) {$\bA^\whi\bigsqcup \bD$}; 
\node[below of=1](3) {$\bigsqcup_{\mathrm{HorEq}(\bD)} \bHsim\Eadj$}; 
\node[right of=2,xshift=1cm](4) {$\bC$}; 
\draw[->] (1) to (2);
\draw[->] (2) to node[above,la]{$J\bigsqcup \pi$} (4);
\draw[->] (1) to node[left,la]{$(\bigsqcup_{K}\id)\bigsqcup(\bigsqcup_{T} J_4)$} (3);
\draw[->,dashed] (3) to node[pos=0.4,below,la]{$L$} (4);
\end{tz}
This yields a double functor $\hat{\pi}\colon \bD^\whi\to \bC$, given by the universal property of the pushout, as depicted below.
\begin{tz}
\node[](1) {$\bigsqcup_{\mathrm{HorEq}(\bD)} \bH\Eadj$}; 
\node[right of=1,xshift=1.5cm](2) {$\bD$};
\node[below of=1](3) {$\bigsqcup_{\mathrm{HorEq}(\bD)} \bHsim\Eadj$};
\node[below of=2](4) {$\bD^\whi$}; 
\draw[->] (1) to (2);
\draw[->] (1) to node[left,la]{$\bigsqcup_{\mathrm{HorEq}(\bD)} J_4$} (3); 
\draw[->] (2) to node[right,la]{$j_\bD$} (4); 
\draw[->] (3) to (4);
\node[below right of=4,xshift=.5cm](5) {$\bC$}; 
\draw[->,bend left] (2) to node[right,la]{$\pi$} (5);
\draw[->,bend right=20] (3) to node[below,la]{$L$} (5);
\draw[->,dashed] (4) to node[pos=0.4,left,la,xshift=-3pt,yshift=-3pt]{$\hat{\pi}$} (5);

\node at ($(4)-(8pt,-8pt)$) {$\ulcorner$};
\end{tz}

We finally show $\hat{\pi}$ satisfies (db1-4). First note that $\pi$ is fully faithful on squares as it is a pullback of $j_{\bB}$ which satisfies this condition. Hence $\hat{\pi}$ also satisfies (db4), since $\hat{\pi} j_\bD=\pi$. By \cref{prop:Jwcellwithwhisourcearedblbieq}, we know that $J\colon \bA^\whi\to \bC$ in $\Jwcof$ is a double biequivalence, and so (db1-3) for $\hat{\pi}$ follow from the fact that $J$ satisfies (db1-3) and that $\hat{\pi} K^\whi=J$, by construction.
\end{proof}

As a direct consequence of this result, we get that the trivial cofibrations are precisely the double functors which have the left lifting property with respect to all fibrations. This concludes the proof of the existence of the model structure.  

\begin{cor} \label{cor:CcapWLLPF}
We have that $\cC\cap\cW={}^\boxslash\cF$.
\end{cor}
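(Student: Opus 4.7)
The plan is to establish the two inclusions separately, with the nontrivial direction following from a standard retract argument enabled by the factorization provided in \cref{thm:factinCcapWandF}.

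First, the inclusion $\cC\cap\cW\subseteq {}^\boxslash\cF$ is immediate from the definition of $\cF$. Indeed, since $\cF=(\cC\cap\cW)^\boxslash$, every morphism in $\cC\cap\cW$ has by construction the left lifting property with respect to every morphism in $\cF$.

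For the reverse inclusion ${}^\boxslash\cF\subseteq \cC\cap\cW$, I would take a double functor $I\colon \bA\to \bB$ in ${}^\boxslash\cF$ and apply \cref{thm:factinCcapWandF} to factor it as $I=RJ$ with $J\in\cC\cap\cW$ and $R\in\cF$. Since $I$ has the left lifting property with respect to $R\in \cF$, the standard retract argument produces a lift in the square
\begin{tz}
\node[](1) {$\bA$};
\node[below of=1](2) {$\bB$};
\node[right of=1,xshift=.5cm](3) {$\bC$};
\node[below of=3](4) {$\bB$};
\draw[->] (1) to node[above,la]{$J$} (3);
\draw[d] (2) to (4);
\draw[->] (1) to node[left,la]{$I$} (2);
\draw[->] (3) to node[right,la]{$R$} (4);
\draw[->,dashed] (2) to node[pos=0.4,above,la]{$S$} (3);
\end{tz}
exhibiting $I$ as a retract of $J$.

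It then remains to observe that $\cC\cap\cW$ is closed under retracts. The class $\cC=\Iwcof$ is closed under retracts since it is the left class of the weak factorization system $(\Iwcof,\Iwinj)$, and the class $\cW$ is closed under retracts by \cref{prop:2outof3andretractW}. Hence $I$, being a retract of $J\in\cC\cap\cW$, lies in $\cC\cap\cW$ itself, which completes the proof. There is no real obstacle here; the only subtlety is recalling that both ingredients of $\cC\cap\cW$ are independently retract-closed, and that the factorization axiom \cref{thm:factinCcapWandF} has already been established, making the retract argument directly applicable.
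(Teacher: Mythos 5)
Your proof is correct and matches the paper's own argument: both establish the easy inclusion from the definition of $\cF$, then use \cref{thm:factinCcapWandF}, the retract argument, and retract-closure of $\cC\cap\cW$ (via \cref{prop:2outof3andretractW} and the weak factorization system $(\Iwcof,\Iwinj)$) for the reverse inclusion. You have simply spelled out the standard details that the paper leaves implicit.
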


\begin{proof}
By definition of $\cF$, we already know that $\cC\cap\cW\subseteq {}^\boxslash \cF$. The reverse inclusion follows from \cref{thm:factinCcapWandF}, the retract argument, and the fact that $\cC\cap\cW$ is closed under retracts.
\end{proof}

\begin{rem} \label{rem:JwcofinCcapW}
This shows that $\Jwcof\subseteq \cC\cap\cW$. Indeed, we have that $\cF\subseteq \Jwinj$ by \cref{lem:FinJwinj}, and hence $\Jwcof={}^\boxslash\Jwinj\subseteq {}^\boxslash \cF=\cC\cap\cW$.
\end{rem}

\section{Quillen pairs} \label{sec:Quillenpairs}

Having constructed a new model structure on $\DblCat$, it is natural to wonder how it compares to the one defined by the authors in \cite{MSV}. We settle this question by showing that the identity functor induces a Quillen pair between our two model structures, but not a Quillen equivalence.

We then devote the rest of the section to comparing our model structure on $\DblCat$ to Lack's model structure on $\TwoCat$; see \cite{Lack2Cat,LackBicat} for more details. As in \cite{MSV}, the horizontal embedding $\bH\colon\TwoCat\to\DblCat$ is a left Quillen and homotopically fully faithful functor, but it is no longer right Quillen as it does not preserve fibrant objects. Instead, this role is now played by its more homotopical version $\bHsim\colon\TwoCat\to\DblCat$, which is also homotopically fully faithful. Furthermore, the double category $\bHsim\cA$ associated to a $2$-category $\cA$ is weakly horizontally invariant and provides a fibrant replacement for $\bH\cA$. 

First, we show that the identity adjunction is a Quillen reflection, embedding the homotopy theory of weakly horizontally invariant double categories into that of double categories.

\begin{theorem} \label{thm:idQuillenrefl}
The identity adjunction
\begin{tz}
\node[](A) {$\DblCat_\mathrm{whi}$};
\node[right of=A,xshift=1.5cm](B) {$\DblCat$};
\draw[->] ($(B.west)+(0,.25cm)$) to [bend right=25] node[above,la]{$\id$} ($(A.east)+(0,.25cm)$);
\draw[->] ($(A.east)-(0,.25cm)$) to [bend right=25] node[below,la]{$\id$} ($(B.west)+(0,-.25cm)$);
\node[la] at ($(A.east)!0.5!(B.west)$) {$\bot$};
\end{tz}
is a Quillen pair between the model structure on $\DblCat$ for weakly horizontally invariant double categories of \cref{thm:secondMS} and the one of \cite[Theorem 3.19]{MSV}. Moreover, the derived counit is levelwise a weak equivalence.
\end{theorem}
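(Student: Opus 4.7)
The plan is to verify that the left adjoint $\id\colon \DblCat_\mathrm{MSV}\to \DblCat_\mathrm{whi}$ preserves cofibrations and trivial cofibrations, from which the Quillen-pair assertion follows; then to compute the derived counit at an arbitrary object and recognize it as a weak equivalence in $\DblCat_\mathrm{whi}$.

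For the preservation of cofibrations, I would observe that the set of generating cofibrations used in \cite[Theorem 3.19]{MSV} for $\DblCat_\mathrm{MSV}$ is exactly $\cI_w\setminus\{I_3\}$, as indicated in the introduction (the new model structure was obtained by adding $I_3\colon \mathbbm 1\sqcup \mathbbm 1\to \vtwo$ to the generators of \cite{MSV}). Since $\cI\text{-}\mathrm{cof}$ is monotone in $\cI$, every $(\cI_w\setminus\{I_3\})$-cofibration is an $\cI_w$-cofibration, so cofibrations in $\DblCat_\mathrm{MSV}$ are cofibrations in $\DblCat_\mathrm{whi}$. Combining this with \cref{prop:dblbieqareinW}, which guarantees that every double biequivalence (i.e., every weak equivalence of $\DblCat_\mathrm{MSV}$) lies in $\cW$, we obtain that every trivial cofibration in $\DblCat_\mathrm{MSV}$ is a trivial cofibration in $\DblCat_\mathrm{whi}$. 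Hence $\id\colon \DblCat_\mathrm{MSV}\to \DblCat_\mathrm{whi}$ is left Quillen.

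For the derived counit at an object $\bY\in \DblCat_\mathrm{whi}$, I would take a cofibrant replacement $q\colon \bQ\bY\to \bY$ in $\DblCat_\mathrm{MSV}$; it is in particular a trivial fibration in $\DblCat_\mathrm{MSV}$ and thus a double biequivalence. Applying $\id$ and invoking \cref{prop:dblbieqareinW} once more, the map $q$ belongs to $\cW$, so it is a weak equivalence in $\DblCat_\mathrm{whi}$, as required.

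There is no genuine obstacle in this proof: the two ingredients needed—namely that the generating cofibrations of \cite{MSV} form a subset of $\cI_w$, and that double biequivalences lie in $\cW$—have already been established, and the rest is formal. Note that one could alternatively check the dual conditions on the right adjoint, since $\cI_w\text{-}\mathrm{inj}\subseteq (\cI_w\setminus\{I_3\})\text{-}\mathrm{inj}$ gives preservation of trivial fibrations and the inclusion of trivial cofibrations established above dually yields preservation of fibrations; but the route via the left adjoint is the most direct.
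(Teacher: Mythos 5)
Your overall strategy matches the paper's exactly: show the left adjoint preserves cofibrations by comparing generating sets, show it preserves trivial cofibrations by appealing to \cref{prop:dblbieqareinW}, and identify the derived counit at a fibrant object $\bY$ with the cofibrant replacement $q\colon \bQ\bY\to\bY$ in the MSV model structure, which is a double biequivalence and hence in $\cW$. The one slip is in identifying the generating cofibrations of \cite[Theorem 3.19]{MSV}. They are not $\cI_w\setminus\{I_3\}$: the set $\cI'$ from \cite[Proposition 4.3]{MSV} contains $I_1$, $I_2$, $I_4$, $I_5$, and the unique map $\emptyset\to\vtwo$ in place of $I_3$ (this is what makes the MSV trivial fibrations merely \emph{surjective} on vertical morphisms rather than full). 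The introduction's phrase ``adding the inclusion $\mathbbm 1\sqcup \mathbbm 1\to\vtwo$ to the class of cofibrations'' refers to the resulting class, not to appending a generator to an otherwise unchanged set. As stated your monotonicity argument is therefore incomplete, since $\cI'$ is not a subset of $\cI_w$; you still need to observe that $\emptyset\to\vtwo$ lies in $\Iwcof$ (for instance because it factors as $(I_1\sqcup I_1)$ followed by $I_3$), from which $\cI'\mathrm{-cof}\subseteq\Iwcof$ follows. The same correction applies to your remark about the dual route via trivial fibrations: the relevant inclusion is $\Iwinj\subseteq\cI'\mathrm{-inj}$, which holds because surjectivity on objects plus fullness on vertical morphisms gives surjectivity on vertical morphisms. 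With this small fix your argument is exactly the paper's.
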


\begin{proof}
The set $\cI'$ of generating cofibrations introduced in \cite[Proposition 4.3]{MSV} for the model structure on $\DblCat$ constructed therein can be described as the set $\cI_w$ where the inclusion $I_3\colon \mathbbm 1\sqcup \mathbbm 1\to \vtwo$ is replaced by the unique map $\emptyset\to \vtwo$. Since the latter is also in $\Iwcof$, it follows that $\cI'\mathrm{-cof}\subseteq \Iwcof$, and hence $\id\colon \DblCat\to \DblCat_\mathrm{whi}$ preserves cofibrations. Furthermore, by \cref{prop:dblbieqareinW}, we have that the class of double biequivalences ---which is precisely the class of weak equivalences for the model structure on $\DblCat$ of \cite{MSV}--- is contained in the class $\cW$ of weak equivalences in $\DblCat_\whi$, and hence $\id\colon \DblCat\to \DblCat_\mathrm{whi}$ also preserves weak equivalences. This shows that the identity adjunction is a Quillen pair. 

It remains to show that the derived counit is levelwise a weak equivalence in $\DblCat_\whi$. Let $\bA$ be a fibrant double category in $\DblCat_\whi$. Then the component of the derived counit at $\bA$ is given by the cofibrant replacement $q_{\bA}\colon \bA^\cof\to \bA$ in the model structure on~$\DblCat$ of \cite{MSV}. In particular, the double functor $q_\bA$ is a double biequivalence, and hence a weak equivalence in $\DblCat_\whi$ by \cref{prop:dblbieqareinW}. 
\end{proof}

However, the identity adjunction does not induce a Quillen equivalence between the two model structures on $\DblCat$, as shown in the following remark. 

\begin{rem}
The derived unit of the identity adjunction above is not a levelwise double biequivalence. To see this, let $\bA$ be the double category generated by the following data,
\begin{tz}
\node[](1) {$A$}; 
\node[below of=1](2) {$A'$}; 
\node[right of=2](2') {$B'$}; 
\node[below of=2'](3') {$B''$}; 
\draw[->,pro] (1) to node[left,la]{$v$} (2);
\draw[->,pro] (2') to node[right,la]{$w$} (3');
\draw[->] (2) to node[above,la]{$a$} node[below,la]{$\simeq$} (2');
\end{tz}
where $a$ is a horizontal adjoint equivalence. By \cite[Proposition~4.11]{MSV}, $\bA$ is cofibrant in the model structure on $\DblCat$ of \cite{MSV}. Then the component of the derived unit at $\bA$ is given by a fibrant replacement of $\bA$ in  $\DblCat_\whi$, and hence we can consider the weakly horizontally invariant replacement $j_\bA\colon \bA\to \bA^\whi$ given in \cref{def:whireplacement}. In particular, a vertical morphism $u\colon A'\arrowdot B'$ is freely added to~$\bA^\whi$, and the composite $wuv\colon A\arrowdot B''$ in~$\bA^\whi$ does not admit a lift along $j_\bA$ as required by (db3) of \cref{def:doublebieq}. This shows that $j_\bA$ is not a double biequivalence.
\end{rem}

As a direct consequence of the above theorem, and the fact that $\bH\dashv \bfH$ is a Quillen pair between Lack's model structure on $\TwoCat$ and the model structure on $\DblCat$ of \cite{MSV}, we get that $\bH\dashv \bfH$ is also a Quillen pair between $\TwoCat$ and the model structure on~$\DblCat$ introduced in this paper. Moreover, the derived unit is levelwise a biequivalence, and so the functor $\bH$ is homotopically fully faithful.

\begin{theorem} \label{thm:HHcorefsecond}
The adjunction 
\begin{tz}
\node[](A) {$\DblCat$};
\node[right of=A,xshift=1cm](B) {$\TwoCat$};
\draw[->] ($(B.west)+(0,.25cm)$) to [bend right=25] node[above,la]{$\bH$} ($(A.east)+(0,.25cm)$);
\draw[->] ($(A.east)-(0,.25cm)$) to [bend right=25] node[below,la]{$\bfH$} ($(B.west)+(0,-.25cm)$);
\node[la] at ($(A.east)!0.5!(B.west)$) {$\bot$};
\end{tz}
is a Quillen pair between Lack's model structure and the model structure of \cref{thm:secondMS}. Moreover, the derived unit is levelwise a biequivalence.
\end{theorem}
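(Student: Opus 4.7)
The plan is to decompose the claim into two parts: establishing the Quillen pair by composition with the result of \cref{thm:idQuillenrefl}, and then identifying the derived unit directly using the explicit fibrant replacement $(-)^\whi$.

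For the Quillen pair, I would observe that $\bH \dashv \bfH$ is already known from~\cite{MSV} to be a Quillen pair between Lack's model structure on $\TwoCat$ and the model structure on $\DblCat$ of~\cite{MSV}. Composing with the identity Quillen pair of \cref{thm:idQuillenrefl}, whose left adjoint $\id\colon \DblCat \to \DblCat_\whi$ preserves cofibrations and weak equivalences, immediately yields that $\bH\colon \TwoCat \to \DblCat_\whi$ is left Quillen with respect to the new model structure.

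For the derived unit, the crucial observation is that $\bfH\bH = \id_\TwoCat$, so the underived unit at every $\cA \in \TwoCat$ is simply $\id_\cA$. Moreover, $\bH$ sends biequivalences to double biequivalences and hence, by \cref{prop:dblbieqareinW}, to maps in $\cW$, so no cofibrant replacement is required to compute the derived unit. By \cref{prop:j_AinCcapW}, $j_{\bH\cA}\colon \bH\cA \to (\bH\cA)^\whi$ is a fibrant replacement in $\DblCat_\whi$, so the derived unit at $\cA$ is naturally represented by the $2$-functor $\bfH(j_{\bH\cA})\colon \cA \to \bfH((\bH\cA)^\whi)$. The key step is to show that this $2$-functor is in fact an isomorphism of $2$-categories, and in particular a biequivalence: by \cref{rem:idonunderhorcat}, $j_{\bH\cA}$ is the identity on underlying horizontal categories, so $\bfH(j_{\bH\cA})$ is the identity on objects and horizontal morphisms; and on $2$-cells, which are squares with identity vertical boundaries, the full faithfulness on squares of $j_{\bH\cA}$ (also from \cref{rem:idonunderhorcat}), applied to boundaries with identity verticals and arbitrary horizontal morphisms, supplies the required bijection.

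No part of this argument presents a genuine obstacle: the heavy lifting has been carried out in \cref{thm:idQuillenrefl} and \cref{prop:j_AinCcapW}, and the rest simply exploits the fact that the fibrant-replacement construction $(-)^\whi$ has been designed so as to preserve the underlying horizontal structure strictly enough to make $\bfH$ transparent to it. The only point worth being careful about is checking that full faithfulness on squares, which is a statement about boundaries lying in the source, is sufficient to control all identity-vertical boundaries in $\bfH((\bH\cA)^\whi)$; this is immediate because $j_{\bH\cA}$ is bijective on objects and horizontal morphisms, so every such boundary arises from one in $\bH\cA$.
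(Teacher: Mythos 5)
Your proposal is correct and takes essentially the same route as the paper: reduce the Quillen pair to \cref{thm:idQuillenrefl} together with the already-established Quillen pair $\bH\dashv\bfH$ from \cite[Proposition 6.1]{MSV}, and identify the derived unit as $\bfH j_{\bH\cA}$ using \cref{rem:idonunderhorcat}. The only difference is that you spell out why $\bfH j_{\bH\cA}$ is an isomorphism (identity on the underlying horizontal $1$-category plus fully faithfulness on squares applied to identity-vertical boundaries) rather than asserting it outright, and you add the observation that $\bH$ preserves all weak equivalences so that cofibrant replacement can be omitted -- but the paper sidesteps this by simply taking $\cA$ cofibrant from the start, which is the cleaner phrasing.
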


\begin{proof}
The fact that this is a Quillen pair follows directly from \cref{thm:idQuillenrefl} and \cite[Proposition 6.1]{MSV}. To show that the derived unit is levelwise a biequivalence, let $\cA$ be a cofibrant $2$-category. The component of the derived unit at $\cA$ is given by the underlying horizontal $2$-functor of a fibrant replacement $\bfH j_{\bH\cA}\colon \cA=\bfH\bH\cA\to \bfH(\bH\cA)^\fib$ of the horizontal double category $\bH\cA$ in $\DblCat$. In particular, if we consider the fibrant replacement given in \cref{def:whireplacement}, it does not change the underlying horizontal $2$-category of $\bH\cA$ by \cref{rem:idonunderhorcat}. Hence $\bfH j_{\bH\cA}$ is an identity, and in particular a biequivalence.  
\end{proof}

As opposed to the case where $\DblCat$ is endowed with the model structure of \cite{MSV} (see \cite[Theorem 6.4]{MSV}), the horizontal embedding is not right Quillen when we consider our new model structure.

\begin{rem} \label{rem:hordblcatarenotwhi}
The functor $\bH$ is not right Quillen as, for example, the horizontal double category $\bH\Eadj$ is not weakly horizontally invariant, where $\Eadj$ denotes the free-living adjoint equivalence. Indeed, there is no vertical morphism in $\bH\Eadj$ filling the diagram below, as $\bH\Eadj$ only contains trivial vertical morphisms.
\begin{tz}
    \node[right of=B,space](A) {$0$};
    \node[right of=A](B) {$1$};
    \node[below of=A](A') {$1$};
    \node[right of=A'](B') {$1$};
    \draw[->] (A) to node[above,la]{$\simeq$} (B);
    \draw[d] (A') to (B');
    \draw[d,pro] (B) to (B');
\end{tz}
Since every 2-category is fibrant, this implies that $\bH$ does not preserve fibrant objects.
\end{rem}

This shortcoming of the horizontal embedding $\bH$ can be remedied by instead considering the homotopical horizontal embedding $\bHsim\colon \TwoCat\to \DblCat$ of \cref{def:htilde}. As we will see, the adjunction $\Lsim\dashv\bHsim$ of \cref{prop:bHsimadj} is compatible with the model structures considered, making the functor $\bHsim$  right Quillen. As a first step towards this, we show that $\bHsim$ provides a levelwise fibrant replacement of $\bH$ in our model structure on $\DblCat$.

\begin{theorem}\label{rem:Hsimiswhi}
Let $\cA$ be a $2$-category. Then the double category $\bHsim\cA$ is weakly horizontally invariant and the inclusion $J_\cA\colon \bH\cA\to \bHsim\cA$ is a double biequivalence. In particular, this exhibits $\bHsim\cA$ as a fibrant replacement of $\bH\cA$ in the model structure on $\DblCat$ of \cref{thm:secondMS}.
\end{theorem}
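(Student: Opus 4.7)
The plan is to establish the two claims separately and then combine them with results already proved. \textbf{First}, we show that $\bHsim\cA$ is weakly horizontally invariant. Suppose we are given horizontal equivalences $a\colon A\xrightarrow{\simeq} C$ and $a'\colon A'\xrightarrow{\simeq} C'$ in $\bHsim\cA$ together with a vertical morphism $w\colon C\arrowdot C'$, which by definition corresponds to an adjoint equivalence $(w,w^\sharp,\eta_w,\epsilon_w)$ in $\cA$. After promoting $a$ and $a'$ to adjoint equivalences $(a,a^\sharp,\eta_a,\epsilon_a)$ and $(a',{a'}^\sharp,\eta_{a'},\epsilon_{a'})$ in $\cA$, we take $u\colon A\arrowdot A'$ to be the vertical morphism in $\bHsim\cA$ corresponding to the composite adjoint equivalence ${a'}^\sharp w a$ in $\cA$, and fill the resulting square with the $2$-isomorphism $\epsilon_{a'}^{-1}\ast wa\colon wa\Rightarrow a'u$ in $\cA$. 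This square in $\bHsim\cA$ is weakly horizontally invertible: the weak inverse is the mate of the $2$-iso built from the unit/counit data of the adjoint equivalences on $a$, $a'$, and the vertical morphisms.

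\textbf{Second}, we verify the four conditions (db1)--(db4) of \cref{def:doublebieq} for $J_\cA$. Since $J_\cA$ is the identity on objects and horizontal morphisms, (db1) and (db2) are immediate, witnessed by identity horizontal equivalences and identity vertically invertible squares. For (db4), note that the only vertical morphisms in $\bH\cA$ are the trivial identities, which $J_\cA$ sends to the identity adjoint equivalences in $\bHsim\cA$; hence a square in $\bHsim\cA$ with identity vertical boundaries is literally a $2$-morphism $a\Rightarrow a'$ in $\cA$, and these are precisely the squares of $\bH\cA$. For (db3), given $v\colon B\arrowdot B'$ in $\bHsim\cA$ corresponding to an adjoint equivalence $(v,v^\sharp,\eta_v,\epsilon_v)$ in $\cA$, we choose $A=A'=B$ and $u=\id_B$ in $\bH\cA$, use the horizontal equivalences $\id_B\colon B\to B$ and $v^\sharp\colon B'\to B$, and fill the required square with the $2$-isomorphism $\eta_v\colon \id_B\Rightarrow v^\sharp v$; weak horizontal invertibility follows by the same kind of mate construction as in the first step.

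\textbf{Combining the two parts:} by \cref{cor:whiarefibrant}, the first step shows that $\bHsim\cA$ is fibrant, and by \cref{prop:dblbieqareinW}, the second step shows that $J_\cA$ lies in $\cW$; together this exhibits $J_\cA$ as a weak equivalence into a fibrant object, hence a fibrant replacement of $\bH\cA$. \textbf{The main technical obstacle} is the verification of weak horizontal invertibility in the first and third steps: according to \cref{def:whisquare}, this requires exhibiting a weak inverse square together with four vertically invertible squares $\eta,\eta',\epsilon,\epsilon'$ satisfying the two pasting equalities. In each case, \cref{uniqueweakinverse} pins down the weak inverse uniquely once horizontal adjoint equivalence data on $a$ and $a'$ is fixed, and the required data can be read off from the unit/counit of the adjoint equivalences in $\cA$, but the pasting diagrams need to be carefully assembled and checked, so this bookkeeping is where the only real work lies.
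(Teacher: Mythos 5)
Your approach is the same as the paper's: verify weak horizontal invariance directly, check (db1)--(db4) for $J_\cA$, and then invoke \cref{cor:whiarefibrant} and \cref{prop:dblbieqareinW} to conclude it is a fibrant replacement. Your constructions of the lifts (the composite ${a'}^\sharp w a$ for the filler square, the trivial vertical morphism together with $\eta_v$ for (db3)) match the paper's, modulo the cosmetic difference that the paper's (db3) square has the trivial vertical morphism on the left rather than the right, which is fine since one can pass to the weak inverse.

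The one place where the proposal falls short is exactly the spot you flag yourself: you say that verifying weak horizontal invertibility ``requires exhibiting a weak inverse square together with four vertically invertible squares $\eta,\eta',\epsilon,\epsilon'$ satisfying the two pasting equalities'' and that ``this bookkeeping is where the only real work lies.'' That bookkeeping is in fact not needed here. The paper cites \cite[Lemma A.2.3]{Lyne}, which says that a square $\sq{\alpha}{a}{c}{u}{w}$ in $\bHsim\cA$ is weakly horizontally invertible if and only if it corresponds to a $2$-isomorphism $wa\Rightarrow cu$ in $\cA$ with $a,c$ equivalences. Once this is invoked, both your first step (the filler square is \emph{any} $2$-iso $wa\cong a'u$ between composites of equivalences, so no choice of composite adjoint equivalence data or mates is required) and your (db3) step ($\eta_v$ is a $2$-iso between equivalences, so the square is automatically weakly horizontally invertible) are immediate, and no pasting diagrams need to be assembled. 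Without that lemma your plan is correct in outline, but you would genuinely have to carry out the check in \cref{def:whisquare}; with it, the ``only real work'' disappears.
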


\begin{proof}
For the first statement, we have by \cite[Lemma A.2.3]{Lyne} that a weakly horizontally invertible square $\sq{\alpha}{a}{c}{u}{w}$ in $\bHsim \cA$ corresponds to a $2$-isomorphism $\alpha\colon wa\Rightarrow cu$ in $\cA$, where $a$ and $c$ are equivalences in $\cA$. In particular, given a boundary in $\bHsim\cA$ as below left,
\begin{tz}
    \node[](A) {$A$};
    \node[right of=A](B) {$C$};
    \node[below of=A](A') {$A'$};
    \node[right of=A'](B') {$C'$};
    \draw[->] (A) to node[above,la] {$a$} node[below,la] {$\simeq$} (B);
    \draw[->] (A') to node[below,la] {$a'$} node[above,la] {$\simeq$}  (B');
    \draw[->,pro] (B) to node[right,la]{$w$} (B');
    
    \node[right of=B, xshift=1cm](A) {$A$};
    \node[right of=A](B) {$C$};
    \node[below of=A](A') {$A'$};
    \node[right of=A'](B') {$C'$};
    \draw[->] (A) to node[above,la] {$a$} node[below,la] {$\simeq$} (B);
    \draw[->] (A') to node[below,la] {$a'$} node[above,la] {$\simeq$}  (B');
    \draw[->,pro] (A) to node[left,la]{$u$} (A');
    \draw[->,pro] (B) to node[right,la]{$w$} (B');
    
    \node[la] at ($(A)!0.5!(B')+(5pt,0)$) {$\simeq$};
    \node[la] at ($(A)!0.5!(B')-(5pt,0)$) {$\alpha$};
\end{tz}
there is an equivalence $u\colon A\xrightarrow{\simeq} A'$ and a $2$-isomorphism $\alpha\colon wa\cong a'u$ in $\cA$, which provides a square as desired, depicted above right. This shows that $\bHsim\cA$ is weakly horizontally invariant.

For the second statement, recall that $\bfH\bH\cA=\cA=\bfH\bHsim\cA$, and thus the inclusion $J_\cA\colon \bH\cA\to \bHsim\cA$ is the identity on underlying horizontal $2$-categories; this shows that $J_\cA$ satisfies (db1-2) and (db4) of \cref{def:doublebieq}. It remains to show (db3). Let $u\colon A\arrowdot A'$ be a vertical morphism in $\bHsim\cA$, i.e., an adjoint equivalence $u\colon A\xrightarrow{\simeq} A'$ in $\cA$. Then the square 
\begin{tz}
 \node[right of=B,xshift=1.5cm](A) {$A$};
    \node[right of=A](B) {$A$};
    \node[below of=A](A') {$A$};
    \node[right of=A'](B') {$A'$};
    \draw[d] (A) to (B);
    \draw[->] (A') to node[below,la]{$u$} node[above,la]{$\simeq$}  (B');
    \draw[d,pro] (A) to node[left,la]{$J_\cA(e_A)$} (A');
    \draw[pro,->] (B) to node[right,la]{$u$} (B');
    
    \node[la] at ($(A)!0.5!(B')$) {$\simeq$};
\end{tz}
induced by the identity at $u$ gives a weakly horizontally invertible square in $\bHsim\cA$ as required. This shows that $J_\cA$ is a double biequivalence. The second statement then follows from \cref{prop:dblbieqareinW}.
\end{proof}

We now show that the double functor $\bHsim$ is right Quillen, and moreover, that it is homotopically fully faithful.

\begin{theorem} \label{thm:LsimHsimrefsecond}
The adjunction 
\begin{tz}
\node[](A) {$\TwoCat$};
\node[right of=A,xshift=1cm](B) {$\DblCat$};
\draw[->] ($(B.west)+(0,.25cm)$) to [bend right=25] node[above,la]{$\Lsim$} ($(A.east)+(0,.25cm)$);
\draw[->] ($(A.east)-(0,.25cm)$) to [bend right=25] node[below,la]{$\bHsim$} ($(B.west)+(0,-.25cm)$);
\node[la] at ($(A.east)!0.5!(B.west)$) {$\bot$};
\end{tz}
is a Quillen pair between Lack's model structure and the model structure of \cref{thm:secondMS}. Moreover, the derived counit is levelwise a weak equivalence.
\end{theorem}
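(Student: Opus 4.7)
The plan is to verify that $\bHsim$ is right Quillen by checking preservation of trivial fibrations and of fibrations, and then to compute the derived counit through a convenient cofibrant representative. Preservation of trivial fibrations is a routine verification from \cref{prop:trivfibinsecond} and the description of trivial fibrations in Lack's model structure as $2$-functors that are surjective on objects, full on morphisms, and fully faithful on $2$-morphisms. The only non-immediate point is fullness of $\bHsim G$ on vertical morphisms, which amounts to lifting the full adjoint equivalence data $(v,v^\sharp,\eta,\epsilon)$ of a vertical morphism in $\bHsim\cB$ componentwise along $G$; the triangle identities for the lift then hold in $\cA$ by fully faithfulness on $2$-morphisms.

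For fibrations, the key observation is that if $G\colon \cA\to\cB$ is a fibration in Lack's model structure, then $\bHsim\cB$ is weakly horizontally invariant by \cref{rem:Hsimiswhi}, so by \cref{prop:F=Jinjbtwwhi} it suffices to check that $\bHsim G$ satisfies \textnormal{(df1)}--\textnormal{(df3)} of \cref{prop:fibinsecond}. Conditions \textnormal{(df1)} and \textnormal{(df2)} follow respectively from the equivalence-lifting property and from the isofibration property on hom-categories, both part of the definition of a fibration in Lack's model structure. The main obstacle will be \textnormal{(df3)}, which demands a strict lift of the full adjoint equivalence data $(v,v^\sharp,\eta_v,\epsilon_v)$ between specified objects $A$ and $A'$ in $\cA$, together with a lift of the square $\beta$. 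To address this, I would use $e=(a')^\sharp w a\colon A\to A'$ in $\cA$ (with respect to a chosen adjoint equivalence data of $a'$) as a candidate: the $2$-isomorphism $\beta$ induces a $2$-isomorphism $v\cong Ge$ in $\cB$, which lifts by isofibration on $\cA(A,A')$ to $\alpha_0\colon u\cong e$ in $\cA$ with $Gu=v$. Then I would pick any adjoint equivalence data $(u,u_0^\sharp,\eta_0,\epsilon_0)$ for the equivalence $u$, and use the uniqueness-up-to-unique-$2$-iso of adjoint equivalence data for a fixed left adjoint to produce a $2$-iso $\zeta\colon v^\sharp\cong Gu_0^\sharp$ in $\cB$; lifting $\zeta$ via isofibration on $\cA(A',A)$ and transporting the data along this lift yields data $(u,u^\sharp,\eta_u,\epsilon_u)$ whose image under $\bHsim G$ equals $(v,v^\sharp,\eta_v,\epsilon_v)$ on the nose. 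The $2$-iso $\alpha\colon wa\cong a'u$ lifting $\beta$ is then built from $\alpha_0$ and the counit of the adjoint equivalence data of $a'$; the identity $G\alpha=\beta$ follows from the triangle identities for the adjoint equivalence data of $Ga'$.

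For the derived counit, for each $\cA\in\TwoCat$ I would choose a cofibrant replacement $q_\cA\colon Q\cA\to\cA$ in Lack's model structure. Since $\bH$ is left Quillen by \cref{thm:HHcorefsecond}, the double category $\bH Q\cA$ is cofibrant in $\DblCat$, and the composite $\bH Q\cA\xrightarrow{\bH q_\cA}\bH\cA\hookrightarrow\bHsim\cA$ is a weak equivalence in the model structure of \cref{thm:secondMS}: the second map is so by \cref{rem:Hsimiswhi}, and the first because $\bH$ sends biequivalences to double biequivalences (a direct verification against \cref{def:doublebieq}), which lie in $\cW$ by \cref{prop:dblbieqareinW}. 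The derived counit at $\cA$ is therefore computed, up to weak equivalence in $\TwoCat$, as the composite $\Lsim\bH Q\cA\to\Lsim\bHsim\cA\xrightarrow{\epsilon_\cA}\cA$. A direct inspection using \cref{def:Lsim} shows that $\Lsim\bH Q\cA=Q\cA$ and that the induced map $\cA=\Lsim\bH\cA\to\Lsim\bHsim\cA$ is the canonical inclusion of $\cA$ as a sub-$2$-category, on which $\epsilon_\cA$ restricts to the identity. Hence the derived counit at $\cA$ equals $q_\cA\colon Q\cA\to\cA$, which is a biequivalence.
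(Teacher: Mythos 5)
Your proof is correct, and for the right-Quillen part it follows essentially the same outline as the paper: reduce to $\cJ_w$-injectivity via \cref{prop:F=Jinjbtwwhi} and \cref{rem:Hsimiswhi}, and check (df1)--(df3). The one place you go beyond the paper is in (df3): the paper lifts only the underlying $1$-morphism $u$ (via local isofibration against $\delta$), and leaves implicit that the \emph{full adjoint-equivalence data} of the lifted vertical morphism must map on the nose to $(v,v^\sharp,\eta_v,\epsilon_v)$, which is what $\bHsim F(u)=v$ actually requires. You address this explicitly, by transporting a choice of adjoint-equivalence data on $u$ along a lift of the unique mate $\zeta\colon v^\sharp\cong Gu_0^\sharp$; that step is needed and your version of it is right.

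For the derived counit, you take a genuinely different route. The paper builds an explicit cofibrant replacement $(\bHsim\cA)^\cof$ by freely generating the underlying horizontal and vertical categories of $\bHsim\cA$ while keeping the squares, and then checks by hand that $\epsilon_\cA\circ\Lsim q_{\bHsim\cA}$ is a trivial fibration in $\TwoCat$. You instead observe that $\bH Q\cA\to\bH\cA\hookrightarrow\bHsim\cA$ already gives a cofibrant replacement (using \cref{thm:HHcorefsecond}, \cref{rem:Hsimiswhi}, and the elementary fact that $\bH$ sends biequivalences to double biequivalences), and that $\Lsim\bH=\id_{\TwoCat}$ together with $\epsilon_\cA|_\cA=\id_\cA$ collapses the derived counit to $q_\cA$. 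That computation is valid: the minimal relations in \cref{def:Lsim} force $\overline{e_A}=\id_A$, $\eta_{\overline{e_A}}=\id$, $\epsilon_{\overline{e_A}}=\id$, so indeed $\Lsim\bH\cC\cong\cC$ naturally, and $\epsilon_\cA$ restricts to the identity on the image of $\Lsim\bH\cA$. Your approach buys a shorter argument that avoids unwinding the structure of $(\bHsim\cA)^\cof$, at the cost of making essential use of the isomorphism $\Lsim\bH=\id$, which the paper does not record and which you should state (and justify in a sentence) rather than leave to ``direct inspection.''
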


\begin{proof}
We show that $\bHsim\colon \TwoCat\to \DblCat$ preserves fibrations and trivial fibrations. Let $F\colon \cA\to \cB$ be a Lack fibration in $\TwoCat$. Since $\bHsim\cB$ is weakly horizontally invariant (see \cref{rem:Hsimiswhi}), \cref{prop:F=Jinjbtwwhi} guarantees that $\bHsim F$ is a fibration in $\DblCat$ if and only if it is $\cJ_w$-injective. Hence, we need to prove that $\bHsim F$ satisfies (df1-3) of \cref{prop:fibinsecond}.

First note that $\bHsim F$ satisfies (df1-2) by definition of $F$ being a fibration in $\TwoCat$. It remains to prove (df3). Consider a diagram in $\bHsim\cA$ as below left, together with a weakly horizontally invertible square $\beta$ in $\bHsim\cB$, as depicted below right, 
\begin{tz}
    \node[](A) {$A$};
    \node[right of=A](B) {$C$};
    \node[below of=A](A') {$A'$};
    \node[right of=A'](B') {$C'$};
    \draw[->] (A) to node[above,la] {$a$} node[below,la] {$\simeq$} (B);
    \draw[->] (A') to node[above,la] {$a'$} node[below,la] {$\simeq$}  (B');
    \draw[->,pro] (B) to node[right,la]{$w$} (B');
    
    \node[right of=B,xshift=1.5cm](A) {$FA$};
    \node[right of=A](B) {$FC$};
    \node[below of=A](A') {$FA'$};
    \node[right of=A'](B') {$FC'$};
    \draw[->] (A) to node[above,la] {$Fa$} node[below,la]{$\simeq$} (B);
    \draw[->] (A') to node[below,la]{$Fa'$} node[above,la]{$\simeq$}  (B');
    \draw[->,pro] (A) to node[left,la]{$v$} (A');
    \draw[->,pro] (B) to node[right,la]{$Fw$} (B');
    
    \node[la] at ($(A)!0.5!(B')+(5pt,0)$) {$\simeq$};
    \node[la] at ($(A)!0.5!(B')-(5pt,0)$) {$\beta$};
\end{tz}
i.e., a $2$-isomorphism $\beta\colon (Fw)(Fa)\Rightarrow (Fa')v$ in $\cB$ by \cite[Lemma A.2.3]{Lyne}. Let $(c',a',\eta,\epsilon)$ be a choice of adjoint equivalence data for $a'$, and let $\delta$ be the $2$-isomorphism in $\cB$ given by the pasting below left. 
\begin{tz}[node distance=1.8cm]
    \node[](A) {$FA$};
    \node[right of=A](B) {$FC$};
    \node[below of=A](A') {$FA'$};
    \node[right of=A'](B') {$FC'$};
    \draw[->] (A) to node[above,la] {$Fa$} node[below,la]{$\simeq$} (B);
    \draw[->] (A') to node[over,la]{$Fa'$}  (B');
    \draw[->] (A) to node[left,la]{$v$} node[right,la]{$\lsimeq$} (A');
    \draw[->] (B) to node[right,la]{$Fw$} node[left,la]{$\rsimeq$} (B');
    \node[below of=B'](A'') {$FA'$};
    \draw[->] (B') to node[right,la]{$Fc'$} node[left,la]{$\rsimeq$} (A'');
    \draw[d] (A') to node(a)[]{} (A'');
    
    \celli[la,left,yshift=4pt]{B}{A'}{$\beta$};
    \celli[la,left,yshift=5pt][n][0.6]{B'}{a}{$F\epsilon$};
    
    \node[right of=B,xshift=1cm](A) {$A$};
    \node[right of=A](B) {$C$};
    \node[below of=B](B') {$C'$};
    \draw[->] (A) to node[above,la] {$a$} node[below,la]{$\simeq$} (B);
    \node[below of=B'](A'') {$C'$};
    \node[left of=A''](A') {$A'$};
    \draw[->] (B) to node(b)[right,la]{$w$} node[left,la]{$\rsimeq$} (B');
    \draw[->] (A) to node(v)[left,la]{$u$} node[right,la]{$\lsimeq$} (A');
    \draw[->] (A') to node[below,la]{$a'$} node[above,la]{$\simeq$}  (A'');
    \draw[->] (B') to node[above,xshift=-3pt,la]{$c'$} (A');
    \draw[d] (B') to node(a)[]{} (A'');
    
    \celli[la,above,xshift=-10pt]{b}{v}{$\overline{\alpha}$};
    \celli[la,below,xshift=5pt][n][0.28]{a}{A'}{$\eta$};
\end{tz}
Since $F$ is a fibration in $\TwoCat$, there is an equivalence $u\colon A\xrightarrow{\simeq} A'$ in~$\cA$ and a $2$-isomorphism $\overline{\alpha}\colon c'wa\cong u$ in~$\cA$ such that $\delta=F\overline{\alpha}$. We set $\alpha\colon wa\cong a'u$ to be the pasting above right; by the triangle identities for $(\eta,\epsilon)$, we get that $\beta=F\alpha$ as desired. This proves that $\bHsim F$ is a fibration in~$\DblCat$.

Now suppose that $F\colon \cA\to \cB$ is a trivial fibration in $\TwoCat$; by definition, we directly see that $\bHsim F$ is surjective on objects, full on horizontal morphisms, and fully faithful on squares. Fullness on vertical morphisms for $\bHsim F$ follows from the fact that a lift of an adjoint equivalence by a biequivalence is also an adjoint equivalence. Hence $\bHsim F$ is a trivial fibration in $\DblCat$ by \cref{prop:trivfibinsecond}, and this shows that $\bHsim$ is right Quillen. 

It remains to show that the derived counit is levelwise a biequivalence. Let $\cA$ be a $2$-category, and let $q_{\bHsim \cA}\colon (\bHsim\cA)^\cof\to \bHsim\cA$ denote the cofibrant replacement of $\bHsim\cA$ constructed as follows. The double category $(\bHsim\cA)^\cof$ has the same objects as $\cA$; it has a copy $\overline a$ for each morphism $a$ in $\cA$ and horizontal morphisms in $(\bHsim\cA)^\cof$ are given by free composites of $\overline a$'s; it has a copy $\overline{u}$ for each adjoint equivalence $u$ in $\cA$ and vertical morphisms in $(\bHsim\cA)^\cof$ are given by free composites of $\overline{u}$'s; and squares in $(\bHsim\cA)^\cof$ are given by squares of $\bHsim\cA$ whose boundaries are the actual composites in $\bHsim\cA$ of the representative of the free composites.

Then, by studying the data of the $2$-category $\Lsim(\bHsim\cA)^\cof$, we can see that the derived counit at $\cA$
\begin{tz}
\node[](1) {$\Lsim(\bHsim\cA)^\cof$}; 
\node[right of=1,xshift=1.4cm](2) {$\Lsim\bHsim\cA$}; 
\node[right of=2,xshift=.6cm](3) {$\cA$}; 
\draw[->] (1) to node[above,la]{$\Lsim q_{\bHsim\cA}$} (2);
\draw[->] (2) to node[above,la]{$\epsilon_\cA$} (3);
\end{tz}
is a trivial fibration in $\TwoCat$ as it is surjective on objects, full on morphisms, and fully faithful on $2$-morphisms. 
\end{proof}

\begin{rem}
The components of the derived unit of the adjunction  $\Lsim\dashv\bHsim$ are not weak equivalences in $\DblCat$. Indeed, since every 2-category is fibrant, we know that the counit and the derived counit agree on cofibrant double categories. Then, if we consider the component $\eta_{\bV\mathbbm{2}}\colon\bV\mathbbm{2}\to\bHsim\Lsim\bV\mathbbm{2}=\bHsim\Eadj$ of the unit at the cofibrant double category $\bV\mathbbm{2}$, we see that $\bHsim\Eadj$ has non-trivial horizontal morphisms, given by the adjoint equivalence created by $\Lsim$ from the unique vertical morphism of $\bV\mathbbm{2}$, while~$\bV\mathbbm{2}$ does not. Therefore $\eta_{\bV\mathbbm{2}}$ is not a double biequivalence, as it does not satisfy (db2). Then, since $\vtwo$ is weakly horizontally invariant, \cref{prop:Wwithwhisourceisdblbieq} implies that $\eta_{\bV\mathbbm{2}}$ is not a weak equivalence in $\DblCat$.
\end{rem}

While \cref{thm:LsimHsimrefsecond} implies that $\bHsim\colon \TwoCat\to \DblCat$ preserves weak equivalences and fibrations, the following result says that it further reflects these classes of double functors. Hence the model structure on $\TwoCat$ is completely determined from our model structure on $\DblCat$ through its image under $\bHsim$. 

\begin{theorem}\label{thm:2CatriHsim}
Lack's model structure on $\TwoCat$ is right-induced along the adjunction
\begin{tz}
\node[](A) {$\TwoCat$};
\node[right of=A,xshift=1cm](B) {$\DblCat$};
\draw[->] ($(B.west)+(0,.25cm)$) to [bend right=25] node[above,la]{$\Lsim$} ($(A.east)+(0,.25cm)$);
\draw[->] ($(A.east)-(0,.25cm)$) to [bend right=25] node[below,la]{$\bHsim$} ($(B.west)+(0,-.25cm)$);
\node[la] at ($(A.east)!0.5!(B.west)$) {$\bot$};
\end{tz}
from the model structure on $\DblCat$ of \cref{thm:secondMS}.
\end{theorem}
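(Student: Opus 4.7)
The strategy is to verify that a $2$-functor $F\colon \cA\to\cB$ is a biequivalence (resp.\ a Lack fibration) if and only if $\bHsim F$ is a weak equivalence (resp.\ fibration) in $\DblCat$, which is exactly the content of Lack's model structure being right-induced along $\bHsim$. The ``only if'' directions in both cases are already Theorem \ref{thm:LsimHsimrefsecond}, so the plan reduces to establishing the two corresponding reflection properties.

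For the reflection of weak equivalences, I would assume that $\bHsim F$ is a weak equivalence in $\DblCat$. Since $\bHsim\cA$ is weakly horizontally invariant by Theorem \ref{rem:Hsimiswhi}, hence fibrant, Proposition \ref{prop:Wwithwhisourceisdblbieq} identifies $\bHsim F$ as a double biequivalence. The remaining work is to translate conditions (db1)--(db4) of Definition \ref{def:doublebieq}: using that the underlying horizontal $2$-category of $\bHsim\cA$ is $\cA$, that horizontal equivalences in $\bHsim\cA$ coincide with equivalences in $\cA$, and that squares in $\bHsim\cA$ with trivial vertical boundaries correspond to $2$-cells in $\cA$, the conditions (db1), (db2), and (db4) recover respectively essential surjectivity up to equivalence, lifting of morphisms up to invertible $2$-cell, and fully faithfulness on $2$-cells. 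Together these are precisely the defining conditions of a biequivalence, so $F$ is a biequivalence.

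For the reflection of fibrations, I would assume that $\bHsim F$ is a fibration in $\DblCat$. Since $\bHsim\cB$ is weakly horizontally invariant, Proposition \ref{prop:F=Jinjbtwwhi} gives that $\bHsim F$ is $\cJ_w$-injective and hence satisfies (df1)--(df3) of Proposition \ref{prop:fibinsecond}. Specializing (df1) and (df2) to $\bHsim$ recovers precisely Lack's two characterizing lifting properties for fibrations in $\TwoCat$: lifting of equivalences at the target, and lifting of vertically invertible $2$-cells along a morphism. Condition (df3) imposes no extra constraint on $F$; indeed, the forward half of Theorem \ref{thm:LsimHsimrefsecond} already shows that (df3) for $\bHsim F$ is automatically implied by Lack's axioms on $F$, so it plays no role in the reverse direction. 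The main work throughout is the routine bookkeeping to identify each condition inside $\bHsim\cA$ or $\bHsim\cB$ with the corresponding $2$-categorical notion, which becomes straightforward once the explicit description of $\bHsim$ is unpacked.
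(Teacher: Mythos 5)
Your proposal is correct and follows essentially the same route as the paper: both reflection arguments rely on $\bHsim\cA$ and $\bHsim\cB$ being weakly horizontally invariant (so that Proposition \ref{prop:Wwithwhisourceisdblbieq} upgrades $\bHsim F$ to a double biequivalence and Proposition \ref{prop:F=Jinjbtwwhi} lets you work with $\cJ_w$-injectivity), and then unpack (db1)--(db4), resp.\ (df1)--(df2), inside $\bHsim$. The one imprecision is in the preservation direction: Theorem \ref{thm:LsimHsimrefsecond} alone only gives that $\bHsim$ preserves fibrations and \emph{trivial} fibrations; to conclude it sends all biequivalences to weak equivalences you also need Ken Brown's Lemma together with the fact that every object of $\TwoCat$ is fibrant, which is the step the paper makes explicit.
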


\begin{proof}
We need to show that a $2$-functor $F$ is a fibration (resp.\ biequivalence) in $\TwoCat$ if and only if $\bHsim F$ is a fibration (resp.\ weak equivalence) in $\DblCat$. Since $\bHsim$ is right Quillen, we know it preserves fibrations and trivial fibrations. Moreover, since all $2$-categories are fibrant, by Ken Brown's Lemma (see \cite[Lemma 1.1.12]{Hovey}) the functor $\bHsim$ preserves all weak equivalences. Therefore, if $F$ is a fibration (resp.\ biequivalence), then $\bHsim F$ is a fibration (resp.\ weak equivalence).

If $\bHsim F$ is a fibration in $\DblCat$, then by \cref{prop:F=Jinjbtwwhi} it is $\cJ_w$-injective, since its target is weakly horizontally invariant by \cref{rem:Hsimiswhi}. Hence, conditions (df1-2) of \cref{prop:fibinsecond} for $\bHsim F$ say that $F$ is a fibration in $\TwoCat$. 

Finally, if $\bHsim F$ is a weak equivalence in $\DblCat$, then by \cref{prop:Wwithwhisourceisdblbieq} it is a double biequivalence, since its source is weakly horizontally invariant. By (db1) and (db2) of \cref{def:doublebieq}, we have that $F$ is bi-essentially surjective on objects and essentially full on morphisms. Fully faithfulness on $2$-morphisms follows from applying (db4) of \cref{def:doublebieq} to squares with trivial vertical boundaries. Hence $F$ is a biequivalence.  
\end{proof}

Finally, we can use the above result to deduce that Lack's model structure on $\TwoCat$ is also left-induced from our model structure on $\DblCat$ along the horizontal embedding~$\bH$.

\begin{theorem}
Lack's model structure on $\TwoCat$ is left-induced along the adjunction
\begin{tz}
\node[](A) {$\DblCat$};
\node[right of=A,xshift=1cm](B) {$\TwoCat$};
\draw[->] ($(B.west)+(0,.25cm)$) to [bend right=25] node[above,la]{$\bH$} ($(A.east)+(0,.25cm)$);
\draw[->] ($(A.east)-(0,.25cm)$) to [bend right=25] node[below,la]{$\bfH$} ($(B.west)+(0,-.25cm)$);
\node[la] at ($(A.east)!0.5!(B.west)$) {$\bot$};
\end{tz}
from the model structure on $\DblCat$ of \cref{thm:secondMS}.
\end{theorem}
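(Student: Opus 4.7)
The strategy is to combine the Quillen adjunction $\bH\dashv \bfH$ from \cref{thm:HHcorefsecond} with the right-induction result along $\bHsim$ from \cref{thm:2CatriHsim}, using the natural weak equivalences $J_\cA\colon \bH\cA\to \bHsim\cA$ of \cref{rem:Hsimiswhi} as a bridge. Concretely, to prove left-induction, I need to show that a $2$-functor $F\colon \cA\to \cB$ is a cofibration (resp.\ biequivalence) in Lack's model structure if and only if $\bH F$ is a cofibration (resp.\ weak equivalence) in the model structure of \cref{thm:secondMS}. One direction for each is already given by \cref{thm:HHcorefsecond}.

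For the cofibrations, the plan is to use the characterization of \cref{thm:charcofDblsecond}: the double functor $\bH F$ is a cofibration in $\DblCat$ precisely when both $U\bfH \bH F$ and $U\bfV \bH F$ have the left lifting property with respect to functors that are surjective on objects and full. Since $\bfH \bH = \id_{\TwoCat}$, the first condition is equivalent to $UF$ having this lifting property, which by the characterization of Lack's cofibrations in \cite{Lack2Cat} is precisely the condition that $F$ is a cofibration in $\TwoCat$. The second condition is automatic once the first holds: $\bfV\bH\cA$ is the $2$-category on the objects of $\cA$ with only identity morphisms, so $U\bfV\bH F$ is a map of discrete categories, and any such map whose underlying function on objects is injective (which is implied by the first condition via \cref{rem:injectivityofIw}) has the required lifting property.

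For the weak equivalences, the key observation is the naturality square
\[
\bHsim F\circ J_\cA = J_\cB\circ \bH F.
\]
By \cref{rem:Hsimiswhi}, both $J_\cA$ and $J_\cB$ are double biequivalences, hence weak equivalences in $\DblCat$ by \cref{prop:dblbieqareinW}. Applying 2-out-of-3 to this common composite, we conclude that $\bH F$ is a weak equivalence in $\DblCat$ if and only if $\bHsim F$ is. By \cref{thm:2CatriHsim}, the latter holds if and only if $F$ is a biequivalence in $\TwoCat$.

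Combining these two biconditionals, the cofibrations and weak equivalences of Lack's model structure are exactly those reflected by $\bH$ from the model structure on $\DblCat$ of \cref{thm:secondMS}, which is the statement of left-induction. I do not anticipate any serious obstacle in this argument: the cofibration step is a straightforward unpacking of the characterization in \cref{thm:charcofDblsecond} combined with the observation that $\bfV\bH$ lands in discrete $2$-categories, and the weak equivalence step is a formal 2-out-of-3 argument leveraging \cref{rem:Hsimiswhi,thm:2CatriHsim}.
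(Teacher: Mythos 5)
Your proposal is correct and essentially mirrors the paper's own proof: both verify the cofibration statement via the characterization in \cref{thm:charcofDblsecond} and reduce the vertical lifting condition to the horizontal one, and both handle weak equivalences by passing through the fibrant replacements $J_\cA\colon\bH\cA\to\bHsim\cA$ and invoking \cref{thm:2CatriHsim}. The one small point of difference is that you spell out why $U\bfV\bH F$ has the lifting property (via discreteness of $\bfV\bH\cA$ and object-injectivity), whereas the paper simply declares this condition trivially satisfied; your care here is reasonable, though you should cite the $\TwoCat$/$\Cat$ version of the injectivity characterization rather than \cref{rem:injectivityofIw}, which is stated for $\Iwcof$ double functors.
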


\begin{proof}
We need to show that a $2$-functor $F\colon \cA\to \cB$ is a cofibration (resp.\ biequivalence) in $\TwoCat$ if and only if $\bH F$ is a cofibration (resp.\ weak equivalence) in $\DblCat$. 

By \cref{thm:charcofDblsecond}, the double functor $\bH F$ is a cofibration if and only if its underlying functors $U\bfH \bH F$ and $U\bfV \bH F$ have the left lifting property with respect to all surjective on objects and full functors. Since $U\bfV \bH F$ trivially satisfies this condition, this holds if and only if $UF=U\bfH \bH F$ has the mentioned lifting property. By \cite[Lemma 4.1]{Lack2Cat}, this is equivalent to saying that $F$ is a cofibration.

Finally, since $\bHsim \cA$ and $\bHsim\cB$ are fibrant replacements of $\bH\cA$ and $\bH\cB$ in $\DblCat$ by \cref{rem:Hsimiswhi}, we have that $\bH F$ is a weak equivalence if and only if $\bHsim F$ is a weak equivalence. By \cref{thm:2CatriHsim}, this is the case if and only if $F$ is a biequivalence. 
\end{proof}

\section{Compatibility with the Gray tensor product} \label{sec:monoidality}

We now explore the monoidality of the model structure on $\DblCat$ constructed in this paper. Although a similar argument as the one in \cite[Remark 7.1]{MSV} quickly shows that our model structure is not monoidal with respect to the cartesian product, in this section we prove that it is monoidal when we instead consider the Gray tensor product for double categories introduced by B\"ohm in \cite[\S 3]{Bohm}. This resembles the case of Lack's model structure on $\TwoCat$, which is monoidal with respect to the Gray tensor product of 2-categories, and improves upon the model structure on $\DblCat$ of \cite{MSV}, which is only $\TwoCat$-enriched.

We first give a description of the Gray tensor product of two double categories. 

\begin{descr} \label{descr:Graytensordbl}
The Gray tensor product $\bA\otimes_\Gray \bX$ of two double categories $\bA$ and $\bX$ can be described as the double category given by the following data.
\begin{rome}
    \item The objects are pairs $(A,X)$ of objects $A\in \bA$ and $X\in \bX$.
    \item The generating horizontal morphisms are of two kinds: pairs $(a,X)\colon (A,X)\to (C,X)$, where $a\colon A\to C$ is a horizontal morphism in $\bA$ and $X$ is an object in $\bX$, which compose as in $\bA$; and pairs $(A,x)\colon (A,X)\to (A,Z)$, where $A$ is an object in $\bA$ and $x\colon X\to Z$ is a horizontal morphism in $\bX$, which compose as in $\bX$.
    \item Similarly, the generating vertical morphisms are given by pairs $(u,X)$ and $(A,t)$ with $A$ and $X$ being objects of $\bA$ and $\bX$ respectively, and $u$ and $t$ being vertical morphisms of $\bA$ and $\bX$ respectively. 
    \item There are six kinds of generating squares: the ones determined by a square $\sq{\alpha}{a}{a'}{u}{w}$ in~$\bA$ and an object $X\in \bX$ as shown below left, the ones given by an object $A\in \bA$ and a square $\sq{\chi}{x}{x'}{t}{v}$ in $\bX$ as below right,
    \begin{tz}
    \node[](1) {$(A,X)$}; 
    \node[right of=1,xshift=.5cm](2) {$(C,X)$}; 
    \node[below of=1](3) {$(A',X)$}; 
    \node[below of=2](4) {$(C',X)$};
    \draw[->,pro] (1) to node[left,la]{$(u,X)$} (3);
    \draw[->,pro] (2) to node[right,la]{$(w,X)$} (4);
    \draw[->] (1) to node[above,la]{$(a,X)$} (2);
    \draw[->] (3) to node[below,la]{$(a',X)$} (4);
    \node[la] at ($(1)!0.5!(4)$) {$(\alpha,X)$};
    
    \node[right of=2,xshift=1cm](1) {$(A,X)$}; 
    \node[right of=1,xshift=.5cm](2) {$(A,Z)$}; 
    \node[below of=1](3) {$(A,X')$}; 
    \node[below of=2](4) {$(A,Z')$};
    \draw[->,pro] (1) to node[left,la]{$(A,t)$} (3);
    \draw[->,pro] (2) to node[right,la]{$(A,v)$} (4);
    \draw[->] (1) to node[above,la]{$(A,x)$} (2);
    \draw[->] (3) to node[below,la]{$(A,x')$} (4);
    \node[la] at ($(1)!0.5!(4)$) {$(A,\chi)$};
    \end{tz}
    the squares determined by a horizontal morphism $a$ in $\bA$ and a vertical morphism~$t$ in~$\bX$ as displayed below left, and the ones given by a horizontal morphism $x$ in~$\bX$ and a vertical morphism $u$ in $\bA$ as below right,
    \begin{tz}
    \node[](1) {$(A,X)$}; 
    \node[right of=1,xshift=.5cm](2) {$(C,X)$}; 
    \node[below of=1](3) {$(A,X')$}; 
    \node[below of=2](4) {$(C,X')$};
    \draw[->,pro] (1) to node[left,la]{$(A,t)$} (3);
    \draw[->,pro] (2) to node[right,la]{$(C,t)$} (4);
    \draw[->] (1) to node[above,la]{$(a,X)$} (2);
    \draw[->] (3) to node[below,la]{$(a,X')$} (4);
    \node[la] at ($(1)!0.5!(4)$) {$(a,t)$};
    
    \node[right of=2,xshift=1cm](1) {$(A,X)$}; 
    \node[right of=1,xshift=.5cm](2) {$(A,Z)$}; 
    \node[below of=1](3) {$(A',X)$}; 
    \node[below of=2](4) {$(A',Z)$};
    \draw[->,pro] (1) to node[left,la]{$(u,X)$} (3);
    \draw[->,pro] (2) to node[right,la]{$(u,Z)$} (4);
    \draw[->] (1) to node[above,la]{$(A,x)$} (2);
    \draw[->] (3) to node[below,la]{$(A',x)$} (4);
    \node[la] at ($(1)!0.5!(4)$) {$(u,x)$};
    \end{tz}
vertically invertible squares determined by horizontal morphisms $a$ in $\bA$ and $x$ in~$\bX$, as shown below,
    \begin{tz}
    \node[](1) {$(A,X)$}; 
    \node[right of=1,xshift=.5cm](2) {$(C,X)$};
    \node[right of=2,xshift=.5cm](3) {$(C,Z)$}; 
    \node[below of=1](1') {$(A,X)$}; 
    \node[below of=2](2') {$(A,Z)$};
    \node[below of=3](3') {$(C,Z)$}; 
    \draw[d,pro] (1) to (1');
    \draw[d,pro] (3) to (3');
    \draw[->] (1) to node[above,la]{$(a,X)$} (2);
    \draw[->] (2) to node[above,la]{$(C,x)$} (3);
    \draw[->] (1') to node[below,la]{$(A,x)$} (2');
    \draw[->] (2') to node[below,la]{$(a,Z)$} (3');
    \node[la] at ($(1)!0.5!(3')+(7pt,0)$) {$\vcong$}; 
    \node[la] at ($(1)!0.5!(3')-(7pt,0)$) {$(a,x)$}; 
    \end{tz}
    and horizontally invertible squares given by vertical morphisms $u$ in $\bA$ and $t$ in~$\bX$, as below,
    \begin{tz}
    \node[](1) {$(A,X)$}; 
    \node[below of=1](2) {$(A',X)$};
    \node[below of=2](3) {$(A',X')$}; 
    \node[right of=1,xshift=.5cm](1') {$(A,X)$}; 
    \node[below of=1'](2') {$(A,X')$};
    \node[below of=2'](3') {$(A',X')$}; 
    \draw[d] (1) to (1');
    \draw[d] (3) to (3');
    \draw[->,pro] (1) to node[left,la]{$(u,X)$} (2);
    \draw[->,pro] (2) to node[left,la]{$(A',t)$} (3);
    \draw[->,pro] (1') to node[right,la]{$(A,t)$} (2');
    \draw[->,pro] (2') to node[right,la]{$(u,X')$} (3');
    \node[la] at ($(1)!0.5!(3')+(0,5pt)$) {$\cong$}; 
    \node[la] at ($(1)!0.5!(3')-(0,5pt)$) {$(u,t)$}; 
    \end{tz}
    subject to conditions which are equivalent to requiring that the projection double functor $\Pi_{\bA,\bX}\colon \bA\otimes_\Gray \bX\to \bA\times \bX$ is fully faithful on squares.
\end{rome}
\end{descr}

We can show that the projection double functor $\Pi_{\bA,\bX}\colon \bA\otimes_\Gray \bX\to \bA\times \bX$ is a trivial fibration in our model structure on $\DblCat$.

\begin{lemme} \label{lem:Piistrivfib}
The projection double functor $\Pi_{\bA,\bX}\colon \bA\otimes_\Gray \bX\to \bA\times \bX$ is a trivial fibration, for all double categories $\bA$ and $\bX$.
\end{lemme}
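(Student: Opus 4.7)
The plan is to verify directly the characterization of trivial fibrations given in \cref{prop:trivfibinsecond}, namely that $\Pi_{\bA,\bX}$ is surjective on objects, full on horizontal morphisms, full on vertical morphisms, and fully faithful on squares. Each of these properties is essentially built into the description of $\bA\otimes_\Gray\bX$ in \cref{descr:Graytensordbl}, so this will amount to reading off the requisite conditions.

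For surjectivity on objects, the objects of $\bA\otimes_\Gray\bX$ are literally the pairs $(A,X)$ with $A\in\bA$ and $X\in\bX$, and $\Pi_{\bA,\bX}$ acts as the identity on them. For fullness on horizontal morphisms, given $(a,x)\colon(A,X)\to(C,Z)$ in $\bA\times\bX$, one exhibits it as the image under $\Pi_{\bA,\bX}$ of the composite $(a,Z)\circ(A,x)$ built from the generating horizontal morphisms in $\bA\otimes_\Gray\bX$. An analogous argument, using the generating vertical morphisms $(u,X)$ and $(A',t)$ and their composite, yields fullness on vertical morphisms.

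Full faithfulness on squares is the most substantive property, but it is precisely the content of the final clause of item (iii) in \cref{descr:Graytensordbl}: the relations imposed on the generating squares of $\bA\otimes_\Gray\bX$ are defined so that this very condition holds. In particular, every square of $\bA\times\bX$ lifts uniquely to a square in $\bA\otimes_\Gray\bX$ with prescribed boundary, giving both fullness and faithfulness on squares in one go.

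Since no genuine obstacle arises, the proof is essentially a matter of bookkeeping from \cref{descr:Graytensordbl} together with \cref{prop:trivfibinsecond}; the only delicate point is trusting the definition's built-in full faithfulness on squares, which is exactly the clause that makes the Gray tensor product a refinement of the cartesian product rather than an unrelated construction.
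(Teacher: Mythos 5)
Your proof is correct and matches the paper's own argument essentially step for step: surjectivity on objects, fullness on horizontal and vertical morphisms via composites of generating morphisms, and full faithfulness on squares read off from the defining conditions of the Gray tensor product. The only slip is a reference typo — the clause building in full faithfulness on squares is item (iv) of \cref{descr:Graytensordbl}, not item (iii).
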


\begin{proof}
We use the characterization of trivial fibrations of \cref{prop:trivfibinsecond}. Since $\Pi_{\bA,\bX}$ is the identity on objects, it is clearly surjective on objects. Given a horizontal morphism $(a,x)\colon (A,X)\to (C,Z)$ in $\bA\times \bX$, the composite
\begin{tz}
\node[](1) {$(A,X)$}; 
\node[right of=1,xshift=.5cm](2) {$(C,X)$}; 
\node[right of=2,xshift=.5cm](3) {$(C,Z)$}; 
\draw[->] (1) to node[above,la]{$(a,X)$} (2);
\draw[->] (2) to node[above,la]{$(C,x)$} (3);
\end{tz}
of horizontal morphisms in $\bA\otimes_\Gray \bX$ is sent by $\Pi_{\bA,\bX}$ to $(a,x)$, which shows that $\Pi_{\bA,\bX}$ is full on horizontal morphisms. Similarly, one can show that $\Pi_{\bA,\bX}$ is full on vertical morphisms. Fully faithfulness on squares holds by \cref{descr:Graytensordbl} (iv). 
\end{proof}

We now show that $\bA^\whi\times \bX^\whi$ gives a fibrant replacement for $\bA\times\bX$, where $(-)^\whi$ is the weakly horizontally invariant replacement of \cref{def:whireplacement}.

\begin{lemme} \label{lem:fibreplproduct}
Let $\bA$ and $\bX$ be double categories. Then $j_\bA\times j_\bX\colon \bA\times \bX\to \bA^\whi\times \bX^\whi$ provides a fibrant replacement for the double category $\bA\times \bX$. 
\end{lemme}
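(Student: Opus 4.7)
The proof has two parts: showing that the target $\bA^\whi \times \bX^\whi$ is fibrant, and showing that $j_\bA \times j_\bX$ lies in $\cW$.

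For the first part, I would prove that $\bA^\whi \times \bX^\whi$ is weakly horizontally invariant, hence fibrant by \cref{cor:whiarefibrant}. The argument is componentwise: given the data of \cref{def:whidbl} in the product double category, each of the horizontal equivalences on top and bottom, the vertical morphism on the right, and the weakly horizontally invertible square we seek to construct decomposes as a pair, one piece in $\bA^\whi$ and one in $\bX^\whi$. Applying the weak horizontal invariance of each factor separately and pairing the resulting data yields the required lift.

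For the second part, I would exploit the fact that $(\bA \times \bX)^\whi$ is also fibrant and that $j_{\bA \times \bX} \in \cC \cap \cW$ by \cref{prop:j_AinCcapW}. Since $\bA^\whi \times \bX^\whi$ is fibrant, the lifting property yields a double functor $L \colon (\bA \times \bX)^\whi \to \bA^\whi \times \bX^\whi$ satisfying $L \circ j_{\bA \times \bX} = j_\bA \times j_\bX$. By 2-out-of-3 on $\cW$ (\cref{prop:2outof3andretractW}), it suffices to show $L \in \cW$, and since both the source and target of $L$ are fibrant, \cref{prop:Wwithwhisourceisdblbieq} reduces this to showing that $L$ is a double biequivalence.

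I would then verify conditions (db1)--(db4) of \cref{def:doublebieq} for $L$. Since both $j_{\bA \times \bX}$ and $j_\bA \times j_\bX$ are identities on underlying horizontal categories (\cref{rem:idonunderhorcat}), so is $L$, which immediately gives (db1) and (db2); (db4) follows from a direct analysis of how $(-)^\whi$ attaches squares (\cref{def:whireplacement}), using that both $j_{\bA \times \bX}$ and $j_\bA \times j_\bX$ are fully faithful on squares. The main obstacle will be (db3): a vertical morphism in $\bA^\whi \times \bX^\whi$ decomposes as a pair of composites involving vertical morphisms of $\bA$ and $\bX$ together with freely added morphisms from \cref{descr:J4} corresponding to horizontal adjoint equivalences in each factor, while we must produce a vertical morphism in $(\bA \times \bX)^\whi$ whose own freely added morphisms correspond to horizontal adjoint equivalences in $\bA \times \bX$, i.e.\ pairs. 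The key idea is to match a freely added $u_a$ in $\bA^\whi$ with the one in $(\bA \times \bX)^\whi$ coming from the horizontal adjoint equivalence $(a,\id)$ of $\bA \times \bX$, and dually for $\bX^\whi$; the weakly horizontally invertible squares from \cref{descr:J4} in each factor then assemble into the required square in the product.
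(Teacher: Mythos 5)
Your first part (fibrancy of $\bA^\whi\times\bX^\whi$) is correct and essentially equivalent to the paper's, which simply invokes closure of fibrant objects under products rather than verifying the weak horizontal invariance condition componentwise.

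Your second part deviates from the paper's argument in a way that introduces a real gap. The paper does not obtain the comparison map by an abstract lifting argument; it uses the \emph{canonical} double functor $(\pi_{\bA}^\whi,\pi_{\bX}^\whi)\colon(\bA\times\bX)^\whi\to\bA^\whi\times\bX^\whi$ induced by applying the functor $(-)^\whi$ to the two projections. The triangle $(\pi_{\bA}^\whi,\pi_{\bX}^\whi)\circ j_{\bA\times\bX}=j_\bA\times j_\bX$ commutes automatically by functoriality of $(-)^\whi$, and because the map is explicit one can see precisely what it does on freely added vertical morphisms, and check straightforwardly that it is a \emph{trivial fibration} (identity on underlying horizontal categories, fully faithful on squares, and full on vertical morphisms since every freely added vertical morphism of $\bA^\whi\times\bX^\whi$ lying over the image of $\bA\times\bX$ also arises from a freely added one of $(\bA\times\bX)^\whi$). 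Your $L$, obtained by solving a lifting problem against $\mathbbm{1}$, is not unique: solutions differ precisely in what they do with the vertical morphisms freely adjoined in the pushout $(\bA\times\bX)^\whi$, which is exactly the data you then rely on in your sketch of (db3). The ``matching a freely added $u_a$ in $\bA^\whi$ with the one coming from $(a,\id)$ in $\bA\times\bX$'' step describes the canonical choice, but an arbitrary lift need not make it, so you cannot conclude anything about $L$'s behavior on those morphisms without further argument. The fix is simple: take $L$ to be $(\pi_\bA^\whi,\pi_\bX^\whi)$ from the outset (you then also get the commuting triangle for free rather than via lifting), at which point your (db3) sketch goes through, and is in fact subsumed by showing fullness on vertical morphisms as in the paper, which avoids the extra bookkeeping with horizontal equivalences entirely.
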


\begin{proof}
First, note that $\bA^\whi\times \bX^\whi$ is fibrant, as fibrant objects are closed under products. Now consider the commutative triangle below, where the bottom map is induced by the projections $\pi_\bA\colon \bA\times \bX\to \bA$ and $\pi_\bX\colon \bA\times \bX\to \bX$.
\begin{tz}
\node[](1) {$\bA\times \bX$}; 
\node[below of=1,xshift=-2cm,yshift=2pt](2) {$(\bA\times \bX)^\whi$};
\node[below of=1,xshift=2cm,yshift=2pt](3) {$\bA^\whi \times \bX^\whi$};
\draw[->] (1) to node[pos=0.4,left,la,xshift=-4pt]{$j_{\bA\times\bX}$} (2); 
\draw[->] (1) to node[pos=0.4,right,la,xshift=4pt]{$j_\bA\times j_\bX$} (3); 
\draw[->] (2) to node[below,la]{$(\pi_\bA^\whi,\pi_\bX^\whi)$} (3);
\end{tz}
Since $j_{\bA\times \bX}$ is a weak equivalence by \cref{prop:j_AinCcapW}, to prove that $j_\bA\times j_\bX$ is a weak equivalence it suffices to show that $(\pi_\bA^\whi,\pi_\bX^\whi)$ is a weak equivalence; we use \cref{prop:trivfibinsecond} to prove that it is in fact a trivial fibration.

One can see that $(\pi_\bA^\whi,\pi_\bX^\whi)$ is the identity on underlying horizontal categories, and that it is fully faithful on squares since $j_{\bA\times \bX}$ and $j_\bA\times j_\bX$ are so. Finally, by studying the weakly horizontally invariant replacements, we can see that it is also full on vertical morphisms. Indeed, all the vertical morphisms that were freely added to $\bA^\whi \times \bX^\whi$ from the image of $\bA\times \bX$ were also freely added to $(\bA \times \bX)^\whi$ from the image of $\bA\times \bX$. 
\end{proof}

Mirroring the proof in \cite[\S 7]{Lack2Cat}, we show that the cartesian product and the Gray tensor product of a weak equivalence with an identity is also a weak equivalence. 

\begin{rem} \label{lem:FGrayiddoublebieq}
Given a double biequivalence $F\colon \bA\to \bB$ and a double category $\bX$, the product $F\times \id_\bX\colon \bA\times\bX\to \bB\times \bX$ is a double biequivalence. Indeed, it is straightforward to see that (db1-4) of \cref{def:doublebieq} hold for $F\times \id_\bX$ since they do for $F$.
\end{rem}

\begin{prop} \label{cor:FGrayidW}
Let $F\colon \bA\to \bB$ be a weak equivalence in the model structure on $\DblCat$ of \cref{thm:secondMS}. Then, for every double category $\bX$, the induced double functors 
\[ F\times \id_\bX\colon \bA\times\bX\to \bB\times \bX \ \ \text{and} \ \ F\otimes_\Gray \id_\bX\colon \bA\otimes_\Gray \bX\to \bB\otimes_\Gray \bX \]
are also weak equivalences in $\DblCat$. 
\end{prop}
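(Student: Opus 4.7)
The plan is to handle the product case first, and then deduce the Gray tensor case by comparison through the projection $\Pi$.

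For $F\times\id_\bX$, I would use \cref{lem:fibreplproduct} to reduce to a statement about double biequivalences between fibrant replacements. Concretely, the naturality of $j$ gives a commutative square
\begin{tz}
\node[](1) {$\bA\times\bX$};
\node[right of=1,xshift=2cm](2) {$\bB\times\bX$};
\node[below of=1](3) {$\bA^\whi\times\bX^\whi$};
\node[below of=2](4) {$\bB^\whi\times\bX^\whi$};
\draw[->] (1) to node[above,la]{$F\times\id_\bX$} (2);
\draw[->] (3) to node[below,la]{$F^\whi\times\id_{\bX^\whi}$} (4);
\draw[->] (1) to node[left,la]{$j_\bA\times j_\bX$} (3);
\draw[->] (2) to node[right,la]{$j_\bB\times j_\bX$} (4);
\end{tz}
in which the two vertical maps are weak equivalences by \cref{lem:fibreplproduct}. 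Since $F$ is in $\cW$, the bottom map $F^\whi\times\id_{\bX^\whi}$ is the product of a double biequivalence with an identity, hence a double biequivalence by \cref{lem:FGrayiddoublebieq}, and therefore a weak equivalence by \cref{prop:dblbieqareinW}. Applying $2$-out-of-$3$ (\cref{prop:2outof3andretractW}) yields that $F\times\id_\bX$ is in $\cW$.

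For $F\otimes_\Gray\id_\bX$, I would use the naturality of the projection $\Pi$ from \cref{descr:Graytensordbl}, which gives a commutative square
\begin{tz}
\node[](1) {$\bA\otimes_\Gray\bX$};
\node[right of=1,xshift=2cm](2) {$\bB\otimes_\Gray\bX$};
\node[below of=1](3) {$\bA\times\bX$};
\node[below of=2](4) {$\bB\times\bX$};
\draw[->] (1) to node[above,la]{$F\otimes_\Gray\id_\bX$} (2);
\draw[->] (3) to node[below,la]{$F\times\id_\bX$} (4);
\draw[->] (1) to node[left,la]{$\Pi_{\bA,\bX}$} (3);
\draw[->] (2) to node[right,la]{$\Pi_{\bB,\bX}$} (4);
\end{tz}
By \cref{lem:Piistrivfib}, both vertical maps are trivial fibrations, and in particular weak equivalences. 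By the first part of the proof, the bottom horizontal map $F\times\id_\bX$ is in $\cW$. Hence by $2$-out-of-$3$, $F\otimes_\Gray\id_\bX$ is also in $\cW$.

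No step presents a serious obstacle: the argument is a clean application of $2$-out-of-$3$ together with the previously established lemmas. The only point requiring a moment of care is confirming that $\Pi$ is indeed natural in its first argument, which is immediate from the description of the Gray tensor product in \cref{descr:Graytensordbl}, since $F\otimes_\Gray\id_\bX$ is defined componentwise on pairs and projects to $F\times\id_\bX$ by construction.
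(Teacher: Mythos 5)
Your proof is correct and follows essentially the same route as the paper's: both reduce the product case to $F^\whi\times\id_{\bX^\whi}$ being a double biequivalence via \cref{lem:fibreplproduct} and \cref{lem:FGrayiddoublebieq}, then conclude by $2$-out-of-$3$, and both deduce the Gray tensor case from the product case by comparing through the trivial fibrations $\Pi_{\bA,\bX}$ and $\Pi_{\bB,\bX}$ of \cref{lem:Piistrivfib}. The only (harmless) cosmetic difference is that you display the naturality square for $j$ explicitly where the paper leaves it implicit.
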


\begin{proof}
First note that the  weakly horizontally invariant replacement $F^\whi$ is a double biequivalence, since $F$ is a weak equivalence. Hence, by \cref{lem:FGrayiddoublebieq}, the double functor $F^\whi\times \id_{\bX^\whi}\colon \bA^\whi\times \bX^\whi\to \bB^\whi\times \bX^\whi$ is also a double biequivalence. Since $\bA^\whi\times \bX^\whi$ and $\bB^\whi\times \bX^\whi$ are fibrant replacements for $\bA\times \bX$ and $\bB\times \bX$ by \cref{lem:fibreplproduct}, this shows that $F\times \id_\bX$ is a weak equivalence by 2-out-of-3. 

For the statement regarding the Gray tensor product, we know by \cref{lem:Piistrivfib} that the double functors $\Pi_{\bA,\bX}$ and $\Pi_{\bB,\bX}$ are trivial fibrations. Since the diagram
\begin{tz}
\node[](1) {$\bA\otimes_\Gray\bX$}; 
\node[right of=1,xshift=1.5cm](2) {$\bB\otimes_\Gray\bX$}; 
\node[below of=1](3) {$\bA\times \bX$};
\node[below of=2](4) {$\bB\times \bX$}; 
\draw[->] (1) to node[above,la]{$F\otimes_\Gray\id_\bX$}(2);
\draw[->] (1) to node[left,la]{$\Pi_{\bA,\bX}$} (3); 
\draw[->] (2) to node[right,la]{$\Pi_{\bB,\bX}$} (4);
\draw[->] (3) to node[below,la]{$F\times \id_\bX$} (4);
\end{tz}
commutes, then $F\otimes_\Gray\id_\bX$ is also a weak equivalence by $2$-out-of-$3$. 
\end{proof}

This allows us to prove that our model structure on $\DblCat$ is monoidal with respect to the Gray tensor product, inspired by the proof of the monoidality of Lack's model structure on $\TwoCat$ of \cite[\S 7]{Lack2Cat}.

\begin{notation}
Given two double functors $I\colon \bA\to \bB$ and $J\colon \bC\to \bD$, we write $\push{I}{J}{}$ for the pushout-product double functor
\[ \pushprod{I}{\bA}{\bB}{J}{\bC}{\bD}{\Gray}. \]
\end{notation}

\begin{theorem} \label{thm:DblCat2monoidal}
The model structure on $\DblCat$ of \cref{thm:secondMS} is monoidal with respect to the Gray tensor product $\otimes_\Gray$.
\end{theorem}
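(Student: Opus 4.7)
Plan. Monoidality, in the sense of Hovey, requires verifying the pushout-product axiom and the unit axiom. The unit of $\otimes_\Gray$ is the terminal double category $\mathbbm{1}$, which is cofibrant by \cref{cor:charcofibrantsecond} since its underlying horizontal and vertical categories are trivial, hence free; this makes the unit axiom automatic.

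For the pushout-product axiom, I would first show that the pushout-product of two cofibrations is again a cofibration. Since $\otimes_\Gray$ preserves colimits in each variable and the cofibrations are generated by the set $\cI_w$ of \cref{not:gencofDblCat2}, this reduces to verifying $\push{I_k}{I_\ell}{}\in\Iwcof$ for every $I_k, I_\ell\in\cI_w$. Each such pushout-product can be computed explicitly from the description of $\otimes_\Gray$ in \cref{descr:Graytensordbl}, and its status as a cofibration then follows by checking the characterization of \cref{thm:charcofDblsecond} on the underlying horizontal and vertical functors, which turn out to be free inclusions in each case. The case $I_k=I_1$ is trivial since $\push{I_1}{J}{}=J$ for any $J$, so the interesting work occurs when both factors lie in $\{I_2,I_3,I_4,I_5\}$, whose sources and targets are already cofibrant.

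Next, let $I\colon\bA\to\bB$ be a cofibration and $J\colon\bC\to\bD$ a trivial cofibration, and let $P$ denote the pushout defining $\push{I}{J}{}\colon P\to\bB\otimes_\Gray\bD$. By the previous step this map is a cofibration, so it remains to see it is a weak equivalence. Consider the factorization
\[ \bB\otimes_\Gray\bC\xrightarrow{\ell} P\xrightarrow{\push{I}{J}{}}\bB\otimes_\Gray\bD, \]
whose composite $\id_\bB\otimes_\Gray J$ is a weak equivalence by \cref{cor:FGrayidW}. Hence by 2-out-of-3 it suffices to show that the canonical pushout inclusion $\ell$ is itself a weak equivalence. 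Since $\ell$ is the pushout of $\id_\bA\otimes_\Gray J$, which is a weak equivalence by \cref{cor:FGrayidW}, along $I\otimes_\Gray\id_\bC$, the desired statement will follow from showing that pushouts of weak equivalences of this form remain weak equivalences.

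The main obstacle lies in this final step. Two natural strategies are available. The first is to prove that the model structure is left proper, so that pushouts of weak equivalences along cofibrations are weak equivalences; for this one must additionally verify that $I\otimes_\Gray\id_\bC$ is a cofibration, which in turn reduces to checking $I_k\otimes_\Gray\id_\bC\in\Iwcof$ for each $I_k\in\cI_w$ (here $I_1$ is a genuine edge case since $I_1\otimes_\Gray\id_\bC$ is the map $\emptyset\to\bC$, which is a cofibration only when $\bC$ is cofibrant; fortunately this case contributes trivially to the pushout-product). The second strategy is to strengthen the first step by verifying directly on generators that $\push{I_k}{J}{}$ is a weak equivalence whenever $J$ is a trivial cofibration, bypassing left properness. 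Either approach requires a detailed, case-by-case analysis of how the Gray tensor product of \cref{descr:Graytensordbl} interacts with the generators of $\cI_w$ and with the horizontal adjoint equivalence structure underlying the class $\cW$ of \cref{def:W}.
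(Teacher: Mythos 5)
Your first step, reducing the cofibration pushout-product to the generators $\cI_w$ and checking each case via \cref{thm:charcofDblsecond}, matches the paper's approach. The gap is in your second step, where you over-complicate things and take a detour through left properness that the paper never needs.

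The difficulty you identify---that $\ell$ is a pushout of the weak equivalence $\id_\bA\otimes_\Gray J$, and weak equivalences are not generally stable under pushout---disappears once you carry the generating-set reduction through this step as well. You already noted that the sources of all the nontrivial generators in $\cI_w$ are cofibrant; indeed, if $I\colon\bA\to\bB$ is any generating cofibration then $\bA$ is cofibrant, so $\id_\bA\otimes_\Gray J = \push{(\emptyset\to\bA)}{J}{}$ is a pushout-product of two cofibrations and hence a \emph{cofibration} by your first step, not merely a weak equivalence. Combined with \cref{cor:FGrayidW}, the map $\id_\bA\otimes_\Gray J$ is a trivial cofibration, and trivial cofibrations are stable under pushout in any model category. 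So $\ell$ is a trivial cofibration, and $2$-out-of-$3$ applied to $\id_\bB\otimes_\Gray J = (\push{I}{J}{})\circ\ell$ finishes the argument. This is precisely the paper's proof. Neither left properness (which you would have to prove separately and which isn't established anywhere in the paper) nor a case-by-case analysis over generating trivial cofibrations (for which the paper has no explicit set) is required. The point you missed is that having $\bA$ cofibrant promotes $\id_\bA\otimes_\Gray J$ from a weak equivalence to a trivial cofibration, which changes the pushout-stability question from a false statement in general to a model-category axiom.
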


\begin{proof} 
We begin by showing that whenever $I$ and $J$ are cofibrations, the pushout-product $\push{I}{J}{}$ is also a cofibration; it is enough to consider the case when $I$ and $J$ are in the set of generating cofibrations $\cI_w=\{I_1,I_2,I_3,I_4,I_5\}$ of \cref{not:gencofDblCat2}. Moreover, since the Gray tensor product is symmetric, if we show the result for $\push{I}{J}{}$, then it also holds for $\push{J}{I}{}$. Note that $\push{I_1}{J}{}\cong J$, which proves the cases involving $I_1$.

To show the cases involving $I_4$ or $I_5$, we observe the following three facts: the functors $U\bfH,U\bfV\colon\DblCat\to\Cat$ preserve pushouts; $U\bfH(I_4), U\bfH(I_5), U\bfV(I_4),$ and $U\bfV(I_5)$ are identities; and $U\bfH(\bA\otimes_\Gray\bB)$ (resp. $U\bfV(\bA\otimes_\Gray\bB)$) only depends on $U\bfH(\bA)$ and $U\bfH(\bB)$ (resp. $U\bfV(\bA)$ and $U\bfV(\bB)$). It then follows that $U\bfH(\push{I}{J}{})$ and $U\bfV(\push{I}{J}{})$ are isomorphisms, and thus $\push{I}{J}{}$ is a cofibration by \cref{thm:charcofDblsecond}, if either $I$ or $J$ is in $\{I_4,I_5\}$.

For the remaining cases, one can check that $\push{I_2}{I_2}{}$ is given by the boundary inclusion $\delta(\bH \mathbbm{2}\otimes_\Gray \bH\mathbbm{2})\to \bH \mathbbm{2}\otimes_\Gray \bH\mathbbm{2}$, which is a cofibration by \cref{thm:charcofDblsecond}, since it is the identity on underlying horizontal and vertical categories. Similarly, one can show that the pushout-products $\push{I_3}{I_3}{}$ and $\push{I_2}{I_3}{}$ are cofibrations, as they are given by the boundary inclusions $\delta(\bH \mathbbm{2}\otimes_\Gray \bV\mathbbm{2})\to \bH \mathbbm{2}\otimes_\Gray \bV\mathbbm{2}$ and $\delta(\bV \mathbbm{2}\otimes_\Gray \bV\mathbbm{2})\to \bV \mathbbm{2}\otimes_\Gray \bV\mathbbm{2}$, respectively.

It remains to show that if $I\in\cI_w$ and $J\colon\bA\to\bB$ is a trivial cofibration, then $\push{I}{J}{}$ is a weak equivalence. Note that $I$ is of the form $I\colon \bC\to \bD$ with $\bC$ cofibrant, and consider the pushout diagram below.
\begin{tz}
\node[](1) {$\bC\otimes_\Gray\bA$};
\node[right of=1,xshift=1.5cm](2) {$\bD\otimes_\Gray\bB$};
\node[below of=1](3) {$\bD\otimes_\Gray\bA$};
\node[below of=2](4) {$\bP$};
\node[below right of=4,xshift=1cm](5) {$\bD\otimes_\Gray\bB$};
\draw[right hook->] (1) to node[above,la]{$\id_\bC\otimes_\Gray J$} node[below,la]{$\sim$} (2);
\draw[->] (1) to node[left,la]{$I\otimes_\Gray\id_\bA$} (3);
\draw[->] (2) to (4);
\draw[right hook->] (3) to node[above,la]{$K$} node[below,la]{$\sim$} (4);
\draw[->,dashed] (4) to node[right,la,yshift=10pt,xshift=-15pt]{$\push{I}{J}{}$} (5);
\draw[->,bend right] (3) to node[below,la,xshift=-10pt] {$\id_\bD\otimes_\Gray J$} node[above,la,sloped,yshift=-2pt] {$\sim$} (5);
\draw[->,bend left] (2) to node[right,la] {$I\otimes_\Gray \id_\bB$} (5);
  \node at ($(4)+(-.3cm,.25cm)$) {$\ulcorner$};
\end{tz}
Since $\bC$ is cofibrant and $J$ is a cofibration, we know that $\push{(\emptyset\to \bC)}{J}{}=\id_\bC\otimes_\Gray J$ is also a cofibration by the above. Since $J$ is a trivial cofibration by assumption, the double functor $\id_\bC\otimes_\Gray J$ is a weak equivalence by \cref{cor:FGrayidW}. Then $\id_\bC\otimes_\Gray J$ is a trivial cofibration, and therefore so is $K$ since these are stable under pushouts. \cref{cor:FGrayidW} also guarantees that $\id_\bD\otimes_\Gray J$ is a weak equivalence, and then so is $\push{I}{J}{}$ by $2$-out-of-$3$.
\end{proof}

\begin{rem} \label{rem:2catenrichedsecond}
Recall that, by restricting the Gray tensor product $\otimes_\Gray$ in one variable along $\bH\colon \TwoCat\to \DblCat$, we get the tensoring functor $\otimes\colon \DblCat\times \TwoCat\to \DblCat$ which gives an enrichment $\bfH[-,-]_\ps$ of $\DblCat$ over $\TwoCat$ as in \cite[Proposition 7.6]{MSV}. Since the functor $\bH$ is left Quillen by \cref{thm:HHcorefsecond}, as a corollary of \cref{thm:DblCat2monoidal} we get that the model structure on $\DblCat$ of \cref{thm:secondMS} is also $\TwoCat$-enriched.
\end{rem}

\section{Whitehead theorem}\label{section:whitehead}

In this section we show a Whitehead Theorem for double categories, that characterizes the weak equivalences between fibrant objects (which, by \cref{prop:Wwithwhisourceisdblbieq}, are double biequivalences) as the double functors that admit a pseudo inverse up to horizontal pseudo natural equivalence. Such a statement is reminiscent of the Whitehead Theorem for $2$-categories: a $2$-functor $F\colon\A\to\B$ is a biequivalence if and only if there is a pseudo functor $G\colon\B\to\A$ together with two pseudo natural equivalences $\id_\A\simeq GF$ and $FG\simeq \id_\B$.

Under the hypothesis that the double categories involved are \emph{horizontally invariant} ---defined analogously to the weakly horizontally invariant double categories with horizontal equivalences replaced by the stronger notion of horizontal isomorphisms; see \cite[Theorem and Definition 4.1.7]{Grandis}---, Grandis characterizes in \cite[Theorem 4.4.5]{Grandis} the double functors $F$ such that $U\bfH F$ and $U\bfH[\vtwo,F]$ are both equivalences of categories as the ones which admit a pseudo inverse up to \emph{horizontal natural isomorphism}. In analogy, double biequivalences can be defined as the double functors such that $\bfH F$ and $\bfH[\vtwo,F]$ are biequivalences of $2$-categories (see \cite[Proposition 3.12]{MSV}). Altogether our Whitehead Theorem can be seen as a $2$-categorical version of Grandis' result.

In the theorem below, whose proof is the content of this section, it is actually enough to require that the \emph{source} be weakly horizontally invariant. 

\begin{theorem}[Whitehead Theorem] \label{thm:whitehead}
Let $\bA$ and $\bB$ be double categories such that $\bA$ is weakly horizontally invariant. Then a double functor $F\colon \bA\to \bB$ is a weak equivalence (or equivalently, a double biequivalence) if and only if there is a pseudo double functor $G\colon \bB\to \bA$ together with horizontal pseudo natural equivalences $\id_\bA\simeq GF$ and $FG\simeq \id_\bB$.
\end{theorem}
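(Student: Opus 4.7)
The plan is to prove both directions of the biconditional. By \cref{prop:Wwithwhisourceisdblbieq} and the hypothesis that $\bA$ is weakly horizontally invariant, $F$ is a weak equivalence if and only if it is a double biequivalence, so we may work throughout with the double biequivalence formulation.

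For the converse direction, suppose we are given a pseudo double functor $G\colon \bB\to \bA$ together with horizontal pseudo natural equivalences $\eta\colon \id_\bA\simeq GF$ and $\epsilon\colon FG\simeq \id_\bB$. We verify conditions (db1)--(db4) of \cref{def:doublebieq} for $F$ directly. The horizontal equivalence components $\epsilon_B\colon FGB\xrightarrow{\simeq} B$ of $\epsilon$ provide (db1). For (db2) and (db3), given a horizontal or vertical morphism in $\bB$ with $F$-image boundary, we transport it through the structure squares of $\eta$ and $\epsilon$ to produce the required preimage in $\bA$ together with the required vertically or weakly horizontally invertible square. For (db4), a square $\beta$ in $\bB$ on an $F$-image boundary can be pasted with the horizontal structure squares of $\eta$ so that applying $G$ yields a square in $\bA$ whose $F$-image, after cancellation with the $\epsilon$-data, recovers $\beta$; uniqueness is obtained by the same cancellation argument using \cref{uniqueweakinverse}. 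This direction proceeds analogously to the classical Whitehead theorem for $2$-categories.

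For the forward direction, assume $F$ is a double biequivalence and $\bA$ is weakly horizontally invariant; we construct $G$ together with $\eta$ and $\epsilon$. On objects, we use (db1) to choose, for each $B\in \bB$, an object $GB\in \bA$ with a horizontal adjoint equivalence $\epsilon_B\colon FGB\xrightarrow{\simeq} B$. On horizontal morphisms $b\colon B\to B'$, we apply (db2) and (db4) to the composite $\epsilon_{B'}^{-1}\circ b\circ \epsilon_B$ to produce $Gb\colon GB\to GB'$ in $\bA$ together with the naturality square for $\epsilon$. On vertical morphisms $v\colon B\arrowdot B'$, we first apply (db3) to obtain a vertical morphism $u\colon A\arrowdot A'$ in $\bA$ and a weakly horizontally invertible square in $\bB$ along horizontal equivalences $B\xrightarrow{\simeq} FA$ and $B'\xrightarrow{\simeq} FA'$; pasting with the $\epsilon$-data and reflecting the resulting horizontal equivalences through $F$ via (db2) and (db4) yields horizontal equivalences $A\xrightarrow{\simeq} GB$ and $A'\xrightarrow{\simeq} GB'$ in $\bA$. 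The weakly horizontally invariant hypothesis on $\bA$ then produces a vertical morphism $Gv\colon GB\arrowdot GB'$ together with a weakly horizontally invertible square, whose pasting with the earlier one gives the naturality square of $\epsilon$ at $v$. Squares of $\bB$ are lifted through $F$ by (db4). Pseudo functoriality of $G$ and the construction of $\eta\colon \id_\bA\simeq GF$ follow by symmetric arguments, with coherence of all structure squares ensured by the uniqueness of weak inverses relative to fixed horizontal adjoint equivalence data from \cref{uniqueweakinverse}.

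The main obstacle is the definition of $G$ on vertical morphisms; this is precisely where weak horizontal invariance of $\bA$ is indispensable, since without it we would be unable to transport the vertical morphism $u\colon A\arrowdot A'$ along the horizontal equivalences $A\xrightarrow{\simeq} GB$ and $A'\xrightarrow{\simeq} GB'$ to obtain $Gv$ on the correct source and target. A secondary technical burden is keeping track of coherences for the pseudo double functor structure on $G$ (the comparison squares for horizontal and vertical composition, and for identities), which will be produced as unique lifts via (db4) from pasting composites of the already chosen structure squares.
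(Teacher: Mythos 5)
Your proposal follows essentially the same route as the paper: reduce to double biequivalences via \cref{prop:Wwithwhisourceisdblbieq}, then prove the two implications as the paper does in \cref{prop:horbieq_is_doublebieq} (horizontal biequivalence $\Rightarrow$ double biequivalence, by directly verifying (db1)--(db4)) and \cref{prop:doublebieq_is_horbieq} (the converse, constructing $G$, $\epsilon$, $\eta$ from (db1)--(db4) and invoking weak horizontal invariance of $\bA$ precisely to transport the lifted vertical morphism to one with source and target $GB$, $GB'$). The only material you gloss over is the need to promote the horizontal biequivalence data to a biadjoint biequivalence (Gurski) and the auxiliary modification $\theta\colon F\eta'\cong\epsilon_F$ of \cref{lem:epsilon_iso_to_eta}, which the paper uses to carry out the cancellation checks in (db2) and (db4) — but this falls within the coherence bookkeeping you already flag as outstanding.
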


\begin{rem}
This retrieves a formulation of the usual  Whitehead Theorem for model categories (see \cite[Lemma 4.24]{DS}) in our setting; such a result characterizes the weak equivalences between cofibrant-fibrant objects in a model structure as the homotopy equivalences.
\end{rem}

Let us now introduce what we mean by a pseudo double functor. 

\begin{defn} \label{defn:pseudodouble}
A \textbf{pseudo double functor} $G\colon \bB\to \bA$ consists of maps on objects, horizontal morphisms, vertical morphisms, and squares, compatible with sources and targets, which preserve
\begin{rome}
    \item horizontal compositions and identities up to coherent  vertically invertible squares
\begin{tz}
\node (A) at (0,0) {$GB$};
\node (B) at (1.5,0) {$GC$};
\node (C) at (3,0) {$GD$};
\node (A') at (0,-1.5) {$GB$};
\node (C') at (3,-1.5) {$GD$};
\draw[->] (A) to node[above,scale=0.8] {$Gb$} (B);
\draw[->] (B) to node[above,scale=0.8] {$Gd$} (C);
\draw[->] (A') to node[below,scale=0.8] {$G(db)$} (C');
\draw[d] (A) to (A');
\draw[d] (C) to (C'); 

\node at (0,-.75) {$\bullet$};
\node at (3,-.75) {$\bullet$};

\node[scale=0.8] at (1.3,-.75) {$\Phi_{b,d}$};
\node[scale=0.8] at (1.7,-.75) {$\vcong$};

\node (A) at (5,0) {$GB$};
\node (B) at (6.5,0) {$GB$};
\node (A') at (5,-1.5) {$GB$};
\node (B') at (6.5,-1.5) {$GB$};

\draw[d] (A) to (B);
\draw[d] (A) to (A');
\draw[d] (B) to (B');
\draw[->] (A') to node[below,scale=0.8] {$G\id_B$} (B');

\node at (5,-.75) {$\bullet$};
\node at (6.5,-.75) {$\bullet$};

\node[scale=0.8] at (5.6,-.75) {$\Phi_{B}$};
\node[scale=0.8] at (6,-.75) {$\vcong$};
\end{tz}
    for every object $B\in \bB$, and every pair of composable horizontal morphisms $b\colon B\to C$ and $d\colon C\to D$ in~$\bB$,
    \item vertical compositions and identities up to coherent horizontally invertible squares $\Psi_{v,v'}$ and $\Psi_B$ ---the transposed versions of those in (i)---, for every object $B\in \bB$, and every pair of composable vertical morphisms $v$, $v'$ in~$\bB$, 
    \item horizontal and vertical compositions of squares accordingly. 
\end{rome}

For a detailed description of the coherences, the reader can see \cite[Definition 3.5.1]{Grandis}.

The pseudo double functor $G$ is said to be \textbf{normal} if the squares $\Phi_B$ and $\Psi_B$ are identities for every object $B\in \bB$.
\end{defn}

\begin{rem}
There are notions of horizontal pseudo natural transformations between pseudo double functors, and modifications between them (with trivial vertical boundaries). These are defined analogously to \cite[\S 3.8]{Grandis}.
\end{rem}

The notion that we now introduce has also been independently considered by Grandis and Par\'e in \cite[\S 3]{GraPar19} under the name of \emph{pointwise equivalences}.

\begin{defn} 
Let $F,G\colon \bA\to \bB$ be pseudo double functors. A \textbf{horizontal pseudo natural equivalence} $\varphi\colon F\Rightarrow G$ is an equivalence in the $2$-category of pseudo double functors $\bA\to \bB$, horizontal pseudo natural transformations, and modifications with trivial vertical boundaries.
\end{defn}

Equivalently, the horizontal pseudo natural equivalences can be described as follows; see \cite[Theorem 4.4]{GraPar19} for a proof.

\begin{lemme}
Let $F,G\colon \bA\to \bB$ be pseudo double functors. A horizontal pseudo natural transformation $\varphi\colon F\Rightarrow G$ is a horizontal pseudo natural equivalence if and only if 
\begin{rome}
  \item the horizontal morphism $\varphi_A\colon FA\xrightarrow{\simeq} GA$ is a horizontal equivalence, for every object $A\in \bA$, and
  \item the square $\sq{\varphi_u}{\varphi_A}{\varphi_{A'}}{Fu}{Gu}$ is weakly horizontally invertible, for every vertical morphism $u\colon A\arrowdot A'$ in $\bA$.
\end{rome}
\end{lemme}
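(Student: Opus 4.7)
The implication from being an equivalence to satisfying (i) and (ii) is a direct unpacking of the definitions. Suppose $\psi\colon G\Rightarrow F$ is a pseudo-inverse with invertible modifications $\eta\colon \id_F\cong \psi\varphi$ and $\epsilon\colon\varphi\psi\cong\id_G$; since these modifications have trivial vertical boundaries, each component $\eta_A$ is a vertically invertible square with trivial vertical sides, i.e.\ a $2$-isomorphism $\id_{FA}\cong \psi_A\varphi_A$ in $\bfH\bB$ (and similarly for $\epsilon_A$). Together, $(\varphi_A,\psi_A,\eta_A,\epsilon_A)$ exhibits $\varphi_A$ as a horizontal adjoint equivalence, proving~(i). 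For~(ii), the modification axiom of $\eta$ at a vertical morphism $u\colon A\arrowdot A'$ produces a pasting equality whose left-hand side is the vertical composite of the naturality squares $\varphi_u$ and $\psi_u$ glued with $\eta_A$, $\eta_{A'}$, and whose right-hand side is $\id_{Fu}$; combined with the analogous axiom for $\epsilon$ at $u$, this is precisely the data of \cref{def:whisquare} showing that $\varphi_u$ is weakly horizontally invertible with weak inverse $\psi_u$.

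For the converse, assume (i) and (ii). For each object $A\in \bA$, promote $\varphi_A$ to a horizontal adjoint equivalence $(\varphi_A,\psi_A,\eta_A,\epsilon_A)$. For each horizontal morphism $a\colon A\to C$ in $\bA$, define the naturality $2$-isomorphism $\psi_a$ of $\psi$ as the mate of the vertically invertible $\varphi_a$ in the horizontal $2$-category $\bfH\bB$, using the chosen adjoint equivalence data. For each vertical morphism $u\colon A\arrowdot A'$ of $\bA$, apply \cref{uniqueweakinverse} to obtain the unique weak inverse $\psi_u$ of $\varphi_u$ with respect to the fixed horizontal adjoint equivalence data at $A$ and $A'$; take this as the naturality square of $\psi$ at $u$. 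The components of the candidate modifications $\eta$ and $\epsilon$ are taken to be those of the chosen adjoint equivalence data.

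It remains to verify that $\psi$ is a horizontal pseudo natural transformation and that $\eta,\epsilon$ are modifications. The modification axioms at objects are immediate from the triangle identities of the chosen data, and the modification axioms at each vertical morphism $u$ are, by construction, precisely the two pasting equations of \cref{def:whisquare} witnessing that $\psi_u$ is a weak inverse of $\varphi_u$. The coherence axioms for $\psi$ (compatibility with horizontal and vertical composition, with identities, and with squares of $\bA$) are checked in the following uniform way: both the prescribed composite of $\psi$-squares and the composite dictated by the axiom are weak inverses, with respect to the same fixed horizontal adjoint equivalence data, of a single square constructed from $\varphi$-data and the pseudo-structure of $F$ and $G$; hence they must agree by the uniqueness statement of \cref{uniqueweakinverse}. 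The principal obstacle is the bookkeeping needed to arrange each pasting so that the uniqueness of weak inverses applies cleanly to both sides of the desired identity; once this is organized, no further computation is required.
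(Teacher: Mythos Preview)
The paper does not prove this lemma: it simply cites \cite[Theorem 4.4]{GraPar19} and moves on. So there is no in-paper argument to compare against, and your proposal is doing strictly more than the paper does here.

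Your outline is essentially the correct one and matches the strategy one would expect (and, presumably, that of the cited reference): promote each $\varphi_A$ to adjoint equivalence data, build $\psi_a$ as the mate of $\varphi_a$ in $\bfH\bB$, build $\psi_u$ via \cref{uniqueweakinverse}, and verify the modification axioms for $\eta,\epsilon$ by recognising them as the two pasting equations of \cref{def:whisquare}. One small overstatement: the claim that \emph{all} coherence axioms for $\psi$ follow ``in the following uniform way'' from uniqueness of weak inverses is not quite right. The uniqueness argument of \cref{uniqueweakinverse} governs the coherences that involve weakly horizontally invertible squares---vertical composition of the $\psi_u$'s and compatibility with squares of $\bA$---but the horizontal coherences (compatibility of $\psi_a$ with horizontal composition and identities, and the horizontal part of the modification axioms for $\eta,\epsilon$) are equations among vertically invertible squares with trivial vertical boundaries, i.e.\ $2$-cells in $\bfH\bB$, whose horizontal sources and targets need not be equivalences. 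Those follow instead from the standard calculus of mates under the chosen adjoint equivalences, together with the horizontal coherence of $\varphi$. This is not a gap in the argument, only in the packaging: separate the horizontal coherences (mate argument in $\bfH\bB$) from the vertical/square coherences (uniqueness of weak inverses), and the proof goes through.
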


We will use the term \emph{horizontal biequivalence} to refer to the double functors which admit a pseudo inverse up to horizontal pseudo natural equivalence.

\begin{defn}\label{def:horbieq}
A double functor $F\colon \bA\to \bB$ is a \textbf{horizontal biequivalence} if there is a pseudo double functor $G\colon \bB\to \bA$ together with horizontal pseudo natural equivalences ${\eta\colon \id_\bA\Rightarrow GF}$ and $\epsilon\colon FG\Rightarrow \id_\bB$.
\end{defn}

\begin{rem} \label{horbieq:otherdata}
If $F\colon \bA\to \bB$ is a horizontal biequivalence, then the tuple $(G,\eta,\epsilon)$ can always be promoted to the following data:
\begin{rome}
    \item a \emph{normal} pseudo double functor $G\colon \bB\to \bA$, 
    \item a horizontal pseudo natural \emph{adjoint} equivalence \[ (\eta\colon \id_\bA\Rightarrow GF, \  \eta'\colon GF\Rightarrow \id_\bA, \  \lambda\colon \id\cong \eta'\eta, \ \kappa\colon \eta\eta'\cong \id), \] 
    \item a horizontal pseudo natural \emph{adjoint} equivalence \[ (\epsilon\colon FG\Rightarrow \id_\bB, \ \epsilon'\colon \id_\bB\Rightarrow FG,  \ \mu\colon \id\cong \epsilon'\epsilon,  \ \nu\colon \epsilon\epsilon'\cong \id),\] 
    \item two invertible modifications $\Theta\colon \id_F\cong \epsilon_F\circ F\eta $ and $\Sigma\colon \id_G\cong  G\epsilon\circ \eta_G$, expressing the triangle (pseudo-)identities for $\eta$ and $\epsilon$. 
\end{rome}
This follows from the fact that a pseudo double functor can always be promoted to a normal one, and from a result by Gurski \cite[Theorem 3.2]{Gurski} saying that a biequivalence can always be promoted to a biadjoint biequivalence.
\end{rem}

\cref{thm:whitehead} now amounts to showing that a double functor whose source is weakly horizontally invariant is a double biequivalence if and only if it is a horizontal biequivalence. However, it is always true that a horizontal biequivalence is a double biequivalence; no additional hypothesis is needed here. In order to prove this first result, we need the following lemma.

\begin{lemme}\label{lem:epsilon_iso_to_eta}
The data of \cref{horbieq:otherdata} induces an invertible modification $\theta\colon F\eta'\cong \epsilon_F$.
\end{lemme}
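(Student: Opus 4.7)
The plan is to obtain $\theta$ by formally manipulating the biadjoint-biequivalence data provided. The key observation is that $F\eta'$ and $\epsilon_F$ are both right inverses of $F\eta$ in the $2$-category of pseudo double functors $\bA \to \bB$, horizontal pseudo natural transformations, and modifications: applying $F$ to the counit $\kappa \colon \eta\eta' \cong \id_{GF}$ of the adjoint equivalence $\eta \dashv \eta'$ yields $F\eta \cdot F\eta' \cong \id_{FGF}$, while $\Theta \colon \id_F \cong \epsilon_F \cdot F\eta$ exhibits $F\eta$ as a right inverse of $\epsilon_F$. The essential uniqueness of right inverses then produces the desired isomorphism.

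Concretely, I would define $\theta$ as the composite of invertible modifications
\[ F\eta' \;\xRightarrow{\Theta \cdot F\eta'}\; (\epsilon_F \cdot F\eta) \cdot F\eta' \;=\; \epsilon_F \cdot F(\eta\eta') \;\xRightarrow{\epsilon_F \cdot F\kappa}\; \epsilon_F \cdot F(\id_{GF}) \;=\; \epsilon_F, \]
where the first arrow is $\Theta$ whiskered on the right by $F\eta'$, the middle equality uses the strictness of whiskering by the (strict) double functor $F$ (so that $F\eta \cdot F\eta' = F(\eta\eta')$), and the last arrow is $F\kappa$ whiskered on the left by $\epsilon_F$. Since $\Theta$ and $\kappa$ are invertible and whiskering and composition preserve invertibility of modifications, $\theta$ is an invertible modification.

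I do not anticipate a technical obstacle. The entire construction is a purely formal manipulation in the $2$-category of pseudo double functors, horizontal pseudo natural transformations, and modifications, so the verification that $\theta$ satisfies the modification axiom is automatic from the fact that its building blocks $\Theta$ and $F\kappa$ do. It is worth noting that among the enumerated data of \cref{horbieq:otherdata}, only $\Theta$, $\kappa$, and the pseudo double functor $G$ (implicitly, through $\eta$, $\eta'$, and $\epsilon$) are actually used; the pieces $\Sigma$, $\lambda$, $\mu$, $\nu$, and $\epsilon'$ play no role.
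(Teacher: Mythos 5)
Your proposal is correct and follows essentially the same approach as the paper: the paper defines $\theta_A$ component-wise as exactly the pasting of $\Theta_A$ whiskered by $F\eta'_A$ followed by $F\kappa_A$ whiskered by $\epsilon_{FA}$, and then notes that the modification axioms for $\theta$ follow from those of $\Theta$ and $F\kappa$. Your global description in terms of whiskered modifications in the hom-$2$-category is just the object-free phrasing of the same construction.
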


\begin{proof}
Given an object $A\in \bA$, we define the component of $\theta$ at $A$ to be the vertically invertible square
\begin{tz}
\node (A) at (-3.5,-.75) {$FGFA$};
\node (B) at (-2,-.75) {$FA$};
\node (C) at (-3.5,-2.25) {$FGFA$};
\node (D) at (-2,-2.25) {$FA$};

\draw[->] (A) to node[above, scale=0.8] {$F\eta'_A$} (B);
\draw[->] (C) to node[below, scale=0.8] {$\epsilon_{FA}$} (D);
\draw[d] (A) to (C);
\draw[d] (B) to (D);

\node at (-3.5,-1.5) {$\bullet$};
\node (f) at (-2,-1.5) {$\bullet$};
\node[scale=0.8] at (-2.9,-1.5) {$\theta_A$};
\node[scale=0.8] at (-2.6,-1.5) {$\vcong$};

\node (A) at (0,0) {$FGFA$};
\node (B) at (1.5,0) {$FA$};
\node (C) at (4.5,0) {$FA$};
\node (D) at (0,-1.5) {$FGFA$};
\node (E) at (1.5,-1.5) {$FA$};
\node (F) at (3,-1.5) {$FGFA$};
\node (G) at (4.5,-1.5) {$FA$};
\node (H) at (0,-3) {$FGFA$};
\node (I) at (3,-3) {$FGFA$};
\node (J) at (4.5,-3) {$FA$};
\node at ($(J.east)-(0,4pt)$) {.};
\node at ($(f)!0.5!(D.west)$) {$=$};

\draw[d] (B) to (C);
\draw[->] (A) to node[above, scale=0.8] {$F\eta'_A$} (B);
\draw[->] (D) to node[below, scale=0.8] {$F\eta'_A$} (E);
\draw[->] (E) to node[above, scale=0.8] {$F\eta_A$} (F);
\draw[->] (F) to node[above, scale=0.8] {$\epsilon_{FA}$} (G);
\draw[->] (I) to node[below, scale=0.8,yshift=-1pt] {$\epsilon_{FA}$} (J);
\draw[d] (H) to (I);
\draw[d] (A) to (D);
\draw[d] (D) to (H);
\draw[d] (B) to (E);
\draw[d] (C) to (G);
\draw[d] (F) to (I);
\draw[d] (G) to (J);

\node at (0,-.75) {$\bullet$};
\node at (0,-2.25) {$\bullet$};
\node at (1.5,-.75) {$\bullet$};
\node at (4.5,-.75) {$\bullet$};
\node at (3,-2.25) {$\bullet$};
\node at (4.5,-2.25) {$\bullet$};

\node[scale=0.8] at (1.3,-2.25) {$F\kappa_A$};
\node[scale=0.8] at (1.8,-2.25) {$\vcong$};
\node[scale=0.8] at (3.75,-2.25) {$e_{\epsilon_{FA}}$};
\node[scale=0.8] at (.75,-.75) {$e_{F\eta'_A}$};
\node[scale=0.8] at (2.8,-.75) {$\Theta_A$};
\node[scale=0.8] at (3.2,-.75) {$\vcong$};
\end{tz}
The proof of horizontal and vertical coherences for $\theta$ is a standard check that stems from the constructions of the squares $\theta_A$ and from the horizontal and vertical coherences of the modifications $F\kappa\colon (F\eta)(F\eta')\cong \id$ and $\Theta\colon \id\cong \epsilon_F \circ F\eta$. 
\end{proof}

\begin{prop}\label{prop:horbieq_is_doublebieq}
If $F\colon\bA\to\bB$ is a horizontal biequivalence, then $F$ is a double biequivalence.
\end{prop}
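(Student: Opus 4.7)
The plan is to verify conditions (db1)--(db4) of \cref{def:doublebieq} for $F$, using the enriched data of \cref{horbieq:otherdata}: a normal pseudo double functor $G\colon\bB\to\bA$, horizontal pseudo natural \emph{adjoint} equivalences $(\eta,\eta',\lambda,\kappa)\colon\id_\bA\simeq GF$ and $(\epsilon,\epsilon',\mu,\nu)\colon FG\simeq \id_\bB$, and triangle modifications $\Theta\colon\id_F\cong\epsilon_F\circ F\eta$ and $\Sigma\colon\id_G\cong G\epsilon\circ\eta_G$, together with the invertible modification $\theta\colon F\eta'\cong\epsilon_F$ from \cref{lem:epsilon_iso_to_eta}. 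Note that no hypothesis on $\bA$ is used here.

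For (db1), given $B\in\bB$ the object $GB\in\bA$ works, since $\epsilon_B\colon FGB\xrightarrow{\simeq} B$ (with inverse $\epsilon'_B$) is a horizontal equivalence, being a component of a horizontal pseudo natural equivalence. For (db2), given a horizontal morphism $b\colon FA\to FC$, set $a\coloneqq \eta'_C\circ Gb\circ \eta_A\colon A\to C$. The required vertically invertible square $Fa\cong b$ is obtained by pasting: the pseudo-functoriality constraints $\Phi$ of $F$ break $Fa$ into $F\eta'_C\circ FGb\circ F\eta_A$; the modification $\theta_C$ rewrites $F\eta'_C$ as $\epsilon_{FC}$; the vertically invertible pseudo-naturality square $\epsilon_b\colon \epsilon_{FC}\circ FGb\cong b\circ \epsilon_{FA}$ slides $b$ to the top; and finally $\Theta_A^{-1}$ reduces $\epsilon_{FA}\circ F\eta_A$ to $\id_{FA}$. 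For (db3), given a vertical morphism $v\colon B\arrowdot B'$ in $\bB$, take $u\coloneqq Gv\colon GB\arrowdot GB'$ in $\bA$; the pseudo-naturality square $\epsilon_v$ of $\epsilon$ at $v$ is weakly horizontally invertible (as $\epsilon$ is a pseudo natural equivalence) with horizontal boundaries $\epsilon_B$ and $\epsilon_{B'}$, and one passes to its weak inverse (with respect to the horizontal adjoint equivalence data for $\epsilon'_B\dashv\epsilon_B$ and $\epsilon'_{B'}\dashv\epsilon_{B'}$) to get the desired weakly horizontally invertible square with top $\epsilon'_B$, bottom $\epsilon'_{B'}$, left $v$, and right $Fu=FGv$.

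The main difficulty is (db4): given $\sq{\beta}{Fa}{Fa'}{Fu}{Fw}$ in $\bB$, produce a \emph{unique} $\sq{\alpha}{a}{a'}{u}{w}$ in $\bA$ with $F\alpha=\beta$. For existence, apply $G$ to obtain the square $G\beta$ with boundaries $GFa,GFa',GFu,GFw$ in $\bA$, then conjugate by the pseudo-naturality data of $\eta\colon\id_\bA\Rightarrow GF$: paste horizontally with the vertically invertible squares $\eta_a^{-1}$ and $\eta_{a'}$ at top and bottom to swap $GFa,GFa'$ for the composites $\eta_C\circ a$ and $\eta_{C'}\circ a'$; then paste vertically with the weak inverses of $\eta_u$ and $\eta_w$ (which exist since $\eta$ is a pseudo natural equivalence) with respect to horizontal adjoint equivalence data for $\eta_{A}\dashv\eta'_A$ etc., so as to cancel the $\eta$-components on all four boundaries using the triangle modifications $\lambda,\kappa$ and the invertible $\Sigma$-style data. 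The resulting square $\alpha$ has boundary $a,a',u,w$, and a routine but lengthy coherence computation using the coherences of the adjoint equivalences, the modifications $\Theta,\Sigma,\theta$, and the pseudo-functoriality of $F$ shows $F\alpha=\beta$. For uniqueness, if $F\alpha_1=F\alpha_2=\beta$ then $GF\alpha_1=GF\alpha_2$, and the same conjugation by the pseudo-naturality data of $\eta$ that was used in the construction is invertible (the pasted $\eta$-squares are vertically or weakly horizontally invertible and the triangle modifications are invertible), so conjugating $GF\alpha_1=GF\alpha_2$ back yields $\alpha_1=\alpha_2$.

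The main obstacle is the bookkeeping for (db4): one must keep careful track of the interaction between the vertically invertible pseudo-naturality squares on horizontal morphisms, the weakly horizontally invertible pseudo-naturality squares on vertical morphisms, and the triangle data $\Theta,\Sigma,\lambda,\kappa$, in order to verify the coherence identity $F\alpha=\beta$. The argument is in spirit parallel to the 2-categorical Whitehead theorem, with the added subtlety that squares on vertical morphisms are only \emph{weakly} (not vertically) invertible, so cancellation in the vertical direction must be performed with respect to fixed horizontal adjoint equivalence data and uniqueness of weak inverses (\cref{uniqueweakinverse}).
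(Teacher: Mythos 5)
Your proposal matches the paper's proof in all essentials: the same choice of object $GB$ and equivalence $\epsilon'_B$ for (db1), the same lift $a=\eta'_C\circ Gb\circ\eta_A$ pasted with $\Theta_A$, $\epsilon_b$, and $\theta_C$ for (db2), the same choice $Gv$ with the weak inverse of $\epsilon_v$ (i.e.\ $\epsilon'_v$) for (db3), and the same ``conjugate $G\beta$ by the $\eta$-pseudonaturality data plus $\mu$'' construction with the identical uniqueness argument for (db4). The only slip is a cosmetic one: $F$ is a strict double functor, so there are no pseudo-functoriality constraints $\Phi$ of $F$ to invoke when expanding $Fa$ — the equality $Fa=F\eta'_C\circ FGb\circ F\eta_A$ is on the nose — but this does not affect the argument.
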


\begin{proof}
We check that $F$ satisfies (db1-4) of \cref{def:doublebieq}. Let $(F,G,\eta,\epsilon)$ be the data of a horizontal adjoint biequivalence as in \cref{horbieq:otherdata}. We first show (db1). For every object $B\in\bB$, we want to find an object $A\in\bA$ and a horizontal equivalence $B\xrightarrow{\simeq} FA$ in $\bB$. Setting $A=GB$, we have that $\epsilon'_B\colon B\xrightarrow{\simeq}FGB=FA$ gives such a horizontal equivalence.
    
We now show (db2). Let -$A$, $C$ be objects in $\bA$, and $b\colon FA\to FC$ be a horizontal morphism in $\bB$. We want to find a horizontal morphism $a\colon A\to C$ in $\bA$ and a vertically invertible square $({e_{FA}} \; ^{{b}}_{\substack{{Fa}}} \; {e_{FC}})$ in $\bB$. Let $a\colon A\to C$ be the composite  
\[ A\xrightarrow{\eta_A} GFA\xrightarrow{Gb}GFC\xrightarrow{\eta'_C}C; \]
we then have a vertically invertible square as desired,
\begin{tz}
\node (A) at (0,0) {$FA$};
\node (B) at (3.6,0) {$FA$};
\node (C) at (5.4,0) {$FC$};
\node (D) at (0,-1.5) {$FA$};
\node (E) at (1.8,-1.5) {$FGFA$};
\node (F) at (3.6,-1.5) {$FA$};
\node (G) at (5.4,-1.5) {$FC$};
\node (H) at (0,-3) {$FA$};
\node (I) at (1.8,-3) {$FGFA$};
\node (J) at (3.6,-3) {$FGFC$};
\node (K) at (5.4,-3) {$FC$};
\node (L) at (0,-4.5) {$FA$};
\node (M) at (1.8,-4.5) {$FGFA$};
\node (N) at (3.6,-4.5) {$FGFC$};
\node (O) at (5.4,-4.5) {$FC$};
\draw[d] (A) to (B);
\draw[->] (B) to node[above, scale=0.8] {$b$} (C);
\draw[->] (D) to node[above, scale=0.8] {$F\eta_A$} (E);
\draw[->] (E) to node[above, scale=0.8] {$\epsilon_{FA}$} (F);
\draw[->] (F) to node[above, scale=0.8] {$b$} (G);
\draw[->] (H) to node[above, scale=0.8] {$F\eta_A$} (I);
\draw[->] (I) to node[above, scale=0.8,yshift=2pt] {$FGb$} (J);
\draw[->] (J) to node[above, scale=0.8] {$\epsilon_{FC}$} (K);
\draw[->] (L) to node[below, scale=0.8] {$F\eta_A$} (M);
\draw[->] (M) to node[below, scale=0.8,yshift=-2pt] {$FGb$} (N);
\draw[->] (N) to node[below, scale=0.8] {$F\eta'_C$} (O);
\draw[d] (A) to (D);
\draw[d] (B) to (F);
\draw[d] (C) to (G);
\draw[d] (D) to (H);
\draw[d] (E) to (I);
\draw[d] (G) to (K);
\draw[d] (H) to (L);
\draw[d] (J) to (N);
\draw[d] (K) to (O);

\node at (0,-.75) {$\bullet$};
\node at (3.6, -.75) {$\bullet$};
\node at (5.4, -.75) {$\bullet$};
\node at (0,-2.25) {$\bullet$};
\node at (1.8,-2.25) {$\bullet$};
\node at (5.4,-2.25) {$\bullet$};
\node at (0,-3.75) {$\bullet$};
\node at (3.6,-3.75) {$\bullet$};
\node at (5.4,-3.75) {$\bullet$};

\node[scale=0.8] at (1.6,-.75) {$\Theta_A$};
\node[scale=0.8] at (2,-.75) {$\vcong$};
\node[scale=0.8] at (4.5,-.75) {$e_b$};
\node[scale=0.8] at (.9,-2.25) {$e_{F\eta_A}$};
\node[scale=0.8] at (3.4,-2.25) {$\epsilon_b$};
\node[scale=0.8] at (3.8,-2.25) {$\vcong$};
\node[scale=0.8] at (1.8,-3.75) {$e_{(FGb)(F\eta_A)}$};
\node[scale=0.8] at (4.4,-3.75) {$\theta^{-1}_C$};
\node[scale=0.8] at (4.8,-3.75) {$\vcong$};
\end{tz}
where $\theta_C$ is the component at $C$ of the invertible modification $\theta$ of \cref{lem:epsilon_iso_to_eta}.

We now show (db3). Let $v\colon B\arrowdot B'$ be a vertical morphism in $\bB$. We want to find a vertical morphism $u\colon A\arrowdot A'$ in $\bA$ and a weakly horizontally invertible square $({v} \; ^{{\simeq}}_{\substack{{\simeq}}} \; {Fu})$ in $\bB$. Let $u\colon A\arrowdot A'$ be the vertical morphism $Gv\colon GB\arrowdot GB'$. Then $\epsilon'_v$ gives the desired weakly horizontally invertible square.
\begin{tz}
\node (A) at (0,0) {$B$};
\node (B) at (1.5,0) {$FGB$};
\node (A') at (0,-1.5) {$B'$};
\node (B') at (1.5,-1.5) {$FGB'$};
\draw[->] (A) to node[below, scale=0.8] {$\simeq$} node[above, scale=0.8] {$\epsilon'_B$} (B);
\draw[->] (A') to node[above, scale=0.8] {$\simeq$} node[below, scale=0.8] {$\epsilon'_{B'}$} (B');
\draw[->] (A) to node[left,scale=0.8] {$v\;$} (A');
\draw[->] (B) to node[right,scale=0.8] {$\;FGv$} (B');

\node at (0,-.75) {$\bullet$};
\node at (1.5, -.75) {$\bullet$};

\node[scale=0.8] at (.6,-.75) {$\epsilon'_v$};
\node[scale=0.8] at (.9,-.75) {$\simeq$};
\end{tz}

Finally, let $\sq{\beta}{Fa}{Fc}{Fu}{Fu'}$ be a square in $\bB$ as in (db4). We want to show that there is a unique square $\sq{\alpha}{a}{c}{u}{u'}$ in $\bA$ such that $F\alpha=\beta$. Define $\alpha$ to be the square given by the following pasting.
\begin{tz}
\node (A) at (-3.5,-3) {$A$};
\node (B) at (-2,-3) {$C$};
\node (A') at (-3.5,-4.5) {$A'$};
\node (B') at (-2,-4.5) {$C'$};
\draw[->] (A) to node[above, scale=0.8] {$a$} (B);
\draw[->] (A') to node[below, scale=0.8] {$c$} (B');
\draw[->] (A) to node[left,scale=0.8] {$u\;$} (A');
\draw[->] (B) to node[right,scale=0.8] {$\;u'$} (B');

\node at (-3.5,-3.75) {$\bullet$};
\node at (-2, -3.75) {$\bullet$};

\node[scale=0.8] at (-2.75,-3.75) {$\alpha$};

\node at (-1,-3.75) {$=$};

\node (A) at (0,0) {$A$};
\node (C) at (3,0) {$A$};
\node (D) at (4.5,0) {$C$};
\draw[d] (A) to (C);
\draw[->] (C) to node[above,scale=0.8] {$a$} (D);

\node (A') at (0,-1.5) {$A$};
\node (B') at (1.5,-1.5) {$GFA$};
\node (C') at (3,-1.5) {$A$};
\node (D') at (4.5,-1.5) {$C$};
\draw[->] (A') to node[above,scale=0.8] {$\eta_A$} (B');
\draw[->] (B') to node[above,scale=0.8] {$\eta'_A$} (C');
\draw[->] (C') to node[above,scale=0.8] {$a$} (D');
\draw[d] (A) to (A');
\draw[d] (C) to (C');
\draw[d] (D) to (D');

\node at (0,-.75) {$\bullet$};
\node at (3,-.75) {$\bullet$};
\node at (4.5,-.75) {$\bullet$};

\node[scale=0.8] at (1.3,-.75) {$\mu_A$};
\node[scale=0.8] at (1.7,-.75) {$\vcong$};
\node[scale=0.8] at (3.75,-.75) {$e_a$};

\node (A) at (0,-3) {$A$};
\node (B) at (1.5,-3) {$GFA$};
\node (C) at (3,-3) {$GFC$};
\node (D) at (4.5,-3) {$C$};
\draw[->] (A) to node[above,scale=0.8] {$\eta_A$} (B);
\draw[->] (B) to node[above,scale=0.8,yshift=1pt] {$GFa$} (C);
\draw[->] (C) to node[above,scale=0.8] {$\eta'_C$} (D);
\draw[d] (A') to (A);
\draw[d] (B') to (B);
\draw[d] (D') to (D);

\node at (0,-2.25) {$\bullet$};
\node at (1.5,-2.25) {$\bullet$};
\node at (4.5,-2.25) {$\bullet$};

\node[scale=0.8] at (2.8,-2.25) {$\eta'_a$};
\node[scale=0.8] at (3.2,-2.25) {$\vcong$};
\node[scale=0.8] at (.75,-2.25) {$e_{\eta_A}$};

\node (A') at (0,-4.5) {$A'$};
\node (B') at (1.5,-4.5) {$GFA'$};
\node (C') at (3,-4.5) {$GFC'$};
\node (D') at (4.5,-4.5) {$C'$};
\draw[->] (A') to node[below,scale=0.8] {$\eta_{A'}$} (B');
\draw[->] (B') to node[below,scale=0.8,yshift=-1pt] {$GFc$} (C');
\draw[->] (C') to node[below,scale=0.8] {$\eta'_{C'}$} (D');
\draw[->] (A) to node[left,scale=0.8] {$u\;$} (A');
\draw[->] (B) to node[over,scale=0.8,yshift=.3cm] {$GFu$} (B');
\draw[->] (C) to node[over,scale=0.8,yshift=.3cm] {$GFu'$} (C');
\draw[->] (D) to node[right,scale=0.8] {$\;u'$} (D');

\node at (0,-3.75) {$\bullet$};
\node at (1.5,-3.85) {$\bullet$};
\node at (3,-3.85) {$\bullet$};
\node at (4.5,-3.75) {$\bullet$};

\node[scale=0.8] at (.75,-3.75) {$\eta_u$};
\node[scale=0.8] at (2.25,-3.75) {$G\beta$};
\node[scale=0.8] at (3.75,-3.75) {$\eta'_{u'}$};

\node (A) at (0,-6) {$A'$};
\node (B) at (1.5,-6) {$GFA'$};
\node (C) at (3,-6) {$A'$};
\node (D) at (4.5,-6) {$C'$};
\draw[->] (A) to node[below,scale=0.8] {$\eta_{A'}$} (B);
\draw[->] (B) to node[below,scale=0.8,yshift=1pt] {$\eta'_{A'}$} (C);
\draw[->] (C) to node[below,scale=0.8] {$c$} (D);
\draw[d] (A') to (A);
\draw[d] (B') to (B);
\draw[d] (D') to (D);

\node at (0,-5.25) {$\bullet$};
\node at (1.5,-5.25) {$\bullet$};
\node at (4.5,-5.25) {$\bullet$};

\node[scale=0.8] at (2.8,-5.25) {${\eta'_c}^{-1}$};
\node[scale=0.8] at (3.2,-5.25) {$\vcong$};
\node[scale=0.8] at (.75,-5.25) {$e_{\eta_{A'}}$};

\node (A') at (0,-7.5) {$A'$};
\node (C') at (3,-7.5) {$A'$};
\node (D') at (4.5,-7.5) {$C'$};
\draw[d] (A') to (C');
\draw[->] (C') to node[below,scale=0.8] {$c$} (D');
\draw[d] (A) to (A');
\draw[d] (C) to (C');
\draw[d] (D) to (D');

\node at (0,-6.75) {$\bullet$};
\node at (3,-6.75) {$\bullet$};
\node at (4.5,-6.75) {$\bullet$};

\node[scale=0.8] at (1.3,-6.75) {$\mu_{A'}^{-1}$};
\node[scale=0.8] at (1.7,-6.75) {$\vcong$};
\node[scale=0.8] at (3.75,-6.75) {$e_c$};
\end{tz}
The thorough reader might check that $F\alpha=\beta$ by completing the following steps. First transform $F\eta'_{u'}$ by using the invertible modification $\theta\colon F\eta'\cong \epsilon_F$ of \cref{lem:epsilon_iso_to_eta}; then apply, in the given order: the horizontal coherence of the modification $F\nu\colon (F\eta')(F\eta)\cong\id$, the horizontal coherence of the modification $\Theta\colon \id\cong \epsilon_F \circ F\eta$, the triangle identity for $(\mu,\nu)$, the compatibility of $\epsilon_F\colon FGF\Rightarrow F$ with $FG\beta$ and $\beta$, and finally the horizontal coherence of the modification $\Theta\colon \id\cong \epsilon_F \circ F\eta$.

Suppose now that $\sq{\alpha'}{a}{c}{u}{u'}$ is another square in $\bA$ such that $F\alpha'=\beta$. If we replace~$G\beta$ with $GF\alpha'$ in the pasting diagram above, then it follows from the compatibility of $\eta'\colon GF\Rightarrow \id_\bA$ with $GF\alpha'$ and $\alpha'$, and the vertical coherence of the modification $\mu\colon \id\cong \eta'\eta$, that this pasting is also equal to $\alpha'$. Therefore, we must have $\alpha=\alpha'$. This completes the proof of (db4).
\end{proof}

It is not true in general that a double biequivalence is a horizontal biequivalence, unless we impose an additional condition on the source or on the target. In \cite[Theorem 5.15]{MSV} we provide a Whitehead Theorem, where the target satisfies a condition related to cofibrancy in the model structure of \cite{MSV}. Here, we prove that such a result holds when the source of the double biequivalence is fibrant, which completes the proof of our Whitehead theorem, \cref{thm:whitehead}.

\begin{prop}\label{prop:doublebieq_is_horbieq}
    Let $F\colon \bA\to \bB$ be a double biequivalence such that $\bA$ is weakly horizontally invariant. Then $F$ is a horizontal biequivalence.
\end{prop}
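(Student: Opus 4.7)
The plan is to build a normal pseudo double functor $G\colon\bB\to\bA$ and horizontal pseudo natural equivalences by transporting the structure of $\bB$ back along $F$, using (db1)--(db4) to get the data and (db4) combined with \cref{uniqueweakinverse} to enforce coherence. For objects, we apply (db1) to choose for each $B\in\bB$ an object $GB\in\bA$ together with horizontal adjoint equivalence data $(\epsilon_B,\epsilon_B^\sharp,\mu_B,\nu_B)\colon FGB\xrightarrow{\simeq}B$ in $\bB$. These will become both the components of the counit $\epsilon\colon FG\Rightarrow\id_\bB$ and the reference equivalences used throughout the construction.

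For horizontal morphisms $b\colon B\to B'$, we apply (db2) to $\epsilon_{B'}^\sharp b\epsilon_B\colon FGB\to FGB'$ to obtain $Gb\colon GB\to GB'$ in $\bA$ with a vertically invertible square $F(Gb)\cong \epsilon_{B'}^\sharp b\epsilon_B$; the horizontal compositor $\Phi_{b,b'}$ and unitor $\Phi_B$ of $G$ are then obtained by pasting with $\mu_B,\nu_B$ to get vertically invertible squares in $\bB$ between $F(G(b'b))$ and $F(Gb')F(Gb)$ (resp.\ between $F(G\id_B)$ and $\id_{FGB}$), and lifting uniquely along $F$ using (db4). For vertical morphisms $v\colon B\arrowdot B'$, we first apply (db3) to find some $u\colon A\arrowdot A'$ in $\bA$ with a weakly horizontally invertible square in $\bB$ whose horizontal boundaries are equivalences $b\colon B\xrightarrow\simeq FA$ and $b'\colon B'\xrightarrow\simeq FA'$; composing these with $\epsilon_B,\epsilon_{B'}$ and invoking (db2) followed by a short argument showing that $F$ reflects horizontal equivalences (a consequence of (db2) and (db4)), we promote $b\epsilon_B$ and $b'\epsilon_{B'}$ to horizontal equivalences $GB\xrightarrow\simeq A$, $GB'\xrightarrow\simeq A'$ in $\bA$. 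The weak horizontal invariance of $\bA$ now produces $Gv\colon GB\arrowdot GB'$ together with a weakly horizontally invertible square filling this outer boundary, and its $F$-image pastes with the (db3) square to yield a weakly horizontally invertible square $\epsilon_v\colon({v} \; ^{\epsilon_B}_{\substack{\epsilon_{B'}}} \; {F(Gv)})$.

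For squares in $\bB$, we define $G\beta$ as the unique square in $\bA$ given by (db4) whose $F$-image equals the canonical pasting of $\beta$ with the witnessing squares $\Phi_b$ for its horizontal edges and $\epsilon_v$ for its vertical edges. All pseudo-functoriality axioms for $G$ and the modification axioms for the unit and counit then reduce, under $F$, to identities already holding in $\bB$, hence hold in $\bA$ by (db4). The counit $\epsilon$ is readily seen to be a horizontal pseudo natural equivalence: its components $\epsilon_B$ are horizontal equivalences by choice, and its vertical naturality squares are precisely the weakly horizontally invertible squares $\epsilon_v$ constructed above. For the unit $\eta\colon \id_\bA\Rightarrow GF$, we obtain the components $\eta_A\colon A\xrightarrow\simeq GFA$ by applying the reflection-of-equivalences argument to the equivalence $\epsilon_{FA}\colon FGFA\xrightarrow\simeq FA$ (via (db2) at $A,GFA$), and the weakly horizontally invertible naturality squares at vertical morphisms $u\colon A\arrowdot A'$ are produced by one further appeal to the weak horizontal invariance of $\bA$, lifting the boundary $(\eta_A,\eta_{A'},u,GFu)$ to a weakly horizontally invertible square.

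The main obstacle will be the coherence bookkeeping for vertical composition of $G$ and for the naturality squares of $\eta$ at vertical morphisms: since these are extracted via weak horizontal invariance rather than from a strict construction, one must verify that the resulting weakly horizontally invertible squares are independent of the choices made, and fit together coherently. The key tool here is \cref{uniqueweakinverse}, which pins down weak inverses relative to fixed horizontal adjoint equivalence data and thus allows the required identities to be reduced, via $F$ and (db4), to statements in $\bB$ that follow from standard double-categorical calculus. The conclusion that $F$ is a double biequivalence (rather than just a weak equivalence) follows from \cref{prop:Wwithwhisourceisdblbieq}, so the equivalence of the two descriptions in the statement is immediate once the pseudo-inverse $G$ has been produced.
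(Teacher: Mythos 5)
Your overall strategy matches the paper's: choose $GB$ and $\epsilon_B$ via (db1), define $G$ on horizontal morphisms via (db2), use (db3) plus the weak horizontal invariance of $\bA$ to define $G$ on vertical morphisms, define $G$ on squares by lifting a canonical pasting via (db4), and use fully faithfulness on squares to obtain all coherence constraints as unique lifts of pastings in $\bB$. That is exactly how the paper proceeds. However, there is one step where your plan breaks down.

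For the naturality square $\eta_u$ of the unit at a vertical morphism $u\colon A\arrowdot A'$, you propose to ``lift the boundary $(\eta_A,\eta_{A'},u,GFu)$ to a weakly horizontally invertible square by one further appeal to the weak horizontal invariance of $\bA$.'' This is not something that weak horizontal invariance can do. \cref{def:whidbl} produces a \emph{new} vertical morphism from two horizontal equivalences and one vertical morphism; it does not fill a square whose four sides are all already prescribed. Here all four sides are determined: $\eta_A$ and $\eta_{A'}$ are chosen, $u$ is given, and $GFu$ is fixed by the already-constructed action of $G$ on vertical morphisms. There is no side left for weak horizontal invariance to produce, and the vertical morphism it would hand you need not equal $u$ or $GFu$. \cref{uniqueweakinverse} does not rescue this either: it asserts uniqueness of a \emph{weak inverse} relative to fixed adjoint equivalence data, not uniqueness of a square with prescribed boundary, so it cannot be used to pin down an a priori arbitrary filler obtained from weak horizontal invariance.

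The correct move (and the one the paper takes) is to skip weak horizontal invariance here entirely and use (db4) directly: build a weakly horizontally invertible square $\psi_u$ in $\bB$ with boundary $(F\eta_A,F\eta_{A'},Fu,FGFu)$ by pasting the comparison squares $\rho_A^{-1}$, $\epsilon'_{Fu}$, $\rho_{A'}$ (where $\rho_A$ witnesses $F\eta_A\cong\epsilon'_{FA}$), and then invoke fully faithfulness on squares to obtain the unique $\eta_u$ in $\bA$ with $F\eta_u=\psi_u$. This lift is weakly horizontally invertible because $\psi_u$ is, and its naturality is inherited from that of $\epsilon'$. This is the same mechanism you already use for $G$ on squares and for the compositors $\Phi_{b,d}$, $\Psi_{v,v'}$, so the fix is local; but as written, the argument for $\eta_u$ does not go through.
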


\begin{proof}
We simultaneously define the pseudo double functor $G\colon \bB\to \bA$ and the horizontal pseudo natural transformation $\epsilon\colon FG\Rightarrow \id_\bB$.

{\bf $G$ and $\epsilon$ on objects.} Let $B\in \bB$ be an object. By (db1) of \cref{def:doublebieq}, there is an object $A\in \bA$ and a horizontal equivalence $b\colon FA\xrightarrow{\simeq} B$ in $\bB$. We set $GB\coloneqq A$ and $\epsilon_B\coloneqq b\colon FGB\xrightarrow{\simeq} B$, and also fix horizontal equivalence data $(\epsilon_B,\epsilon'_B,\mu_B,\nu_B)$. 

{\bf $G$ and $\epsilon$ on horizontal morphisms.} Now let $b\colon B\to C$ be a horizontal morphism in~$\bB$. By (db2), there is a horizontal morphism $a\colon GB\to GC$ in $\bA$ and a vertically invertible square $\overline{\epsilon}_b$ as depicted inside the right-hand side of the pasting below. We set $Gb\coloneqq a\colon GB\to GC$ and $\epsilon_b$ to be the square given by the following pasting.
\begin{tz}
\node (A) at (0,0) {$FGB$};
\node (B) at (1.5,0) {$B$};
\node (C) at (3,0) {$C$};
\node (A') at (0,-1.5) {$FGB$};
\node (B') at (1.5,-1.5) {$FGC$};
\node (C') at (3,-1.5) {$C$};
\draw[d] (A) to (A');
\draw[d] (C) to (C');
\draw[->] (A) to node[above,scale=0.8] {$\epsilon_B$} (B);
\draw[->] (B) to node[above,scale=0.8] {$b$} (C);
\draw[->] (A') to node[below,scale=0.8,yshift=-1pt] {$FGb$} (B');
\draw[->] (B') to node[below,scale=0.8] {$\epsilon_C$} (C');

\node at (0,-.75) {$\bullet$};
\node at (3,-.75) {$\bullet$};

\node[scale=0.8] at (1.3,-.75) {$\epsilon_b$};
\node[scale=0.8] at (1.6,-.75) {$\vcong$};

\node at (4,-.75) {$=$};

\node (A) at (5,.75) {$FGB$};
\node (B) at (6.5,.75) {$B$};
\node (C) at (8,.75) {$C$};
\node (E) at (11,.75) {$C$};

\node (A') at (5,-.75) {$FGB$};
\node (B') at (6.5,-.75) {$B$};
\node (C') at (8,-.75) {$C$};
\node (D') at (9.5,-.75) {$FGC$};
\node (E') at (11,-.75) {$C$};

\draw[d] (A) to (A');
\draw[d] (C) to (C');
\draw[d] (E) to (E');
\draw[->] (A) to node[above,scale=0.8] {$\epsilon_B$} (B);
\draw[->] (B) to node[above,scale=0.8] {$b$} (C);
\draw[d] (C) to (E);

\node at (5,0) {$\bullet$};
\node at (8,0) {$\bullet$};
\node at (11,0) {$\bullet$};

\node[scale=0.8] at (6.5,0) {$e_{b \epsilon_B}$};
\node[scale=0.8] at (9.3,0) {$\nu_C^{-1}$};
\node[scale=0.8] at (9.7,0) {$\vcong$};

\node (A'') at (5,-2.25) {$FGB$};
\node (D'') at (9.5,-2.25) {$FGC$};
\node (E'') at (11,-2.25) {$C$};
\draw[d] (A') to (A'');
\draw[d] (D') to (D'');
\draw[d] (E') to (E'');
\draw[->] (A') to node[above,scale=0.8] {$\epsilon_B$} (B');
\draw[->] (B') to node[above,scale=0.8] {$b$} (C');
\draw[->] (C') to node[above,scale=0.8] {$\epsilon'_C$} (D');
\draw[->] (D') to node[above,scale=0.8] {$\epsilon_C$} (E');
\draw[->] (A'') to node[below,scale=0.8,yshift=-1pt] {$Fa=FGb$} (D'');
\draw[->] (D'') to node[below,scale=0.8] {$\epsilon_C$} (E'');

\node at (5,-1.5) {$\bullet$};
\node at (9.5,-1.5) {$\bullet$};
\node at (11,-1.5) {$\bullet$};

\node[scale=0.8] at (7,-1.5) {$\overline{\epsilon}_b$};
\node[scale=0.8] at (7.3,-1.5) {$\vcong$};
\node[scale=0.8] at (10.25,-1.5) {$e_{\epsilon_C}$};
\end{tz}
If $b=\id_B$, we can choose $G\id_B\coloneqq\id_{GB}$ and $\overline{\epsilon}_{\id_B}\coloneqq\mu_B^{-1}$. Then $\epsilon_{\id_B}=e_{\epsilon_B}$ by the triangle identities for $(\mu_B,\nu_B)$. 

{\bf Horizontal coherence.} Given horizontal morphisms $b\colon B\to C$ and $d\colon C\to D$ in $\bB$, we define the vertically invertible comparison square between $Gd\circ Gb$ and $G(db)$ as follows. Let us denote by $\Theta_{b,d}$ the following pasting.
\begin{tz}
\node (A) at (0,0) {$FGB$};
\node (B) at (4.5,0) {$FGC$};
\node (C) at (9,0) {$FGD$};
\draw[->] (A) to node[above,scale=0.8] {$FGb$} (B);
\draw[->] (B) to node[above,scale=0.8] {$FGd$} (C);

\node (A') at (0,-1.5) {$FGB$};
\node (X) at (1.5,-1.5) {$B$};
\node (Y) at (3,-1.5) {$C$};
\node (B') at (4.5,-1.5) {$FGC$};
\node (T) at (6,-1.5) {$C$};
\node (U) at (7.5,-1.5) {$D$};
\node (C') at (9,-1.5) {$FGD$};
\draw[->] (A') to node[above,scale=0.8] {$\epsilon_B$} (X);
\draw[->] (X) to node[above,scale=0.8] {$b$} (Y);
\draw[->] (Y) to node[above,scale=0.8] {$\epsilon'_C$} (B');
\draw[->] (B') to node[above,scale=0.8] {$\epsilon_C$} (T);
\draw[->] (T) to node[above,scale=0.8] {$d$} (U);
\draw[->] (U) to node[above,scale=0.8] {$\epsilon'_D$} (C');
\draw[d] (A) to (A');
\draw[d] (B) to (B');
\draw[d] (C) to (C');

\node at (0,-.75) {$\bullet$};
\node at (4.5,-.75) {$\bullet$};
\node at (9,-.75) {$\bullet$};

\node[scale=0.8] at (2,-.75) {$\overline{\epsilon}^{-1}_b$};
\node[scale=0.8] at (2.4,-.75) {$\vcong$};
\node[scale=0.8] at (6.5,-.75) {$\overline{\epsilon}^{-1}_d$};
\node[scale=0.8] at (6.9,-.75) {$\vcong$};

\node (A) at (0,-3) {$FGB$};
\node (X') at (1.5,-3) {$B$};
\node (Y') at (3,-3) {$C$};
\node (T') at (6,-3) {$C$};
\node (U') at (7.5,-3) {$D$};
\node (C) at (9,-3) {$FGD$};
\draw[->] (A) to node[below,scale=0.8] {$\epsilon_B$} (X');
\draw[->] (X') to node[below,scale=0.8] {$b$} (Y');
\draw[d] (Y') to (T');
\draw[->] (T') to node[below,scale=0.8] {$d$} (U');
\draw[->] (U') to node[below,scale=0.8] {$\epsilon'_D$} (C);
\draw[d] (A') to (A);
\draw[d] (C') to (C);
\draw[d] (Y) to (Y');
\draw[d] (T) to (T');

\node at (0,-2.25) {$\bullet$};
\node at (3,-2.25) {$\bullet$};
\node at (6,-2.25) {$\bullet$};
\node at (9,-2.25) {$\bullet$};

\node[scale=0.8] at (1.5,-2.25) {$e_{b\epsilon_B}$};
\node[scale=0.8] at (7.5,-2.25) {$e_{\epsilon'_D d}$};
\node[scale=0.8] at (4.3,-2.25) {$\nu_C$};
\node[scale=0.8] at (4.6,-2.25) {$\vcong$};

\node (A') at (0,-4.5) {$FGB$};
\node (C') at (9,-4.5) {$FGD$};
\draw[d] (A) to (A');
\draw[d] (C) to (C');
\draw[->] (A') to node[below,scale=0.8] {$FG(db)$} (C');

\node at (0,-3.75) {$\bullet$};
\node at (9,-3.75) {$\bullet$};

\node[scale=0.8] at (4.3,-3.75) {$\overline{\epsilon}_{db}$};
\node[scale=0.8] at (4.7,-3.75) {$\vcong$};
\end{tz}
Then, by (db4), there is a unique vertically invertible square $\Phi_{b,d}$ as in \cref{defn:pseudodouble} (i) such that $F\Phi_{b,d}=\Theta_{b,d}$. In particular, one can check that with this definition of $\Phi_{b,d}$, the squares $\epsilon_b$, $\epsilon_d$, and $\epsilon_{db}$ satisfy the required pasting equality pictured below.
\begin{tz}
\node (A) at (0,0) {$B$};
\node (B) at (1.5,0) {$FGB$};
\node (C) at (3,0) {$FGC$};
\node (D) at (4.5,0) {$FGD$};
\draw[->] (A) to node[above,scale=0.8] {$\epsilon_B$} (B);
\draw[->] (B) to node[above,scale=0.8,yshift=2pt] {$FGb$} (C);
\draw[->] (C) to node[above,scale=0.8,yshift=2pt] {$FGd$} (D);

\node (A') at (0,-1.5) {$B$};
\node (B') at (1.5,-1.5) {$FGB$};
\node (D') at (4.5,-1.5) {$FGD$};
\draw[->] (A') to node[below,scale=0.8] {$\epsilon_B$} (B');
\draw[->] (B') to node[below,scale=0.8] {$FG(db)$} (D');
\draw[d] (A) to (A');
\draw[d] (B) to (B');
\draw[d] (D) to (D');

\node at (0,-.75) {$\bullet$};
\node at (1.5,-.75) {$\bullet$};
\node at (4.5,-.75) {$\bullet$};

\node[scale=0.8] at (2.7,-.75) {$F\Phi_{b,d}$};
\node[scale=0.8] at (3.3,-.75) {$\vcong$};
\node[scale=0.8] at (.75,-.75) {$e_{\epsilon_B}$};

\node (A) at (0,-3) {$B$};
\node (B) at (1.5,-3) {$C$};
\node (C) at (3,-3) {$D$};
\node (D) at (4.5,-3) {$FGD$};
\draw[->] (A) to node[below,scale=0.8] {$b$} (B);
\draw[->] (B) to node[below,scale=0.8] {$d$} (C);
\draw[->] (C) to node[below,scale=0.8] {$\epsilon_D$} (D);
\draw[d] (A') to (A);
\draw[d] (D') to (D);

\node at (0,-2.25) {$\bullet$};
\node at (4.5,-2.25) {$\bullet$};

\node[scale=0.8] at (2,-2.25) {$\epsilon_{db}$};
\node[scale=0.8] at (2.4,-2.25) {$\vcong$};

\node at (5.5, -1.5) {$=$};

\node (A) at (6.5,0) {$B$};
\node (B) at (8,0) {$FGB$};
\node (C) at (9.5,0) {$FGC$};
\node (D) at (11,0) {$FGD$};
\draw[->] (A) to node[above,scale=0.8] {$\epsilon_B$} (B);
\draw[->] (B) to node[above,scale=0.8,yshift=2pt] {$FGb$} (C);
\draw[->] (C) to node[above,scale=0.8,yshift=2pt] {$FGd$} (D);

\node (A') at (6.5,-1.5) {$B$};
\node (B') at (8,-1.5) {$C$};
\node (C') at (9.5,-1.5) {$FGC$};
\node (D') at (11,-1.5) {$FGD$};
\draw[->] (A') to node[below,scale=0.8] {$b$} (B');
\draw[->] (B') to node[below,scale=0.8] {$\epsilon_C$} (C');
\draw[->] (C') to node[below,scale=0.8,yshift=-1pt] {$FGd$} (D');
\draw[d] (A) to (A');
\draw[d] (C) to (C');
\draw[d] (D) to (D');

\node at (6.5,-.75) {$\bullet$};
\node at (9.5,-.75) {$\bullet$};
\node at (11,-.75) {$\bullet$};

\node[scale=0.8] at (7.8,-.75) {$\epsilon_b$};
\node[scale=0.8] at (8.1,-.75) {$\vcong$};
\node[scale=0.8] at (10.25,-.75) {$e_{FGd}$};

\node (A) at (6.5,-3) {$B$};
\node (B) at (8,-3) {$C$};
\node (C) at (9.5,-3) {$D$};
\node (D) at (11,-3) {$FGD$};
\draw[->] (A) to node[below,scale=0.8] {$b$} (B);
\draw[->] (B) to node[below,scale=0.8] {$d$} (C);
\draw[->] (C) to node[below,scale=0.8] {$\epsilon_D$} (D);
\draw[d] (A') to (A);
\draw[d] (B') to (B);
\draw[d] (D') to (D);

\node at (6.5,-2.25) {$\bullet$};
\node at (8,-2.25) {$\bullet$};
\node at (11,-2.25) {$\bullet$};

\node[scale=0.8] at (9.3,-2.25) {$\epsilon_{d}$};
\node[scale=0.8] at (9.6,-2.25) {$\vcong$};
\node[scale=0.8] at (7.25,-2.25) {$e_b$};

\end{tz} 

{\bf $G$ and $\epsilon$ on vertical morphisms.} Now let $v\colon B\arrowdot B'$ be a vertical morphism in~$\bB$. By (db3), there is a vertical morphism $u'\colon A\arrowdot A'$ and a weakly horizontally invertible square $\sq{\gamma_v}{b}{d}{v}{Fu'}$, where $b\colon B\xrightarrow{\simeq} FA$ and $d\colon B'\xrightarrow{\simeq} FA'$ are horizontal equivalences. If we consider the composites of horizontal equivalences $b\epsilon_B\colon FGB\xrightarrow{\simeq} FA$ and $d\epsilon_{B'}\colon FGB'\xrightarrow{\simeq} FA'$, there is horizontal morphisms $a\colon GB\to A$ and $c\colon GB'\to A'$ in $\bA$ and vertically invertible squares $\sq{\gamma_b}{b\epsilon_B}{Fa}{e_{FGB}}{e_{FA}}$ and $\sq{\gamma_d}{d\epsilon_{B'}}{Fc}{e_{FGB'}}{e_{FA'}}$. Since lifts of horizontal equivalences by a double biequivalence are horizontal equivalences, we have that $a\colon GB\xrightarrow{\simeq} A$ and $c\colon GB'\xrightarrow{\simeq} A'$ are horizontal equivalences in $\bA$; thus, since $\bA$ is weakly horizontally invariant, there is a vertical morphism $u\colon GB\arrowdot GB'$ and a weakly horizontally invertible square
\begin{tz}
\node (A) at (0,0) {$GB$};
\node (B) at (1.5,0) {$A$};
\node (A') at (0,-1.5) {$GB'$};
\node (B') at (1.5,-1.5) {$A'$};
\node at ($(B'.east)-(0,4pt)$) {.};
\draw[->] (A) to node[above,scale=0.8] {$a$} node[below,scale=0.8] {$\simeq$} (B);
\draw[->] (A') to node[below,scale=0.8] {$c$} node[above,scale=0.8] {$\simeq$} (B');
\draw[->] (A) to node[left,scale=0.8] {$u\;$} (A');
\draw[->] (B) to node[right,scale=0.8] {$\;u'$} (B');
\node[scale=0.8] at (.6,-.75) {$\alpha_v$};
\node[scale=0.8] at (1,-.75) {$\simeq$};

\node at (0,-.75) {$\bullet$};
\node at (1.5,-.75) {$\bullet$};
\end{tz}
We set $Gv\coloneqq u\colon GB\arrowdot GB'$. To define the weakly horizontally invertible square $\epsilon_v$, let us first fix a weak inverse $\gamma'_v$ of $\gamma_v$ with respect to some horizontal equivalences $(b,b',\lambda,\kappa)$ and $(d,d',\lambda',\kappa')$. We set $\epsilon_v$ to be the square given by the following pasting.
\begin{tz}
\node (A) at (0,0) {$FGB$};
\node (B) at (1.5,0) {$B$};
\node (A') at (0,-1.5) {$FGB'$};
\node (B') at (1.5,-1.5) {$B'$};
\draw[->] (A) to node[above,scale=0.8] {$\epsilon_B$} (B);
\draw[->] (A') to node[below,scale=0.8] {$\epsilon_{B'}$}(B');
\draw[->] (A) to node[left,scale=0.8] {$FGv\;$} (A');
\draw[->] (B) to node[right,scale=0.8] {$\;v$} (B');
\node[scale=0.8] at (.75,-.75) {$\epsilon_v$};

\node at (0,-.75) {$\bullet$};
\node at (1.5,-.75) {$\bullet$};

\node at (2.3,-.75) {$=$};

\node (A) at (3.5,3) {$FGB$};
\node (B) at (5,3) {$B$};
\node (D) at (8,3) {$B$};
\draw[->] (A) to node[above,scale=0.8] {$\epsilon_B$} (B);
\draw[d] (B) to (D);

\node (A') at (3.5,1.5) {$FGB$};
\node (B') at (5,1.5) {$B$};
\node (C') at (6.5,1.5) {$FA$};
\node (D') at (8,1.5) {$B$};
\draw[d] (A) to (A');
\draw[d] (B) to (B');
\draw[d] (D) to (D');
\draw[->] (A') to node[above,scale=0.8] {$\epsilon_B$} (B');
\draw[->] (B') to node[above,scale=0.8] {$b$} (C');
\draw[->] (C') to node[above,scale=0.8] {$b'$} (D');

\node at (3.5,2.25) {$\bullet$};
\node at (5,2.25) {$\bullet$};
\node at (8,2.25) {$\bullet$};

\node[scale=0.8] at (4.25,2.25) {$e_{\epsilon_B}$};
\node[scale=0.8] at (6.3,2.25) {$\lambda$};
\node[scale=0.8] at (6.6,2.25) {$\vcong$};

\node (A) at (3.5,0) {$FGB$};
\node (C) at (6.5,0) {$FA$};
\node (D) at (8,0) {$B$};
\draw[d] (A') to (A);
\draw[d] (C') to (C);
\draw[d] (D') to (D);
\draw[->] (A) to node[above,scale=0.8] {$Fa$} (C);
\draw[->] (C) to node[above,scale=0.8] {$b'$} (D);

\node at (3.5,.75) {$\bullet$};
\node at (6.5,.75) {$\bullet$};
\node at (8,.75) {$\bullet$};

\node[scale=0.8] at (7.25,.75) {$e_{b'}$};
\node[scale=0.8] at (4.8,.75) {$\gamma_b$};
\node[scale=0.8] at (5.1,.75) {$\vcong$};

\node (A') at (3.5,-1.5) {$FGB'$};
\node (C') at (6.5,-1.5) {$FA'$};
\node (D') at (8,-1.5) {$B'$};
\draw[->] (A) to node[left,scale=0.8] {$FGv\;$} (A');
\draw[->] (C) to node[left,scale=0.8] {$Fu'\;$} (C');
\draw[->] (D) to node[right,scale=0.8] {$\;v$} (D');
\draw[->] (A') to node[below,scale=0.8] {$Fc$} (C');
\draw[->] (C') to node[below,scale=0.8] {$d'$} (D');

\node at (3.5,-.75) {$\bullet$};
\node at (6.5,-.75) {$\bullet$};
\node at (8,-.75) {$\bullet$};

\node[scale=0.8] at (7.1,-.75) {$\gamma'_v$};
\node[scale=0.8] at (7.5,-.75) {$\simeq$};
\node[scale=0.8] at (4.7,-.75) {$F\alpha_v$};
\node[scale=0.8] at (5.2,-.75) {$\simeq$};

\node (A) at (3.5,-3) {$FGB'$};
\node (B) at (5,-3) {$B'$};
\node (C) at (6.5,-3) {$FA'$};
\node (D) at (8,-3) {$B'$};
\draw[d] (A') to (A);
\draw[d] (C') to (C);
\draw[d] (D') to (D);
\draw[->] (A) to node[below,scale=0.8] {$\epsilon_{B'}$} (B);
\draw[->] (B) to node[below,scale=0.8] {$d$} (C);
\draw[->] (C) to node[below,scale=0.8] {$d'$} (D);

\node at (3.5,-2.25) {$\bullet$};
\node at (6.5,-2.25) {$\bullet$};
\node at (8,-2.25) {$\bullet$};

\node[scale=0.8] at (7.25,-2.25) {$e_{d'}$};
\node[scale=0.8] at (4.8,-2.25) {$\gamma_d^{-1}$};
\node[scale=0.8] at (5.2,-2.25) {$\vcong$};

\node (A') at (3.5,-4.5) {$FGB'$};
\node (B') at (5,-4.5) {$B'$};
\node (D') at (8,-4.5) {$B'$};
\draw[->] (A') to node[below,scale=0.8] {$\epsilon_{B'}$} (B');
\draw[d] (B') to (D');
\draw[d] (A) to (A');
\draw[d] (B) to (B');
\draw[d] (D) to (D');

\node at (3.5,-3.75) {$\bullet$};
\node at (5,-3.75) {$\bullet$};
\node at (8,-3.75) {$\bullet$};

\node[scale=0.8] at (4.25,-3.75) {$e_{\epsilon_{B'}}$};
\node[scale=0.8] at (6.3,-3.75) {${\lambda'}^{-1}$};
\node[scale=0.8] at (6.7,-3.75) {$\vcong$};

\end{tz}
Note that all the squares in the pasting are weakly horizontally invertible by \cite[Lemma A.2.1]{Lyne}, and thus so is $\epsilon_v$. We write $\epsilon'_v$ for its unique weak inverse with respect to the horizontal adjoint equivalences $(\epsilon_B,\epsilon'_B,\mu_B,\nu_B)$ and $(\epsilon_{B'},\epsilon'_{B'},\mu_{B'},\nu_{B'})$, as given by \cref{uniqueweakinverse}.

If $v=e_B$, we can choose $Ge_B\coloneqq e_{GB}$ and $\gamma_{e_B}\coloneqq e_{\epsilon_B}$. Then $\alpha_{e_B}$ can be chosen to be the identity square at the object $GB$ and we get $\epsilon_{e_B}=e_{\epsilon_B}$.

{\bf Vertical coherence.} Given vertical morphisms $v\colon B\arrowdot B'$ and $v'\colon B'\arrowdot B''$ in $\bB$, we define the horizontally invertible comparison square between $Gv'\bullet Gv$ and $G(v' v)$ as follows. Let us denote by $\Omega_{v,v'}$ the following pasting.
\begin{tz}
\node (A) at (0,0) {$FGB$};
\node (C) at (3,0) {$FGB$};
\draw[d] (A) to (C);

\node (A') at (0,-1.5) {$FGB$};
\node (B') at (1.5,-1.5) {$B$};
\node (C') at (3,-1.5) {$FGB$};
\draw[d] (A) to (A');
\draw[d] (C) to (C');
\draw[->] (A') to node[above,scale=0.8] {$\epsilon_B$} (B');
\draw[->] (B') to node[above,scale=0.8] {$\epsilon'_B$} (C');

\node at (0,-.75) {$\bullet$};
\node at (3,-.75) {$\bullet$};

\node[scale=0.8] at (1.3,-.75) {$\mu_B$};
\node[scale=0.8] at (1.7,-.75) {$\vcong$};

\node (A) at (0,-3) {$FGB'$};
\node (B) at (1.5,-3) {$B'$};
\draw[->] (A') to node[left,scale=0.8] {$FGv\;$} (A);
\draw[->] (B') to node[right,scale=0.8] {$\;v$} (B);
\draw[->] (A) to node[above,scale=0.8] {$\epsilon_{B'}$} (B);

\node at (0,-2.25) {$\bullet$};
\node at (1.5,-2.25) {$\bullet$};

\node[scale=0.8] at (.75,-2.25) {$\epsilon_v$};

\node (A') at (0,-4.5) {$FGB''$};
\node (B') at (1.5,-4.5) {$B''$};
\node (C) at (3,-4.5) {$FGB''$};
\draw[->] (A) to node[left,scale=0.8] {$FGv'\;$} (A');
\draw[->] (B) to node[right,scale=0.8] {$\;v'$} (B');
\draw[->] (C') to node[right,scale=0.8] {$\;FG(v'v)$} (C);
\draw[->] (A') to node[below,scale=0.8] {$\epsilon_{B''}$} (B');
\draw[->] (B') to node[below,scale=0.8] {$\epsilon'_{B''}$} (C);

\node at (0,-3.75) {$\bullet$};
\node at (1.5,-3.75) {$\bullet$};
\node at (3,-3) {$\bullet$};

\node[scale=0.8] at (.75,-3.75) {$\epsilon_{v'}$};
\node[scale=0.8] at (2.25,-3) {$\epsilon'_{v'v}$};

\node (A) at (0,-6) {$FGB''$};
\node (C') at (3,-6) {$FGB''$};
\draw[d] (A) to (C');
\draw[d] (A') to (A);
\draw[d] (C) to (C');

\node at (0,-5.25) {$\bullet$};
\node at (3,-5.25) {$\bullet$};

\node[scale=0.8] at (1.2,-5.25) {$\mu_{B''}^{-1}$};
\node[scale=0.8] at (1.7,-5.25) {$\vcong$};

\end{tz}
Note that this square is horizontally invertible, since it is weakly horizontally invertible and its horizontal boundaries are identities. By (db4), there is a unique horizontally invertible square $\Psi_{v,v'}$ as depicted below left such that $F\Psi_{v,v'}=\Omega_{v,v'}$. In particular, one can check that, with this definition of $\Psi_{v,v'}$, the squares $\epsilon_v$, $\epsilon_{v'}$ and $\epsilon_{v'v}$ satisfy the pasting equality below right. 
\begin{tz}
\node (A') at (-5.5,-1.5) {$GB$};
\node (B) at (-4,-1.5) {$GB$};
\draw[d] (A') to (B);

\node (A) at (-5.5,-3) {$GB'$};
\draw[->] (A') to node[left,scale=0.8] {$Gv\;$} (A);

\node at (-5.5,-2.25) {$\bullet$};

\node (A') at (-5.5,-4.5) {$GB''$};
\node (B') at (-4,-4.5) {$GB''$};
\draw[->] (A) to node[left,scale=0.8] {$Gv'\;$} (A');
\draw[->] (B) to node[right,scale=0.8] {$\;G(v'v)$} (B');
\draw[d] (A') to (B');

\node at (-5.5,-3.75) {$\bullet$};
\node at (-4,-3) {$\bullet$};

\node[scale=0.8] at (-4.75,-3.2) {$\Psi_{v,v'}$};
\node[scale=0.8] at (-4.75,-2.8) {$\cong$};

\node (A') at (-.25,-1.5) {$FGB$};
\node (B) at (1.5,-1.5) {$FGB$};
\draw[d] (A') to (B);

\node (A) at (-.25,-3) {$FGB'$};
\draw[->] (A') to node[left,scale=0.8] {$FGv\;$} (A);

\node at (-.25,-2.25) {$\bullet$};

\node (A') at (-.25,-4.5) {$FGB''$};
\node (B') at (1.5,-4.5) {$FGB''$};
\draw[->] (A) to node[left,scale=0.8] {$FGv'\;$} (A');
\draw[->] (B) to node[over,scale=0.8,yshift=.7cm] {$FG(v'v)$} (B');
\draw[d] (A') to (B');

\node at (-.25,-3.75) {$\bullet$};
\node at (1.5,-3) {$\bullet$};

\node[scale=0.8] at (.75,-3.2) {$F\Psi_{v,v'}$};
\node[scale=0.8] at (.75,-2.8) {$\cong$};

\node (C) at (3.25,-1.5) {$B$};
\node (C') at (3.25,-4.5) {$B''$};
\draw[->] (B) to node[above,scale=0.8] {$\epsilon_{B}$} (C);
\draw[->] (B') to node[below,scale=0.8] {$\epsilon_{B''}$} (C');
\draw[->] (C) to node[right,scale=0.8] {$\;v'v$} (C');
\node at (3.25,-3) {$\bullet$};

\node[scale=0.8] at (2.4,-3) {$\epsilon_{v'v}$};

\node at (4.25,-3) {$=$};

\node (A') at (5.25,-1.5) {$FGB$};
\node (B') at (7,-1.5) {$B$};
\draw[->] (A') to node[above,scale=0.8] {$\epsilon_B$} (B');

\node (A) at (5.25,-3) {$FGB'$};
\node (B) at (7,-3) {$B'$};
\draw[->] (A') to node[left,scale=0.8] {$FGv\;$} (A);
\draw[->] (B') to node[right,scale=0.8] {$\;v$} (B);
\draw[->] (A) to node[above,scale=0.8] {$\epsilon_{B'}$} (B);

\node at (5.25,-2.25) {$\bullet$};
\node at (7,-2.25) {$\bullet$};

\node[scale=0.8] at (6.15,-2.25) {$\epsilon_v$};

\node (A') at (5.25,-4.5) {$FGB''$};
\node (B') at (7,-4.5) {$B''$};
\draw[->] (A) to node[left,scale=0.8] {$FGv'\;$} (A');
\draw[->] (B) to node[right,scale=0.8] {$\;v'$} (B');
\draw[->] (A') to node[below,scale=0.8] {$\epsilon_{B''}$} (B');

\node at (5.25,-3.75) {$\bullet$};
\node at (7,-3.75) {$\bullet$};

\node[scale=0.8] at (6.15,-3.75) {$\epsilon_{v'}$};
\end{tz}

{\bf $G$ on squares.} Let $\sq{\beta}{b}{d}{v}{v'}$ be a square in $\bB$. Let us denote by $\delta$ the following pasting. 
\begin{tz}
\node (A) at (0,0) {$FGB$};
\node (D) at (4.5,0) {$FGC$};
\draw[->] (A) to node[above,scale=0.8] {$FGb$} (D);

\node (A') at (0,-1.5) {$FGB$};
\node (B') at (1.5,-1.5) {$B$};
\node (C') at (3,-1.5) {$C$};
\node (D') at (4.5,-1.5) {$FGC$};
\draw[d] (A) to (A');
\draw[d] (D) to (D');
\draw[->] (A') to node[above,scale=0.8] {$\epsilon_B$} (B');
\draw[->] (B') to node[above,scale=0.8] {$b$} (C');
\draw[->] (C') to node[above,scale=0.8] {$\epsilon'_C$} (D');

\node at (0,-.75) {$\bullet$};
\node at (4.5,-.75) {$\bullet$};

\node[scale=0.8] at (2,-.75) {$\overline{\epsilon}^{-1}_b$};
\node[scale=0.8] at (2.4,-.75) {$\vcong$};

\node (A) at (0,-3) {$FGB'$};
\node (B) at (1.5,-3) {$B'$};
\node (C) at (3,-3) {$C'$};
\node (D) at (4.5,-3) {$FGC'$};
\draw[->] (A') to node[left,scale=0.8] {$FGv\;$} (A);
\draw[->] (B') to node[left,scale=0.8] {$v\;$} (B);
\draw[->] (C') to node[right,scale=0.8] {$v'\;$} (C);
\draw[->] (D') to node[right,scale=0.8] {$FGv'\;$} (D);
\draw[->] (A) to node[below,scale=0.8] {$\epsilon_{B'}$} (B);
\draw[->] (B) to node[below,scale=0.8] {$d$} (C);
\draw[->] (C) to node[below,scale=0.8] {$\epsilon'_{C'}$} (D);

\node at (0,-2.25) {$\bullet$};
\node at (1.5,-2.25) {$\bullet$};
\node at (3,-2.25) {$\bullet$};
\node at (4.5,-2.25) {$\bullet$};

\node[scale=0.8] at (.75,-2.25) {$\epsilon_v$};
\node[scale=0.8] at (2.25,-2.25) {$\beta$};
\node[scale=0.8] at (3.75,-2.25) {$\epsilon'_{v'}$};

\node (A') at (0,-4.5) {$FGB'$};
\node (D') at (4.5,-4.5) {$FGC'$};
\draw[->] (A') to node[below,scale=0.8] {$FGd$} (D');
\draw[d] (A') to (A);
\draw[d] (D') to (D);

\node at (0,-3.75) {$\bullet$};
\node at (4.5,-3.75) {$\bullet$};

\node[scale=0.8] at (2,-3.75) {$\overline{\epsilon}_d$};
\node[scale=0.8] at (2.3,-3.75) {$\vcong$};
\end{tz}
Then, by (db4), there is a unique square $\sq{\alpha}{Gb}{Gd}{Gv}{Gv'}$ such that $F\alpha=\delta$. We set $G\beta\coloneqq\sq{\alpha}{Gb}{Gd}{Gv}{Gv'}$. 

Let $b\colon B\to C$ be a horizontal morphism in $\bB$, and $\sq{\beta=e_b}{b}{b}{e_B}{e_C}$. Then we have that $\delta=e_{FGb}$, since $\epsilon_{e_B}=e_{\epsilon_B}$ and $\epsilon'_{e_C}=e_{\epsilon_C}$, and the unique square $\sq{\alpha}{Gb}{Gb}{e_{GB}}{e_{GC}}$ such that $F\alpha=e_{FGb}$ is given by $e_{Gb}$. Therefore, $Ge_b=e_{Gb}$. 

Now let $v\colon B\arrowdot B'$ be a vertical morphism in $\bB$, and $\sq{\beta=\id_v}{\id_B}{\id_{B'}}{v}{v}$. Then we have that $\delta=\id_{FGv}$, since $\overline{\epsilon}_{\id_B}^{-1}=\mu_B$ and $\overline{\epsilon}_{\id_{B'}}=\mu_{B'}^{-1}$ and $\epsilon'_v$ is the weak inverse of $\epsilon_B$ with respect to the horizontal adjoint equivalence data $(\epsilon_B,\epsilon'_B,\mu_B,\nu_B)$ and $(\epsilon_{B'},\epsilon'_{B'},\mu_{B'},\nu_{B'})$. The unique square $\sq{\alpha}{\id_{GB}}{\id_{GB'}}{Gv}{Gv}$ such that $F\alpha=\id_{FGv}$ is given by $\id_{Gv}$. Therefore, $G\id_v=\id_{Gv}$.

{\bf Naturality and adjointness of $\epsilon$ and $\epsilon'$.} The assignment of $G$ on squares is natural with the data of $\epsilon_B$, $\epsilon_b$ and $\epsilon_v$, and therefore the latter assemble into a horizontal pseudo natural equivalence $\epsilon\colon FG\Rightarrow \id_\bB$. Moreover, since $(\epsilon_B,\epsilon'_B,\mu_B,\nu_B)$ are horizontal adjoint equivalences, the data of $\epsilon'_B$, $\epsilon'_b$ and $\epsilon'_v$ also assemble into a horizontal pseudo natural equivalence $\epsilon'\colon \id_\bB\Rightarrow FG$, where $\epsilon'_b$ is defined in a similar manner as $\epsilon_b$ was. In particular, $\epsilon\colon FG\Rightarrow \id_\bB$ and $\epsilon'\colon \id_\bB\Rightarrow FG$ are adjoint equivalences, where the invertible modifications are given by $\mu\colon \id\cong \epsilon'\epsilon$ and $\nu\colon \epsilon\epsilon'\cong \id$. 

It remains to define the horizontal pseudo natural equivalence $\eta\colon \id_\bA\Rightarrow GF$. For this purpose, we use the horizontal pseudo natural equivalence $\epsilon'\colon \id_\bB\Rightarrow FG$.

{\bf $\eta$ on objects.} Let $A\in \bA$, and consider the horizontal equivalence $\epsilon'_{FA}\colon FA\xrightarrow{\simeq} FGFA$. By (db2), there is a horizontal morphism $a\colon A\to GFA$ and a vertically invertible square $\sq{\rho_A}{\epsilon'_{FA}}{Fa}{e_{FA}}{e_{FGFA}}$. We set $\eta_A\coloneqq a \colon A\to GFA$. Note that $\eta_A\colon A\xrightarrow{\simeq} GFA$ is a horizontal equivalence.

{\bf $\eta$ on horizontal morphisms.} Let $a\colon A\to C$ be a horizontal morphism in $\bA$. We denote by $\psi_a$ the pasting below left. By (db4) there is a unique vertically invertible square $\alpha$ as below right such that $F\alpha=\psi_a$; let $\eta_a \coloneqq \alpha$. 
\begin{tz}
\node (A) at (0,0) {$FA$};
\node (B) at (1.8,0) {$FGFA$};
\node (C) at (3.6,0) {$FGFC$};
\draw[->] (A) to node[above,scale=0.8] {$F\eta_A$} (B);
\draw[->] (B) to node[above,scale=0.8,yshift=2pt] {$FGFa$} (C);

\node (A') at (0,-1.5) {$FA$};
\node (B') at (1.8,-1.5) {$FGFA$};
\node (C') at (3.6,-1.5) {$FGFC$};
\draw[->] (A') to node[below,scale=0.8] {$\epsilon'_{FA}$} (B');
\draw[->] (B') to node[below,scale=0.8,yshift=-2pt] {$FGFa$} (C');
\draw[d] (A) to (A');
\draw[d] (B) to (B');
\draw[d] (C) to (C');

\node at (0,-.75) {$\bullet$};
\node at (1.8,-.75) {$\bullet$};
\node at (3.6,-.75) {$\bullet$};

\node[scale=0.8] at (.7,-.75) {$\rho_A^{-1}$};
\node[scale=0.8] at (1.1,-.75) {$\vcong$};
\node[scale=0.8] at (2.7,-.75) {$e_{FGFa}$};

\node (A) at (0,-3) {$FA$};
\node (B) at (1.8,-3) {$FC$};
\node (C) at (3.6,-3) {$FGFC$};
\draw[->] (A) to node[below,scale=0.8] {$Fa$} (B);
\draw[->] (B) to node[below,scale=0.8] {$\epsilon'_{FC}$} (C);
\draw[d] (A') to (A);
\draw[d] (C') to (C);

\node at (0,-2.25) {$\bullet$};
\node at (3.6,-2.25) {$\bullet$};

\node[scale=0.8] at (1.6,-2.25) {$\epsilon'_{Fa}$};
\node[scale=0.8] at (2,-2.25) {$\vcong$};

\node (A') at (0,-4.5) {$FA$};
\node (B') at (1.8,-4.5) {$FC$};
\node (C') at (3.6,-4.5) {$FGFC$};
\draw[->] (A') to node[below,scale=0.8] {$Fa$} (B');
\draw[->] (B') to node[below,scale=0.8] {$F\eta_C$} (C');
\draw[d] (A) to (A');
\draw[d] (B) to (B');
\draw[d] (C) to (C');

\node at (0,-3.75) {$\bullet$};
\node at (1.8,-3.75) {$\bullet$};
\node at (3.6,-3.75) {$\bullet$};

\node[scale=0.8] at (2.5,-3.75) {$\rho_C$};
\node[scale=0.8] at (2.9,-3.75) {$\vcong$};
\node[scale=0.8] at (.9,-3.75) {$e_{Fa}$};

\node (A) at (6,-1.5) {$A$};
\node (B) at (7.5,-1.5) {$GFA$};
\node (C) at (9,-1.5) {$GFC$};
\node (A') at (6,-3) {$A$};
\node (B') at (7.5,-3) {$C$};
\node (C') at (9,-3) {$GFC$};
\draw[d] (A) to (A');
\draw[d] (C) to (C');
\draw[->] (A) to node[above,scale=0.8] {$\eta_A$} (B);
\draw[->] (B) to node[above,scale=0.8,yshift=1pt] {$GFa$} (C);
\draw[->] (A') to node[below,scale=0.8] {$a$} (B');
\draw[->] (B') to node[below,scale=0.8] {$\eta_C$} (C');

\node at (6,-2.25) {$\bullet$};
\node at (9,-2.25) {$\bullet$};

\node[scale=0.8] at (7.3,-2.25) {$\alpha$};
\node[scale=0.8] at (7.6,-2.25) {$\vcong$};
\end{tz}

{\bf $\eta$ on vertical morphisms.} Let $u\colon A\arrowdot A'$ be a vertical morphism in $\bA$. We denote by $\psi_u$ the pasting below left.
\begin{tz}
\node (A) at (0,0) {$FA$};
\node (B) at (1.8,0) {$FGFA$};
\node (A') at (0,-1.5) {$FA$};
\node (B') at (1.8,-1.5) {$FGFA$};
\draw[->] (A) to node[above,scale=0.8] {$F\eta_A$} (B);
\draw[->] (A') to node[above,scale=0.8] {$\epsilon'_{FA}$} (B');
\draw[d] (A) to (A');
\draw[d] (B) to (B');

\node at (0,-.75) {$\bullet$};
\node at (1.8,-.75) {$\bullet$};

\node[scale=0.8] at (.7,-.65) {$\rho_A^{-1}$};
\node[scale=0.8] at (1.1,-.65) {$\vcong$};

\node (A) at (0,-3) {$FA'$};
\node (B) at (1.8,-3) {$FGFA'$};
\draw[->] (A) to node[below,scale=0.8] {$\epsilon'_{FA'}$} (B);
\draw[->] (A') to node[left,scale=0.8] {$Fu\;$} (A);
\draw[->] (B') to node[right,scale=0.8] {$\;FGFu$} (B);

\node at (0,-2.25) {$\bullet$};
\node at (1.8,-2.25) {$\bullet$};

\node[scale=0.8] at (.7,-2.25) {$\epsilon'_{Fu}$};
\node[scale=0.8] at (1.1,-2.25) {$\simeq$};

\node (A') at (0,-4.5) {$FA'$};
\node (B') at (1.8,-4.5) {$FGFA'$};
\draw[->] (A') to node[below,scale=0.8] {$F\eta_{A'}$} (B');
\draw[d] (A) to (A');
\draw[d] (B) to (B');

\node at (0,-3.75) {$\bullet$};
\node at (1.8,-3.75) {$\bullet$};

\node[scale=0.8] at (.7,-3.85) {$\rho_{A'}$};
\node[scale=0.8] at (1.1,-3.85) {$\vcong$};

\node (A) at (4.5,-1.5) {$A$};
\node (B) at (6,-1.5) {$GFA$};
\node (A') at (4.5,-3) {$A'$};
\node (B') at (6,-3) {$GFA'$};
\draw[->] (A) to node[above,scale=0.8] {$\eta_A$} (B);
\draw[->] (A') to node[below,scale=0.8] {$\eta_{A'}$}(B');
\draw[->] (A) to node[left,scale=0.8] {$u\;$} (A');
\draw[->] (B) to node[right,scale=0.8] {$\;GFu$} (B');
\node[scale=0.8] at (5.25,-2.25) {$\gamma$}; 
\node at (4.5,-2.25) {$\bullet$};
\node at (6,-2.25) {$\bullet$};
\end{tz}
Note that all the squares in $\psi_u$ are weakly horizontally invertible by \cite[Lemma A.2.1]{Lyne}, and thus so is $\psi_u$. By (db4) there is a unique weakly horizontally invertible square $\sq{\gamma}{\eta_A}{\eta_{A'}}{u}{GFu}$ as above right such that $F\gamma=\psi_u$; let $\eta_u\coloneqq \gamma$.

{\bf Naturality of $\eta$.} Since $\epsilon'\colon \id_\bB\Rightarrow FG$ is a horizontal pseudo natural transformation, then $\eta_A$, $\eta_a$, and $\eta_u$ assemble into a horizontal pseudo natural transformation $\eta\colon \id_\bA\Rightarrow GF$. Note that $\eta$ is a horizontal pseudo natural equivalence, because $\eta_A$ are horizontal equivalences and $\eta_u$ are weakly horizontally invertible squares.  Moreover, $\rho\colon \epsilon'_F\cong F\eta$ gives the data of an invertible modification.
\end{proof}

\bibliographystyle{plain}
\bibliography{References}

\begin{thebibliography}{10}

\bibitem{Bohm}
Gabriella B\"ohm.
\newblock The {G}ray monoidal product of double categories.
\newblock {\em Appl. Categ. Structures}, 2019.

\bibitem{Camp}
Alexander Campbell.
\newblock The folk model structure for double categories.
\newblock Seminar talk,
  \url{http://web.science.mq.edu.au/groups/coact/seminar/cgi-bin/abstract.cgi?talkid=1616}.

\bibitem{DS}
W.~G. Dwyer and J.~Spali\'{n}ski.
\newblock Homotopy theories and model categories.
\newblock In {\em Handbook of algebraic topology}, pages 73--126.
  North-Holland, Amsterdam, 1995.

\bibitem{FP}
Thomas~M. Fiore and Simona Paoli.
\newblock A {T}homason model structure on the category of small {$n$}-fold
  categories.
\newblock {\em Algebr. Geom. Topol.}, 10(4):1933--2008, 2010.

\bibitem{FPP}
Thomas~M. Fiore, Simona Paoli, and Dorette Pronk.
\newblock Model structures on the category of small double categories.
\newblock {\em Algebr. Geom. Topol.}, 8(4):1855--1959, 2008.

\bibitem{Grandis}
Marco Grandis.
\newblock {\em Higher dimensional categories}.
\newblock World Scientific Publishing Co. Pte. Ltd., Hackensack, NJ, 2020.
\newblock From double to multiple categories.

\bibitem{GrandisPare}
Marco Grandis and Robert Paré.
\newblock Limits in double categories.
\newblock {\em Cahiers Topologie G\'{e}om. Diff\'{e}rentielle Cat\'{e}g.},
  40(3):162--220, 1999.

\bibitem{GraPar19}
Marco Grandis and Robert Par\'{e}.
\newblock Persistent double limits.
\newblock {\em Cah. Topol. G\'{e}om. Diff\'{e}r. Cat\'{e}g.}, 60(3):255--297,
  2019.

\bibitem{Gurski}
Nick Gurski.
\newblock Biequivalences in tricategories.
\newblock {\em Theory Appl. Categ.}, 26:No. 14, 349--384, 2012.

\bibitem{Hovey}
Mark Hovey.
\newblock {\em Model categories}, volume~63 of {\em Mathematical Surveys and
  Monographs}.
\newblock American Mathematical Society, Providence, RI, 1999.

\bibitem{Lack2Cat}
Stephen Lack.
\newblock A {Q}uillen model structure for 2-categories.
\newblock {\em $K$-Theory}, 26(2):171--205, 2002.

\bibitem{LackBicat}
Stephen Lack.
\newblock A {Q}uillen model structure for bicategories.
\newblock {\em $K$-Theory}, 33(3):185--197, 2004.

\bibitem{Lyne}
Lyne Moser.
\newblock A double $(\infty,1)$-categorical nerve for double categories.
\newblock Preprint on arXiv:2007.01848, 2020.

\bibitem{MSV}
Lyne Moser, Maru Sarazola, and Paula Verdugo.
\newblock A 2{C}at-inspired model structure for double categories.
\newblock Preprint on arXiv:2004.14233, 2020.

\bibitem{RiehlVerity}
Emily Riehl and Dominic Verity.
\newblock Elements of {$\infty$}-category theory.
\newblock \url{http://www.math.jhu.edu/~eriehl/elements.pdf}, 2018.

\end{thebibliography}

\end{document}